\documentclass[11pt]{article}
\usepackage{amsmath,amssymb,amsthm,amscd}
\usepackage[all]{xy}

\setlength{\topmargin}{-0.5cm}
\setlength{\textheight}{22cm}
\setlength{\evensidemargin}{0.5cm}
\setlength{\oddsidemargin}{0.5cm}
\setlength{\textwidth}{15cm}

\newtheorem{theorem}{Theorem}[section]
\newtheorem{lemma}[theorem]{Lemma}
\newtheorem{proposition}[theorem]{Proposition}
\newtheorem{corollary}[theorem]{Corollary}

\theoremstyle{definition}
\newtheorem{definition}[theorem]{Definition}
\newtheorem{remark}[theorem]{Remark}

\numberwithin{equation}{section}

\renewcommand{\phi}{\varphi}

\newcommand{\ep}{\varepsilon}

\newcommand{\Coker}{\operatorname{Coker}}
\newcommand{\Homeo}{\operatorname{Homeo}}
\newcommand{\id}{\operatorname{id}}
\newcommand{\height}{\operatorname{height}}
\newcommand{\Ima}{\operatorname{Im}}
\newcommand{\Ker}{\operatorname{Ker}}
\newcommand{\rank}{\operatorname{rank}}
\newcommand{\supp}{\operatorname{supp}}

\newcommand{\V}{\mathcal{V}}
\newcommand{\E}{\mathcal{E}}
\newcommand{\B}{\mathcal{B}}

\newcommand{\N}{\mathbb{N}}
\newcommand{\Z}{\mathbb{Z}}

\newcommand{\R}{\mathbb{R}}

\newcommand{\F}{\mathbb{F}}

\setcounter{tocdepth}{2}

\title{Topological full groups of one-sided shifts of finite type}
\author{Hiroki Matui \\
Graduate School of Science \\
Chiba University \\
Inage-ku, Chiba 263-8522, Japan}
\date{}

\begin{document}
\maketitle

\begin{abstract}
We explore the topological full group $[[G]]$ of 
an essentially principal \'etale groupoid $G$ on a Cantor set. 
When $G$ is minimal, 
we show that $[[G]]$ (and its certain normal subgroup) is a complete invariant 
for the isomorphism class of the \'etale groupoid $G$. 
Furthermore, when $G$ is either almost finite or purely infinite, 
the commutator subgroup $D([[G]])$ is shown to be simple. 
The \'etale groupoid $G$ 
arising from a one-sided irreducible shift of finite type is 
a typical example of a purely infinite minimal groupoid. 
For such $G$, 
$[[G]]$ is thought of as a generalization of the Higman-Thompson group. 
We prove that $[[G]]$ is of type F$_\infty$, 
and so in particular it is finitely presented. 
This gives us a new infinite family of 
finitely presented infinite simple groups. 
Also, the abelianization of $[[G]]$ is calculated and described 
in terms of the homology groups of $G$. 
\end{abstract}

\tableofcontents

\section{Introduction}

We study various properties of topological full groups of 
topological dynamical systems on Cantor sets. 
H. Dye \cite{D59AJM,D63AJM} introduced the notion of 
full groups for ergodic measure preserving actions of countable groups and 
proved that the full group is a complete invariant of orbit equivalence. 
The study of full groups in the setting of topological dynamics 
was initiated by T. Giordano, I. F. Putnam and C. F. Skau \cite{GPS99Israel}. 
For a minimal action $\phi:\Z\curvearrowright X$ on a Cantor set $X$, 
they defined several types of full groups and showed that 
these groups completely determine the orbit equivalence class, 
the strong orbit equivalence class and the flip conjugacy class of $\phi$, 
respectively. 

The notion of topological full groups was later generalized to the setting of 
essentially principal \'etale groupoids $G$ on Cantor sets in \cite{M12PLMS}. 
\'Etale groupoids (called $r$-discrete groupoids in \cite{Rtext}) provide us 
a natural framework for unified treatment of 
various topological dynamical systems. 
The topological full group $[[G]]$ of $G$ is a subgroup of $\Homeo(G^{(0)})$ 
consisting of all homeomorphisms of $G^{(0)}$ 
whose graph is `contained' in the groupoid $G$ as a compact open subset 
(see Definition \ref{defoftfg}). 
From an action $\phi$ of a discrete group $\Gamma$ on a Cantor set $X$, 
we can construct the \'etale groupoid $G_\phi$, 
which is called the transformation groupoid. 
The topological full group $[[G_\phi]]$ of $G_\phi$ is 
the group of $\alpha\in\Homeo(X)$ 
for which there exists a continuous map $c:X\to\Gamma$ such that 
$\alpha(x)=\phi_{c(x)}(x)$ for all $x\in X$. 
Another important class of \'etale groupoids is the AF groupoids. 
The terminology AF comes from $C^*$-algebra theory and 
means approximately finite. 
AF groupoids have played a crucial role 
in the study of orbit equivalence for minimal $\Z^N$-actions 
(\cite{GMPS}). 
The \'etale groupoids $G$ 
arising from one-sided irreducible shifts of finite type $(X,\sigma)$ 
have been also studied by many people. 
The groupoid $C^*$-algebras $C^*_r(G)$ are called Cuntz-Krieger algebras 
(\cite{CK80Invent}). 
V. V. Nekrashevych \cite{Ne04JOP} observed that 
$[[G]]$ is naturally isomorphic to the Higman-Thompson group $V_{n,1}$ 
when $(X,\sigma)$ is the full shift over $n$ symbols 
(see Remark \ref{HigThomp}). 
Thus, the groups $[[G]]$ for shifts of finite type are 
regarded as a generalization of $V_{n,1}$. 
For two \'etale groupoids $G_1$ and $G_2$ 
arising from one-sided irreducible shifts of finite type, 
K. Matsumoto \cite{Matsu1205} has recently proved that 
the topological full groups $[[G_1]]$ and $[[G_2]]$ are isomorphic as groups 
if and only if $G_1$ and $G_2$ are isomorphic as \'etale groupoids. 
This is analogous to the above mentioned result for minimal $\Z$-actions. 
Furthermore, Matsumoto \cite{MatsuProcAMS} gave a sufficient condition 
on the subshifts under which the corresponding groupoids are isomorphic 
(see Theorem \ref{classify}). 
Clearly the group $[[G]]$ embeds 
in the Leavitt path algebra associated with $(X,\sigma)$. 
This may indicate possible connections to the study of Leavitt path algebras. 

In the present paper, 
we first extend the results of \cite{GPS99Israel,Matsu1205}. 
More precisely, 
for given essentially principal minimal \'etale groupoids $G_1$ and $G_2$ 
on Cantor sets and 
normal subgroups $\Gamma_i\subset[[G_i]]$ 
containing the commutator subgroups $D([[G_i]])$, 
we will show that any isomorphism between $\Gamma_1$ and $\Gamma_2$ 
is realized by a homeomorphism between the unit spaces $G_i^{(0)}$ 
(Theorem \ref{spatial} and Proposition \ref{tfg=F}). 
This means that if $\Gamma_1$ and $\Gamma_2$ are isomorphic as groups, 
then $G_1$ and $G_2$ are isomorphic as \'etale groupoids (Theorem \ref{iso}). 
In particular, $[[G_i]]$ (or $D([[G_i]])$) are isomorphic to each other 
if and only if $G_i$ are isomorphic to each other (Theorem \ref{iso2}). 
In other words, we can say that the topological full group $[[G]]$ 
(and its commutator subgroup $D([[G]])$) `remembers' the \'etale groupoid $G$. 
We remark that similar results are also obtained 
by S. Bezuglyi and K. Medynets \cite{BM08Coll,Me11BLMS}. 

In Section 4 we study simplicity properties of $[[G]]$. 
The notion of almost finite groupoids was introduced in \cite{M12PLMS}. 
Almost finiteness is a weak version of approximate finiteness. 
The transformation groupoids of free actions of $\Z^N$ are 
known to be almost finite (\cite[Lemma 6.3]{M12PLMS}). 
We show that if $G$ is almost finite and minimal, 
then the commutator subgroup $D([[G]])$ is simple (Theorem \ref{simple1}). 
This is a generalization of 
\cite[Theorem 4.9]{M06IJM} and \cite[Theorem 3.4 (2)]{BM08Coll} 
for minimal $\Z$-actions. 
We also present a new class of \'etale groupoids, 
namely purely infinite groupoids. 
The definition of pure infiniteness is inspired 
by the work of M. R\o rdam and A. Sierakowski \cite{RS12ETDS}
(see Definition \ref{pi}). 
Transformation groupoids arising from $n$-filling actions 
in the sense of P. Jolissaint and G. Robertson \cite{JR00JFA} are 
purely infinite and minimal (Remark \ref{n-filling}). 
In contrast to almost finite groupoids, 
purely infinite groupoids admit no invariant probability measures. 
It will be proven that if $G$ is purely infinite and minimal, 
then the commutator subgroup $D([[G]])$ is simple (Theorem \ref{simple2}). 
We also observe that 
the topological full group $[[G]]$ for a purely infinite groupoid $G$ 
contains the free product $\Z_2*\Z_3$ (Theorem \ref{pi>Z2*Z3}), 
and so is not amenable. 

In Section 5 we prove that for any purely infinite groupoid $G$, 
the index map $I:[[G]]\to H_1(G)$ is surjective, 
where $H_1(G)$ stands for the homology group of $G$. 
The index map $I$ is a homomorphism introduced in \cite[Section 7]{M12PLMS}. 
We denote by $[[G]]_0$ the kernel of $I$. 
For almost finite groupoids, 
the surjectivity of $I$ is already known (\cite[Theorem 7.5]{M12PLMS}). 
Therefore, if $G$ is almost finite or purely infinite, 
then the abelianization of $[[G]]$ at least has 
$H_1(G)\cong[[G]]/[[G]]_0$ as its quotient. 

Section 6 is devoted to the study of the \'etale groupoids $G$ 
arising from one-sided irreducible shifts of finite type $(X,\sigma)$. 
Shifts of finite type (also called topological Markov shifts) form 
the most prominent class of symbolic dynamical systems. 
A good introduction to symbolic dynamics can be found 
in the standard textbook \cite{LM} by D. Lind and B. Marcus. 
From $(X,\sigma)$, we can construct the \'etale groupoid $G$ by 
\[
G=\{(x,n,y)\in X\times\Z\times X\mid
\exists k,l\in\N,\ n=k{-}l,\ \sigma^k(x)=\sigma^l(y)\}. 
\]
By \cite[Theorem 4.14]{M12PLMS}, the homology groups $H_n(G)$ of $G$ are 
\[
H_n(G)=\begin{cases}\Coker(\id-M^t)&n=0\\
\Ker(\id-M^t)&n=1\\0&n\geq2, \end{cases}
\]
where $M$ is the $k\times k$ matrix with entries in $\Z_+$ 
representing $(X,\sigma)$ and 
is thought of as a homomorphism from $\Z^k$ to $\Z^k$. 
Notice that $H_0(G)$ is a finitely generated abelian group and 
$H_1(G)$ is isomorphic to the torsion-free part of $H_0(G)$. 
For any non-empty clopen set $Y\subset X$, the reduction $G|Y$ of $G$ to $Y$ 
(see Section 2 for the definition) is again an \'etale groupoid. 

For the topological full group $[[G|Y]]$ 
associated to an irreducible shift of finite type, we have the following. 
\begin{itemize}
\item $G|Y$ is purely infinite and minimal (Theorem \ref{SFT>pim}). 
Hence the commutator subgroup $D([[G|Y]])$ is simple by Theorem \ref{simple2}. 
\item By Theorem \ref{iso2}, $[[G|Y]]$ (or $[[G|Y]]_0$ or $D([[G|Y]])$) is 
a complete invariant of $G|Y$. 
\item By K. Matsumoto's theorem (Theorem \ref{classify}), 
for two groupoids $G_1|Y_1$ and $G_2|Y_2$ as above, 
if there exists an isomorphism $\phi:H_0(G_1)\to H_0(G_2)$ 
such that $\phi([1_{Y_1}]_{G_1})=[1_{Y_2}]_{G_2}$ and 
$\det(\id-M_1^t)=\det(\id-M_2^t)$, then 
$G_1|Y_1$ is isomorphic to $G_2|Y_2$ as an \'etale groupoid. 
\item $[[G|Y]]$ is not amenable (see Theorem \ref{pi>Z2*Z3}), 
but has the Haagerup property (Theorem \ref{Haagerup}). 
This is essentially due to B. Hughes's theorem \cite[Theorem 1.1]{H09GGD}. 
\item $[[G|Y]]$ is of type F$_\infty$ (Theorem \ref{finite}). 
Thus, $[[G|Y]]$ is finitely presented and is of type FP$_\infty$. 
This is a generalization of K. S. Brown's result \cite{Br87JPAA}, 
which says that the Higman-Thompson group $V_{n,r}$ is of type F$_\infty$ 
(see also Remark \ref{HigThomp}). 
\item The abelianization $[[G|Y]]/D([[G|Y]])$ is isomorphic to 
$(H_0(G)\otimes\Z_2)\oplus H_1(G)$ (Corollary \ref{abel}). 
In particular, $[[G|Y]]$ is simple if and only if $H_0(G)$ is $2$-divisible. 
\item If $H_1(G)=0$, then 
$[[G|Y]]_0$ and $D([[G|Y]])$ are of type F$_\infty$ 
(Corollary \ref{Dfinite} (2)). 
\item $[[G|Y]]_0$ and $D([[G|Y]])$ are finitely generated 
(Theorem \ref{fg} and Corollary \ref{Dfinite} (1)). 
\item Examples are given in Section 6.7. 
Among others, 
the boundary action $\phi_k$ of the free group $\F_k$ is discussed 
(Section 6.7.5). 
The associated transformation groupoid $G_{\phi_k}$ is naturally isomorphic 
to an \'etale groupoid of an irreducible shift of finite type. 
Hence $[[G_{\phi_k}]]$ is of type F$_\infty$ and 
the commutator subgroup $D([[G_{\phi_k}]])$ is simple. 
The abelianization of $[[G_{\phi_k}]]$ is $\Z^k\oplus(\Z_2)^k$ 
when $k$ is even and is $\Z^k\oplus(\Z_2)^{k+1}$ when $k$ is odd. 
If $k\neq l$, then 
$D([[G_{\phi_k}]])$ is not isomorphic to $D([[G_{\phi_l}]])$. 
\end{itemize}
We have to explain some terminologies in group theory used above. 
For a natural number $n\in\N$, a group $\Gamma$ is said to be of type F$_n$ 
if it admits a $K(\Gamma,1)$ with finite $n$-skeleton, and 
$\Gamma$ is said to be of type F$_\infty$ 
if it admits a $K(\Gamma,1)$ with finite $n$-skeleton in all dimensions $n$ 
(see \cite[Section 7.2]{Geotext}). 
A group is of type F$_1$ if and only if it is finitely generated, 
and a group is of type F$_2$ if and only if it is finitely presented 
(\cite[Proposition 7.2.1]{Geotext}). 
The properties F$_n$ and F$_\infty$ are called 
topological finiteness properties of $\Gamma$. 
A group $\Gamma$ is said to be of type FP$_n$ (resp. FP$_\infty$) 
if the trivial $\Z\Gamma$-module $\Z$ admits a projective resolution 
which is finitely generated in dimensions not greater than $n$ 
(resp. in all dimensions), cf. \cite[Section 8.2]{Geotext}. 
The properties FP$_n$ and FP$_\infty$ are called 
homological finiteness properties of $\Gamma$. 
It is clear that F$_n$ implies FP$_n$. 
It is known that $\Gamma$ is of type F$_\infty$ 
if and only if it is of type FP$_\infty$ and is finitely presented 
(see the proof of \cite[Theorem VIII.7.1]{BrGTM}). 

We would like to mention various known results 
for the topological full group $[[G_\phi]]$ of 
a minimal action $\phi:\Z\curvearrowright X$ on a Cantor set $X$. 
While this is not directly relevant to the present article, 
the reader may find interesting similarities 
to the case of shifts of finite type. 
By \cite[Corollary 4.4]{GPS99Israel} and \cite[Theorem 5.13]{BM08Coll}, 
$[[G_\phi]]$ (or $[[G_\phi]]_0$ or $D([[G_\phi]])$) is 
a complete invariant of $G_\phi$ (see also Theorem \ref{iso}). 
By Theorem \ref{simple1}, the commutator subgroup $D([[G_\phi]])$ is simple 
(this was first proved in \cite[Theorem 4.9]{M06IJM}). 
The abelianization $[[G_\phi]]/D([[G_\phi]])$ is known to be 
isomorphic to $(H_0(G_\phi)\otimes\Z_2)\oplus\Z$ 
(see \cite[Theorem 4.8]{M06IJM}). 
Note that $H_1(G_\phi)$ is always $\Z$ in this setting. 
By \cite[Theorem 5.4]{M06IJM}, $D([[G_\phi]])$ is finitely generated 
if and only if $\phi$ is expansive 
(i.e. $\phi$ is conjugate to a two-sided minimal subshift). 
However, $[[G_\phi]]$, $[[G_\phi]]_0$ and $D([[G_\phi]])$ are never 
finitely presented (\cite[Theorem 5.7, Corollary 5.8]{M06IJM}). 
When $D([[G_\phi]])$ is finitely generated, it is known that 
$D([[G_\phi]])$ has exponential growth (\cite[Corollary 2.5]{M1111}). 
R. Grigorchuk and K. Medynets \cite{GM} proved that 
$[[G_\phi]]$ is locally embeddable into finite groups, 
and conjectured that $[[G_\phi]]$ is amenable. 
K. Juschenko and N. Monod \cite{JM1204} recently confirmed this conjecture. 
This provided the first examples of 
finitely generated simple amenable infinite groups.

\section{Preliminaries}

The cardinality of a set $A$ is written $\#A$ and 
the characteristic function of $A$ is written $1_A$. 
The finite cyclic group of order $n$ is denoted by $\Z_n$. 
We say that a subset of a topological space is clopen 
if it is both closed and open. 
A topological space is said to be totally disconnected 
if its topology is generated by clopen subsets. 
By a Cantor set, 
we mean a compact, metrizable, totally disconnected space 
with no isolated points. 
It is known that any two such spaces are homeomorphic. 
The homeomorphism group of a topological space $X$ is written $\Homeo(X)$. 
For $\alpha\in\Homeo(X)$, we let $\supp(\alpha)$ denote 
the closure of $\{x\in X\mid\alpha(x)\neq x\}$. 
The commutator of $\alpha,\beta\in\Homeo(X)$ is 
$[\alpha,\beta]=\alpha\beta\alpha^{-1}\beta^{-1}$. 
The commutator subgroup of a group $\Gamma$ is denoted by $D(\Gamma)$. 

In this article, by an \'etale groupoid 
we mean a second countable locally compact Hausdorff groupoid 
such that the range map is a local homeomorphism. 
We refer the reader to \cite{Rtext,R08Irish} 
for background material on \'etale groupoids. 
For an \'etale groupoid $G$, 
we let $G^{(0)}$ denote the unit space and 
let $s$ and $r$ denote the source and range maps. 
For $x\in G^{(0)}$, 
$G(x)=r(Gx)$ is called the $G$-orbit of $x$. 
When every $G$-orbit is dense in $G^{(0)}$, 
$G$ is said to be minimal. 
For an open subset $Y\subset G^{(0)}$, 
the reduction of $G$ to $Y$ is $r^{-1}(Y)\cap s^{-1}(Y)$ and 
denoted by $G|Y$. 
The reduction $G|Y$ is an \'etale subgroupoid of $G$ in an obvious way. 
The isotropy bundle is $G'=\{g\in G\mid r(g)=s(g)\}$. 
We say that $G$ is principal if $G'=G^{(0)}$. 
When the interior of $G'$ is $G^{(0)}$, 
we say that $G$ is essentially principal. 
A subset $U\subset G$ is called a $G$-set if $r|U,s|U$ are injective. 
When $U,V$ are $G$-sets, 
\[
U^{-1}=\{g\in G\mid g^{-1}\in U\}
\]
and 
\[
UV=\{gg'\in G\mid g\in U,\ g'\in V,\ s(g)=r(g')\}
\]
are also $G$-sets. 
For an open $G$-set $U$, 
we let $\pi_U$ denote the homeomorphism $r\circ(s|U)^{-1}$ 
from $s(U)$ to $r(U)$. 
For any compact open $G$-set $U$, 
$G|r(U)$ is naturally isomorphic to $G|s(U)$. 
A probability measure $\mu$ on $G^{(0)}$ is said to be $G$-invariant 
if $\mu(r(U))=\mu(s(U))$ holds for every open $G$-set $U$. 
The set of all $G$-invariant measures is denoted by $M(G)$. 
For an \'etale groupoid $G$, 
we denote the reduced groupoid $C^*$-algebra of $G$ by $C^*_r(G)$ and 
identify $C_0(G^{(0)})$ with a subalgebra of $C^*_r(G)$. 
J. Renault's theorem \cite[Theorem 4.11]{R08Irish} tells us that 
two essentially principal \'etale groupoids $G_1$ and $G_2$ are isomorphic 
if and only if there exists an isomorphism $\phi:C^*_r(G_1)\to C^*_r(G_2)$ 
such that $\phi(C_0(G_1^{(0)}))=C_0(G_2^{(0)})$ 
(see also \cite[Theorem 5.1]{M12PLMS}). 

We would like to recall 
the notion of topological full groups for \'etale groupoids. 

\begin{definition}[{\cite[Definition 2.3]{M12PLMS}}]\label{defoftfg}
Let $G$ be an essentially principal \'etale groupoid 
whose unit space $G^{(0)}$ is compact. 
\begin{enumerate}
\item The set of all $\alpha\in\Homeo(G^{(0)})$ such that 
for every $x\in G^{(0)}$ there exists $g\in G$ 
satisfying $r(g)=x$ and $s(g)=\alpha(x)$ 
is called the full group of $G$ and denoted by $[G]$. 
\item The set of all $\alpha\in\Homeo(G^{(0)})$ for which 
there exists a compact open $G$-set $U$ satisfying $\alpha=\pi_U$ 
is called the topological full group of $G$ and denoted by $[[G]]$. 
\end{enumerate}
Obviously $[G]$ is a subgroup of $\Homeo(G^{(0)})$ and 
$[[G]]$ is a subgroup of $[G]$. 
\end{definition}

For $\alpha\in[[G]]$ the compact open $G$-set $U$ as above uniquely exists, 
because $G$ is essentially principal. 
Obviously $[[G]]$ is a subgroup of $\Homeo(G^{(0)})$. 
Since $G$ is second countable, it has countably many compact open subsets, 
and so $[[G]]$ is at most countable. 
By \cite[Proposition 5.6]{M12PLMS}, 
we have the following short exact sequence: 
\[
1\longrightarrow U(C(G^{(0)}))\longrightarrow
N(C(G^{(0)}),C^*_r(G))\stackrel{\sigma}{\longrightarrow}[[G]]\longrightarrow1, 
\]
where $U(C(G^{(0)}))$ denotes the group of unitaries in $C(G^{(0)})$ and 
$N(C(G^{(0)}),C^*_r(G))$ denotes the group of unitaries in $C^*_r(G)$ 
which normalize $C(G^{(0)})$. 
In addition, the homomorphism $\sigma$ has a right inverse. 

\begin{lemma}\label{suppclp}
Let $G$ be an essentially principal \'etale groupoid 
whose unit space $G^{(0)}$ is compact. 
Then $\supp(\alpha)$ is clopen for any $\alpha\in[[G]]$. 
\end{lemma}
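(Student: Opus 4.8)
The plan is to identify the complement of $\supp(\alpha)$ with the interior of the fixed-point set of $\alpha$, and then to prove that this interior is clopen by transporting the essential-principality hypothesis through the $G$-set implementing $\alpha$. First I would record two purely topological facts about $G$. Since $G$ is Hausdorff and étale, the unit space $G^{(0)}$ is clopen in $G$: it is open because $r$ is a local homeomorphism, and it is closed because any limit $g$ of units satisfies $r(g)=s(g)=g$ and hence lies in $G^{(0)}$. Moreover the isotropy bundle $G'$ is closed, being the preimage of the diagonal of $G^{(0)}\times G^{(0)}$ under the continuous map $g\mapsto(r(g),s(g))$; in particular the fixed-point set of $\alpha$ will be closed, as it should.

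Next I would write $\alpha=\pi_U$ for the unique compact open $G$-set $U$ with $\alpha=\pi_U$. Since $\alpha\in\Homeo(G^{(0)})$, we have $s(U)=r(U)=G^{(0)}$, and $s|U$ is a homeomorphism of $U$ onto $G^{(0)}$. For $x\in G^{(0)}$ put $g_x=(s|U)^{-1}(x)$, so that $\alpha(x)=r(g_x)$ by definition of $\pi_U$. Then $\alpha(x)=x$ holds precisely when $r(g_x)=s(g_x)$, that is, when $g_x\in G'$. Consequently the fixed-point set $F=\{x\in G^{(0)}\mid\alpha(x)=x\}$ is exactly $s(U\cap G')$, and because $s|U$ is a homeomorphism it carries $U\cap G'$ onto $F$ and, more importantly, carries interiors to interiors.

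The key step then invokes essential principality. Because $U$ is open in $G$, for any subset $A$ we have $\operatorname{int}(U\cap A)=U\cap\operatorname{int}(A)$, and applying this with $A=G'$ gives $\operatorname{int}(U\cap G')=U\cap\operatorname{int}(G')=U\cap G^{(0)}$, the last equality being the very definition of essential principality. Since $G^{(0)}$ is clopen in $G$, the set $U\cap G^{(0)}$ is clopen in $U$; transporting it through the homeomorphism $s|U$ shows that $\operatorname{int}(F)=s(U\cap G^{(0)})$ is clopen in $G^{(0)}$. Finally $\supp(\alpha)=\overline{\{x\mid\alpha(x)\neq x\}}$ is precisely the complement $G^{(0)}\setminus\operatorname{int}(F)$, and therefore it too is clopen, which is the desired conclusion.

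I expect the only delicate point to be the elementary identity $\operatorname{int}(U\cap G')=U\cap\operatorname{int}(G')$, which holds exactly because $U$ is open, and this is precisely where the hypothesis that $\alpha$ arises from a \emph{compact open} $G$-set meets the essential-principality condition $\operatorname{int}(G')=G^{(0)}$. Everything else is a routine check that $s|U$ is a homeomorphism and that $\alpha(x)=r((s|U)^{-1}(x))$.
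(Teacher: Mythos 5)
Your proof is correct and takes essentially the same route as the paper, just read from the complementary side: the paper establishes $\supp(\alpha)=s(U\setminus G^{(0)})$, with the nontrivial inclusion coming from the density of $U\setminus G'$ in $U\setminus G^{(0)}$ forced by essential principality, whereas you establish the equivalent identity $G^{(0)}\setminus\supp(\alpha)=\operatorname{int}\{x\mid\alpha(x)=x\}=s(U\cap G^{(0)})$ via the dual interior computation $\operatorname{int}(U\cap G')=U\cap\operatorname{int}(G')=U\cap G^{(0)}$. Both arguments rest on exactly the same ingredients --- the compact open bisection $U$, the homeomorphism $s|U$, the clopenness of $G^{(0)}$ in $G$, and the hypothesis $\operatorname{int}(G')=G^{(0)}$ --- so there is no substantive difference.
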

\begin{proof}
There exists a compact open $G$-set $U$ such that $\alpha=\pi_U$. 
The map $s|U:U\to G^{(0)}$ is a homeomorphism. 
It suffices to show that $\supp(\alpha)=s(U\setminus G^{(0)})$. 
Since $G^{(0)}$ is open and $s(g)=g=r(g)$ for $g\in G^{(0)}$, 
it is evident that $\supp(\alpha)$ is contained in $s(U\setminus G^{(0)})$. 
It is also clear that $s(U\setminus G')$ is contained in $\supp(\alpha)$. 
Therefore $s(U\setminus G^{(0)})$ is contained in $\supp(\alpha)$, 
because $G$ is essentially principal. 
\end{proof}

In \cite[Section 7]{M12PLMS}, 
we introduced the index map $I:[[G]]\to H_1(G)$, 
where $H_1(G)$ is the homology group of $G$ (see \cite[Section 3]{M12PLMS}). 
For $\alpha\in[[G]]$, 
let $U\subset G$ be the compact open $G$-set such that $\alpha=\pi_U$. 
Then the characteristic function $1_U\in C_c(G,\Z)$ is a cycle, 
and $I(\alpha)$ is the equivalence class of $1_U$ in $H_1(G)$. 
The index map $I$ is a homomorphism. 
We let $[[G]]_0$ denote the kernel of the index map $I$. 
Evidently $D([[G]])$ is contained in $[[G]]_0$. 
The main objective of the present paper is 
to study various properties of 
the groups $D([[G]])\subset[[G]]_0\subset[[G]]$.

\section{A spatial realization theorem}

In this section, we will prove that 
any isomorphism between (certain normal subgroups of) topological full groups 
is realized by a homeomorphism of the underlying spaces 
(Theorem \ref{spatial}, Proposition \ref{tfg=F}). 
In particular, 
these various groups (as abstract groups) are complete invariants 
for \'etale groupoids (Theorem \ref{iso2}). 
Our proof of Theorem \ref{spatial} is 
essentially the same as that of \cite[Theorem 384D]{Fr}, 
and a similar argument can be found in \cite[Section 5]{BM08Coll}. 

\begin{definition}\label{F}
Let $X$ be a Cantor set and 
let $\Gamma\subset\Homeo(X)$ be a subgroup. 
We say that $\Gamma$ is of class F 
if the following conditions are satisfied. 
\begin{enumerate}
\item[(F0)]
For any $\alpha\in\Gamma$ satisfying $\alpha^2=1$, $\supp(\alpha)$ is clopen. 
\item[(F1)]
For any $x\in X$ and any clopen neighborhood $A\subset X$ of $x$, 
there exists $\alpha\in\Gamma\setminus\{1\}$ 
such that $x\in\supp(\alpha)$, $\supp(\alpha)\subset A$ and $\alpha^2=1$. 
\item[(F2)]
For any $\alpha\in\Gamma\setminus\{1\}$ satisfying $\alpha^2=1$ 
and any non-empty clopen set $A\subset\supp(\alpha)$, 
there exists $\beta\in\Gamma\setminus\{1\}$ such that 
$\supp(\beta)\subset A\cup\alpha(A)$ and $\beta(x)=\alpha(x)$ 
for every $x\in\supp(\beta)$. 
\item[(F3)]
For any non-empty clopen set $A\subset X$, there exists $\alpha\in\Gamma$ 
such that $\supp(\alpha)\subset A$ and $\alpha^2\neq1$. 
\end{enumerate}
\end{definition}

For $A\subset X$, we write 
\[
\Gamma(A)=\{\alpha\in\Gamma\mid\supp(\alpha)\subset A\}. 
\]
A closed set $A$ is said to be regular 
if it is equal to the closure of its interior. 

\begin{lemma}\label{localsubgrp}
Let $X$ be a Cantor set and 
let $\Gamma\subset\Homeo(X)$ be a subgroup of class F. 
For two regular closed sets $A,B\subset X$, 
$\Gamma(A)\subset\Gamma(B)$ if and only if $A\subset B$. 
\end{lemma}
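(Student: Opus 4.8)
The statement is an equivalence, and one direction is immediate: if $A\subset B$ and $\supp(\alpha)\subset A$, then $\supp(\alpha)\subset B$, so $\alpha\in\Gamma(B)$. The plan is therefore to concentrate on the converse, and to prove it by contraposition. Assuming $A\not\subset B$, I would exhibit a single element lying in $\Gamma(A)$ but not in $\Gamma(B)$, which is enough to conclude $\Gamma(A)\not\subset\Gamma(B)$.

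To carry this out, first pick a point $x\in A\setminus B$. Since $B$ is closed, $X\setminus B$ is an open neighborhood of $x$; since $A$ is regular closed, $x$ lies in the closure of $\operatorname{int}(A)$, so every neighborhood of $x$ meets $\operatorname{int}(A)$. Hence the open set $U=\operatorname{int}(A)\cap(X\setminus B)$ is non-empty. Because $X$ is a Cantor set, its clopen subsets form a basis, so I can shrink $U$ to a non-empty clopen set $C\subset U$; by construction $C\subset\operatorname{int}(A)\subset A$ and $C\cap B=\emptyset$.

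The final step is to manufacture the witness using the class-F axioms. Fixing $y\in C$ and applying (F1) to $y$ together with the clopen neighborhood $C$, I obtain $\alpha\in\Gamma\setminus\{1\}$ with $y\in\supp(\alpha)$, $\supp(\alpha)\subset C$ and $\alpha^2=1$. Then $\supp(\alpha)\subset C\subset A$ places $\alpha$ in $\Gamma(A)$, while $y\in\supp(\alpha)$ and $y\notin B$ show that $\supp(\alpha)\not\subset B$, so $\alpha\notin\Gamma(B)$. This completes the contrapositive and hence the lemma.

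I expect the content to sit in two places rather than in any lengthy computation. The regularity hypothesis on $A$ is precisely what makes the argument go through: it guarantees that a point of $A\setminus B$ is detected by the \emph{interior} of $A$ meeting $X\setminus B$, whereas for a merely closed $A$ the point could lie entirely on the boundary and $U$ could be empty. The other essential ingredient is axiom (F1), the only place where a genuinely nontrivial group element supported in a prescribed small clopen set is produced; everything else is elementary point-set topology of the Cantor set. I note in passing that regularity of $B$ is not actually needed for this direction, closedness of $B$ being sufficient.
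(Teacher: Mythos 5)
Your proof is correct and follows essentially the same route as the paper: the paper also observes that regularity of $A$ makes $A\setminus B$ have non-empty interior and then invokes (F1) (or (F3)) to produce a non-trivial element supported in $A\setminus B$. Your write-up merely makes explicit the point-set details (shrinking to a clopen set before applying (F1)) that the paper leaves implicit, and your remark that only closedness of $B$ is used is consistent with the paper's argument.
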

\begin{proof}
It is clear that $A\subset B$ implies $\Gamma(A)\subset\Gamma(B)$. 
Suppose that $A\setminus B$ is not empty. 
Since $A$ is regular, $A\setminus B$ has non-empty interior. 
It follows from (F1) (or (F3)) that 
there exists $\alpha\in\Gamma\setminus\{1\}$ 
such that $\supp(\alpha)\subset A\setminus B$. 
Hence $\Gamma(A)$ is not contained in $\Gamma(B)$. 
\end{proof}

Let $X$ be a Cantor set and 
let $\Gamma\subset\Homeo(X)$ be a subgroup of class F. 
For $\tau\in\Gamma\setminus\{1\}$ satisfying $\tau^2=1$, 
we define $C_\tau$, $U_\tau$, $S_\tau$ and $W_\tau$ as follows. 
First, let $C_\tau$ be the centralizer of $\tau$, that is, 
\[
C_\tau=\{\alpha\in\Gamma\mid[\alpha,\tau]=1\}. 
\]
Define a subset $U_\tau\subset C_\tau$ by 
\[
U_\tau=\{\sigma\in C_\tau\mid\sigma^2=1,\quad 
[\sigma,\alpha\sigma\alpha^{-1}]=1\quad\forall\alpha\in C_\tau\}. 
\]
We define $S_\tau$ by 
\[
S_\tau=\{\alpha^2\in\Gamma\mid\alpha\in\Gamma,\ 
[\alpha,\sigma]=1\quad\forall\sigma\in U_\tau\}. 
\]
Then, we let $W_\tau$ be the centralizer of $S_\tau$, that is, 
\[
W_\tau=\{\alpha\in\Gamma\mid[\alpha,\beta]=1\quad\forall\beta\in S_\tau\}. 
\]
For these subsets, we can prove the following. 

\begin{lemma}\label{W}
Let the notation be as above. 
\begin{enumerate}
\item For any $\sigma\in U_\tau$ and $x\notin\supp(\tau)$, 
we have $\sigma(x)=x$. 
\item For any non-empty clopen set $A\subset\supp(\tau)$, 
there exists $\sigma\in U_\tau$ such that 
$\supp(\sigma)\subset A\cup\tau(A)$ and $\sigma(x)=\tau(x)$ 
for every $x\in\supp(\sigma)$. 
\item For any non-empty clopen set $A$ such that $A\cap\supp(\tau)=\emptyset$, 
there exists $\alpha\in S_\tau\setminus\{1\}$ 
such that $\supp(\alpha)\subset A$. 
\item For any $\alpha\in S_\tau$ and $x\in\supp(\tau)$, $\alpha(x)=x$. 
\item We have $W_\tau=\Gamma(\supp(\tau))$. 
\end{enumerate}
\end{lemma}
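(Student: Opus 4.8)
The plan is to establish (1)--(5) in order, writing $F=\supp(\tau)$, which is clopen by (F0), and $Y=X\setminus F$. I will use repeatedly that every $\alpha\in C_\tau$ preserves the clopen partition $X=F\sqcup Y$, since $\alpha\tau\alpha^{-1}=\tau$ forces $\alpha(F)=\alpha(\supp(\tau))=\supp(\tau)=F$; in particular, any element of $\Gamma$ whose support is disjoint from $F$ automatically lies in $C_\tau$. For (1), I argue by contraposition. If some $\sigma\in U_\tau$ moved a point $x\in Y$, then, by continuity and total disconnectedness, there would be a clopen $B\subset Y$ with $x\in B$ and $\sigma(B)\cap B=\emptyset$. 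Applying (F3) to $B$, I obtain $\gamma\in\Gamma$ with $\supp(\gamma)\subset B$ and $\gamma^2\neq1$; then $\gamma\in C_\tau$. Writing $\sigma'=\gamma\sigma\gamma^{-1}$ and using that $\gamma$ fixes $\sigma(B)$ pointwise, a direct computation gives $\sigma\sigma'|_B=\gamma^{-1}|_B$ and $\sigma'\sigma|_B=\gamma|_B$; since $\gamma^2\neq1$ moves a point of $B$, these restrictions differ, so $[\sigma,\sigma']\neq1$, contradicting $\sigma\in U_\tau$. Hence $\supp(\sigma)\subset F$.

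For (2), I apply (F2) to $\tau$ and the given clopen $A\subset F$ to obtain $\sigma\in\Gamma\setminus\{1\}$ with $\supp(\sigma)\subset A\cup\tau(A)\subset F$ and $\sigma=\tau$ on $\supp(\sigma)$. One first checks the bookkeeping: $\supp(\sigma)$ is $\tau$-invariant (as $\tau=\sigma$ there and supports are invariant under the map itself), whence $\sigma^2=1$ and $\sigma\in C_\tau$. The heart of the matter is the elementary fact that \emph{any two involutions agreeing with $\tau$ on their supports commute}; this follows from a short case analysis on the membership of a point in the two ($\tau$-invariant) supports. Applying this fact to $\sigma$ and to $\alpha\sigma\alpha^{-1}$ for an arbitrary $\alpha\in C_\tau$ --- noting that $\alpha\sigma\alpha^{-1}$ again agrees with $\tau$ on its support because $\alpha$ centralizes $\tau$ --- yields $[\sigma,\alpha\sigma\alpha^{-1}]=1$, so $\sigma\in U_\tau$, as required.

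Assertion (3) is then quick: given clopen $A\subset Y$, (F3) supplies $\beta\in\Gamma$ with $\supp(\beta)\subset A$ and $\beta^2\neq1$, and by (1) every element of $U_\tau$ is supported in $F$, hence has support disjoint from $\supp(\beta)$; thus $\beta$ centralizes $U_\tau$ and $\beta^2\in S_\tau\setminus\{1\}$ with $\supp(\beta^2)\subset A$. For (4), let $\alpha^2\in S_\tau$ with $\alpha$ centralizing $U_\tau$. Since $\tau\in U_\tau$ (it commutes with all its $C_\tau$-conjugates, which equal $\tau$), we get $\alpha\in C_\tau$, so $\alpha$ preserves $F$; and for each $\sigma\in U_\tau$, commuting with $\sigma$ forces $\alpha(\supp(\sigma))=\supp(\sigma)$. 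Now suppose $\tau(x)\neq x$ for some $x\in F$ but $\alpha(x)$ lay in a $\tau$-orbit different from $\{x,\tau(x)\}$. I would then choose a clopen $\tau$-invariant neighbourhood $V$ of $\{x,\tau(x)\}$ inside $\{z:\tau(z)\neq z\}$ with $\alpha(V)\cap V=\emptyset$, split $V$ into a clopen fundamental domain for $\tau$, and apply (2) to produce $\sigma\in U_\tau$ with $\emptyset\neq\supp(\sigma)\subset V$; then $\alpha(\supp(\sigma))=\supp(\sigma)\subset V$ contradicts $\alpha(V)\cap V=\emptyset$. Hence $\alpha(x)\in\{x,\tau(x)\}$ for every such $x$, so $\alpha^2$ fixes the set $\{z:\tau(z)\neq z\}$, which is dense in $F$, and therefore $\alpha^2$ fixes all of $F=\supp(\tau)$ by continuity.

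Finally, (5) follows by combining (3) and (4). If $\supp(\gamma)\subset F$, then by (4) the support of $\gamma$ is disjoint from that of every element of $S_\tau$ (each of which fixes $F$ pointwise), so $\gamma$ commutes with $S_\tau$ and $\gamma\in W_\tau$; this gives $\Gamma(\supp(\tau))\subset W_\tau$. Conversely, if $\gamma\in\Gamma$ moves a point $x\in Y$, I pick clopen $B\subset Y$ with $x\in B$ and $\gamma(B)\cap B=\emptyset$ and use (3) to find $\alpha\in S_\tau\setminus\{1\}$ with $\supp(\alpha)\subset B$; then $\gamma\alpha\gamma^{-1}$ is supported in $\gamma(B)$, which is disjoint from $\supp(\alpha)\subset B$, so $[\gamma,\alpha]$ restricts to $\alpha^{-1}$ on $B$ and is nontrivial, forcing $\gamma\notin W_\tau$. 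Thus $W_\tau=\Gamma(\supp(\tau))$. I expect the main obstacle to be the localization step in (4): one must manufacture, from (2) alone, an involution of $U_\tau$ whose nonempty support is squeezed into a prescribed small $\tau$-invariant clopen set near a single $\tau$-orbit, and then convert the $\alpha$-invariance of that support into the orbitwise constraint on $\alpha$; the remaining parts reduce, after the right choice of test element from (F3), to the two support-disjointness computations carried out in (1) and (5).
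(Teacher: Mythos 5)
Your proof is correct and follows essentially the same route as the paper: (1) via an (F3)-element supported in a small clopen set off $\supp(\tau)$, (2) via (F2) plus the fact that involutions agreeing with $\tau$ on their $\tau$-invariant supports commute, (3) from (F3) and (1), (4) by using (2) to manufacture a $U_\tau$-element localized near the $\tau$-orbit of $x$, and (5) by the two disjoint-support computations. The only differences are cosmetic: in (4) you extract the contradiction from $\alpha(\supp(\sigma))=\supp(\sigma)$ versus $\alpha(V)\cap V=\emptyset$ (using the neat observation $\tau\in U_\tau$ to get $\alpha\in C_\tau$), where the paper evaluates the commutator at a point, and you make explicit the injectivity-plus-density step from ``$\alpha(x)\in\{x,\tau(x)\}$'' to ``$\alpha^2$ fixes $\supp(\tau)$ pointwise,'' which the paper leaves implicit.
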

\begin{proof}
(1)
Let $\sigma\in U_\tau$ and $x\notin\supp(\tau)$. 
Assume $\sigma(x)\neq x$. 
There exists a clopen neighborhood $A\subset X$ of $x$ 
such that $A\cap\sigma(A)=\emptyset$ and $A\cap\supp(\tau)=\emptyset$. 
By (F3), we can find $\alpha\in\Gamma$ 
such that $\supp(\alpha)\subset A$ and $\alpha^2\neq1$. 
Clearly $\alpha$ commutes with $\tau$, and so $\alpha$ is in $C_\tau$. 
There exists $y\in A$ such that 
$\alpha^{-1}(y),y,\alpha(y)$ are mutually distinct. 
Then we have 
\[
(\sigma\alpha\sigma\alpha^{-1})(y)=\alpha^{-1}(y)
\]
and 
\[
(\alpha\sigma\alpha^{-1}\sigma)(y)=\alpha(y), 
\]
which contradict $[\sigma,\alpha\sigma\alpha^{-1}]=1$. 

(2)
Let $A\subset\supp(\tau)$ be a non-empty clopen set. 
By (F2), there exists $\sigma\in\Gamma$ such that 
$\supp(\sigma)\subset A\cup\tau(A)$ and $\sigma(x)=\tau(x)$ 
for every $x\in\supp(\sigma)$. 
It suffices to show that $\sigma$ is in $U_\tau$. 
It is easy to see $\sigma\in C_\tau$ and $\sigma^2=1$. 
Let $\alpha\in C_\tau$. 
Then, for any $x\in\supp(\alpha\sigma\alpha^{-1})=\alpha(\supp(\sigma))$, 
one has $(\alpha\sigma\alpha^{-1})(x)=(\alpha\tau\alpha^{-1})(x)=\tau(x)$. 
It follows that $\sigma$ commutes with $\alpha\sigma\alpha^{-1}$, 
and hence $\sigma$ belongs to $U_\tau$. 

(3)
Let $A$ be a non-empty clopen set such that $A\cap\supp(\tau)=\emptyset$. 
By (F3), there exists $\alpha\in\Gamma$ 
such that $\supp(\alpha)\subset A$ and $\alpha^2\neq1$. 
It follows from (1) that $\alpha$ commutes with any elements in $U_\tau$. 
Therefore $\alpha^2$ is in $S_\tau\setminus\{1\}$. 

(4)
Suppose that $\alpha\in\Gamma$ commutes with any elements in $U_\tau$. 
It suffices to show that if $x\neq\tau(x)$, 
then $\alpha(x)$ is equal to either $x$ or $\tau(x)$. 
Assume that $x,\tau(x),\alpha(x)$ are mutually distinct. 
There exists a clopen neighborhood $A\subset\supp(\tau)$ of $x$ 
such that $A,\tau(A),\alpha(A)$ are mutually disjoint. 
By (2), we can find $\sigma\in U_\tau$ such that 
$\supp(\sigma)\subset A\cup\tau(A)$ and $\sigma(y)=\tau(y)$ 
for every $y\in\supp(\sigma)$. 
Then $(\sigma\alpha)(y)=\alpha(y)$ is not equal to $(\alpha\sigma)(y)$ 
for $y\in\supp(\sigma)$, which is a contradiction. 

(5)
From (4), we easily obtain $\Gamma(\supp(\tau))\subset W_\tau$. 
Let us show $W_\tau\subset\Gamma(\supp(\tau))$. 
Let $\alpha\in W_\tau$ and $x\notin\supp(\tau)$. 
Assume $\alpha(x)\neq x$. 
There exists a non-empty clopen neighborhood $A$ of $x$ 
such that $A\cap\supp(\tau)=\emptyset$ and $A\cap\alpha(A)=\emptyset$. 
By (3), there exists $\beta\in S_\tau\setminus\{1\}$ 
such that $\supp(\beta)\subset A$. 
Then $\beta$ is not equal to $\alpha\beta\alpha^{-1}$, 
which is a contradiction. 
Therefore $\alpha(x)=x$, and whence $\alpha\in\Gamma(\supp(\tau))$. 
\end{proof}

\begin{lemma}
For $i=1,2$, 
let $X_i$ be a Cantor set and 
let $\Gamma_i\subset\Homeo(X_i)$ be a subgroup of class F. 
Suppose that $\Phi:\Gamma_1\to\Gamma_2$ is an isomorphism. 
Let $\tau,\sigma\in\Gamma_1$ be such that $\tau^2=\sigma^2=1$. 
\begin{enumerate}
\item We have $\supp(\tau)\subset\supp(\sigma)$ 
if and only if $\supp(\Phi(\tau))\subset\supp(\Phi(\sigma))$. 
\item We have $\supp(\tau)\cap\supp(\sigma)=\emptyset$ 
if and only if $\supp(\Phi(\tau))\cap\supp(\Phi(\sigma))=\emptyset$. 
\end{enumerate}
\end{lemma}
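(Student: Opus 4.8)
The plan is to characterize the two relations in the statement purely in terms of the group-theoretic data of $\Gamma_i$, so that they are automatically preserved by the isomorphism $\Phi$. The key is Lemma \ref{W}(5), which tells us that $W_\tau = \Gamma(\supp(\tau))$, and crucially that $W_\tau$ is defined entirely from $\tau$ using only group operations (centralizers, squares, and the involution condition). Therefore $\Phi(W_\tau) = W_{\Phi(\tau)}$ for any involution $\tau$, since $\Phi$ carries centralizers to centralizers, squares to squares, and the defining conditions of $U_\tau$, $S_\tau$, $W_\tau$ are all purely algebraic. The strategy is then to reduce each of the two set-theoretic conditions on supports to an inclusion or a joint condition on the subgroups $\Gamma(\supp(\tau))$ and $\Gamma(\supp(\sigma))$.

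For part (1), I would first reformulate $\supp(\tau) \subset \supp(\sigma)$ in terms of the subgroups. Since supports of involutions are clopen by (F0), hence regular closed, Lemma \ref{localsubgrp} gives that $\supp(\tau) \subset \supp(\sigma)$ if and only if $\Gamma(\supp(\tau)) \subset \Gamma(\supp(\sigma))$, i.e.\ $W_\tau \subset W_\sigma$. The latter is a purely group-theoretic relation, so applying $\Phi$ yields $W_{\Phi(\tau)} \subset W_{\Phi(\sigma)}$, which by the same Lemma \ref{localsubgrp} (applied in $X_2$) is equivalent to $\supp(\Phi(\tau)) \subset \supp(\Phi(\sigma))$. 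The only subtlety is confirming that $W_\tau$ is genuinely invariant under $\Phi$, i.e.\ that $\Phi(W_\tau) = W_{\Phi(\tau)}$; this follows step by step by tracking the definitions of $C_\tau$, $U_\tau$, $S_\tau$, $W_\tau$ and observing each is a group-theoretic construction commuting with the isomorphism.

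For part (2), I would characterize disjointness of supports again through the $W$-subgroups. Two regular closed sets $A = \supp(\tau)$ and $B = \supp(\sigma)$ are disjoint precisely when every element of $\Gamma(A)$ commutes with every element of $\Gamma(B)$: if $A \cap B = \emptyset$ then elements supported in $A$ and in $B$ have disjoint supports and hence commute, while if $A \cap B \neq \emptyset$ one uses (F1) or (F3) to produce non-commuting elements supported inside the common region (choosing a small clopen set inside $A \cap B$ and, by an argument as in the proof of Lemma \ref{W}, elements acting nontrivially there that fail to commute). Thus $\supp(\tau) \cap \supp(\sigma) = \emptyset$ is equivalent to $[W_\tau, W_\sigma] = 1$, a purely algebraic condition, which $\Phi$ preserves and translates into $[W_{\Phi(\tau)}, W_{\Phi(\sigma)}] = 1$, hence into disjointness of the images' supports.

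The main obstacle I anticipate is the reverse direction of the disjointness characterization, namely showing that overlapping supports force the existence of non-commuting elements in $\Gamma(A)$ and $\Gamma(B)$. Commuting is automatic from disjoint supports, but the converse requires constructing witnesses: inside a clopen subset of $A \cap B$ one must exhibit two homeomorphisms in $\Gamma$ whose commutator is nontrivial, which leans on property (F3) to guarantee elements with $\alpha^2 \neq 1$ and on the freedom to move points apart, much as in the proof of Lemma \ref{W}(1). Making this precise while staying inside $\Gamma(A)$ and $\Gamma(B)$ separately is the delicate point; everything else reduces cleanly to Lemma \ref{localsubgrp} and the algebraic invariance of the $W_\tau$ construction.
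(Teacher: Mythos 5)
Your part (1) is exactly the paper's argument: by (F0) the supports of involutions are clopen, hence regular closed, so Lemma \ref{localsubgrp} together with Lemma \ref{W}(5) converts $\supp(\tau)\subset\supp(\sigma)$ into $W_\tau\subset W_\sigma$, the purely algebraic definition of $W_\tau$ gives $\Phi(W_\tau)=W_{\Phi(\tau)}$, and the same two lemmas convert back in $X_2$.

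For part (2) you take a genuinely different route. The paper introduces no new algebraic characterization of disjointness; it simply reduces (2) to (1): if $\supp(\tau)\cap\supp(\sigma)\neq\emptyset$, then by (F0) this intersection is clopen, (F1) produces an involution $\rho\neq1$ with $\supp(\rho)\subset\supp(\tau)\cap\supp(\sigma)$, and part (1) yields $\supp(\Phi(\rho))\subset\supp(\Phi(\tau))\cap\supp(\Phi(\sigma))$, which is non-empty because $\Phi(\rho)\neq1$; the converse follows by symmetry, applying the same argument to $\Phi^{-1}$. Your characterization, namely that $\supp(\tau)\cap\supp(\sigma)=\emptyset$ if and only if $[W_\tau,W_\sigma]=1$, is also correct, and the ``delicate point'' you flag is in fact harmless: both non-commuting witnesses may be taken inside $\Gamma(\supp(\tau)\cap\supp(\sigma))$, which lies in $W_\tau\cap W_\sigma$, so there is no issue about staying in $\Gamma(A)$ and $\Gamma(B)$ separately. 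Concretely, (F1) gives an involution $\tau_0\neq1$ supported in the intersection; pick $y$ with $\tau_0(y)\neq y$ and a clopen set $A_0\ni y$ with $A_0\subset\supp(\tau_0)$ and $A_0\cap\tau_0(A_0)=\emptyset$; (F1) again gives an involution $\rho\neq1$ supported in $A_0$; then $\tau_0\rho\tau_0^{-1}$ is supported in $\tau_0(A_0)$, which is disjoint from $\supp(\rho)\neq\emptyset$, so $[\tau_0,\rho]\neq1$. What your route buys is a self-contained algebraic criterion for disjointness of supports, usable independently of part (1); what the paper's route buys is brevity, since (2) becomes a three-line corollary of (1) with no new construction at all.
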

\begin{proof}
First, we note that $\Phi(W_\tau)$ equals $W_{\Phi(\tau)}$, 
because the definition of $W_\tau$ is purely algebraic. 

(1)
From Lemma \ref{localsubgrp} and Lemma \ref{W} (5), we get 
\[
\supp(\tau)\subset\supp(\sigma)
\Rightarrow\Gamma_1(\supp(\tau))\subset\Gamma_1(\supp(\sigma))
\Rightarrow W_\tau\subset W_{\sigma}
\Rightarrow\Phi(W_\tau)\subset\Phi(W_{\sigma}), 
\]
which means $W_{\Phi(\tau)}\subset W_{\Phi(\sigma)}$. 
Using Lemma \ref{localsubgrp} and Lemma \ref{W} (5) again, we have 
\[
W_{\Phi(\tau)}\subset W_{\Phi(\sigma)}
\Rightarrow\Gamma_2(\supp(\Phi(\tau)))\subset\Gamma_2(\supp(\Phi(\sigma)))
\Rightarrow\supp(\Phi(\tau))\subset\supp(\Phi(\sigma)). 
\]

(2)
Suppose that $\supp(\tau)\cap\supp(\sigma)$ is not empty. 
By (F0), $\supp(\tau)\cap\supp(\sigma)$ has non-empty interior. 
It follows from (F1) that there exists $\rho\in\Gamma\setminus\{1\}$ 
such that $\rho^2=1$ and $\supp(\rho)\subset\supp(\tau)\cap\supp(\sigma)$. 
From (1), 
we get $\supp(\Phi(\rho))\subset\supp(\Phi(\tau))\cap\supp(\Phi(\sigma))$. 
Thus $\supp(\Phi(\tau))\cap\supp(\Phi(\sigma))$ is not empty. 
\end{proof}

The following theorem is a generalization of 
\cite[Theorem 4.2]{GPS99Israel} and \cite[Theorem 7.2]{Matsu1205}. 
K. Medynets \cite{Me11BLMS} also obtained a similar result 
for (topological) full groups of transformation groupoids 
(see \cite[Remark 3]{Me11BLMS}). 

\begin{theorem}\label{spatial}
For $i=1,2$, 
let $X_i$ be a Cantor set and 
let $\Gamma_i\subset\Homeo(X_i)$ be a subgroup of class F. 
Suppose that $\Phi:\Gamma_1\to\Gamma_2$ is an isomorphism. 
Then there exists a homeomorphism $\phi:X_1\to X_2$ such that 
$\Phi(\alpha)=\phi\circ\alpha\circ\phi^{-1}$ for all $\alpha\in\Gamma_1$. 
\end{theorem}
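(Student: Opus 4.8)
The plan is to reconstruct each point set $X_i$ from the lattice of supports of involutions and to let $\Phi$ transport points accordingly. By (F0) every involution $\tau\in\Gamma_i$ (i.e.\ $\tau^2=1$) has clopen support, and by (F1) these supports form a basis for the topology of $X_i$; moreover $\Phi$ carries involutions to involutions, and the preceding lemma says that $\tau\mapsto\Phi(\tau)$ respects both inclusion and disjointness of supports in both directions. First I would fix $x\in X_1$ and consider the family of clopen sets $\{\supp(\Phi(\tau)) : \tau^2=1,\ x\in\supp(\tau)\}$, which is non-empty by (F1). Using (F1) to find, inside any finite intersection $\bigcap_i\supp(\tau_i)\ni x$ of members of the family, an involution $\rho$ with $x\in\supp(\rho)\subset\bigcap_i\supp(\tau_i)$, part (1) of the preceding lemma gives $\supp(\Phi(\rho))\subset\bigcap_i\supp(\Phi(\tau_i))$, so this family has the finite intersection property; compactness of $X_2$ then makes $\bigcap\{\supp(\Phi(\tau)):x\in\supp(\tau)\}$ non-empty.

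The \emph{crux} of the argument, and the step I expect to be the main obstacle, is to show this intersection is a single point, which I will then call $\phi(x)$. Suppose $y\neq y'$ both lie in it. Using (F1) in $X_2$, choose an involution $\mu$ with $y\in\supp(\mu)$ and $y'\notin\supp(\mu)$, and set $\nu=\Phi^{-1}(\mu)$, an involution in $\Gamma_1$. If $x\notin\supp(\nu)$, then (F1) produces an involution $\tau$ with $x\in\supp(\tau)$ and $\supp(\tau)\cap\supp(\nu)=\emptyset$; part (2) of the preceding lemma gives $\supp(\Phi(\tau))\cap\supp(\mu)=\emptyset$, contradicting that $y$ lies in both. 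Hence $x\in\supp(\nu)$, so $\supp(\mu)=\supp(\Phi(\nu))$ is one of the sets in the defining intersection and therefore contains $y'$, contradicting the choice of $\mu$. Thus $\phi(x)$ is well-defined.

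To finish, I would run the identical construction with $\Phi^{-1}$ to obtain a map $\psi:X_2\to X_1$ and show $\phi,\psi$ are mutually inverse homeomorphisms. The basic observation is that $x\in\supp(\tau)$ forces $\phi(x)\in\supp(\Phi(\tau))$ by construction; hence for each involution $\tau$ with $x\in\supp(\tau)$ one has $\psi(\phi(x))\in\supp(\tau)$, and since these supports form a neighborhood basis at $x$ their intersection is $\{x\}$, giving $\psi\circ\phi=\id_{X_1}$ and symmetrically $\phi\circ\psi=\id_{X_2}$. It follows that $\phi(\supp(\tau))=\supp(\Phi(\tau))$ for every involution $\tau$, and since involution supports form a basis on each side this shows $\phi$ is a homeomorphism. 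For the conjugacy relation I would use that $\Phi$ is a homomorphism: for $\alpha\in\Gamma_1$ and an involution $\tau$, conjugation yields $\supp(\alpha\tau\alpha^{-1})=\alpha(\supp(\tau))$ and $\supp(\Phi(\alpha)\Phi(\tau)\Phi(\alpha)^{-1})=\Phi(\alpha)(\supp(\Phi(\tau)))$. Thus, for $x\in\supp(\tau)$, the point $\Phi(\alpha)(\phi(x))$ lies in $\Phi(\alpha)(\supp(\Phi(\tau)))=\supp(\Phi(\alpha\tau\alpha^{-1}))$; as $\tau$ ranges over involutions with $x\in\supp(\tau)$, the conjugates $\alpha\tau\alpha^{-1}$ range over all involutions whose support contains $\alpha(x)$, so $\Phi(\alpha)(\phi(x))$ lies in every set defining $\phi(\alpha(x))$ and hence equals it. Therefore $\Phi(\alpha)\circ\phi=\phi\circ\alpha$, as required.
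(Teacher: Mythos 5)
Your proposal is correct and takes essentially the same route as the paper's proof: you define $\phi(x)$ as the unique point of $\bigcap_{\tau}\supp(\Phi(\tau))$ over involutions $\tau$ with $x\in\supp(\tau)$, establish non-emptiness via the finite intersection property and compactness, rule out two distinct points by exactly the paper's argument with $\sigma\in T(y)$ avoiding $y'$, and prove the conjugacy relation by the same support-transport computation. The only cosmetic difference is at the end: the paper verifies continuity of $\phi$ directly (using compactness to extract a finite subfamily of supports inside a given neighborhood), whereas you deduce that $\phi$ is a homeomorphism from the fact that it is a bijection carrying the basis of involution supports onto the corresponding basis in $X_2$; both finishes are valid.
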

\begin{proof}
For $x\in X_i$, we let 
\[
T(x)=\{\tau\in\Gamma_i\mid x\in\supp(\tau),\ \tau^2=1\}. 
\]
We first claim that for every $x\in X_1$, the set 
\[
P(x)=\bigcap_{\tau\in T(x)}\supp(\Phi(\tau))\subset X_2
\]
is a singleton. 
Let $\tau_1,\tau_2,\dots,\tau_n\in T(x)$. 
By (F0), $\supp(\tau_i)$ is clopen, and so 
there exists a clopen neighborhood $A$ of $x$ 
such that $A\subset\supp(\tau_i)$ for all $i=1,2,\dots,n$. 
By (F1), we can find $\tau\in T(x)\cap\Gamma_1(A)$. 
It follows from the lemma above that 
$\supp(\Phi(\tau))$ is contained in $\supp(\Phi(\tau_i))$ 
for every $i=1,2,\dots,n$. 
Hence the sets $\supp(\Phi(\tau_i))$ have non-empty intersection. 
From the compactness of $X_2$, we can conclude that $P(x)$ is not empty. 
Assume that $P(x)$ contains two distinct points $y,y'\in X_2$. 
By (F1), there exists $\sigma\in T(y)$ such that $y'\notin\supp(\sigma)$. 
If $x$ does not belong to $\supp(\Phi^{-1}(\sigma))$, 
then by (F1) there exists $\tau\in T(x)$ 
such that $\supp(\tau)\cap\supp(\Phi^{-1}(\sigma))=\emptyset$. 
By the lemma above, one has $\supp(\Phi(\tau))\cap\supp(\sigma)=\emptyset$. 
Since $y$ is in $P(x)$ and $P(x)$ is contained in $\supp(\Phi(\tau))$, 
$\supp(\Phi(\tau))$ intersects with $\supp(\sigma)$, 
which is a contradiction. 
Hence $x$ is in $\supp(\Phi^{-1}(\sigma))$, that is, 
$\Phi^{-1}(\sigma)$ is in $T(x)$. 
Then $P(x)$ is contained in $\supp(\sigma)$, 
and so $y'$ is not in $P(x)$, 
which is again a contradiction. 

Now we can define a map $\phi:X_1\to X_2$ by $\{\phi(x)\}=P(x)$. 
It is easy to see that 
$\phi(\supp(\tau))\subset\supp(\Phi(\tau))$ holds 
for any $\tau\in\Gamma_1$ such that $\tau^2=1$. 
We would like to show that $\phi$ is continuous. 
Let $A$ be an open neighborhood of $\phi(x)$. 
By the definition of $P(x)$, 
there exist $\tau_1,\tau_2,\dots,\tau_n\in T(x)$ such that 
the intersection of the sets $\supp(\Phi(\tau_i))$ is contained in $A$. 
In the same way as above, using (F0) and (F1), 
we can find $\tau\in T(x)$ satisfying $\supp(\tau)\subset\supp(\tau_i)$. 
Since $\supp(\tau)$ is a clopen neighborhood of $x$ by (F0) and 
\[
\phi(\supp(\tau))\subset\supp(\Phi(\tau))
\subset\bigcap_{i=1}^n\supp(\Phi(\tau_i))\subset A, 
\]
$\phi$ is continuous at $x$. 

In exactly the same way, 
we can construct a continuous map $\phi':X_2\to X_1$ such that 
\[
\{\phi'(y)\}=\bigcap_{\sigma\in T(y)}\supp(\Phi^{-1}(\sigma))
\quad\forall y\in X_2. 
\]
For any $x\in X_1$, by (F1), we have 
\[
\{\phi'(\phi(x))\}
=\bigcap_{\sigma\in T(\phi(x))}\supp(\Phi^{-1}(\sigma))
\subset\bigcap_{\tau\in T(x)}\supp(\Phi^{-1}(\Phi(\tau)))=\{x\}, 
\]
which means $\phi'\circ\phi=\id$. 
Similarly $\phi\circ\phi'=\id$. 
Thus $\phi$ is a homeomorphism and $\phi^{-1}=\phi'$. 

Let $\alpha\in\Gamma_1$. 
We would like to show $\Phi(\alpha)=\phi\circ\alpha\circ\phi^{-1}$. 
For any $\tau\in\Gamma_1$ satisfying $\tau^2=1$, one has 
\[
(\Phi(\alpha)\circ\phi)(\supp(\tau))=\Phi(\alpha)(\supp(\Phi(\tau))
=\supp(\Phi(\alpha\tau\alpha^{-1}))
\]
and 
\[
(\phi\circ\alpha)(\supp(\tau))=\phi(\supp(\alpha\tau\alpha^{-1}))
=\supp(\Phi(\alpha\tau\alpha^{-1})). 
\]
Since $\{x\}=\bigcap_{\tau\in T(x)}\supp(\tau)$ for any $x\in X_1$, 
we can conclude $\Phi(\alpha)\circ\phi=\phi\circ\alpha$. 
\end{proof}

\begin{proposition}\label{tfg=F}
Let $G$ be an essentially principal \'etale groupoid 
whose unit space is a Cantor set. 
Suppose that $G$ is minimal. 
Then any subgroup $\Gamma\subset[[G]]$ containing $D([[G]])$ is of class F. 
\end{proposition}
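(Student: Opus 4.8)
The plan is to verify the four axioms (F0)--(F3) for an arbitrary $\Gamma$ with $D([[G]])\subset\Gamma\subset[[G]]$; the whole difficulty is that every witnessing element must be produced inside $D([[G]])$, since that is the smallest group we are allowed to use. Axiom (F0) is immediate: by Lemma \ref{suppclp} every element of $[[G]]$ has clopen support, so in particular every involution in $\Gamma$ does. For the remaining axioms I would first isolate the one mechanism that forces elements into $D([[G]])$. Given finitely many pairwise disjoint nonempty clopen sets $C_1,\dots,C_n\subset G^{(0)}$ together with compact open $G$-sets implementing homeomorphisms $C_i\to C_j$, gluing these over the blocks and the identity over the complement yields a homomorphism $S_n\to[[G]]$; since a homomorphism carries commutators to commutators, every \emph{even} block permutation lands in $D([[G]])$. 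Concretely I will only need the identities $[\,(1\,2),(1\,3)\,]=(1\,2\,3)$ and, for two commuting block transpositions $\sigma_1,\sigma_2$ with disjoint support, $[\gamma,\sigma_1]=\sigma_1\sigma_2$ whenever $\gamma$ conjugates $\sigma_1$ to $\sigma_2$.

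The second ingredient is a supply of blocks, furnished by minimality. Since $G^{(0)}$ is a Cantor set it has no isolated points, so every nonempty clopen set is infinite; as $G$ is minimal, the orbit $G(x)$ of any point is dense and hence meets every nonempty clopen set in infinitely many points. Consequently, inside any prescribed nonempty clopen set I can pick as many distinct points of a single orbit as I like, join them to a base point by elements of $G$, thicken these to compact open $G$-sets, and shrink to obtain pairwise disjoint clopen blocks that are mutually $G$-equivalent in the required way. (Essential principality is what guarantees, via Lemma \ref{suppclp}, that on its clopen support an involution of $[[G]]$ is fixed-point-free, so that the supports come out exactly as the unions of the blocks.)

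With these two tools the easy axioms follow. For (F1), given $x$ and a clopen neighbourhood $A$, I choose four pairwise disjoint mutually $G$-equivalent blocks $A_1,\dots,A_4\subset A$ with $x\in A_1$, let $\sigma_1$ swap $A_1\leftrightarrow A_2$ and $\sigma_2$ swap $A_3\leftrightarrow A_4$, pick $\gamma\in[[G]]$ with $\gamma\sigma_1\gamma^{-1}=\sigma_2$ (available since the four blocks are mutually $G$-equivalent), and set $\alpha=\sigma_1\sigma_2=[\gamma,\sigma_1]\in D([[G]])$; this $\alpha$ is an involution with $x\in\supp(\alpha)$ and $\supp(\alpha)\subset A$. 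For (F3), given a nonempty clopen $A$, I take three mutually $G$-equivalent disjoint blocks $A_1,A_2,A_3\subset A$ and set $\alpha=[\tau_{12},\tau_{13}]$, the $3$-cycle cyclically permuting them; then $\alpha\in D([[G]])$, $\supp(\alpha)\subset A$, and $\alpha^2\neq1$ because $\alpha$ has order $3$.

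The real work is (F2), where the witness $\beta$ must simultaneously (a) agree with the given involution $\alpha$ on its support, (b) be supported inside $A\cup\alpha(A)$, and (c) lie in $D([[G]])$. My plan is to realize $\beta$ as a single commutator of the type above, but with the two transpositions implemented by $\alpha$ itself. I would pick a small clopen $C_1\subset A$ and, using minimality, a compact open $G$-set $W$ with $s(W)=C_1$ whose range is a clopen $C_3\subset A$ built from a further orbit point distinct from the $\alpha$-images; after shrinking, the four sets $C_1,\ \alpha(C_1),\ C_3,\ \alpha(C_3)$ are pairwise disjoint and contained in $A\cup\alpha(A)$. Let $\sigma_1$ swap $C_1\leftrightarrow\alpha(C_1)$ via $\alpha$ and $\sigma_2$ swap $C_3\leftrightarrow\alpha(C_3)$ via $\alpha$, and let $\gamma$ swap $C_1\leftrightarrow C_3$ via $W$ and $\alpha(C_1)\leftrightarrow\alpha(C_3)$ via the conjugate $\alpha W\alpha$. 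A direct check gives $\gamma\sigma_1\gamma^{-1}=\sigma_2$, whence $\beta:=[\gamma,\sigma_1]=\sigma_1\sigma_2$ is an involution equal to $\alpha$ on $\supp(\beta)=C_1\cup\alpha(C_1)\cup C_3\cup\alpha(C_3)\subset A\cup\alpha(A)$ and lies in $D([[G]])\subset\Gamma$. I expect the main obstacle to be exactly this construction: keeping the four blocks pairwise disjoint while forcing two of the swaps to be realized by the \emph{given} $\alpha$ and confining everything to $A\cup\alpha(A)$, together with verifying that the conjugation identity $\gamma\sigma_1\gamma^{-1}=\sigma_2$ holds on the nose.
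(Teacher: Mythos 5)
Your proposal is correct and follows essentially the same route as the paper: (F0) comes from Lemma \ref{suppclp}, and for (F1)--(F3) minimality supplies disjoint clopen blocks joined by compact open $G$-sets, on which the required involutions and order-three elements are realized as commutators of block transpositions, hence land in $D([[G]])\subset\Gamma$. In particular your (F2) witness, a commutator acting as $\alpha$ on the four disjoint blocks $C_1,\alpha(C_1),C_3,\alpha(C_3)$, is exactly the configuration the paper uses (with $s(V),r(V),\alpha(s(V)),\alpha(r(V))$), just with the roles of the two factors in the commutator interchanged.
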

\begin{proof}
Let $\Gamma\subset[[G]]$ be a subgroup such that $D([[G]])\subset\Gamma$. 
We verify the conditions of Definition \ref{F} for $\Gamma$. 
Condition (F0) immediately follows from Lemma \ref{suppclp}. 

To show (F1), 
let $A\subset G^{(0)}$ be a clopen neighborhood of $x\in G^{(0)}$. 
Since $G$ is minimal, 
there exists a compact open $G$-set $U\subset G$ such that 
$x\in s(U)$, $s(U)\cup r(U)\subset A$ and $s(U)\cap r(U)=\emptyset$. 
Define $\alpha\in[[G]]$ by 
\[
\alpha(x)=\begin{cases}\pi_U(x)&x\in s(U)\\\pi_U^{-1}(x)&x\in r(U)\\
x&\text{otherwise. }\end{cases}
\]
We can also find a compact open $G$-set $V\subset G$ such that 
$x\in s(V)$, $s(V)\cup r(V)\subset s(U)$ and $s(V)\cap r(V)=\emptyset$, 
because $G$ is minimal. 
Define $\beta\in[[G]]$ by 
\[
\beta(x)=\begin{cases}\pi_V(x)&x\in s(V)\\\pi_V^{-1}(x)&x\in r(V)\\
x&\text{otherwise. }\end{cases}
\]
Then $[\alpha,\beta]=\alpha\beta\alpha\beta\in D[[G]]$ 
satisfies the requirement. 

In order to prove (F2), 
we take $\alpha\in[[G]]\setminus\{1\}$ satisfying $\alpha^2=1$ and 
a non-empty clopen set $A\subset\supp(\alpha)$. 
Since $G$ is minimal, 
we can find a non-empty compact open $G$-set $V$ such that 
$s(V)\cup r(V)\subset A$ and 
$s(V),r(V),\alpha(s(V)),\alpha(r(V))$ are mutually disjoint. 
Using $V$, we define $\beta\in[[G]]$ as above. 
Define $\tilde\alpha,\tilde\beta\in[[G]]$ by 
\[
\tilde\alpha(x)=\begin{cases}\alpha(x)&x\in s(V)\cup\alpha(s(V))\\
x&\text{otherwise}\end{cases}
\]
and $\tilde\beta=[\alpha,\beta]=\alpha\beta\alpha\beta$. 
Then $[\tilde\alpha,\tilde\beta]\in D([[G]])$ satisfies the requirement. 

Let us consider (F3). 
Since $G$ is minimal, there exist non-empty compact open $G$-sets $U_1,U_2$ 
satisfying $s(U_i)\cup r(U_i)\subset A$, $s(U_i)\cap r(U_i)=\emptyset$, 
$r(U_1)\cap s(U_2)=\emptyset$ and $s(U_1)=r(U_2)$. 
For $i=1,2$, we define $\alpha_i\in[[G]]$ by 
\[
\alpha_i(x)=\begin{cases}\pi_{U_i}(x)&x\in s(U_i)\\
\pi_{U_i}^{-1}(x)&x\in r(U_i)\\
x&\text{otherwise. }\end{cases}
\]
Then $[\alpha_1,\alpha_2]\in D([[G]])$ is of order three. 
\end{proof}

\begin{lemma}
Let $G$ be an essentially principal \'etale groupoid 
whose unit space is a Cantor set. 
Suppose that every $G$-orbit contains at least three points. 
For any $g\in G$, there exists a compact open $G$-set $U$ such that 
$\pi_U\in D([[G]])$ and $g\in U$. 
\end{lemma}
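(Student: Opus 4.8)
The plan is to split into three cases according to where $g$ sits relative to $G^{(0)}$ and the isotropy bundle $G'$. If $g\in G^{(0)}$ I would simply take $U=G^{(0)}$, so that $\pi_U=\id\in D([[G]])$ and trivially $g\in U$. The heart of the matter is the case $s(g)\neq r(g)$, which I would handle by manufacturing a full compact open $G$-set $U\ni g$ for which $\pi_U$ is a genuine $3$-cycle of clopen sets, and then showing that such a $3$-cycle is a single commutator. The three-points-per-orbit hypothesis is exactly what provides the room for the third cell.

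Concretely, for $s(g)\neq r(g)$, first fix a compact open $G$-set $V_0\ni g$. The $G$-orbit of $s(g)$ already contains the two distinct points $s(g)$ and $r(g)$ and has at least three points, so there is $h\in G$ with $s(h)=s(g)$ and $r(h)=c$ for some $c\notin\{s(g),r(g)\}$; fix a compact open $G$-set $V_2\ni h$. Since $G^{(0)}$ is totally disconnected and $\pi_{V_0},\pi_{V_2}$ are continuous with $\pi_{V_0}(s(g))=r(g)$ and $\pi_{V_2}(s(g))=c$, I would shrink to a clopen neighbourhood $A\ni s(g)$ so small that, writing $V=V_0|_{s^{-1}(A)}$, $B=\pi_{V}(A)$ and $C=\pi_{V_2|_{s^{-1}(A)}}(A)$, the sets $A,B,C$ are mutually disjoint; note $g\in V$. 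Then set $U=V\cup(V_2V^{-1})\cup V_2^{-1}\cup(G^{(0)}\setminus(A\cup B\cup C))$, whose three nontrivial pieces are the bisections $A\to B$, $B\to C$ and $C\to A$. This $U$ is a full compact open $G$-set with $g\in U$, and $\theta:=\pi_U$ cyclically permutes $A,B,C$ while fixing the rest.

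To see $\theta\in D([[G]])$, let $P\in[[G]]$ be the involution swapping $A$ and $B$ via $V$ and fixing $G^{(0)}\setminus(A\cup B)$ (in particular $C$), and let $Q\in[[G]]$ be the involution swapping $B$ and $C$ via $V_2V^{-1}$ and fixing $A$. A direct evaluation on the three cells shows $[P,Q]=\theta^{-1}$, whence $\theta=[Q,P]\in D([[G]])$; this settles the case $s(g)\neq r(g)$. I expect the only delicate point here to be the bookkeeping that makes $A,B,C$ simultaneously disjoint while keeping $g\in V$, which is the shrinking step above.

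Finally, for an isotropy element $g\notin G^{(0)}$ with $s(g)=r(g)=x$ I would reduce to the case just proved. Since the orbit of $x$ has at least two points, choose $h_1\in G$ with $s(h_1)=x$ and $r(h_1)=y\neq x$, and put $k=gh_1^{-1}$; then both $k$ and $h_1$ lie outside $G'$ (indeed $s(k)=y\neq x=r(k)$ and $s(h_1)=x\neq y=r(h_1)$), and $g=kh_1$. Applying the non-isotropy case to $k$ and to $h_1$ produces full compact open $G$-sets $U_k\ni k$ and $U_{h_1}\ni h_1$ with $\pi_{U_k},\pi_{U_{h_1}}\in D([[G]])$; then $U:=U_kU_{h_1}$ is a full compact open $G$-set, $g=kh_1\in U$, and $\pi_U=\pi_{U_k}\pi_{U_{h_1}}\in D([[G]])$. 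The main obstacle is precisely this isotropy case: because $s(g)=r(g)$ one cannot surround $g$ by three disjoint clopen cells, and it is the device of factoring $g=kh_1$ through a point $y\neq x$ of its orbit, then invoking the non-isotropy case twice, that circumvents the fixed point at $x$.
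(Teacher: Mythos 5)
Your proposal is correct and follows essentially the same route as the paper: for $s(g)\neq r(g)$ both arguments use the third orbit point to build a full compact open $G$-set implementing a $3$-cycle of disjoint clopen sets with $g$ lying in one of its bisection pieces (the paper's $W=U\cup V\cup(UV)^{-1}\cup(G^{(0)}\setminus\cdots)$ is exactly your $A\to B\to C\to A$ device, with the $3$-cycle being a commutator of two clopen involutions), and for isotropy elements both factor $g=g_1g_2$ with $s(g_i)\neq r(g_i)$ and multiply the resulting full $G$-sets. You merely make explicit two points the paper leaves implicit, namely the shrinking that produces the three disjoint cells and the verification $\theta=[Q,P]$.
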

\begin{proof}
First, let us assume $s(g)\neq r(g)$. 
Since the $G$-orbit of $s(g)$ contains at least three points, 
there exist compact open $G$-sets $U$ and $V$ such that the following hold. 
\begin{itemize}
\item $s(U)=r(V)$. 
\item $r(U)$, $s(U)$ and $s(V)$ are mutually disjoint. 
\item $g\in U$. 
\end{itemize}
Then 
\[
W=U\cup V\cup(UV)^{-1}\cup(G^{(0)}\setminus(r(U)\cup s(U)\cup s(V)))
\]
is a desired compact open $G$-set. 
Next, assume $s(g)=r(g)$. 
We can find $g_1,g_2\in G$ such that 
$s(g_i)\neq r(g_i)$ for $i=1,2$ and $g=g_1g_2$. 
By the argument above, the proof is complete. 
\end{proof}

\begin{proposition}\label{spatial>iso}
For $i=1,2$, let $G_i$ be an essentially principal \'etale groupoid 
whose unit space is a Cantor set. 
Suppose that every $G_i$-orbit contains at least three points. 
For each $i=1,2$, let $\Gamma_i$ be a subgroup of $[[G_i]]$ 
such that $D([[G_i]])\subset\Gamma_i$. 
If there exists a homeomorphism $\phi:G_1^{(0)}\to G_2^{(0)}$ 
such that $\alpha\mapsto\phi\circ\alpha\circ\phi^{-1}$ gives 
an isomorphism from $\Gamma_1$ to $\Gamma_2$, then 
$G_1$ and $G_2$ are isomorphic as \'etale groupoids. 
\end{proposition}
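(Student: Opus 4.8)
The plan is to construct directly an isomorphism of \'etale groupoids $\Psi\colon G_1\to G_2$ that restricts to the given homeomorphism $\phi$ on the unit spaces. Write $\Phi$ for the isomorphism $\alpha\mapsto\phi\circ\alpha\circ\phi^{-1}$ from $\Gamma_1$ to $\Gamma_2$. If $U$ is a compact open $G_1$-set with $\pi_U\in\Gamma_1$, then necessarily $s(U)=r(U)=G_1^{(0)}$, and $\Phi(\pi_U)\in\Gamma_2$ is again of the form $\pi_{\tilde U}$ for a unique compact open $G_2$-set $\tilde U$ (uniqueness because $G_2$ is essentially principal). Since $\Phi$ is a homomorphism and $U\mapsto\pi_U$ turns the product and inverse of $G$-sets into composition and inverse of homeomorphisms, the assignment $U\mapsto\tilde U$ satisfies $\widetilde{UV}=\tilde U\tilde V$, $\widetilde{U^{-1}}=\tilde U^{-1}$ and $\widetilde{G_1^{(0)}}=G_2^{(0)}$.

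For $g\in G_1$, the preceding lemma---this is where the hypothesis that every $G_1$-orbit contains at least three points is used---furnishes a compact open $G_1$-set $U$ with $g\in U$ and $\pi_U\in D([[G_1]])\subset\Gamma_1$, and I would set $\Psi(g)$ to be the unique element of $\tilde U$ whose source is $\phi(s(g))$. The crux of the argument, and the step I expect to be the main obstacle, is well-definedness: $\Psi(g)$ must not depend on the choice of $U$. Suppose $g\in U\cap U'$ for two such $G_1$-sets. Then $U$ and $U'$ contain the same elements over the clopen set $s(U\cap U')$, so $\gamma=\pi_{U'}^{-1}\pi_U$ restricts to the identity on a clopen neighborhood of $s(g)$; equivalently $s(g)\notin\supp(\gamma)$. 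Because $\supp(\Phi(\gamma))=\phi(\supp(\gamma))$ and $\Phi(\gamma)=\pi_{(\tilde U')^{-1}\tilde U}$, we obtain $\phi(s(g))\notin\supp(\Phi(\gamma))$. Now Lemma \ref{suppclp} identifies $\supp(\Phi(\gamma))$ with $s\bigl((\tilde U')^{-1}\tilde U\setminus G_2^{(0)}\bigr)$, so the element of $(\tilde U')^{-1}\tilde U$ lying over $\phi(s(g))$ must be a unit; unwinding this product shows that the element of $\tilde U$ over $\phi(s(g))$ coincides with that of $\tilde U'$. It is precisely here that essential principality of $G_2$ is indispensable: a priori the element over $\phi(s(g))$ could be a nontrivial isotropy element, and it is Lemma \ref{suppclp} (whose proof rests on essential principality) that rules this out.

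It then remains to check that $\Psi$ is an isomorphism of groupoids. Taking $U=G_1^{(0)}$ gives $\Psi|_{G_1^{(0)}}=\phi$, and by construction $s(\Psi(g))=\phi(s(g))$ and $r(\Psi(g))=\pi_{\tilde U}(\phi(s(g)))=\phi(r(g))$. For multiplicativity I would choose $U_1\ni g_1$ and $U_2\ni g_2$ as above, observe that $U_1U_2\ni g_1g_2$ with $\widetilde{U_1U_2}=\tilde U_1\tilde U_2$, and invoke uniqueness of the element over $\phi(s(g_2))$ to conclude $\Psi(g_1g_2)=\Psi(g_1)\Psi(g_2)$. Performing the identical construction with $\phi^{-1}$ and $\Phi^{-1}$ yields a map $\Psi'\colon G_2\to G_1$, and a direct computation (using $\Phi^{-1}(\Phi(\pi_U))=\pi_U$) shows $\Psi'\circ\Psi=\id$ and $\Psi\circ\Psi'=\id$, so $\Psi$ is a bijection. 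Finally, on each $U$ the restriction $\Psi|_U$ equals $(s|_{\tilde U})^{-1}\circ\phi\circ(s|_U)$, a homeomorphism onto the compact open set $\tilde U\subset G_2$; since by the preceding lemma these $U$ form an open cover of $G_1$, the well-defined bijection $\Psi$ is a local homeomorphism, hence a homeomorphism. Thus $\Psi$ is an isomorphism of \'etale groupoids, and $G_1$ is isomorphic to $G_2$.
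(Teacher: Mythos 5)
Your proposal is correct and takes essentially the same route as the paper: both use the preceding lemma to cover $G_1$ by compact open $G_1$-sets $U$ with $\pi_U\in D([[G_1]])\subset\Gamma_1$, send $g$ to the unique element of the $G_2$-set implementing $\Phi(\pi_U)$ over $\phi(s(g))$, and then verify this is a well-defined groupoid isomorphism. The paper dismisses the verification as ``routine,'' whereas you carry it out; your well-definedness argument via $\supp(\Phi(\gamma))=\phi(\supp(\gamma))$ and Lemma \ref{suppclp} is exactly the intended use of essential principality.
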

\begin{proof}
For $g\in G_1$, by the lemma above, 
we can find a compact open $G_1$-set $U$ 
such that $\pi_U\in D([[G_1]])\subset\Gamma_1$ and $g\in U$. 
By assumption, $\phi\circ\pi_U\circ\phi^{-1}$ belongs to $\Gamma_2$. 
Thus, there exists a compact open $G_2$-set $V$ 
such that $\pi_V=\phi\circ\pi_U\circ\phi^{-1}$. 
It follows that there exists $g'\in G_2$ 
such that $g'\in V$ and $s(g')=\phi(s(g))$. 
It is routine to check that 
the map $g\mapsto g'$ is a well-defined isomorphism from $G_1$ to $G_2$. 
\end{proof}

The following theorem is a generalization of 
several results obtained in \cite[Section 4]{GPS99Israel} and 
\cite[Corollary 7.3]{Matsu1205}. 

\begin{theorem}\label{iso}
For $i=1,2$, let $G_i$ be an essentially principal \'etale groupoid 
whose unit space is a Cantor set. 
Suppose that $G_i$ is minimal. 
For each $i=1,2$, let $\Gamma_i$ be a subgroup of $[[G_i]]$ 
such that $D([[G_i]])\subset\Gamma_i$. 
If $\Gamma_1$ and $\Gamma_2$ are isomorphic as discrete groups, then 
$G_1$ and $G_2$ are isomorphic as \'etale groupoids. 
\end{theorem}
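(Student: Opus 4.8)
The plan is to assemble the three preceding results into a single chain of implications, since each one handles exactly one stage of the passage from an abstract group isomorphism back to a groupoid isomorphism. First I would fix an abstract isomorphism $\Phi\colon\Gamma_1\to\Gamma_2$ witnessing the hypothesis. The immediate task is to upgrade this to something geometric, and the tool for that is Theorem \ref{spatial}, which requires both $\Gamma_1$ and $\Gamma_2$ to be subgroups of class F. This is supplied directly by Proposition \ref{tfg=F}: because each $G_i$ is essentially principal, minimal, and has a Cantor unit space, and because $D([[G_i]])\subset\Gamma_i$ by hypothesis, each $\Gamma_i$ is of class F.

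With class F in hand, Theorem \ref{spatial} produces a homeomorphism $\phi\colon G_1^{(0)}\to G_2^{(0)}$ satisfying $\Phi(\alpha)=\phi\circ\alpha\circ\phi^{-1}$ for every $\alpha\in\Gamma_1$. It then remains to convert this spatial conjugacy into a genuine isomorphism of \'etale groupoids, which is precisely the content of Proposition \ref{spatial>iso}: the output of Theorem \ref{spatial} is exactly the input of that proposition. The only hypothesis of Proposition \ref{spatial>iso} not already available is that every $G_i$-orbit contain at least three points.

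The single point requiring an argument, and the closest thing to an obstacle, is this orbit condition. I would observe that minimality forces each $G_i$-orbit to be dense in $G_i^{(0)}$, while a Cantor set has no isolated points and is in particular infinite. A finite subset of a Hausdorff space is closed, so a finite orbit, being both dense and closed, would have to equal the whole (infinite) unit space, a contradiction. Hence every $G_i$-orbit is infinite, and a fortiori contains at least three points. With this verified, Proposition \ref{spatial>iso} applies to $\phi$ and yields an isomorphism $G_1\cong G_2$ of \'etale groupoids, completing the proof. I do not anticipate any genuine difficulty, since all of the substantive work is carried by the three cited results; the argument is simply a matter of checking that their hypotheses interlock.
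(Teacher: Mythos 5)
Your proposal is correct and follows exactly the paper's own route: the paper proves Theorem \ref{iso} by combining Theorem \ref{spatial}, Proposition \ref{tfg=F} and Proposition \ref{spatial>iso} in precisely the chain you describe. Your additional verification that minimality on a Cantor set forces every orbit to be infinite (hence to contain at least three points) is a correct filling-in of a hypothesis the paper leaves implicit.
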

\begin{proof}
This readily follows from Theorem \ref{spatial}, Proposition \ref{tfg=F} 
and Proposition \ref{spatial>iso}. 
\end{proof}

As an immediate consequence of the theorem above, we obtain the following. 

\begin{theorem}\label{iso2}
For $i=1,2$, let $G_i$ be an essentially principal \'etale groupoid 
whose unit space is a Cantor set 
and suppose that $G_i$ is minimal. 
The following conditions are equivalent. 
\begin{enumerate}
\item $G_1$ and $G_2$ are isomorphic as \'etale groupoids. 
\item $[[G_1]]$ and $[[G_2]]$ are isomorphic as discrete groups. 
\item $[[G_1]]_0$ and $[[G_2]]_0$ are isomorphic as discrete groups. 
\item $D([[G_1]])$ and $D([[G_2]])$ are isomorphic as discrete groups. 
\end{enumerate}
\end{theorem}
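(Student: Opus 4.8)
The plan is to treat condition (1) as the hub and prove that it is equivalent to each of (2), (3), (4) separately, splitting each equivalence into a forward implication $(1)\Rightarrow(\cdot)$ and a reverse implication $(\cdot)\Rightarrow(1)$. The reverse implications are where Theorem \ref{iso} does all the work, so I would dispose of them first. Suppose one of $[[G_1]]\cong[[G_2]]$, $[[G_1]]_0\cong[[G_2]]_0$, or $D([[G_1]])\cong D([[G_2]])$ holds as an isomorphism of discrete groups. In each case set $\Gamma_i$ to be the relevant subgroup of $[[G_i]]$. Since $D([[G_i]])\subset[[G_i]]_0\subset[[G_i]]$, with the first inclusion recorded in the text immediately after the definition of the index map, each candidate $\Gamma_i$ satisfies $D([[G_i]])\subset\Gamma_i$, so Theorem \ref{iso} applies verbatim and yields $G_1\cong G_2$ as \'etale groupoids, i.e.\ (1).

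For the forward implications I would start from a groupoid isomorphism $\Psi:G_1\to G_2$ and let $\psi:G_1^{(0)}\to G_2^{(0)}$ be its restriction to unit spaces, which is a homeomorphism. Conjugation $\alpha\mapsto\psi\circ\alpha\circ\psi^{-1}$ then carries each compact open $G_1$-set to a compact open $G_2$-set, so it defines a group isomorphism $[[G_1]]\to[[G_2]]$; this gives $(1)\Rightarrow(2)$. For $(1)\Rightarrow(4)$ nothing further is needed, since any group isomorphism carries the commutator subgroup $D([[G_1]])$ isomorphically onto $D([[G_2]])$.

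The one implication that requires a genuine (if brief) argument is $(1)\Rightarrow(3)$, where I must check that the conjugation isomorphism carries $[[G_1]]_0$ onto $[[G_2]]_0$. This is the naturality of the index map: the groupoid isomorphism $\Psi$ induces an isomorphism $\Psi_*:H_1(G_1)\to H_1(G_2)$ on homology, and because $I(\alpha)$ is defined as the class of the cycle $1_U$ for the compact open $G$-set $U$ implementing $\alpha$, one has $I(\psi\circ\alpha\circ\psi^{-1})=\Psi_*(I(\alpha))$. Since $\Psi_*$ is an isomorphism, the kernel $[[G_1]]_0$ of $I$ is mapped exactly onto the kernel $[[G_2]]_0$. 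Thus conjugation by $\psi$ restricts to an isomorphism $[[G_1]]_0\cong[[G_2]]_0$, establishing $(1)\Rightarrow(3)$.

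I expect no serious obstacle: the theorem is, as the text states, an immediate consequence of Theorem \ref{iso}, and the only point deserving care is the compatibility of conjugation with the index map in the $(1)\Rightarrow(3)$ direction. Once the four forward implications and the three reverse implications above are assembled, the four conditions are mutually equivalent and the proof is complete.
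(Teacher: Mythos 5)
Your proposal is correct and matches the paper's intent: the paper derives Theorem \ref{iso2} as an immediate consequence of Theorem \ref{iso}, which is exactly your treatment of the reverse implications, while the forward implications (conjugation by the unit-space homeomorphism, preservation of commutator subgroups, and naturality of the index map for $[[G_i]]_0$) are the routine functoriality facts the paper leaves unstated. Your write-up simply makes explicit the details the paper regards as immediate.
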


\section{Simplicity of commutator subgroups}

\subsection{Almost finite groupoids}

Throughout this subsection, 
we let $G$ be an essentially principal \'etale groupoid 
whose unit space is a Cantor set. 
In this subsection, 
we would like to show that the commutator subgroup $D([[G]])$ of 
the topological full group $[[G]]$ is simple 
when $G$ is almost finite and minimal (Theorem \ref{simple1}). 
This is a generalization of 
\cite[Theorem 4.9]{M06IJM} and \cite[Theorem 3.4]{BM08Coll}. 
Let us recall the definition of almost finite groupoids. 

\begin{definition}[{\cite[Definition 6.2]{M12PLMS}}]
Let $G$ be an \'etale groupoid whose unit space is a Cantor set. 
\begin{enumerate}
\item We say that $K\subset G$ is an elementary subgroupoid 
if $K$ is a compact open principal subgroupoid of $G$ 
such that $K^{(0)}=G^{(0)}$. 
\item We say that $G$ is almost finite 
if for any compact subset $C\subset G$ and $\ep>0$ 
there exists an elementary subgroupoid $K\subset G$ such that 
\[
\frac{\#(CKx\setminus Kx)}{\#(Kx)}<\ep
\]
for all $x\in G^{(0)}$. 
We also remark that $\#(K(x))$ equals $\#(Kx)$, 
because $K$ is principal. 
\end{enumerate}
\end{definition}

In \cite[Lemma 6.3]{M12PLMS}, 
it was shown that 
when $\phi:\Z^N\curvearrowright X$ is a free action of $\Z^N$ 
on a Cantor set $X$, 
the transformation groupoid $G_\phi$ is almost finite 
(see \cite[Definition 2.1]{M12PLMS} for the definition of $G_\phi$). 

A homeomorphism $\alpha$ of a Cantor set $X$ is said to be elementary 
if it is of finite order and 
$\{x\in X\mid\alpha^k(x)=x\}$ is clopen for every $k\in\N$ 
(\cite[Definition 7.6 (1)]{M12PLMS}). 

\begin{lemma}
Suppose that $G$ is almost finite. 
For any $\ep>0$ and $\alpha\in[[G]]$, 
there exists an elementary homeomorphism $\alpha_0\in[[G]]$ such that 
$\mu(\supp(\alpha_0\alpha))\leq\ep$ for any $\mu\in M(G)$. 
\end{lemma}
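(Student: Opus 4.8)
The plan is to exploit almost finiteness to produce an elementary subgroupoid whose orbits are large relative to the "graph" of $\alpha$, and then to build $\alpha_0$ inside the associated finite symmetric groups so that $\alpha_0\alpha$ moves as few points as possible.

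\medskip

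First I would represent $\alpha$ by its compact open $G$-set $U$ with $\alpha=\pi_U$, and apply the almost finiteness condition to the compact set $C=U\cup U^{-1}$ (or some compact set capturing the graph of $\alpha$) together with the given $\ep$. This yields an elementary subgroupoid $K\subset G$ with $\#(CKx\setminus Kx)/\#(Kx)<\ep$ for every $x\in G^{(0)}$. Because $K$ is a compact open principal groupoid with $K^{(0)}=G^{(0)}$, the orbit equivalence relation it defines decomposes $G^{(0)}$ into finitely many clopen pieces on which the orbits $Kx$ have constant cardinality; on each such piece $K$ looks like a finite full equivalence relation, so the homeomorphisms supported there that permute $K$-orbits form a copy of a product of symmetric groups, and all of these lie in $[[G]]$ and are elementary in the sense defined above.

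\medskip

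The key idea is that $\alpha$ almost preserves the partition into $K$-orbits: the estimate $\#(CKx\setminus Kx)/\#(Kx)<\ep$ says that $\alpha$ maps all but an $\ep$-fraction of each $K$-orbit back into the same $K$-orbit. On the points where $\alpha$ keeps us inside a single $K$-orbit, I can choose $\alpha_0$ to be the inverse permutation within that orbit, using elements of the symmetric group on $Kx$; this cancels $\alpha$ there so that $\alpha_0\alpha$ fixes those points. The remaining "bad" points, where $\alpha$ jumps to a different $K$-orbit, are exactly the ones controlled by the almost finiteness estimate, and these are where $\alpha_0\alpha$ may fail to be the identity. I would assemble $\alpha_0$ globally as a single elementary element of $[[G]]$ by gluing these orbitwise permutations over the finitely many clopen pieces, making sure the construction respects clopenness so that $\alpha_0$ is genuinely in $[[G]]$ and elementary.

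\medskip

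Finally, to estimate $\mu(\supp(\alpha_0\alpha))$ for an invariant measure $\mu$, I would note that $\supp(\alpha_0\alpha)$ is contained in the set of "bad" points together with their $\alpha$-images, i.e. in a set built from $CKx\setminus Kx$. Using $G$-invariance of $\mu$ and the fact that $\mu$ distributes uniformly across each $K$-orbit (since $K$ is principal and $\mu$ is invariant, each point of a $K$-orbit of size $n$ carries mass proportional to $1/n$ of the orbit's total mass), the cardinality bound $\#(CKx\setminus Kx)<\ep\,\#(Kx)$ translates into the measure bound $\mu(\supp(\alpha_0\alpha))\le\ep$. The main obstacle I anticipate is the careful orbitwise bookkeeping: one must choose the cancelling permutations consistently so that $\alpha_0$ is a single well-defined elementary homeomorphism in $[[G]]$, and one must correctly convert the uniform counting estimate into an invariant-measure estimate, keeping track of the $\alpha$-images of bad points so that the factor stays $\ep$ rather than a multiple of it.
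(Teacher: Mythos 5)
Your proposal is correct and matches the intended argument: the paper's own proof is nothing but a citation to \cite[Lemma 7.10]{M12PLMS}, and the proof of that lemma is exactly your scheme --- apply almost finiteness to $C=U\cup U^{-1}$, cancel $\alpha$ orbitwise inside the resulting elementary subgroupoid $K$, and convert the counting estimate into a measure estimate using invariance of $\mu$ over $K$-orbits. For the bookkeeping you flag, the clean way is to declare the good set to be $s(U\cap K)$ rather than ``points whose $\alpha$-image stays in its $K$-orbit'': then the good set is automatically clopen (even when $G$ has nontrivial isotropy), $U\cap K$ is a compact open $K$-set whose inverse extends to a compact open $K$-set $W$ with $s(W)=r(W)=G^{(0)}$, so $\alpha_0=\pi_W\in[[K]]$ is elementary, and since $\alpha_0\alpha$ fixes every good point, $\supp(\alpha_0\alpha)$ is contained in the bad set itself --- no $\alpha$-images are needed, so the bound stays $\ep$ with no factor of $2$.
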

\begin{proof}
This statement is almost the same as \cite[Lemma 7.10]{M12PLMS} 
and is shown by a small modification of its proof. 
\end{proof}

\begin{lemma}
Suppose that every $G$-orbit is infinite. 
For any elementary homeomorphism $\alpha\in[[G]]$ and $\ep>0$, 
there exist elementary homeomorphisms 
$\alpha_1,\alpha_2,\dots,\alpha_n\in[[G]]$ such that 
$\alpha=\alpha_1\alpha_2\dots\alpha_n$ and 
$\mu(\supp(\alpha_i))\leq\ep$ for any $i=1,2,\dots,n$ and $\mu\in M(G)$. 
\end{lemma}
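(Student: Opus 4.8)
The plan is to reduce $\alpha$ to a disjoint union of cyclic permutations indexed by period, subdivide the space into clopen pieces of uniformly small measure using that every invariant measure is non-atomic, and then write each cyclic permutation as a product of transpositions, each of which is an elementary involution supported on only two small pieces.

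First I would decompose the unit space according to the period of $\alpha$. Since $\alpha$ is elementary of some finite order $m$, each set $\{x : \alpha^d(x)=x\}$ is clopen, so the set $X_d$ of points of exact period $d$ (for $d\mid m$) is clopen, and $G^{(0)}=\bigsqcup_{d\mid m}X_d$ with $\alpha(X_d)=X_d$. On $X_d$ the homeomorphism $\alpha$ acts freely with every orbit of size $d$, so I can choose a clopen fundamental domain $A^{(d)}\subset X_d$ with $X_d=\bigsqcup_{j=0}^{d-1}\alpha^j(A^{(d)})$; the piece $X_1$ consists of fixed points and contributes nothing. Since every element of $[[G]]$ preserves all $\mu\in M(G)$, we have $\mu(\alpha^j(B))=\mu(B)$ for clopen $B$, which keeps the measure bookkeeping uniform.

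The key measure-theoretic step is this: because every $G$-orbit is infinite, no $\mu\in M(G)$ has an atom, since an atom would force an infinite orbit of points of equal positive measure, contradicting $\mu(G^{(0)})=1$. Using that $M(G)$ is weak-$*$ compact together with a Dini-type argument — for a decreasing sequence of clopen neighborhoods $V_n\downarrow\{x\}$ the continuous functions $\mu\mapsto\mu(V_n)$ decrease to $0$, hence uniformly — I obtain, for each point, a clopen neighborhood whose measure is uniformly small over all $\mu\in M(G)$. Compactness of $G^{(0)}$ then yields a clopen partition into pieces of $\mu$-measure $\le\ep/2$ for every $\mu$. Refining each fundamental domain $A^{(d)}$ by this partition gives $A^{(d)}=\bigsqcup_k B_k^{(d)}$ with $\mu(B_k^{(d)})\le\ep/2$ for all $\mu\in M(G)$.

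Finally, on the $\alpha$-orbit of a single piece $B_k^{(d)}$, the homeomorphism $\alpha$ is the $d$-cycle $\alpha^0(B_k^{(d)})\to\alpha^1(B_k^{(d)})\to\dots\to\alpha^{d-1}(B_k^{(d)})\to\alpha^0(B_k^{(d)})$, which factors as the product of the consecutive transpositions swapping $\alpha^i(B_k^{(d)})$ and $\alpha^{i+1}(B_k^{(d)})$. Each such transposition is realized by a compact open $G$-set built from suitable restrictions of $U$ and $U^{-1}$, where $\alpha=\pi_U$, so it lies in $[[G]]$; it is an involution, hence elementary by Lemma \ref{suppclp}, and its support is contained in $\alpha^i(B_k^{(d)})\cup\alpha^{i+1}(B_k^{(d)})$, of measure at most $\ep$. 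Since the orbit-supports for distinct $d$ and distinct $k$ are pairwise disjoint, I can assemble all these transpositions into a single finite product equal to $\alpha$. The main obstacle is the uniform small-measure partition; once non-atomicity and the compactness/Dini argument are in place, the cycle-to-transposition decomposition and its realization in $[[G]]$ are routine.
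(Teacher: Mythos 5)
Your argument is correct, but it takes a genuinely different route from the paper's. The paper avoids fundamental domains and transpositions altogether: for each $x$ it chooses a clopen base $U_x$ on which the period function $r(\cdot)$ is constant, whose iterates $U_x,\alpha(U_x),\dots,\alpha^{r(x)-1}(U_x)$ are disjoint, and with $\mu(U_x)\le\ep/r(x)$ for all $\mu\in M(G)$; the tower $V_x=U_x\cup\alpha(U_x)\cup\cdots\cup\alpha^{r(x)-1}(U_x)$ is then an $\alpha$-invariant clopen set of measure at most $\ep$. Covering $G^{(0)}$ by finitely many towers $V_1,\dots,V_n$ and disjointifying ($W_1=V_1$, $W_i=V_i\setminus(W_1\cup\cdots\cup W_{i-1})$) gives an $\alpha$-invariant clopen partition, and the factors are simply $\alpha_i=\alpha$ on $W_i$ and the identity elsewhere; invariance of the $W_i$ makes both the elementarity of each $\alpha_i$ and the identity $\alpha=\alpha_1\cdots\alpha_n$ immediate. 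You instead decompose by exact period, build clopen fundamental domains, refine them by a uniformly small clopen partition, and factor each cyclic tower into consecutive transpositions. That is more work — the clopen fundamental domain needs its own disjointification argument, and the transpositions must be composed in the right order so that the product is $\alpha$ rather than $\alpha^{-1}$ — but it yields a formally stronger conclusion: your factors are involutions supported on pairs of small pieces, in the spirit of the set $\mathcal{T}$ of transpositions used later in Section 6.4. A genuine merit of your write-up is that you justify the uniform smallness over all of $M(G)$ (non-atomicity of invariant measures forced by infinite orbits, weak-$\ast$ compactness of $M(G)$, and Dini's theorem), a point the paper's proof asserts without comment; note that exactly the same justification is needed for the paper's choice of $U_x$.
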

\begin{proof}
For $x\in G^{(0)}$, let $r(x)=\min\{k\in\N\mid\alpha^k(x)=x\}$. 
Since every $G$-orbit is infinite, 
there exists a clopen neighborhood $U_x$ of $x$ 
such that the following hold. 
\begin{itemize}
\item $r(y)=r(x)$ for every $y\in U_x$, 
\item $\mu(U_x)\leq\ep/r(x)$ for any $\mu\in M(G)$. 
\item $U_x,\alpha(U_x),\dots,\alpha^{r(x)-1}(U_x)$ are mutually disjoint. 
\end{itemize}
Set $V_x=U_x\cup\alpha(U_x)\cup\dots\cup\alpha^{r(x)-1}(U_x)$. 
Then $V_x$ is $\alpha$-invariant and $\mu(V_x)=r(x)\mu(U_x)\leq\ep$. 
We can find finitely many such clopen sets $V_x$, 
say $V_1,V_2,\dots,V_n$, so that they cover $G^{(0)}$. 
Define $W_i$ inductively 
by $W_1=V_1$ and $W_i=V_i\setminus(W_1\cup W_2\cup\dots\cup W_{i-1})$. 
Put 
\[
\alpha_i(x)
=\begin{cases}\alpha(x)&x\in W_i\\x&\text{otherwise. }
\end{cases}
\]
It is easy to verify that 
$\alpha_1,\alpha_2,\dots,\alpha_n$ satisfy the requirement. 
\end{proof}

\begin{lemma}\label{decomp}
Suppose that $G$ is almost finite and every $G$-orbit is infinite. 
For any $\alpha\in[[G]]$ and $\ep>0$, 
there exist $\alpha_1,\alpha_2,\dots,\alpha_n\in[[G]]$ such that 
$\alpha=\alpha_1\alpha_2\dots\alpha_n$ and 
$\mu(\supp(\alpha_i))\leq\ep$ for any $i=1,2,\dots,n$ and $\mu\in M(G)$. 
\end{lemma}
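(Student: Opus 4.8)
The plan is to combine the two preceding lemmas. The statement I need to prove, Lemma \ref{decomp}, drops the elementariness requirement present in the earlier decomposition lemma but still asks for a factorization of an arbitrary $\alpha\in[[G]]$ into pieces of small measure-support. The natural strategy is a two-stage reduction: first approximate $\alpha$ by an elementary homeomorphism modulo a small error, then decompose that elementary piece into small-support elementary factors.

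Let me sketch the steps. First I would fix $\alpha\in[[G]]$ and $\ep>0$. Applying the first lemma of this subsection (the approximation lemma) with $\ep/2$, I obtain an elementary homeomorphism $\alpha_0\in[[G]]$ such that $\mu(\supp(\alpha_0\alpha))\leq\ep/2$ for every $\mu\in M(G)$. Write $\beta=\alpha_0\alpha$, so that $\alpha=\alpha_0^{-1}\beta$ and $\mu(\supp(\beta))\leq\ep/2$. Since $\alpha_0$ is elementary, so is $\alpha_0^{-1}$, and I can apply the second lemma (the elementary decomposition lemma, which only requires every $G$-orbit to be infinite) to $\alpha_0^{-1}$ with tolerance $\ep$, obtaining elementary factors $\alpha_1,\dots,\alpha_{n-1}$ with $\alpha_0^{-1}=\alpha_1\cdots\alpha_{n-1}$ and $\mu(\supp(\alpha_i))\leq\ep$ for all $i$ and all $\mu\in M(G)$. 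Setting $\alpha_n=\beta$ then gives $\alpha=\alpha_1\cdots\alpha_{n-1}\alpha_n$ with every factor having measure-support at most $\ep$, which is exactly the desired conclusion.

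I should check the hypotheses line up: almost finiteness is assumed for the approximation lemma, and both almost finiteness and infinite orbits are assumed in the statement of Lemma \ref{decomp}, matching what the second lemma needs. One small point to verify is that $\supp(\beta)=\supp(\alpha_0\alpha)$ is genuinely the leftover term and that no cancellation is being swept under the rug; since the identity $\alpha=\alpha_0^{-1}(\alpha_0\alpha)$ is purely formal, this is immediate.

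I do not anticipate a serious obstacle here, since the real work has already been carried out in the two preceding lemmas; the present statement is essentially a packaging result. The only mildly delicate point is bookkeeping of the constants, namely ensuring that the error term $\beta$ from the approximation step is itself counted as one of the small-support factors rather than requiring its own separate decomposition. Because $\mu(\supp(\beta))\leq\ep/2\leq\ep$ already, no further splitting of $\beta$ is needed, so the argument closes cleanly.
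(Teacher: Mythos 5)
Your proposal is correct and is essentially the paper's own argument: the paper's proof consists of the single line ``This follows immediately from the lemmas above,'' and the intended combination is exactly the one you spell out, namely writing $\alpha=\alpha_0^{-1}(\alpha_0\alpha)$, decomposing the elementary homeomorphism $\alpha_0^{-1}$ via the second lemma, and counting the small remainder $\alpha_0\alpha$ as the final factor. Your bookkeeping (including the harmless use of $\ep/2$ in the first step and the observation that $\alpha_0^{-1}$ is again elementary) is accurate, so there is nothing to correct.
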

\begin{proof}
This follows immediately from the lemmas above. 
\end{proof}

\begin{lemma}\label{tauexist}
Suppose that $G$ is almost finite and minimal. 
Let $N$ be a non-trivial subgroup of $[[G]]$ normalized by $D([[G]])$. 
Then there exists $\tau\in N\setminus\{1\}$ 
such that $\supp(\tau)\neq G^{(0)}$. 
\end{lemma}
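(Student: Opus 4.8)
The plan is to produce the required $\tau$ as a commutator $\tau=[\delta,\gamma]=\delta\gamma\delta^{-1}\gamma^{-1}$, where $\gamma$ is an arbitrary nontrivial element of $N$ and $\delta$ is a suitably chosen nontrivial element of $D([[G]])$ with small support. Since $N$ is normalized by $D([[G]])$, the element $\delta\gamma\delta^{-1}$ lies in $N$, and therefore so does $\tau=(\delta\gamma\delta^{-1})\gamma^{-1}$. Writing $\tau=\delta\cdot(\gamma\delta^{-1}\gamma^{-1})$ exhibits it as a product whose two factors are supported in $\supp(\delta)$ and $\gamma(\supp(\delta))$ respectively; keeping these two sets inside a proper clopen subset will bound $\supp(\tau)$ away from $G^{(0)}$, and keeping them disjoint will prevent cancellation and so force $\tau\neq1$.

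First I would fix $\gamma\in N\setminus\{1\}$ and a point $x_0$ with $\gamma(x_0)\neq x_0$. Using that $\gamma$ is a homeomorphism of the Cantor set $G^{(0)}$, I would choose a clopen neighborhood $A$ of $x_0$ small enough that $A\cap\gamma(A)=\emptyset$ and, in addition, $A\cup\gamma(A)\neq G^{(0)}$. The second requirement is arranged by first picking a point $z_0\notin\{x_0,\gamma(x_0)\}$ (possible because a Cantor set is infinite) and then shrinking $A$ until $z_0\notin A$ and $z_0\notin\gamma(A)$. Since $G$ is minimal, Proposition \ref{tfg=F} applies to the subgroup $D([[G]])$ itself, so $D([[G]])$ is of class F; condition (F3) then furnishes $\delta\in D([[G]])$ with $\supp(\delta)\subset A$ and $\delta\neq1$.

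It remains to verify that $\tau=[\delta,\gamma]$ works. As $\supp(\delta)\subset A$ and $\supp(\gamma\delta^{-1}\gamma^{-1})=\gamma(\supp(\delta))\subset\gamma(A)$ are disjoint, in the factorization $\tau=\delta\cdot(\gamma\delta^{-1}\gamma^{-1})$ the two factors have disjoint supports. Hence $\tau$ coincides with $\delta$ on $\supp(\delta)$, so $\tau\neq1$, and
\[
\supp(\tau)=\supp(\delta)\cup\gamma(\supp(\delta))\subset A\cup\gamma(A)\neq G^{(0)}.
\]
Together with $\tau\in N$ noted above, this gives the statement.

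There is no single hard step here; the one point requiring care is to secure nontriviality and properness of the support simultaneously. The disjoint-support factorization achieves exactly this, since disjointness of $A$ and $\gamma(A)$ confines $\supp(\tau)$ to $A\cup\gamma(A)$ while also ruling out that the two factors cancel to the identity. I note that this argument relies on minimality only (through class F and condition (F3)); almost finiteness is the standing hypothesis of the subsection and will enter in the later steps toward Theorem \ref{simple1} rather than in this lemma.
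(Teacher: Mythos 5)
Your proof is correct and is essentially the same as the paper's: both take a nontrivial $\sigma\in N$, a clopen $A$ with $A\cap\sigma(A)=\emptyset$ and $A\cup\sigma(A)\neq G^{(0)}$, produce (via minimality) a nontrivial element of $D([[G]])$ supported in $A$, and conclude with the same commutator $\tau$. Your extra verification of $\tau\neq1$ via the disjoint-support factorization, and your observation that only minimality is used, simply make explicit what the paper leaves implicit.
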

\begin{proof}
Take $\sigma\in N\setminus\{1\}$. 
There exists a non-empty clopen set $A\subset G^{(0)}$ 
such that $A\cap\sigma(A)=\emptyset$ and $A\cup\sigma(A)\neq G^{(0)}$. 
Since $G$ is minimal, there exists $\alpha\in D([[G]])\setminus\{1\}$ 
such that $\supp(\alpha)\subset A$. 
Then $\tau=\alpha\sigma\alpha^{-1}\sigma^{-1}$ is in $N\setminus\{1\}$ 
and $\supp(\tau)\subset A\cup\sigma(A)\neq G^{(0)}$. 
\end{proof}

\begin{lemma}\label{normalize}
Suppose that $G$ is almost finite and minimal. 
Let $N$ be a non-trivial subgroup of $[[G]]$ normalized by $D([[G]])$. 
If $\tau\in N$ satisfies $\supp(\tau)\neq G^{(0)}$, 
then $\alpha\tau\alpha^{-1}\in N$ for all $\alpha\in[[G]]$. 
\end{lemma}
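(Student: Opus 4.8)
The plan is to upgrade the hypothesis that $N$ is normalized by $D([[G]])$ to the conclusion that $N$ is normalized by all of $[[G]]$, by showing that \emph{every} element of $[[G]]$ agrees, modulo $D([[G]])$, with an element whose support avoids $\supp(\tau)$. Write $F=\supp(\tau)$, which is clopen by Lemma \ref{suppclp}, and put $B=G^{(0)}\setminus F$; by hypothesis $B$ is a non-empty clopen set. The first reduction is to show that each $\alpha\in[[G]]$ can be factored as $\alpha=\rho\eta$ with $\rho\in D([[G]])$ and $\supp(\eta)\subseteq B$, i.e.
\[
[[G]]=D([[G]])\cdot\{\eta\in[[G]]\mid\supp(\eta)\subseteq B\}.
\]
Indeed, given such a factorization, $\eta$ commutes with $\tau$ because their supports are disjoint, so $\alpha\tau\alpha^{-1}=\rho\eta\tau\eta^{-1}\rho^{-1}=\rho\tau\rho^{-1}$, and the right-hand side lies in $N$ since $N$ is normalized by $D([[G]])$.

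Next I would reduce this factorization to the case of small support. As $G$ is minimal, every $G$-orbit is infinite, so Lemma \ref{decomp} applies: for any $\ep>0$ we may write $\alpha=\alpha_1\alpha_2\cdots\alpha_n$ with $\mu(\supp(\alpha_i))\le\ep$ for all $i$ and all $\mu\in M(G)$. Because $[[G]]/D([[G]])$ is abelian, if each $\alpha_i$ is congruent mod $D([[G]])$ to some $\eta_i$ with $\supp(\eta_i)\subseteq B$, then in the abelianization $\overline{\alpha}=\prod_i\overline{\alpha_i}=\prod_i\overline{\eta_i}=\overline{\eta_1\cdots\eta_n}$, and $\supp(\eta_1\cdots\eta_n)\subseteq B$; this produces the required factorization of $\alpha$. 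Thus it remains only to treat a single $\beta\in[[G]]$ whose support $S$ satisfies $\mu(S)\le\ep$ for all $\mu\in M(G)$, and to find $\eta$ supported in $B$ with $\beta\equiv\eta$ mod $D([[G]])$.

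For this last step I would transport $S$ into $B$ by a conjugation. Since $B$ is a non-empty clopen set and $G$ is minimal, every invariant measure has full support, so $\inf_{\mu\in M(G)}\mu(B)>0$ by weak${}^*$ compactness of $M(G)$ (interpreting the infimum as $+\infty$ if $M(G)=\emptyset$). Choosing $\ep$ below this infimum, so that $\mu(S)<\mu(B)$ for all $\mu\in M(G)$, the comparison property of almost finite groupoids furnishes an element $\theta\in[[G]]$ with $\theta(S)\subseteq B$. Then $\theta\beta\theta^{-1}$ is supported in $\theta(S)\subseteq B$, while $\theta\beta\theta^{-1}\beta^{-1}=[\theta,\beta]\in D([[G]])$; hence $\beta$ is congruent mod $D([[G]])$ to $\eta:=\theta\beta\theta^{-1}$, which is supported in $B$, as desired. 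The heart of the matter—and the step I expect to require the most care—is precisely this transport: converting the uniform measure inequality $\mu(S)<\mu(B)$ into an honest element $\theta\in[[G]]$ carrying $S$ into $B$ (one first obtains a $G$-set embedding $S$ into $B$ and then extends it over the complements, whose invariant measures agree). This is where almost finiteness, rather than mere minimality, is indispensable, since it is what guarantees that clopen sets are comparable via the invariant measures.
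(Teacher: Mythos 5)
Your proposal is correct and takes essentially the same route as the paper: both arguments reduce, via Lemma \ref{decomp}, to elements whose support is uniformly small in measure, then invoke the comparison lemma for almost finite groupoids (\cite[Lemma 6.7]{M12PLMS}, your ``comparison property'') together with the uniform lower bound on $\mu(G^{(0)}\setminus\supp(\tau))$ (\cite[Lemma 6.8]{M12PLMS}, which you rederive from minimality and weak$^*$ compactness) to conjugate that support off $\supp(\tau)$, observing that the conjugation discrepancy is a commutator. The only difference is organizational: you package this as a factorization $[[G]]=D([[G]])\cdot\{\eta\in[[G]]\mid\supp(\eta)\subseteq G^{(0)}\setminus\supp(\tau)\}$ obtained by passing to the abelianization, whereas the paper applies the identity $\alpha\tau\alpha^{-1}=[\alpha,\sigma]\,\tau\,[\alpha,\sigma]^{-1}$ factor by factor; these amount to the same computation, since $\alpha=[\alpha,\sigma]\cdot(\sigma\alpha\sigma^{-1})$ is exactly your decomposition $\alpha=\rho\eta$.
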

\begin{proof}
By \cite[Lemma 6.8]{M12PLMS}, 
there exists $c>0$ such that $\mu(\supp(\tau))<1-c$ for all $\mu\in M(G)$. 
Let $\alpha\in[[G]]$. 
By virtue of Lemma \ref{decomp}, 
it suffices to show $\alpha\tau\alpha^{-1}\in N$ 
when $\mu(\supp(\alpha))\leq c$ for any $\mu\in M(G)$. 
It follows from \cite[Lemma 6.7]{M12PLMS} that 
there exists $\sigma\in[[G]]$ such that 
$\sigma(\supp(\alpha))\cap\supp(\tau)=\emptyset$. 
Therefore $\sigma\alpha^{-1}\sigma^{-1}$ commutes with $\tau$. 
Since $[\alpha,\sigma]=\alpha\sigma\alpha^{-1}\sigma^{-1}$ is in $D([[G]])$ 
and $N$ is normalized by $D([[G]])$, we get 
\[
\alpha\tau\alpha^{-1}
=\alpha(\sigma\alpha^{-1}\sigma^{-1})\tau(\sigma\alpha\sigma^{-1})\alpha^{-1}
\in N, 
\]
which completes the proof. 
\end{proof}

The proof of the following theorem is inspired 
by that of \cite[Theorem 3.4]{BM08Coll}. 

\begin{theorem}\label{simple1}
Suppose that $G$ is almost finite and minimal. 
Then any non-trivial subgroup of $[[G]]$ 
normalized by the commutator subgroup $D([[G]])$ contains $D([[G]])$. 
In particular, $D([[G]])$ is simple. 
\end{theorem}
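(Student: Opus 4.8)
The plan is to reduce the simplicity statement to a statement about the normal closure of a single element, and then to rebuild all of $D([[G]])$ out of conjugates of that element together with the small-support decomposition of Lemma \ref{decomp}. First I would fix a non-trivial subgroup $N\subset[[G]]$ normalized by $D([[G]])$ and apply Lemma \ref{tauexist} to obtain $\tau\in N\setminus\{1\}$ with $\supp(\tau)\neq G^{(0)}$. By Lemma \ref{normalize}, $\alpha\tau\alpha^{-1}\in N$ for every $\alpha\in[[G]]$, so $N$ contains the normal closure $M$ of $\tau$ in $[[G]]$; the point of passing to $M$ is that, unlike $N$ itself, it is genuinely normal in $[[G]]$, which lets me conjugate freely in the computations below. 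Since $G$ is minimal and $G^{(0)}$ is a Cantor set, every $G$-orbit is infinite, so Lemma \ref{decomp} applies. It now suffices to prove $D([[G]])\subset M$, and since $D([[G]])$ is generated by commutators it is enough to show $[\alpha,\beta]\in M$ for all $\alpha,\beta\in[[G]]$.

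Next I would cut a general commutator down to a local one. Using Lemma \ref{decomp} to write $\alpha$ and $\beta$ as products of homeomorphisms with arbitrarily small support, and expanding $[\alpha,\beta]$ by the standard commutator identities into a product of $[[G]]$-conjugates of commutators $[a,b]$ of the individual factors, I reduce (using normality of $M$) to the case in which $\supp(a)\cup\supp(b)$ is contained in a single clopen set that is so small that, by minimality, a conjugate of it can be placed disjointly from its $\tau$-image. Conjugating by a suitable element of $[[G]]$ and invoking normality of $M$ once more, I may therefore assume $a,b$ are supported in a clopen set $A$ with $A\cap\tau(A)=\emptyset$.

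The engine is the computation of $[\tau,\beta]$ for $\beta$ supported in such an $A$: since $\tau\beta\tau^{-1}$ is then supported in $\tau(A)$ while $\beta^{-1}$ is supported in $A$, the element $[\tau,\beta]=(\tau\beta\tau^{-1})\beta^{-1}$ lies in $M$ and is a disjoint product of a transported copy of $\beta$ on $\tau(A)$ and of $\beta^{-1}$ on $A$. Applying this to $a$, to $b$, and to their combinations, and then forming suitable products and commutators of the resulting paired elements---whose $A$-parts and $\tau(A)$-parts commute because their supports are disjoint---one arranges the $\tau(A)$-supported copies to cancel, depositing in $M$ honest elements of $D([[G]])$ supported in $A$, and in particular the commutator $[a,b]$ itself (or a short list of commutators generating the local commutator subgroup over $A$).

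The main obstacle I anticipate is precisely this final cancellation: organizing the transposition-and-commutator bookkeeping so that the unwanted copies on $\tau(A)$ cancel while exactly the desired commutators on $A$ survive, and checking throughout that one never steps outside $D([[G]])$. Once this is carried out, every commutator lies in $M\subset N$, hence $D([[G]])\subset N$, which is the first assertion. For the simplicity statement, a non-trivial normal subgroup $N$ of $D([[G]])$ is in particular a non-trivial subgroup of $[[G]]$ normalized by $D([[G]])$, so the first assertion gives $D([[G]])\subset N$; combined with $N\subset D([[G]])$ this forces $N=D([[G]])$, so $D([[G]])$ has no proper non-trivial normal subgroup and is therefore simple.
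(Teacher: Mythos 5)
Your proposal is correct, and it rests on the same three pillars as the paper's proof (Lemmas \ref{tauexist}, \ref{normalize} and \ref{decomp}) together with the same disjoint-support commutator trick: your ``engine'' is, up to inverses, exactly the paper's identity $[\alpha,\beta]=[[\alpha,\tilde\tau],\beta]$. The genuine difference is organizational, and it buys a real simplification. The paper works with $N$ itself, which is only normalized by $D([[G]])$, so it must run a two-stage induction (first decomposing $\alpha$, then $\beta$, into small factors) and re-invoke Lemma \ref{normalize} at every step, each time checking that the element being conjugated has support $\neq G^{(0)}$. You instead pass at the outset to the normal closure $M$ of $\tau$ in $[[G]]$: by Lemma \ref{normalize} every $[[G]]$-conjugate of $\tau$ lies in $N$, hence $M\subset N$, and since $M$ is normal in $[[G]]$ by construction, your one-shot expansion of $[\alpha,\beta]$ into $[[G]]$-conjugates of the commutators $[a_i,b_j]$ of small factors needs no further justification. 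Two remarks. First, the cancellation you flag as the main obstacle is in fact immediate: for $a,b$ supported in $A$ with $A\cap\tau(A)=\emptyset$, write $[\tau,a]=ua^{-1}$ with $u=\tau a\tau^{-1}$ supported in $\tau(A)$; then $[[\tau,a],b]=[ua^{-1},b]=u[a^{-1},b]u^{-1}=[a^{-1},b]$, since $u$ commutes with everything supported in $A$, and $[[\tau,a],b]\in M$ because $\tau\in M$ and $M$ is normal in $[[G]]$. Second, one attribution should be fixed: moving the (small) set $\supp(a)\cup\supp(b)$ into $A$ is \emph{not} a consequence of minimality alone --- for instance, no element of $[[G]]$ can carry a clopen set of invariant measure greater than $\tfrac12$ off of its $\tau$-image. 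What is needed is the comparison property of almost finite groupoids, namely \cite[Lemma 6.7]{M12PLMS} combined with the uniform lower bound $\mu(A)>c$ of \cite[Lemma 6.8]{M12PLMS}; this is precisely why Lemma \ref{decomp} measures smallness against all invariant probability measures, and it is the only place where almost finiteness (rather than minimality) enters your argument.
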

\begin{proof}
Let $N\subset[[G]]$ be a non-trivial subgroup normalized by $D([[G]])$. 
By Lemma \ref{tauexist}, 
there exists $\tau\in N\setminus\{1\}$ 
such that $\supp(\tau)\neq G^{(0)}$. 
Let $A\subset G^{(0)}$ be a non-empty clopen set 
such that $A\cap\tau(A)=\emptyset$. 
By \cite[Lemma 6.8]{M12PLMS}, 
there exists $c>0$ such that $\mu(A)>c$ for any $\mu\in M(G)$. 
We call $\alpha\in[[G]]$ small 
when $\mu(\supp(\alpha))<c/2$ for any $\mu\in M(G)$. 
Notice that if $\alpha\in[[G]]$ is small, then 
\[
\mu(\supp([\alpha,\sigma]))=\mu(\supp(\alpha\sigma\alpha^{-1}\sigma^{-1}))
\leq\mu(\supp(\alpha)\cup\sigma(\supp(\alpha)))
<c<1
\]
holds for any $\sigma\in[[G]]$ and $\mu\in M(G)$, 
and hence $\supp([\alpha,\sigma])\neq G^{(0)}$. 

First, we claim that 
the commutator $[\alpha,\beta]$ belongs to $N$ 
for any small $\alpha,\beta\in[[G]]$. 
By \cite[Lemma 6.7]{M12PLMS}, we can find $\sigma\in[[G]]$ 
such that $\sigma(\supp(\alpha)\cup\supp(\beta))\subset A$. 
From Lemma \ref{normalize}, 
we get $\tilde\tau=\sigma^{-1}\tau\sigma\in N$. 
Since $\tilde\tau(\supp(\alpha))$ and $\supp(\beta)$ are disjoint, 
$\tilde\tau\alpha\tilde\tau^{-1}$ commutes with $\beta$. 
Hence 
\[
[\alpha,\beta]=\alpha\beta\alpha^{-1}\beta^{-1}
=\alpha(\tilde\tau\alpha^{-1}\tilde\tau^{-1})\beta
(\tilde\tau\alpha\tilde\tau^{-1})\alpha^{-1}\beta^{-1}
=[[\alpha,\tilde\tau],\beta]. 
\]
As $\supp(\tilde\tau)=\sigma^{-1}(\supp(\tau))\neq G^{(0)}$, 
by using Lemma \ref{normalize} again, 
we also have $\alpha\tilde\tau\alpha^{-1}\in N$, 
and hence $[\alpha,\tilde\tau]\in N$. 
As mentioned above, $\supp([\alpha,\tilde\tau])\neq G^{(0)}$ 
because $\alpha$ is small. 
Therefore, by Lemma \ref{normalize}, 
we can conclude $[\alpha,\beta]\in N$. 

Next, we claim that 
the commutator $[\alpha,\beta]$ belongs to $N$ 
if $\beta$ is small. 
By Lemma \ref{decomp}, 
there exist $\alpha_1,\alpha_2,\dots,\alpha_n\in[[G]]$ such that 
$\alpha=\alpha_1\alpha_2\dots\alpha_n$ and 
$\alpha_i$ is small for all $i=1,2,\dots,n$. 
Suppose that $[\alpha_1\alpha_2\dots\alpha_{k-1},\beta]\in N$ is known. 
By the claim above, $[\alpha_k,\beta]\in N$. 
Besides, we can apply Lemma \ref{normalize} for $[\alpha_k,\beta]$, 
because $\supp([\alpha_k,\beta])\neq G^{(0)}$. 
Then 
\[
[\alpha_1\alpha_2\dots\alpha_k,\beta]
=\alpha_1\alpha_2\dots\alpha_{k-1}[\alpha_k,\beta]
(\alpha_1\alpha_2\dots\alpha_{k-1})^{-1}
[\alpha_1\alpha_2\dots\alpha_{k-1},\beta]
\]
is also in $N$. 
By induction, we obtain $[\alpha,\beta]\in N$. 

Finally, we would like to show that 
$[\alpha,\beta]$ belongs to $N$ for any $\alpha,\beta\in[[G]]$. 
By Lemma \ref{decomp}, 
there exist $\beta_1,\beta_2,\dots,\beta_m\in[[G]]$ such that 
$\beta=\beta_1\beta_2\dots\beta_m$ and 
$\beta_i$ is small for all $i=1,2,\dots,m$. 
Suppose that $[\alpha,\beta_1\beta_2\dots\beta_{k-1}]\in N$ is known. 
By the claim above, $[\alpha,\beta_k]\in N$. 
Besides, we can apply Lemma \ref{normalize} for $[\alpha,\beta_k]$, 
because $\supp([\alpha,\beta_k])\neq G^{(0)}$. 
Then 
\[
[\alpha,\beta_1\beta_2\dots\beta_k]
=[\alpha,\beta_1\beta_2\dots\beta_{k-1}]
\beta_1\beta_2\dots\beta_{k-1}[\alpha,\beta_k]
(\beta_1\beta_2\dots\beta_{k-1})^{-1}
\]
is also in $N$. 
By induction, we obtain $[\alpha,\beta]\in N$. 
Hence the commutator subgroup $D([[G]])$ is contained in $N$. 
\end{proof}

\begin{remark}
In general, we do not know what the abelianization $[[G]]/D([[G]])$ is. 
In \cite[Theorem 7.5]{M12PLMS}, it was shown that 
the index map $I:[[G]]\to H_1(G)$ is surjective when $G$ is almost finite. 
Thus $[[G]]/D([[G]])$ at least has $H_1(G)\cong[[G]]/[[G]]_0$ as its quotient. 
For a minimal action $\phi:\Z\curvearrowright X$ of $\Z$ on a Cantor set $X$, 
it is known that $[[G_\phi]]_0/D([[G_\phi]]])$ is isomorphic to 
$H_0(G_\phi)\otimes\Z_2$ 
(see \cite[Theorem 4.8]{M06IJM} and the remark following it). 
\end{remark}

\subsection{Purely infinite groupoids}

Throughout this subsection, 
we let $G$ be an essentially principal \'etale groupoid 
whose unit space is a Cantor set. 
In this subsection, 
we would like to show that the commutator subgroup $D([[G]])$ of 
the topological full group $[[G]]$ is simple 
when $G$ is purely infinite and minimal (Theorem \ref{simple2}). 
We first introduce the notion of purely infinite groupoids. 
This definition is inspired 
by the work of M. R\o rdam and A. Sierakowski \cite{RS12ETDS}. 

\begin{definition}\label{pi}
\begin{enumerate}
\item A clopen set $A\subset G^{(0)}$ is said to be properly infinite 
if there exist compact open $G$-sets $U,V\subset G$ 
such that $s(U)=s(V)=A$, $r(U)\cup r(V)\subset A$ 
and $r(U)\cap r(V)=\emptyset$. 
\item We say that $G$ is purely infinite 
if every clopen set $A\subset G^{(0)}$ is properly infinite. 
\end{enumerate}
\end{definition}

It is easy to see that if $G$ is purely infinite, then 
for any clopen set $Y\subset G^{(0)}$ 
the reduction $G|Y$ is also purely infinite. 
It is also clear that if $G^{(0)}$ is properly infinite, then 
there does not exist a $G$-invariant probability measure. 
When $G$ is purely infinite, 
in essentially the same way as \cite[Theorem 4.1]{RS12ETDS}, 
one can prove that 
the reduced groupoid $C^*$-algebra $C^*_r(G)$ is purely infinite. 

The following is a generalization of \cite[Theorem 2.5]{MatsuDCDS}. 

\begin{proposition}\label{pi>Z2*Z3}
Suppose that $G^{(0)}$ is properly infinite. 
The group $[[G]]$ contains a subgroup isomorphic to 
the free product $\Z_2*\Z_3$. 
In particular, $[[G]]$ is not amenable. 
\end{proposition}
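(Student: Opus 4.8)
The plan is to exhibit an element $a\in[[G]]$ of order two and an element $b\in[[G]]$ of order three, and then to verify, via the ping-pong lemma, that they generate a copy of $\Z_2*\Z_3$. Granting this, the non-amenability assertion is immediate: $\Z_2*\Z_3\cong\mathrm{PSL}(2,\Z)$ contains a non-abelian free group $\F_2$ as a subgroup of finite index and is therefore non-amenable, and since amenability passes to subgroups, $[[G]]$ cannot be amenable once it contains $\Z_2*\Z_3$.

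First I would unpack the hypothesis. Since $G^{(0)}$ is properly infinite, Definition \ref{pi} supplies compact open $G$-sets $U,V$ with $s(U)=s(V)=G^{(0)}$ and with $r(U),r(V)$ disjoint clopen subsets of $G^{(0)}$. Writing $A_1=r(U)$, $A_2=r(V)$ and $\phi_1=\pi_U$, $\phi_2=\pi_V$, we obtain homeomorphisms $\phi_1\colon G^{(0)}\to A_1$ and $\phi_2\colon G^{(0)}\to A_2$ onto disjoint clopen sets. Applying $\phi_1$ once more inside $A_1$, the sets $A_{11}=\phi_1(A_1)$ and $A_{12}=\phi_1(A_2)$ are disjoint clopen subsets of $A_1$, realized as the ranges of the compact open $G$-sets $UU$ and $UV$. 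I would then let $a\in[[G]]$ be the transposition swapping $A_1$ and $A_2$ through $\phi_2\phi_1^{-1}$ and $\phi_1\phi_2^{-1}$ and fixing $G^{(0)}\setminus(A_1\cup A_2)$ pointwise; it is implemented by the compact open $G$-set $VU^{-1}\cup UV^{-1}\cup(G^{(0)}\setminus(A_1\cup A_2))$, satisfies $a^2=1$, and is nontrivial as $A_1,A_2$ are nonempty. Similarly I would let $b\in[[G]]$ be the $3$-cycle sending $A_2\to A_{11}\to A_{12}\to A_2$ (through the evident products of $U,V$ and their inverses) and fixing the rest of $G^{(0)}$; since $A_2,A_{11},A_{12}$ are mutually disjoint clopen sets, $b$ is a well-defined element of $[[G]]$ with $b^3=1$ and $b\neq1$, hence of order exactly three.

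Finally I would run ping-pong with the nonempty disjoint sets $\Omega_1=A_1$ and $\Omega_2=A_2$, taking $\langle b\rangle\cong\Z_3$ as the factor of order at least three and $\langle a\rangle\cong\Z_2$ as the other factor. By construction $a(\Omega_1)=A_2=\Omega_2\subseteq\Omega_2$, while $b(\Omega_2)=A_{11}\subset A_1=\Omega_1$ and $b^2(\Omega_2)=A_{12}\subset A_1=\Omega_1$; thus every nontrivial power of $b$ carries $\Omega_2$ into $\Omega_1$, and the nontrivial element of $\langle a\rangle$ carries $\Omega_1$ into $\Omega_2$. The ping-pong lemma then gives $\langle a,b\rangle\cong\Z_3*\Z_2\cong\Z_2*\Z_3$. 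The only point requiring genuine care is the bookkeeping in this last step: one must arrange for the order-three factor $\langle b\rangle$ (rather than the order-two factor) to be the one whose nontrivial elements map $\Omega_2$ into $\Omega_1$, so that the standard form of the lemma—which requires one factor to have at least three elements—applies. Everything else is a routine check that the displayed homeomorphisms are indeed induced by compact open $G$-sets and hence lie in $[[G]]$.
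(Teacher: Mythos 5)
Your proof is correct and takes essentially the same route as the paper: the paper also chooses compact open $G$-sets $U,V$ with $s(U)=s(V)=G^{(0)}$ and disjoint ranges $A=r(U)$, $B=r(V)$, defines the involution $\alpha$ swapping $A$ and $B$ and a $3$-cycle $\beta$ on the three disjoint clopen sets $B$, $r(UU)$, $r(UV)$, and applies the table-tennis lemma with the pair $(A,B)$, taking $\langle\beta\rangle\cong\Z_3$ as the factor of order at least three. The only differences are cosmetic (your $3$-cycle runs in the opposite direction, and you spell out the non-amenability step, which the paper leaves implicit).
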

\begin{proof}
Since $G^{(0)}$ is properly infinite, 
there exist compact open $G$-sets $U,V\subset G$ 
such that $s(U)=s(V)=G^{(0)}$ and $r(U)\cap r(V)=\emptyset$. 
Let $A=r(U)$ and $B=r(V)$. 
Define $\alpha,\beta\in[[G]]$ by 
\[
\alpha(x)=\begin{cases}\pi_V(\pi_U^{-1}(x))&x\in A\\
\pi_U(\pi_V^{-1}(x))&x\in B\\ x&\text{otherwise, }\end{cases}
\beta(x)=\begin{cases}
\pi_V(\pi_U^{-1}(\pi_U^{-1}(x)))&x\in r(UU)\\
\pi_U(x)&x\in B\\
\pi_U(\pi_U(\pi_V^{-1}((\pi_U^{-1}(x)))))&x\in r(UV)\\
x&\text{otherwise. }\end{cases}
\]
Evidently $\alpha^2=1$, $\beta^3=1$, $\alpha(A)=B$, $\beta(B)\subset A$ and 
$\beta^2(B)\subset A$. 
The table-tennis lemma (see \cite[II.B.24]{Htext}, for instance) implies that 
the subgroup generated by $\alpha$ and $\beta$ is isomorphic to 
the free product $\Z_2*\Z_3$. 
\end{proof}

\begin{proposition}\label{pim}
The following conditions are mutually equivalent. 
\begin{enumerate}
\item $G$ is purely infinite and minimal. 
\item For any clopen sets $A,B\subset G^{(0)}$ with $B\neq\emptyset$, 
there exists a compact open $G$-set $U\subset G$ 
such that $s(U)=A$ and $r(U)\subset B$. 
\item For any clopen sets $A,B\subset G^{(0)}$ 
with $A\neq G^{(0)}$ and $B\neq\emptyset$, 
there exists $\alpha\in[[G]]$ such that $\alpha(A)\subset B$. 
\end{enumerate}
\end{proposition}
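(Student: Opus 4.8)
The plan is to prove the three conditions equivalent cyclically, $(1)\Rightarrow(2)\Rightarrow(3)\Rightarrow(1)$. Throughout I would abbreviate, for clopen sets $E,F\subset G^{(0)}$, by $E\preceq F$ the assertion that there is a compact open $G$-set $U$ with $s(U)=E$ and $r(U)\subset F$; thus $(2)$ says exactly that $E\preceq F$ for every clopen $E$ and every non-empty clopen $F$. The one genuinely delicate point, common to whichever implication must manufacture an element of $[[G]]$ (namely $\Rightarrow(3)$), is that a homeomorphism in $[[G]]$ is a full bisection, so prescribing it on $A$ forces its range-complement to be matched exactly. This is a Schr\"oder--Bernstein problem for compact open $G$-sets, and arranging that the resulting bisection is clopen is the crux of the whole proposition.

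For $(1)\Rightarrow(2)$, fix a clopen $A$ and a non-empty clopen $B$. First I would use minimality: the $G$-saturation of $B$ is all of $G^{(0)}$, so every point lies in $r(W)$ for some compact open $G$-set $W$ with $s(W)\subset B$; by compactness finitely many such ranges cover $G^{(0)}$, and after disjointifying I obtain compact open $G$-sets $W_1,\dots,W_n$ with pairwise disjoint ranges covering $G^{(0)}$ and $s(W_i)\subset B$. This writes $A=\bigsqcup_i(A\cap r(W_i))$ and sends each piece back into $B$ via $\pi_{W_i}^{-1}$, but the images may overlap inside $B$. To separate them I would invoke pure infiniteness: iterating proper infiniteness of $B$ yields $n$ pairwise disjoint clopen copies $B^{(1)},\dots,B^{(n)}\subset B$ together with compact open $G$-sets $T_i\colon B\to B^{(i)}$. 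Composing $A\cap r(W_i)\to s(W_i)\subset B\to B^{(i)}$ lands the $i$-th piece in the disjoint set $B^{(i)}$, and the disjoint union of these finitely many $G$-sets is the required $U$ with $s(U)=A$ and $r(U)\subset B$.

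For $(2)\Rightarrow(3)$, I would first extract from $(2)$ that every non-empty clopen set $E$ is properly infinite: splitting $E=E_1\sqcup E_2$ and applying $(2)$ to $(E,E_1)$ and $(E,E_2)$ embeds two disjoint copies of $E$ into $E$. After discarding the trivial case $A=\emptyset$ and shrinking $B$ to a proper non-empty clopen subset, the target reduces to the following: any two proper non-empty clopen sets admit an \emph{honest} compact open equivalence (a $G$-set $U$ with $s(U)=E$ and $r(U)=F$, not merely $E\preceq F$). Concretely, take $U$ from $(2)$ with $s(U)=A$ and $r(U)\subset B$ a proper non-empty set; an honest equivalence between $A^c$ and $G^{(0)}\setminus r(U)$, glued to $U$, is a full bisection $W$ with $\pi_W(A)=r(U)\subset B$, giving $\alpha=\pi_W$. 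Proving this equivalence is the hard part. Since $(2)$ supplies $E\preceq F$ and $F\preceq E$ at once, it is a Schr\"oder--Bernstein statement; however the naive Banach--Knaster--Tarski construction produces a bijection whose defining set is a countable union of clopen pieces and need not be clopen, so its graph need not be a compact open $G$-set. The plan is to circumvent this as in R\o rdam--Sierakowski: using that both sets are properly infinite, the ill-behaved infinite tail of the Banach decomposition can be absorbed into a single clopen piece, so that $E\preceq F$, $F\preceq E$ and proper infiniteness together force a genuine compact open equivalence $E\sim F$. This absorption step is where I expect essentially all the real work to be.

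Finally, for $(3)\Rightarrow(1)$, minimality comes for free: every $\alpha\in[[G]]$ preserves $G$-orbits, so a proper non-empty clopen $G$-invariant set $C$ would satisfy $\alpha(C)=C$ for all $\alpha$, contradicting $(3)$ applied to $A=C^c\neq G^{(0)}$ and $B=C$. For pure infiniteness I would treat proper clopen sets first: for $A\neq G^{(0)}$ split $A=A_1\sqcup A_2$ and apply $(3)$ to $(A,A_1)$ and $(A,A_2)$, whose restricted bisections give $U,V$ with $s(U)=s(V)=A$ and disjoint ranges $r(U)\subset A_1$, $r(V)\subset A_2$. The case $A=G^{(0)}$ I would bootstrap: choose a proper non-empty clopen $C$, use its (now established) proper infiniteness to get $U_0\colon C\to C'\subsetneq C$, use $(3)$ to map $C^c$ into $C\setminus C'$, and glue to obtain $U\colon G^{(0)}\to r(U)\subsetneq G^{(0)}$; then $(3)$ applied to $\bigl(r(U),\,G^{(0)}\setminus r(U)\bigr)$ followed by composition with $U$ yields a second bisection $V$ with $s(V)=G^{(0)}$ and $r(V)\subset G^{(0)}\setminus r(U)$, so $G^{(0)}$ is properly infinite. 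With the clopen Schr\"oder--Bernstein lemma of the previous step in hand, the cycle closes.
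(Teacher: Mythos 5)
Your (1)$\Rightarrow$(2) is sound and essentially the paper's argument (minimality plus compactness to chop $A$ into finitely many pieces mapped into $B$, then iterated proper infiniteness to disjointify the images), and the pure-infiniteness half of your (3)$\Rightarrow$(1) is also correct. The fatal problem is the ``clopen Schr\"oder--Bernstein lemma'' on which your (2)$\Rightarrow$(3) rests: it is \emph{false} that $E\preceq F$, $F\preceq E$ and proper infiniteness of two proper non-empty clopen sets force a compact open $G$-set $U$ with $s(U)=E$ and $r(U)=F$. The obstruction is the class in $H_0(G)$ (the groupoid analogue of $K_0$), which is preserved by honest equivalence but not by subequivalence. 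Concretely, let $G$ be the groupoid of the full shift on three symbols (Section 6.7.1), so $H_0(G)\cong\Z_2$ and $[1_{C_\mu}]_G=[1_X]_G=1$ for every cylinder set; then $E=C_1$ and $F=C_1\sqcup C_2$ are proper, non-empty, properly infinite, and mutually subequivalent (Lemma \ref{SFT>pim} together with the already-proved implication (1)$\Rightarrow$(2)), yet $[1_E]_G=1\neq 0=[1_F]_G$, so Theorem \ref{Hopf} forbids any compact open equivalence between them. Hence no absorption argument can prove your lemma, because the statement it targets is wrong. (The particular pair you need, $G^{(0)}\setminus A$ and $G^{(0)}\setminus r(U)$, does have equal $H_0$-classes, since $[1_{r(U)}]_G=[1_{s(U)}]_G$; but your proposal never uses this, and upgrading ``equal classes'' to an honest equivalence is a substantial theorem -- it is exactly Theorem \ref{Hopf} for shifts of finite type -- not a routine repair.) The paper sidesteps Schr\"oder--Bernstein entirely by a disjointness trick you missed: when $B\setminus A\neq\emptyset$, apply (2) with target $B\setminus A$, so that $s(U)=A$ and $r(U)$ are disjoint; then $V=U\cup U^{-1}\cup(G^{(0)}\setminus(s(U)\cup r(U)))$ is a compact open $G$-set and $\pi_V\in[[G]]$ is an involution with $\pi_V(A)\subset B$. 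When $B\subset A$, compose two such involutions through $G^{(0)}\setminus A$. One never has to match the complements exactly.

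There is a second, smaller gap in (3)$\Rightarrow$(1): you deduce minimality from the absence of proper non-empty clopen invariant sets, but that is strictly weaker than minimality. For example, the transformation groupoid of the full two-sided shift $\Z\curvearrowright\{0,1\}^\Z$ is an essentially principal \'etale groupoid on a Cantor set which is topologically transitive, hence has no proper non-empty clopen invariant subset, yet it is not minimal (the fixed point $0^\infty$ has non-dense orbit). The correct deduction uses (3) directly: given $x\in G^{(0)}$ and a non-empty clopen $B$ (we may assume $B\neq G^{(0)}$ and $x\notin B$), apply (3) to the proper clopen set $A=G^{(0)}\setminus B\ni x$ to get $\alpha\in[[G]]$ with $\alpha(A)\subset B$; since $[[G]]\subset[G]$, the point $\alpha(x)\in B$ lies in the $G$-orbit of $x$, so every orbit meets every non-empty open set and $G$ is minimal.
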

\begin{proof}
(1)$\Rightarrow$(2)
There exist compact open $G$-sets $U,V\subset G$ 
such that $s(U)=s(V)=B$, $r(U)\cup r(V)\subset B$ 
and $r(U)\cap r(V)=\emptyset$, 
because $B$ is properly infinite. 
Define $V_1,V_2,\dots$ inductively by $V_1=U$, $V_{n+1}=VV_n$. 
Then $V_n$ are compact open $G$-sets 
which satisfy $s(V_n)=B$, $r(V_n)\subset B$ 
and $r(V_n)\cap r(V_m)=\emptyset$ for $n\neq m$. 
Since $G$ is minimal, 
we can find compact open $G$-sets $W_1,W_2,\dots,W_n$ such that 
$r(W_i)\subset B$, $A=s(W_1)\cup s(W_2)\cup\dots\cup s(W_n)$ 
and $s(W_i)\cap s(W_j)=\emptyset$ for $i\neq j$. 
Define a compact open $G$-set $W\subset G$ by 
\[
W=\bigcup_{i=1}^nV_iW_i. 
\]
Clearly we have $s(W)=A$ and $r(W)\subset B$. 

(2)$\Rightarrow$(3)
First, we assume that $B\setminus A$ is not empty. 
By (2), there exists a compact open $G$-set $U\subset G$ 
such that $s(U)=A$ and $r(U)\subset B\setminus A$. 
Then 
\[
V=U\cup U^{-1}\cup(G^{(0)}\setminus(s(U)\cup r(U)))
\]
is a compact open $G$-set satisfying $s(V)=r(V)=G^{(0)}$. 
Hence $\pi_V$ belongs to $[[G]]$. 
It is easy to see $\pi_V(A)\subset B$. 
Next, we assume that $B\setminus A$ is empty, thus $B\subset A$. 
By the argument above, we can find $\alpha_1,\alpha_2\in[[G]]$ 
such that $\alpha_1(A)\subset G^{(0)}\setminus A$ and 
$\alpha_2(G^{(0)}\setminus A)\subset B$. 
Then $\alpha=\alpha_2\alpha_1$ meets the requirement. 

(3)$\Rightarrow$(1)
Clearly (3) implies minimality of $G$. 
Let $A\subset G^{(0)}$ be a non-empty clopen subset. 
Choose mutually disjoint non-empty clopen sets $B_0,B_1,B_2\subset A$. 
By (3), there exist $\alpha_1,\alpha_2\in[[G]]$ 
satisfying $\alpha_1(A\setminus B_0)\subset B_1$ and 
$\alpha_2(B_0\cup B_1)\subset B_2$. 
Let $U_1,U_2$ be compact open $G$-sets 
such that $\alpha_1=\pi_{U_1}$ and $\alpha_2=\pi_{U_2}$. 
Define compact open $G$-sets $V_1$ and $V_2$ by 
\[
V_1=B_0\cup U_1(A\setminus B_0)\quad\text{and}\quad 
V_2=U_2V_1. 
\]
It is not so hard to see 
$s(V_1)=s(V_2)=A$, 
$r(V_1)\subset B_0\cup B_1$ and $r(V_2)\subset B_2$. 
The proof is complete. 
\end{proof}

\begin{remark}\label{n-filling}
Let $X$ be a Cantor set and 
let $\phi:\Gamma\curvearrowright X$ be an action of 
a countable discrete group $\Gamma$ by homeomorphisms. 
The action $\phi$ is said to be $n$-filling 
in the sense of P. Jolissaint and G. Robertson \cite{JR00JFA} 
if for any non-empty clopen sets $A_1,A_2,\dots,A_n$ of $X$, 
there exist $g_1,g_2,\dots,g_n\in\Gamma$ such that 
$\phi_{g_1}(A_1)\cup\dots\cup\phi_{g_n}(A_n)=X$. 
We can see that if $\phi$ is $n$-filling, then 
the transformation groupoid $G_\phi=\Gamma\times X$ is 
purely infinite and minimal. 
Indeed, one can verify condition (2) of the proposition above as follows. 
We identify the unit space $G^{(0)}$ with $X$. 
Let $A,B\subset X$ be non-empty clopen sets. 
We would like to construct a compact open $G_\phi$-set $U\subset G_\phi$ 
such that $s(U)=A$ and $r(U)\subset B$. 
Let $B_1,B_2,\dots,B_n\subset B$ be mutually disjoint non-empty clopen sets. 
Since $\phi$ is $n$-filling, 
there exist $g_1,g_2,\dots,g_n\in\Gamma$ such that 
$\phi_{g_1}(B_1)\cup\dots\cup\phi_{g_n}(B_n)=X$. 
Define $C_1,C_2,\dots,C_n$ inductively by 
\[
C_1=B_1\cap\phi_{g_1}^{-1}(A)\quad\text{and}\quad 
C_{k+1}=B_{k+1}\cap
\phi_{g_{k+1}}^{-1}(A\setminus(\phi_{g_1}(C_1)\cup\dots\cup\phi_{g_k}(C_k))). 
\]
Then $C_k$ are mutually disjoint and 
$A$ equals the disjoint union $\phi_{g_1}(C_1)\cup\dots\cup\phi_{g_n}(C_n)$. 
It follows that $U=\bigcup\{g_k^{-1}\}\times\phi_{g_k}(C_k)$ is 
a compact open $G$-set with $s(U)=A$ and $r(U)\subset B$. 
\end{remark}

\begin{lemma}\label{smallsupp}
Suppose that $G$ is purely infinite and minimal. 
\begin{enumerate}
\item For any clopen set $A,B\subset G^{(0)}$ 
with $A\neq G^{(0)}$ and $B\neq\emptyset$, there exists $\alpha\in[[G]]$ 
such that $\alpha(A)\subset B$ and $A\cup\supp(\alpha)\neq G^{(0)}$. 
\item For any clopen set $A,B\subset G^{(0)}$ 
with $A\neq G^{(0)}$ and $B\neq\emptyset$, 
there exists $\alpha\in D([[G]])$ such that $\alpha(A)\subset B$. 
\end{enumerate}
\end{lemma}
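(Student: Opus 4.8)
Here is how I would prove Lemma~\ref{smallsupp}, treating the two parts in order and deducing (2) from (1) by a single commutator trick; Proposition~\ref{pim} is the engine throughout.

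For part (1) the idea is to reserve, in advance, a nonempty clopen set that the homeomorphism will leave untouched, and then do all the moving inside a reduction. Since $A\neq G^{(0)}$, the set $G^{(0)}\setminus A$ is a nonempty clopen subset of a Cantor set and hence has no isolated points, so I can choose a nonempty clopen $C\subsetneq G^{(0)}\setminus A$ with $B\not\subset C$ (a routine point-set choice: split off a small piece of $G^{(0)}\setminus A$ avoiding a prescribed point of $B$ in the case $B\subset G^{(0)}\setminus A$). Put $Y=G^{(0)}\setminus C$. Then $A\subset Y$, the set $Y\setminus A=(G^{(0)}\setminus A)\setminus C$ is nonempty, and $B\cap Y=B\setminus C$ is nonempty. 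The reduction $G|Y$ is again purely infinite (as noted after Definition~\ref{pi}) and minimal (every $G$-orbit is dense, hence so is every $(G|Y)$-orbit in $Y$). Applying condition (3) of Proposition~\ref{pim} inside $G|Y$ to the clopen sets $A\subsetneq Y$ and $\emptyset\neq B\cap Y$, I obtain $\alpha\in[[G|Y]]$ with $\alpha(A)\subset B\cap Y\subset B$. Extending $\alpha$ by the identity on $C$ (the standard embedding $[[G|Y]]\hookrightarrow[[G]]$) yields $\tilde\alpha\in[[G]]$ with $\tilde\alpha(A)\subset B$ and $\supp(\tilde\alpha)\subset Y$; since $C\subset G^{(0)}\setminus A$ and $C\cap Y=\emptyset$, we get $A\cup\supp(\tilde\alpha)\subset Y\neq G^{(0)}$, as required.

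For part (2) I would feed the output of part (1) into a commutator. Apply part (1) to obtain $\alpha_1\in[[G]]$ with $\alpha_1(A)\subset B$ and $S:=A\cup\supp(\alpha_1)\neq G^{(0)}$; note that $S$ is clopen by Lemma~\ref{suppclp} and that $A\subset S$. Choose a nonempty clopen $E\subset G^{(0)}\setminus S$. Using condition (2) of Proposition~\ref{pim} with source $S$ and target $E$, find a compact open $G$-set $T$ with $s(T)=S$ and $r(T)\subset E$; since $r(T)\subset E$ is disjoint from $S$, the associated ``swap'' $\beta:=\pi_{T\cup T^{-1}\cup(G^{(0)}\setminus(S\cup r(T)))}\in[[G]]$ is an involution that carries $S$ onto $r(T)\subset G^{(0)}\setminus S$ and fixes everything off $S\cup r(T)$. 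I then claim that $\alpha:=[\alpha_1,\beta]\in D([[G]])$ already works. Indeed, for $x\in A\subset S$ one has $\beta^{-1}(x)=\beta(x)\in r(T)$, which lies off $S\supset\supp(\alpha_1)$ and is therefore fixed by $\alpha_1^{-1}$; hence $\alpha_1^{-1}\beta^{-1}(x)=\beta(x)$ and $\alpha(x)=\alpha_1\beta\bigl(\beta(x)\bigr)=\alpha_1(x)$. Thus $\alpha|_A=\alpha_1|_A$, so $\alpha(A)=\alpha_1(A)\subset B$, while $\alpha$ is visibly a commutator of elements of $[[G]]$ and so lies in $D([[G]])$.

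The real content is the commutator identity in part (2), and this is exactly what the extra condition $A\cup\supp(\alpha_1)\neq G^{(0)}$ in part (1) was engineered to supply: because $G$ is purely infinite and minimal, the whole active region $S$ can be pushed entirely off itself by an involution $\beta$ (via Proposition~\ref{pim}(2)), after which conjugation by $\beta$ leaves $\alpha_1$ undisturbed on $A$ and the commutator $[\alpha_1,\beta]$ collapses to $\alpha_1$ on $A$. I expect the remaining work to be purely bookkeeping — verifying that extension-by-identity $[[G|Y]]\to[[G]]$ behaves as claimed on supports, and checking the elementary existence of the clopen set $C$ in part (1). Crucially, the measure-theoretic estimates needed in the almost finite case do not appear here, since pure infiniteness provides unlimited room and there are no $G$-invariant measures to respect.
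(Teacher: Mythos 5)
Your proof is correct and follows essentially the same route as the paper: part (1) by passing to a proper clopen reduction containing $A$ and meeting $B$, applying Proposition \ref{pim} (3) there, and extending by the identity; part (2) by a commutator with an element that moves $A\cup\supp(\alpha_1)$ entirely off itself, so that the commutator agrees with $\alpha_1$ on $A$. The only cosmetic difference is in (2): you construct an explicit involution $\beta$ from Proposition \ref{pim} (2), whereas the paper simply takes any $\sigma\in[[G]]$ with $\sigma(A\cup\supp(\alpha))\subset G^{(0)}\setminus(A\cup\supp(\alpha))$ via Proposition \ref{pim} (3) and verifies $(\alpha\sigma\alpha^{-1}\sigma^{-1})(A)=\alpha(A)$.
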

\begin{proof}
(1)
Take a clopen set $C$ so that 
$C\neq G^{(0)}$, $A\subset C$, $C\setminus A\neq\emptyset$ 
and $B\cap C\neq\emptyset$. 
The reduction $G|C=r^{-1}(C)\cap s^{-1}(C)$ is purely infinite and minimal. 
By Proposition \ref{pim} (3), 
there exists $\alpha\in[[G|C]]$ such that $\alpha(A)\subset B\cap C$. 
Letting $\alpha(x)=x$ for $x\notin C$, 
we may regard $\alpha$ as an element of $[[G]]$ 
and obtain $A\cup\supp(\alpha)\subset C\neq G^{(0)}$. 

(2) 
By (1), we can find $\alpha\in[[G]]$ such that 
$\alpha(A)\subset B$ and $A\cup\supp(\alpha)\neq G^{(0)}$. 
By Proposition \ref{pim} (3), 
there exists $\sigma\in[[G]]$ such that 
\[
\sigma(A\cup\supp(\alpha))\subset G^{(0)}\setminus(A\cup\supp(\alpha)). 
\]
Then we have $(\alpha\sigma\alpha^{-1}\sigma^{-1})(A)=\alpha(A)\subset B$. 
\end{proof}

\begin{lemma}\label{decomp2}
Suppose that $G$ is purely infinite and minimal. 
Let $N\subset[[G]]$ be a non-trivial subgroup normalized by $D([[G]])$. 
For any $\tau\in N$, there exist $\tau_1,\tau_2\in N$ 
such that $\supp(\tau_1)\neq G^{(0)}$, $\supp(\tau_2)\neq G^{(0)}$ 
and $\tau=\tau_1\tau_2$. 
\end{lemma}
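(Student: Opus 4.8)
The plan is to reduce the statement to the construction of a single auxiliary element of $N$ with prescribed local behaviour. First observe that if $\supp(\tau)\neq G^{(0)}$ there is nothing to do: one takes $\tau_1=\tau$ and $\tau_2=1$, both of which lie in $N$ and have support different from $G^{(0)}$. So from now on I may assume $\supp(\tau)=G^{(0)}$.

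The key point is a sufficient condition. Suppose I can produce $\tau_1\in N$ together with non-empty clopen sets $B,C\subset G^{(0)}$ such that $\tau_1=\tau$ on $B$ and $\tau_1=\id$ on $C$. Then $\supp(\tau_1)\subset G^{(0)}\setminus C\neq G^{(0)}$, and putting $\tau_2=\tau_1^{-1}\tau\in N$ one has $\tau_2=\id$ on $B$, whence $\supp(\tau_2)\subset G^{(0)}\setminus B\neq G^{(0)}$; since $\tau=\tau_1\tau_2$ this is the desired decomposition. (Conversely, any valid decomposition is forced to be of this shape, so this really is the whole problem.) Thus everything reduces to building such a $\tau_1$ inside $N$.

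Since the only members of $N$ I can write down are $\tau^{\pm1}$ and their conjugates $g\tau^{\pm1}g^{-1}$ with $g\in D([[G]])$ (recall $N$ is normalized by $D([[G]])$), and since any single such conjugate again has full support because $\supp(g\tau g^{-1})=g(\supp(\tau))=G^{(0)}$, I will take $\tau_1$ to be a product of two conjugates,
\[
\tau_1=(g\tau g^{-1})(h\tau^{-1}h^{-1}),\qquad g,h\in D([[G]]),
\]
which automatically lies in $N$. The conjugators will be chosen so that the two factors cancel on one region and compose to $\tau$ on another. Concretely, I pick small non-empty clopen sets $B$ and $C$ with $B\cup\tau(B)$ and $C\cup\tau^{-1}(C)$ disjoint and proper, and arrange $g=h=\id$ on $C\cup\tau^{-1}(C)$; on $C$ this gives $h\tau^{-1}h^{-1}=\tau^{-1}$ followed by $g\tau g^{-1}=\tau$, so that $\tau_1=\id$ on $C$. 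On the $B$-side I instead prescribe $g$ and $h$ on $B$ and on finitely many disjoint auxiliary clopen "scratch" sets so that the composite routes $B$ through the scratch sets and lands on $\tau(B)$ exactly as $\tau$ does, yielding $\tau_1=\tau$ on $B$.

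The hard part will be the realizability of such $g,h\in D([[G]])$ with prescribed behaviour on several disjoint clopen sets simultaneously; this is exactly where pure infiniteness and minimality are used. Proposition \ref{pim} and Lemma \ref{smallsupp} allow me to move any proper clopen set into any non-empty clopen target by an element of $D([[G]])$, and, by superposing such moves on disjoint supports, to realize an element of $D([[G]])$ with any prescribed action on a finite disjoint family of clopen sets, the leftover freedom being absorbed harmlessly since no invariant probability measure constrains the sizes of these sets. With $g$ and $h$ so chosen, the two displayed properties of $\tau_1$ give $\supp(\tau_1)\neq G^{(0)}$ and $\supp(\tau_1^{-1}\tau)\neq G^{(0)}$, and $\tau=\tau_1\,(\tau_1^{-1}\tau)$ is the required decomposition.
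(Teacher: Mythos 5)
Your reduction to the case $\supp(\tau)=G^{(0)}$, your sufficient condition (find $\tau_1\in N$ with $\tau_1=\tau$ on $B$ and $\tau_1=\id$ on $C$, then set $\tau_2=\tau_1^{-1}\tau$), and even the shape $\tau_1=(g\tau g^{-1})(h\tau^{-1}h^{-1})$ with $g,h\in D([[G]])$ all match the paper's proof, where $\tau_1=\alpha^{-1}[\alpha\sigma\alpha^{-1},\tau]\alpha=(\sigma\alpha^{-1})\tau(\sigma\alpha^{-1})^{-1}\cdot\alpha^{-1}\tau^{-1}\alpha$. But the step you defer---actually producing $g,h\in D([[G]])$ so that the composite equals $\tau$ on $B$---is the entire content of the lemma, and the principle you invoke to discharge it is not available. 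Proposition \ref{pim} (3) and Lemma \ref{smallsupp} give only set-level statements $\alpha(A)\subset B$: they prescribe neither the pointwise action of $\alpha$ on $A$ nor (in the case of Lemma \ref{smallsupp} (2)) the support of $\alpha$, so ``superposing such moves on disjoint supports'' cannot be carried out with them, and they certainly do not yield ``an element of $D([[G]])$ with any prescribed action on a finite disjoint family of clopen sets.'' Realizing a prescribed partial map by an element of $[[G]]$ requires in general solving an extension problem on the complementary clopen sets, whose obstruction is homological (it lives in $H_0(G)$), not measure-theoretic---so your remark that no invariant probability measure constrains the sizes of the sets does not address the issue---and membership in $D([[G]])$ is a further constraint that cannot be verified, since for a general purely infinite minimal $G$ the abelianization of $[[G]]$ is unknown at this point in the paper.

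The paper's proof is engineered precisely to avoid both difficulties, and that is where the real work lies: it builds $\sigma\in D([[G]])$ with $\sigma=\tau$ on $A$ and $\supp(\sigma)\subset A\cup\tau(A)\cup B$ as an explicit commutator $[\sigma_2,\sigma_1]$ of two involutions, each an honest element of $[[G]]$ because it is of the form ``a compact open $G$-set, its inverse, and the identity elsewhere'' (no extension problem arises); it then compresses $\supp(\sigma)$ into $A$ by $\alpha\in D([[G]])$ from Lemma \ref{smallsupp} (2), and verifies by a support-tracking computation that $\tau_1=\alpha^{-1}[\alpha\sigma\alpha^{-1},\tau]\alpha$ agrees with $\tau$ on $A$ and has support inside $\alpha^{-1}(A\cup\tau(A))$. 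That this care is genuinely necessary, and that your ``routing'' cannot be treated as routine, is shown by the involution case: if $\tau^2=1$ and one tries the simplest routing $h=\id$, the requirement $\tau_1=\tau$ on $B$ forces $g\tau g^{-1}$ to agree with $\tau^2=\id$ on the non-empty clopen set $\tau^{-1}(B)$, which is impossible because $\supp(g\tau g^{-1})=g(\supp(\tau))=G^{(0)}$ means its fixed-point set has empty interior. So the scratch-set routing must be specified and verified in detail (which disjointness conditions are imposed, how $g,h$ are assembled from involutions and commutators, and why the composite works); as written, your proposal reproduces the paper's framework but leaves its core unproven.
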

\begin{proof}
It suffices to show the lemma when $\supp(\tau)=G^{(0)}$. 
We can find a non-empty clopen set $A\subset G^{(0)}$ 
such that $A\cap\tau(A)=\emptyset$ and $A\cup\tau(A)\neq G^{(0)}$. 
Choose a non-empty clopen set $B$ so that 
$B\cap A=\emptyset$, $B\cap\tau(A)=\emptyset$ and 
$A\cup\tau(A)\cup B\neq G^{(0)}$. 
There exists $\sigma_0\in[[G]]$ such that $\sigma_0(\tau(A))\subset B$, 
because $G$ is purely infinite and minimal. 
Define $\sigma_1,\sigma_2\in[[G]]$ by 
\[
\sigma_1(x)=\begin{cases}\tau(x)&x\in A\\
\tau^{-1}(x)&x\in\tau(A)\\ x&\text{otherwise}\end{cases}
\quad\text{and}\quad 
\sigma_2(x)=\begin{cases}\sigma_0(x)&x\in\tau(A)\\
\sigma_0^{-1}(x)&x\in\sigma_0(\tau(A))\\ x&\text{otherwise. }\end{cases}
\]
Then $\sigma=[\sigma_2,\sigma_1]\in D([[G]])$ satisfies 
$\supp(\sigma)\subset A\cup\tau(A)\cup B$ and 
$\sigma(x)=\tau(x)$ for $x\in A$. 

By Lemma \ref{smallsupp} (2), there exists $\alpha\in D([[G]])$ 
such that $\alpha(A\cup\tau(A)\cup B)\subset A$. 
Since $N$ is normalized by $D([[G]])$ and $\tau$ is in $N$, we have 
\[
\tau_1=\alpha^{-1}[\alpha\sigma\alpha^{-1},\tau]\alpha\in N. 
\]
In addition, as 
\[
\supp(\tau\alpha\sigma\alpha^{-1}\tau^{-1})=\tau(\alpha(\supp(\sigma)))
\subset\tau(\alpha(A\cup\tau(A)\cup B))\subset\tau(A)
\]
and $A\cap\tau(A)=\emptyset$, for any $x\in A$ we get 
\[
\tau_1(x)
=(\sigma\alpha^{-1}\tau\alpha\sigma^{-1}\alpha^{-1}\tau^{-1}\alpha)(x)
=(\sigma\alpha^{-1}\alpha)(x)=\sigma(x)=\tau(x). 
\]
Also, one has $\supp(\tau_1)\subset\alpha^{-1}(A\cup\tau(A))\neq G^{(0)}$. 
Let $\tau_2=\tau_1^{-1}\tau$. 
Then $\tau=\tau_1\tau_2$ is a desired decomposition. 
\end{proof}

\begin{lemma}\label{normalize2}
Suppose that $G$ is purely infinite and minimal. 
Let $N\subset[[G]]$ be a non-trivial subgroup normalized by $D([[G]])$. 
For any $\tau\in N$ and $\alpha\in[[G]]$, 
we have $\alpha\tau\alpha^{-1}\in N$. 
\end{lemma}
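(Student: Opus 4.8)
The plan is to follow the architecture of the almost finite case (Lemma \ref{normalize}), replacing its measure-theoretic ``small support'' input by the compression property of Proposition \ref{pim}(3). The heart of the matter is a core assertion: if $\alpha\in[[G]]$ satisfies $\supp(\alpha)\neq G^{(0)}$ and $\tau\in N$ satisfies $\supp(\tau)\neq G^{(0)}$, then $\alpha\tau\alpha^{-1}\in N$. To prove this, first recall that $\supp(\alpha)$ and $\supp(\tau)$ are clopen by Lemma \ref{suppclp}. Since $\supp(\alpha)\neq G^{(0)}$ and $G^{(0)}\setminus\supp(\tau)$ is a non-empty clopen set, Proposition \ref{pim}(3) produces $\sigma\in[[G]]$ with $\sigma(\supp(\alpha))\subset G^{(0)}\setminus\supp(\tau)$. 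Then $\sigma\alpha^{-1}\sigma^{-1}$ has support disjoint from $\supp(\tau)$, hence commutes with $\tau$; using this commutation one rewrites
\[
\alpha\tau\alpha^{-1}=\alpha(\sigma\alpha^{-1}\sigma^{-1})\tau(\sigma\alpha\sigma^{-1})\alpha^{-1}=[\alpha,\sigma]\,\tau\,[\alpha,\sigma]^{-1}.
\]
Because $[\alpha,\sigma]\in D([[G]])$ and $N$ is normalized by $D([[G]])$, the right-hand side belongs to $N$.

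Next I would upgrade the core assertion so as to remove the restriction on $\supp(\tau)$: for every $\alpha\in[[G]]$ with $\supp(\alpha)\neq G^{(0)}$ and \emph{every} $\tau\in N$, one has $\alpha\tau\alpha^{-1}\in N$. This follows at once from Lemma \ref{decomp2}: writing $\tau=\tau_1\tau_2$ with $\tau_i\in N$ and $\supp(\tau_i)\neq G^{(0)}$, we obtain $\alpha\tau\alpha^{-1}=(\alpha\tau_1\alpha^{-1})(\alpha\tau_2\alpha^{-1})$, a product of two elements of $N$ by the core assertion.

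It remains to discard the hypothesis $\supp(\alpha)\neq G^{(0)}$, and this is the least automatic step. The plan is to decompose an arbitrary $\alpha\in[[G]]$ as a product $\alpha=\alpha_1\alpha_2$ of two elements with $\supp(\alpha_i)\neq G^{(0)}$. If $\supp(\alpha)=G^{(0)}$, choose $x$ with $\alpha(x)\neq x$ and a clopen neighbourhood $C\ni x$ small enough that $C\cap\alpha(C)=\emptyset$ and $C\cup\alpha(C)\neq G^{(0)}$; let $\alpha_1\in[[G]]$ agree with $\alpha$ on $C$, with $\alpha^{-1}$ on $\alpha(C)$, and with the identity elsewhere (its graph is the evident symmetrization of the restriction to $C$ of the compact open $G$-set defining $\alpha$), and put $\alpha_2=\alpha_1^{-1}\alpha$. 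Then $\supp(\alpha_1)\subset C\cup\alpha(C)\neq G^{(0)}$, while $\alpha_2$ fixes $C$ pointwise, so $\supp(\alpha_2)\subset G^{(0)}\setminus C\neq G^{(0)}$. Given such a factorization and any $\tau\in N$, two applications of the upgraded assertion yield first $\alpha_2\tau\alpha_2^{-1}\in N$ and then $\alpha_1(\alpha_2\tau\alpha_2^{-1})\alpha_1^{-1}=\alpha\tau\alpha^{-1}\in N$, which finishes the proof. I expect the main obstacle to be exactly this last reduction: in contrast to the almost finite setting, where proper-support pieces arose from measure estimates, here one must argue purely topologically that a full-support homeomorphism splits into proper-support factors, and one must verify that the auxiliary factor $\alpha_1$ really lies in $[[G]]$, i.e. that its graph is a compact open $G$-set.
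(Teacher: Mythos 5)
Your proposal is correct and follows essentially the same route as the paper: the same core case (using Proposition \ref{pim} (3) to find $\sigma$ with $\sigma(\supp(\alpha))\cap\supp(\tau)=\emptyset$ and the identity $\alpha\tau\alpha^{-1}=[\alpha,\sigma]\tau[\alpha,\sigma]^{-1}$), followed by the same bootstrap via Lemma \ref{decomp2} for $\tau$ and a proper-support factorization $\alpha=\alpha_1\alpha_2$ for $\alpha$. The only difference is cosmetic: the paper merges the two decompositions into a single final computation and dismisses the factorization of $\alpha$ with ``it is easy to find such a decomposition,'' whereas you carry out the bootstrap in two stages and supply that factorization explicitly (correctly, via the symmetrized $G$-set $V\cup V^{-1}\cup(G^{(0)}\setminus(C\cup\alpha(C)))$).
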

\begin{proof}
First, we assume $\supp(\alpha)\neq G^{(0)}$ and $\supp(\tau)\neq G^{(0)}$. 
By Proposition \ref{pim} (3), there exists $\sigma\in[[G]]$ 
such that $\sigma(\supp(\alpha))\cap\supp(\tau)=\emptyset$. 
Then 
\[
\alpha\tau\alpha^{-1}=[\alpha,\sigma]\tau[\alpha,\sigma]^{-1}\in N, 
\]
because $\sigma\alpha\sigma^{-1}$ commutes with $\tau$. 

Let us consider the general case. 
Let $\tau\in N$ and $\alpha\in[[G]]$. 
By Lemma \ref{decomp2}, there exist $\tau_1,\tau_2\in N$ 
such that $\supp(\tau_1)\neq G^{(0)}$, $\supp(\tau_2)\neq G^{(0)}$ 
and $\tau=\tau_1\tau_2$. 
Similarly, there exist $\alpha_1,\alpha_2\in[[G]]$ 
such that $\supp(\alpha_1)\neq G^{(0)}$, $\supp(\alpha_2)\neq G^{(0)}$ 
and $\alpha=\alpha_1\alpha_2$. 
(It is easy to find such a decomposition for an element in $[[G]]$.) 
It follows from the proof above that 
\[
\alpha\tau\alpha^{-1}=
(\alpha_1\alpha_2\tau_1\alpha_2^{-1}\alpha_1^{-1})
(\alpha_1\alpha_2\tau_2\alpha_2^{-1}\alpha_1^{-1})\in N. 
\]
\end{proof}

We are now ready to prove the main theorem of this subsection. 

\begin{theorem}\label{simple2}
Suppose that $G$ is purely infinite and minimal. 
Then any non-trivial subgroup of $[[G]]$ 
normalized by the commutator subgroup $D([[G]])$ contains $D([[G]])$. 
In particular, $D([[G]])$ is simple. 
\end{theorem}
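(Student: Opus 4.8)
The plan is to prove the stronger assertion that every commutator $[\alpha,\beta]$ with $\alpha,\beta\in[[G]]$ lies in $N$. Since $N$ is normalized by $D([[G]])$, Lemma \ref{normalize2} already upgrades this to full normality of $N$ in $[[G]]$, so once all commutators are captured we obtain $D([[G]])\subset N$, and the simplicity of $D([[G]])$ follows by applying the result to an arbitrary non-trivial normal subgroup of $D([[G]])$. First I would fix a witness $\tau\in N\setminus\{1\}$ with $\supp(\tau)\neq G^{(0)}$: starting from any non-trivial element of $N$ and applying Lemma \ref{decomp2} produces such a $\tau$. Choosing a non-empty clopen set $A$ with $A\cap\tau(A)=\emptyset$ then gives an element of $N$ that visibly displaces $A$ off itself.

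The core step is the claim that $[\alpha,\beta]\in N$ whenever $\supp(\alpha)\cup\supp(\beta)\neq G^{(0)}$. To prove it, use Proposition \ref{pim}(3) to find $\sigma\in[[G]]$ with $\sigma(\supp(\alpha)\cup\supp(\beta))\subset A$, and set $\tilde\tau=\sigma^{-1}\tau\sigma$, which lies in $N$ by Lemma \ref{normalize2}. Since $\supp(\alpha),\supp(\beta)\subset\sigma^{-1}(A)$ while $\tilde\tau(\supp(\alpha))\subset\sigma^{-1}(\tau(A))$ and $A\cap\tau(A)=\emptyset$, the elements $\tilde\tau\alpha\tilde\tau^{-1}$ and $\beta$ have disjoint supports and hence commute. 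A direct computation then yields the identity $[\alpha,\beta]=[[\alpha,\tilde\tau],\beta]$. Here $[\alpha,\tilde\tau]=\alpha\tilde\tau\alpha^{-1}\cdot\tilde\tau^{-1}$ lies in $N$ because $\tilde\tau\in N$ and $N$ is normal, and then $[[\alpha,\tilde\tau],\beta]=[\alpha,\tilde\tau]\cdot\beta[\alpha,\tilde\tau]^{-1}\beta^{-1}$ lies in $N$ for the same reason. This mirrors the double-commutator trick in the proof of Theorem \ref{simple1}, but pure infiniteness lets us dispense with the measure-theoretic smallness used there.

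It remains to remove the hypothesis $\supp(\alpha)\cup\supp(\beta)\neq G^{(0)}$. Using the identities $[\alpha,\beta_1\beta_2]=[\alpha,\beta_1]\cdot\beta_1[\alpha,\beta_2]\beta_1^{-1}$ and $[\alpha_1\alpha_2,\beta]=\alpha_1[\alpha_2,\beta]\alpha_1^{-1}\cdot[\alpha_1,\beta]$ together with the normality of $N$, membership $[\alpha,\beta]\in N$ is stable under factoring either argument. Factoring $\beta$ into two elements of proper support (as in the proof of Lemma \ref{normalize2}) reduces to the case $\supp(\beta)\neq G^{(0)}$. Writing $P=G^{(0)}\setminus\supp(\beta)\neq\emptyset$, I would then factor $\alpha=\alpha_1\cdots\alpha_n$ so that each $\supp(\alpha_k)$ omits some non-empty clopen subset of $P$; for every such factor $\supp(\alpha_k)\cup\supp(\beta)\neq G^{(0)}$, so the core claim applies and the commutator identities reassemble $[\alpha,\beta]\in N$.

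The hard part will be exactly this last factorization of $\alpha$ relative to $P$, since in the remaining case $P\subset\supp(\alpha)$ and $\alpha$ moves all of $P$. Concretely I would choose small disjoint non-empty clopen sets $R_1,R_2\subset P$ and build $\alpha_1\in[[G]]$ with $\alpha_1|_{R_1}=\id$ and $\alpha_1|_{R_2}=\alpha|_{R_2}$, so that $\alpha_2=\alpha_1^{-1}\alpha$ fixes $R_2$; then $\supp(\alpha_1)$ omits $R_1$ and $\supp(\alpha_2)$ omits $R_2$, as required. Producing $\alpha_1$ amounts to extending a prescribed partial homeomorphism to a genuine element of $[[G]]$, and this is where pure infiniteness and minimality are essential: via Proposition \ref{pim} (and Lemma \ref{smallsupp}) any two non-empty clopen sets are matched by a compact open $G$-set, so the complementary pieces can be glued together. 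Verifying this comparison carefully, and checking that the chosen $R_1,R_2$ keep the relevant complements non-empty, is the step that will demand the most attention.
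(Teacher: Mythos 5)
Your overall architecture is sound and close in spirit to the paper's: both arguments rest on Lemma \ref{normalize2} (which upgrades $N$ to a fully normal subgroup of $[[G]]$), on the double-commutator identity $[\alpha,\beta]=[[\alpha,\tilde\tau],\beta]$ valid when a conjugate of $\alpha$ commutes with $\beta$, and on the two factorization identities for reassembling commutators. Your core claim (the case $\supp(\alpha)\cup\supp(\beta)\neq G^{(0)}$) is correct as proved, and is in fact simpler than the paper's corresponding step because your hypothesis is stronger. The genuine gap is exactly where you predicted it, and your proposed fix there is wrong: Proposition \ref{pim} and Lemma \ref{smallsupp} do \emph{not} say that any two non-empty clopen sets are matched by a compact open $G$-set. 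They produce $U$ with $s(U)=A$ and $r(U)\subset B$ --- containment, not equality. Exact matching $s(U)=A$, $r(U)=B$ forces $[1_A]_G=[1_B]_G$ in $H_0(G)$, so it fails in general (take two cylinder sets with different classes in an SFT groupoid with $H_0\neq 0$; such groupoids are purely infinite and minimal by Lemma \ref{SFT>pim}), and even when the classes agree, the converse is established in this paper only for shifts of finite type (Theorem \ref{Hopf}), not for abstract purely infinite minimal groupoids. Consequently your construction of $\alpha_1$ cannot be completed as described: after prescribing $\alpha_1$ on $R_1\cup R_2$ you would need a compact open $G$-set carrying $G^{(0)}\setminus(R_1\cup R_2)$ \emph{onto} $G^{(0)}\setminus(R_1\cup\alpha(R_2))$, and nothing you cite produces one.

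The gap is repairable without any comparison theory, by the transposition trick the paper itself uses (e.g.\ in Lemma \ref{decomp2}). If $\alpha$ fixes $P=G^{(0)}\setminus\supp(\beta)$ pointwise, then $\supp(\alpha)\subset\supp(\beta)$ (supports are clopen by Lemma \ref{suppclp}), so your core claim applies directly. Otherwise $\alpha$ moves some point of $P$, and you can choose a non-empty clopen $R_2\subset P$ so small that $R_2\cap\alpha(R_2)=\emptyset$ and $P\setminus(R_2\cup\alpha(R_2))\neq\emptyset$; writing $\alpha=\pi_W$, the $G$-set $WR_2\cup(WR_2)^{-1}\cup\bigl(G^{(0)}\setminus(R_2\cup\alpha(R_2))\bigr)$ defines $\alpha_1\in[[G]]$ equal to $\alpha$ on $R_2$, to $\alpha^{-1}$ on $\alpha(R_2)$, and to the identity elsewhere. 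This $\alpha_1$ needs no matching of complements, agrees with $\alpha$ on $R_2$, and has support inside $R_2\cup\alpha(R_2)$, hence misses a non-empty clopen $R_1\subset P$; then $\alpha_2=\alpha_1^{-1}\alpha$ fixes $R_2$, and your reassembly goes through. It is worth noting how the paper avoids this last factorization altogether: its first claim treats all $\alpha,\beta$ whose supports are \emph{individually} proper (their union may be everything) by taking $\gamma$ with $\gamma(\supp(\alpha))\cap\supp(\beta)=\emptyset$ and $\supp(\alpha)\cup\supp(\gamma)\neq G^{(0)}$ (Lemma \ref{smallsupp}(1)) and then replacing $\gamma$ by $\tilde\gamma=[\gamma,\tilde\tau]\in N$, which still moves $\supp(\alpha)$ off $\supp(\beta)$; after that, only the easy two-factor decompositions of $\alpha$ and of $\beta$ into elements of proper support are required.
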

\begin{proof}
Let $N$ be a non-trivial subgroup of $[[G]]$ normalized by $D([[G]])$. 
Let $\tau\in N\setminus\{1\}$. 
There exists a non-empty clopen set $A\subset G^{(0)}$ 
such that $A\cap\tau(A)=\emptyset$. 

We would like to show that 
$[\alpha,\beta]$ is in $N$ for any $\alpha,\beta\in[[G]]$. 
First, we assume 
$\supp(\alpha)\neq G^{(0)}$ and $\supp(\beta)\neq G^{(0)}$. 
By Lemma \ref{smallsupp} (1), we can find $\gamma\in[[G]]$ 
such that $\gamma(\supp(\alpha))\cap\supp(\beta)=\emptyset$ 
and $\supp(\alpha)\cup\supp(\gamma)\neq G^{(0)}$. 
There exists $\sigma\in[[G]]$ 
such that $\sigma(\supp(\alpha)\cup\supp(\gamma))\subset A$. 
By Lemma \ref{normalize2}, $\tilde\tau=\sigma^{-1}\tau\sigma$ is in $N$. 
It is easy to see 
\[
(\supp(\alpha)\cup\supp(\gamma))
\cap\tilde\tau(\supp(\alpha)\cup\supp(\gamma))=\emptyset, 
\]
and so $(\tilde\tau\gamma\tilde\tau^{-1})(x)=x$ 
for any $x\in\supp(\alpha)$. 
Hence $\tilde\gamma=[\gamma,\tilde\tau]$ satisfies 
$\tilde\gamma(\supp(\alpha))\cap\supp(\beta)=\emptyset$. 
It follows that 
$\tilde\gamma\alpha\tilde\gamma^{-1}$ commutes with $\beta$. 
Moreover, by Lemma \ref{normalize2}, $\tilde\gamma$ is in $N$, too. 
Therefore 
\[
[\alpha,\beta]=\alpha\beta\alpha^{-1}\beta^{-1}
=\alpha(\tilde\gamma\alpha\tilde\gamma^{-1})\beta
(\tilde\gamma\alpha^{-1}\tilde\gamma^{-1})\alpha^{-1}\beta^{-1}
=[[\alpha,\tilde\gamma],\beta]
\]
is in $N$. 

Next, we would like to show $[\alpha,\beta]\in N$ 
when $\supp(\beta)\neq G^{(0)}$. 
We can find $\alpha_1,\alpha_2\in[[G]]$ such that 
$\supp(\alpha_1)\neq G^{(0)}$, $\supp(\alpha_2)\neq G^{(0)}$ 
and $\alpha=\alpha_1\alpha_2$. 
By the proof above,  
$[\alpha_1,\beta]$ and $[\alpha_2,\beta]$ are in $N$. 
It follows from Lemma \ref{normalize2} that 
\[
[\alpha,\beta]=[\alpha_1\alpha_2,\beta]
=\alpha_1[\alpha_2,\beta]\alpha_1^{-1}[\alpha_1,\beta]
\]
belongs to $N$. 

Finally, let us show $[\alpha,\beta]\in N$. 
We can find $\beta_1,\beta_2\in[[G]]$ such that 
$\supp(\beta_1)\neq G^{(0)}$, $\supp(\beta_2)\neq G^{(0)}$ 
and $\beta=\beta_1\beta_2$. 
By the proof above,  
$[\alpha,\beta_1]$ and $[\alpha,\beta_2]$ are in $N$. 
It follows from Lemma \ref{normalize2} that 
\[
[\alpha,\beta]=[\alpha,\beta_1\beta_2]
=[\alpha,\beta_1]\beta_1[\alpha,\beta_2]\beta_1^{-1}
\]
belongs to $N$, as desired. 
\end{proof}

\begin{remark}
In view of Theorem \ref{simple2}, 
it is a natural problem to determine the abelianization $[[G]]/D([[G]])$. 
In Theorem \ref{surjective}, it will be shown that 
the index map $I:[[G]]\to H_1(G)$ is surjective when $G$ is purely infinite. 
Thus $[[G]]/D([[G]])$ at least has $H_1(G)\cong[[G]]/[[G]]_0$ as its quotient. 
In general, however, 
we do not know what the quotient group $[[G]]_0/D([[G]])$ is. 
When $G$ arises from a one-sided irreducible shift of finite type, 
we will prove that 
$[[G]]/D([[G]])$ is isomorphic to $(H_0(G)\otimes\Z_2)\oplus H_1(G)$ 
(Corollary \ref{abel} (1)). 
\end{remark}

\section{Surjectivity of the index map}

Throughout this section, 
we let $G$ be an essentially principal \'etale groupoid 
whose unit space is a Cantor set. 
In this section we will show that 
the index map $I:[[G]]\to H_1(G)$ is surjective 
when $G$ is purely infinite (Theorem \ref{surjective}). 

\begin{lemma}\label{dsj}
Suppose that $G$ is purely infinite. 
For any clopen sets $A_1,A_2,\dots,A_n\subset G^{(0)}$, 
there exist compact open $G$-sets $U_1,U_2,\dots,U_n$ such that 
$s(U_i)=A_i$ for all $i$ and the sets $r(U_i)$ are mutually disjoint. 
\end{lemma}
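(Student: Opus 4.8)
The plan is to obtain from a single application of proper infiniteness to the whole unit space a tower of $n$ compact open $G$-sets, each with source $G^{(0)}$ but with pairwise disjoint ranges, and then to cut each one down to the prescribed source $A_i$. Since $G$ is purely infinite, $G^{(0)}$ itself is properly infinite, so there exist compact open $G$-sets $U,V$ with $s(U)=s(V)=G^{(0)}$ and $r(U)\cap r(V)=\emptyset$. Because $s(U)=G^{(0)}$, the homeomorphism $\pi_U\colon G^{(0)}\to r(U)$ has full domain and may be composed with itself arbitrarily, so every power $U^j$ is again a compact open $G$-set with $s(U^j)=G^{(0)}$.

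First I would set $T_j=U^jV$ for $j=0,1,\dots,n-1$, where $U^0=G^{(0)}$, so that $T_0=V$. Each $T_j$ is a product of compact open $G$-sets, hence a compact open $G$-set, and since $s(U)=s(V)=G^{(0)}$ all the intermediate matchings are unobstructed; thus $s(T_j)=G^{(0)}$ and $r(T_j)=\pi_U^j(r(V))$.

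The heart of the argument is to show that the clopen sets $B_j:=\pi_U^j(r(V))$, $0\le j\le n-1$, are mutually disjoint; I would prove by induction on $m$ that $B_0,B_1,\dots,B_m$ are pairwise disjoint. In the inductive step it suffices to check that $B_{m+1}$ is disjoint from each $B_i$ with $i\le m$. For $i=0$ this is immediate, since $B_0=r(V)$ is disjoint from $r(U)$ while $B_{m+1}=\pi_U^{m+1}(r(V))\subset r(U)$. For $1\le i\le m$, injectivity of $\pi_U$ yields $B_i\cap B_{m+1}=\pi_U^i(r(V))\cap\pi_U^{m+1}(r(V))=\pi_U\bigl(B_{i-1}\cap B_m\bigr)$, which is empty by the inductive hypothesis. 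This is the step I expect to be the only real obstacle, being the one place where both hypotheses—the disjointness $r(U)\cap r(V)=\emptyset$ and the injectivity of $\pi_U$—are genuinely needed, and where one must take care that passing to images under the injection $\pi_U$ preserves disjointness.

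Finally I would restrict the sources. For each $i=1,2,\dots,n$, put $U_i=T_{i-1}\cap s^{-1}(A_i)=\{g\in T_{i-1}\mid s(g)\in A_i\}$, which is a compact open $G$-set with $s(U_i)=A_i$ and $r(U_i)=\pi_{T_{i-1}}(A_i)\subset r(T_{i-1})=B_{i-1}$. Since the sets $B_0,\dots,B_{n-1}$ are mutually disjoint by the previous step, the ranges $r(U_i)$ are mutually disjoint, which is exactly what is required.
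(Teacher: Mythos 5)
Your proof is correct, but it takes a genuinely different route from the paper's. The paper argues by induction on $n$: given $U_1,\dots,U_{n-1}$ with disjoint ranges, it forms the overlaps $B_i=r(U_i)\cap A_n$, applies pure infiniteness to each clopen set $B_i$ to produce disjoint subsets $r(V_i),r(W_i)\subset B_i$, reroutes each $U_i$ through $V_i$ via $U_i'=((r(U_i)\setminus B_i)\cup V_iB_i)U_i$, and assembles $U_n$ from the pieces $W_iB_i$ together with $A_n\setminus(B_1\cup\dots\cup B_{n-1})$. You instead make a single application of proper infiniteness of the whole unit space, build the tower $T_j=U^jV$, and prove the ranges $\pi_U^j(r(V))$ are pairwise disjoint before cutting sources down to the $A_i$; your disjointness induction (using injectivity of $\pi_U$ together with $r(U)\cap r(V)=\emptyset$) is the same device the paper itself uses in the proof of Proposition \ref{pim}, (1)$\Rightarrow$(2), where the sets $V_{n+1}=VV_n$ are shown to have mutually disjoint ranges, so your argument transplants that trick to Lemma \ref{dsj}. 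What your route buys: it is shorter, avoids the inductive bookkeeping, and uses pure infiniteness only through proper infiniteness of $G^{(0)}$ itself, so it proves the lemma under that strictly weaker hypothesis. What the paper's route buys: it never needs a $G$-set with full source $G^{(0)}$, and its construction is local in the sense that (starting the induction from $U_1=A_1$) it keeps $r(U_i)\subset A_i$ for every $i$, a stronger localization of the ranges that your tower does not provide, although the lemma as stated does not ask for it.
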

\begin{proof}
The proof is by induction on $n$. 
Suppose that the lemma is known for $n-1$. 
Let $A_1,A_2,\dots,A_n\subset G^{(0)}$ be clopen sets. 
There exist compact open $G$-sets $U_1,U_2,\dots,U_{n-1}$ such that 
$s(U_i)=A_i$ for all $i$ and the sets $r(U_i)$ are mutually disjoint. 
Set $B_i=r(U_i)\cap A_n$ for $i=1,2,\dots,n{-}1$. 
By pure infiniteness of $G$, 
we can find compact open $G$-sets $V_i$ and $W_i$ 
such that $s(V_i)=s(W_i)=B_i$, $r(V_i)\cup r(W_i)\subset B_i$ 
and $r(V_i)\cap r(W_i)=\emptyset$. 
Define compact open $G$-sets $U'_1,U'_2,\dots,U'_{n-1}$ by 
\[
U'_i=((r(U_i)\setminus B_i)\cup V_iB_i)U_i. 
\]
Let 
\[
U_n=W_1B_1\cup W_2B_2\cup\dots\cup W_{n-1}B_{n-1}
\cup(A_n\setminus(B_1\cup B_2\cup\dots\cup B_{n-1})). 
\]
Then $U'_1,U'_2,\dots,U'_{n-1},U_n$ meet the requirement. 
\end{proof}

\begin{theorem}\label{surjective}
Suppose that $G$ is purely infinite. 
The index map $I:[[G]]\to H_1(G)$ is surjective. 
\end{theorem}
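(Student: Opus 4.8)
The plan is to work directly with the chain complex computing $H_1(G)=\Ker\partial_1/\Ima\partial_2$, where $\partial_1\colon C_c(G,\Z)\to C_c(G^{(0)},\Z)$ sends $1_U$ to $1_{s(U)}-1_{r(U)}$ for a compact open $G$-set $U$, and to recall that $I(\pi_W)=[1_W]$ whenever $W$ is a compact open $G$-set with $s(W)=r(W)=G^{(0)}$. Throughout I will use the elementary congruences modulo $\Ima\partial_2$, valid for composable compact open $G$-sets $A,B$ and clopen $C\subset G^{(0)}$ and proved as in \cite[Section 3]{M12PLMS}:
\[
1_{AB}\equiv 1_A+1_B,\qquad 1_{A^{-1}}\equiv 1_{r(A)}-1_A,\qquad 1_C\equiv 0.
\]
Given an arbitrary class in $H_1(G)$, I first represent it by a cycle $f\in\Ker\partial_1$. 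Since $C_c(G,\Z)$ is spanned by indicators of compact open $G$-sets and $1_{A^{-1}}\equiv-1_A$ modulo boundaries and units, I may replace $f$ by a homologous cycle of the form $f=\sum_{i=1}^n 1_{U_i}$ with each $U_i$ a compact open $G$-set. The cycle condition $\partial_1 f=0$ then reads $\sum_i 1_{s(U_i)}=\sum_i 1_{r(U_i)}=:h$, a balance condition I will exploit. It therefore suffices to realize $\sum_i[1_{U_i}]$ as $I(\alpha)$ for some $\alpha\in[[G]]$.

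Next I would use Lemma \ref{dsj}, the only place pure infiniteness enters, to create disjoint room. Applying it to the $2n$ clopen sets $s(U_1),\dots,s(U_n),r(U_1),\dots,r(U_n)$ produces compact open $G$-sets $S_1,\dots,S_n$ and $R_1,\dots,R_n$ with $s(S_i)=s(U_i)$, $s(R_i)=r(U_i)$, and with the $2n$ ranges $P_i:=r(S_i)$ and $Q_i:=r(R_i)$ all pairwise disjoint. Thus $P_i$ is a disjoint copy of the source of $U_i$ and $Q_i$ a disjoint copy of its range, and $\tilde U_i:=R_iU_iS_i^{-1}$ is a compact open $G$-set from $P_i$ to $Q_i$; since the $P_i$ are pairwise disjoint and so are the $Q_i$, the set $W_0:=\bigsqcup_i\tilde U_i$ is a genuine compact open $G$-set with $s(W_0)=\bigsqcup_iP_i$ and $r(W_0)=\bigsqcup_iQ_i$.

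The heart of the argument is to extend $W_0$ to a bijection of $G^{(0)}$ in a way that kills the bookkeeping error introduced by the rerouting. I would build a filler $W_1$ carrying $G^{(0)}\setminus\bigsqcup_iP_i$ onto $G^{(0)}\setminus\bigsqcup_iQ_i$, defined as the identity on $K:=G^{(0)}\setminus\bigsqcup_i(P_i\cup Q_i)$ together with ``return'' maps from $\bigsqcup_iQ_i$ back to $\bigsqcup_iP_i$. Here the balance condition is essential: over each point $x\in G^{(0)}$ there are exactly $h(x)$ range-copies among the $Q_j$ and exactly $h(x)$ source-copies among the $P_i$, so one can choose a locally constant matching of range-copies to source-copies lying over the same base point, realized by compact open $G$-sets that are pieces of $S_iR_j^{-1}$ (each covering the identity on the relevant part of $r(U_j)\cap s(U_i)$). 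Setting $W:=W_0\sqcup W_1$ gives a compact open $G$-set with $s(W)=r(W)=G^{(0)}$, hence $\alpha:=\pi_W\in[[G]]$.

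Finally I would compute $I(\alpha)=[1_W]$ with the congruences above. On the one hand $\sum_i1_{\tilde U_i}\equiv f+\sum_i1_{R_i}-\sum_i1_{S_i}$ modulo $\Ima\partial_2$ and units; on the other hand, because the return maps reassemble (via $S_i^{-1}$ and $R_j$) to exactly the $S_i$ and the $R_j$, one obtains $1_{W_1}\equiv\sum_i1_{S_i}-\sum_i1_{R_i}$ modulo $\Ima\partial_2$ and units. Adding, the two correction terms cancel and $1_W\equiv f$ in $H_1(G)$, so $I(\alpha)=[f]$, as desired. I expect the main obstacle to be precisely this last step: arranging the filler $W_1$ so that it is simultaneously a genuine compact open $G$-set yielding a bijection of $G^{(0)}$ and has index contribution exactly cancelling the rerouting correction. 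This is what forces the careful, balance-driven choice of the matching between range- and source-copies, together with the verification that the chosen pieces of $S_iR_j^{-1}$ reassemble to the full $S_i$ and $R_j$.
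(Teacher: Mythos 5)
Your proposal is correct and is essentially the paper's own proof: both arguments reduce the cycle to a positive sum $\sum_i 1_{U_i}$ of indicators of compact open $G$-sets, invoke Lemma \ref{dsj} (the only use of pure infiniteness) to create disjoint copies, extract from the balance condition a clopen matching (the sets $A_{ij}$ in the paper, your locally constant pairing of range- and source-copies over common base points), and assemble a single compact open $G$-set with full source and range whose index telescopes to $[f]$ via the congruences $1_{UV}\equiv 1_U+1_V$, $1_{U^{-1}}\equiv -1_U$, $1_C\equiv 0$. The only difference is organizational: the paper disjointifies just the ranges and fuses transport and matching into the sets $U_iC_iA_{ij}U_j^{-1}$, whereas you disjointify both sources and ranges and keep the transport $W_0$ and the return/matching $W_1$ as separate pieces of $W$ --- a cosmetic repackaging of the same construction.
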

\begin{proof}
Let $\delta_1:C_c(G,\Z)\to C(G^{(0)},\Z)$ and 
$\delta_2:C_c(G^{(2)},\Z)\to C_c(G,\Z)$ be as in \cite[Section 3]{M12PLMS}. 
The homology group $H_1(G)$ is $\Ker\delta_1/\Ima\delta_2$. 
For $f\in\Ker\delta_1$, we denote its equivalence class in $H_1(G)$ by $[f]$. 
For a subset $C\subset G$, 
we let $1_C$ denote the characteristic function of $C$. 

Let $f\in\Ker\delta_1$. 
We would like to show that there exists a compact open $G$-set $V$ 
such that $s(V)=r(V)=G^{(0)}$ and $[f]=[1_V]$. 
By \cite[Lemma 7.3 (4)]{M12PLMS}, 
we may assume that there exist compact open $G$-sets $C_1,C_2,\dots,C_n$ 
such that $f=1_{C_1}+1_{C_2}+\dots+1_{C_n}$. 
Since $f$ is in $\Ker\delta_1$, one has 
\[
\sum_{i=1}^n1_{r(C_i)}=\sum_{j=1}^n1_{s(C_j)}. 
\]
Hence there exist clopen sets $A_{ij}\subset G^{(0)}$ for $i,j=1,2,\dots,n$ 
satisfying 
\[
\sum_{i=1}^n1_{A_{ij}}=1_{r(C_j)}\quad\text{and}\quad 
\sum_{j=1}^n1_{A_{ij}}=1_{s(C_i)}.
\]
By the lemma above, 
we can find compact open $G$-sets $U_1,U_2,\dots,U_n$ such that 
$s(U_i)=r(C_i)$ for all $i$ and the sets $r(U_i)$ are mutually disjoint. 
Put 
\[
V_{ij}=U_iC_iA_{ij}U_j^{-1}. 
\]
Then $V_{ij}$ is a compact open $G$-set which satisfies 
\[
r(V_{ij})=U_iC_iA_{ij}U_j^{-1}U_jA_{ij}C_iU_i^{-1}
=U_iC_iA_{ij}r(C_j)A_{ij}C_iU_i^{-1}=r(U_iC_iA_{ij})
\]
and
\[
s(V_{ij})=U_jA_{ij}C_i^{-1}U_i^{-1}U_iC_iA_{ij}U_j^{-1}
=U_jA_{ij}C_i^{-1}r(C_i)C_iA_{ij}U_j^{-1}=s(A_{ij}U_j^{-1}). 
\]
Therefore $V=\bigcup_{i,j}V_{ij}$ is a compact open $G$-set satisfying 
\[
r(V)=\bigcup_{i=1}^nr(U_iC_i)=\bigcup_{i=1}^nr(U_i)
=\bigcup_{j=1}^ns(U_j^{-1})=s(V). 
\]
By using \cite[Lemma 7.3 (1)]{M12PLMS} repeatedly, we obtain 
\begin{align*}
[1_V]
&=\left[\sum_{i,j}1_{V_{ij}}\right]
=\left[\sum_{i,j}1_{U_iC_iA_{ij}U_j^{-1}}\right]\\
&=\left[\sum_{i,j}(1_{U_iC_iA_{ij}}+1_{A_{ij}U_j^{-1}})\right]\\
&=\left[\sum_i1_{U_iC_i}+\sum_j1_{U_j^{-1}}\right]\\
&=\left[\sum_i(1_{U_i}+1_{C_i})+\sum_j1_{U_j^{-1}}\right]
\end{align*}
in $H_1(G)$. 
This, together with \cite[Lemma 7.3 (4)]{M12PLMS}, 
implies $[1_V]=[f]$ in $H_1(G)$. 
Hence $\tilde V=V\cup(G^{(0)}\setminus s(V))$ is a desired $G$-set. 
\end{proof}

We will use the following lemma in Section 6.5. 
For $f\in C(G^{(0)},\Z)$, 
we denote its equivalence class in $H_0(G)$ by $[f]_G$. 

\begin{lemma}\label{pi>clopen}
Suppose that $G$ is purely infinite. 
For any $h\in H_0(G)$, 
there exists a non-empty clopen set $A$ such that $[1_A]_G=h$. 
\end{lemma}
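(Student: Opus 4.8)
The plan is to identify $H_0(G)$ with the set of classes realized by clopen sets. Concretely, consider the submonoid
\[
R=\{[1_A]_G\mid A\subset G^{(0)}\text{ clopen}\}\subset H_0(G),
\]
and aim to show $R=H_0(G)$, handling non-emptiness afterward. The starting point is the defining relation of $H_0(G)=\Coker\delta_1$ from \cite[Section 3]{M12PLMS}: for a compact open $G$-set $U$ one has $\delta_1(1_U)=1_{s(U)}-1_{r(U)}$, whence $[1_{s(U)}]_G=[1_{r(U)}]_G$. Since $C(G^{(0)},\Z)$ is generated as an abelian group by characteristic functions of clopen sets, the classes $[1_E]_G$ generate $H_0(G)$; so it will suffice to prove that $R$ is a \emph{subgroup}, for then $R=H_0(G)$.

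That $R$ is closed under addition is immediate from Lemma \ref{dsj}: given clopen $A,B$, I would choose compact open $G$-sets $U,V$ with $s(U)=A$, $s(V)=B$ and $r(U)\cap r(V)=\emptyset$, so that $[1_A]_G+[1_B]_G=[1_{r(U)}]_G+[1_{r(V)}]_G=[1_{r(U)\cup r(V)}]_G\in R$, the last step using disjointness. Together with $0=[1_\emptyset]_G\in R$, this makes $R$ a submonoid.

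The main point, and the step I expect to be the hardest, is closure under negation: for each clopen $A$ I must produce a clopen $B$ with $[1_B]_G=-[1_A]_G$. This is exactly where pure infiniteness is essential. Since $G^{(0)}$ is itself properly infinite (Definition \ref{pi}), there are compact open $G$-sets $U,V$ with $s(U)=s(V)=G^{(0)}$ and $r(U)\cap r(V)=\emptyset$. Setting $E=G^{(0)}\setminus r(V)$, I note $E\supset r(U)$ and, applying the relation to $V$, $[1_E]_G=[1_{G^{(0)}}]_G-[1_{r(V)}]_G=[1_{s(V)}]_G-[1_{r(V)}]_G=0$. I then transport $A$ into this zero-class set: the restriction $UA=\{g\in U\mid s(g)\in A\}$ is a compact open $G$-set with $s(UA)=A$ and $r(UA)\subset r(U)\subset E$, so $[1_A]_G=[1_{r(UA)}]_G$ with $r(UA)\subset E$. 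Taking $B=E\setminus r(UA)$ yields $[1_B]_G=[1_E]_G-[1_{r(UA)}]_G=-[1_A]_G$, as desired. Hence $R$ is a subgroup, so $R=H_0(G)$.

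Finally I would address non-emptiness. For $h\neq0$ any clopen set $A$ with $[1_A]_G=h$ is automatically non-empty, since $[1_\emptyset]_G=0$. For $h=0$ I simply take $A=E$ from the construction above: it is clopen with $[1_E]_G=0$ and is non-empty, as it contains $r(U)$, which is non-empty because $s(U)=G^{(0)}\neq\emptyset$. This completes the argument.
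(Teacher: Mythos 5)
Your proof is correct, and it reaches the statement by a somewhat different organization than the paper, though with the same essential tools. The paper argues directly: it takes a representative $f=\sum_i 1_{C_i}-\sum_j 1_{D_j}$ of $h$, applies Lemma \ref{dsj} to all the $C_i$ and $D_j$ simultaneously to obtain $h=[1_C]_G-[1_D]_G$ with $C$ and $D$ disjoint clopen sets, and then flips the sign of $[1_D]_G$ \emph{inside $D$ itself}: choosing compact open $G$-sets $U,V$ with $s(U)=s(V)=D$, $r(U)\cup r(V)\subset D$ and $r(U)\cap r(V)=\emptyset$, the leftover set $E=D\setminus(r(U)\cup r(V))$ satisfies $[1_E]_G=[1_D]_G-[1_D]_G-[1_D]_G=-[1_D]_G$, whence $h=[1_{C\cup E}]_G$. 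You instead run a closure argument: the set $R$ of realizable classes is a submonoid (closure under addition via Lemma \ref{dsj}, exactly as in the paper), and is closed under negation, hence a subgroup, hence all of $H_0(G)$ since classes of clopen sets generate. Your negation step is the same ``remove disjoint copies'' trick, but applied to $G^{(0)}$ rather than to $D$: proper infiniteness of the unit space yields a non-empty clopen set $E=G^{(0)}\setminus r(V)$ of class zero, into which you transport a copy of $A$ via $UA$ and take the complement. The two arguments cost the same; yours is more modular (the zero-class set $E$ is manufactured once and reused, including for the case $h=0$) and treats non-emptiness explicitly, whereas the paper handles non-emptiness only implicitly, through its choice of non-empty cylinder-type sets $C_i$, and is shorter because it produces the witness $C\cup E$ in a single pass.
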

\begin{proof}
There exists $f\in C(G^{(0)},\Z)$ such that $[f]_G=h$. 
We can find non-empty clopen sets $C_1,C_2,\dots,C_n$ and $D_1,D_2,\dots,D_m$ 
such that 
\[
f=\sum_{i=1}^n1_{C_i}-\sum_{j=1}^m1_{D_j}. 
\]
From Lemma \ref{dsj}, 
there exist compact open $G$-sets $U_1,U_2,\dots,U_n$ and $V_1,V_2,\dots,V_m$ 
such that $s(U_i)=C_i$, $s(V_j)=D_j$ for all $i,j$ and 
the sets $r(U_i)$, $r(V_j)$ are mutually disjoint. 
Let 
\[
C=\bigcup_{i=1}^nr(U_i)\quad\text{and}\quad D=\bigcup_{j=1}^mr(V_j). 
\]
Then $[f]_G=[1_C]_G-[1_D]_G$. 
Since $G$ is purely infinite, 
there exist compact open $G$-sets $U,V\subset G$ 
such that $s(U)=s(V)=D$, $r(U)\cup r(V)\subset D$ 
and $r(U)\cap r(V)=\emptyset$. 
Put $E=D\setminus(r(U)\cup r(V))$. 
One has 
\[
[1_E]_G=[1_D]_G-[1_{r(U)}]_G-[1_{r(V)}]_G=[1_D]_G-[1_D]_G-[1_D]_G=-[1_D]_G, 
\]
which implies 
\[
h=[f]_G=[1_C]_G-[1_D]_G=[1_C]_G+[1_E]_G=[1_{C\cup E}]_G, 
\]
as desired. 
\end{proof}

\section{Groupoids from shifts of finite type}

In this section 
we study topological full groups of \'etale groupoids 
arising from shifts of finite type. 
We refer the reader to \cite{CK80Invent,LM} 
for background material on symbolic dynamical systems.

\subsection{Preliminaries}

Let $(\V,\E)$ be a finite directed graph, 
where $\V$ is a finite set of vertices and $\E$ is a finite set of edges. 
For $e\in\E$, $i(e)$ denotes the initial vertex of $e$ and 
$t(e)$ denotes the terminal vertex of $e$. 
Let $M=(M(\xi,\eta))_{\xi,\eta\in\V}$ be the adjacency matrix of $(\V,\E)$, 
that is, 
\[
M(\xi,\eta)=\#\{e\in\E\mid i(e)=\xi,\ t(e)=\eta\}. 
\]
We assume that $M$ is irreducible 
(i.e. for all $\xi,\eta\in\V$ 
there exists $n\in\N$ such that $M^n(\xi,\eta)>0$) 
and that $M$ is not a permutation matrix. 
Define 
\[
X=\{(x_k)_{k\in\N}\in\E^\N\mid t(x_k)=i(x_{k+1})\quad\forall k\in\N\}. 
\]
With the product topology, $X$ is a Cantor set 
(see condition (I) defined in \cite{CK80Invent}). 
Define a surjective continuous map $\sigma:X\to X$ by 
\[
\sigma(x)_k=x_{k+1}\quad k\in\N,\ x=(x_k)_k\in X. 
\]
In other words, $\sigma$ is the (one-sided) shift on $X$.  
It is easy to see that $\sigma$ is a local homeomorphism. 
The dynamical system $(X,\sigma)$ is called 
the one-sided irreducible shift of finite type 
associated with the graph $(\V,\E)$. 

The \'etale groupoid $G$ for $(X,\sigma)$ is given by 
\[
G=\{(x,n,y)\in X\times\Z\times X\mid
\exists k,l\in\N,\ n=k{-}l,\ \sigma^k(x)=\sigma^l(y)\}. 
\]
The topology of $G$ is generated by the sets 
$\{(x,k{-}l,y)\in G\mid x\in A,\ y\in B,\ \sigma^k(x)=\sigma^l(y)\}$, 
where $A,B\subset X$ are open and $k,l\in\N$. 
Two elements $(x,n,y)$ and $(x',n',y')$ in $G$ are composable 
if and only if $y=x'$, and the multiplication and the inverse are 
\[
(x,n,y)\cdot(y,n',y')=(x,n{+}n',y'),\quad (x,n,y)^{-1}=(y,-n,x). 
\]
We identify $X$ with the unit space $G^{(0)}$ via $x\mapsto(x,0,x)$. 

A multiindex $\mu=(e_1,e_2,\dots,e_k)$ with $e_j\in\E$ is called a word. 
We denote by $\lvert\mu\rvert$ the length $k$ of $\mu$ and 
write $i(\mu)=i(e_1)$, $t(\mu)=t(e_k)$. 
Every $e\in\E$ is regarded as a word of length one. 
When $\mu=(e_1,e_2,\dots,e_k)$ and $\nu=(f_1,f_2,\dots,f_l)$ are words, 
we write $\mu\nu$ for the word 
$\mu\nu=(e_1,\dots,e_k,f_1,\dots,f_l)$ of length $k+l$. 
A word $\mu=(e_1,e_2,\dots,e_k)$ is said to be admissible 
if $t(e_j)=i(e_{j+1})$ for every $j=1,2,\dots,k-1$. 
For a word $\mu=(e_1,e_2,\dots,e_k)$, 
\[
C_\mu=\{(x_n)_{n\in\N}\in X\mid x_j=e_j\quad\forall j=1,2,\dots,k\}
\]
is a clopen subset of $X$ and is called a cylinder set. 
Clearly, $C_\mu$ is non-empty if and only if $\mu$ is admissible. 
For any two cylinder sets $C_\mu$ and $C_\nu$, we have 
$C_\mu\cap C_\nu=\emptyset$, $C_\mu\subset C_\nu$ or $C_\mu\supset C_\nu$. 
For $\xi\in\V$, we set 
\[
D_\xi=\{(x_n)_{n\in\N}\in X\mid i(x_1)=\xi\}. 
\]
For words $\mu$ and $\nu$ with $t(\mu)=t(\nu)$, 
we define a compact open $G$-set $U_{\mu,\nu}$ by 
\[
U_{\mu,\nu}=\{(x,\lvert\mu\rvert-\lvert\nu\rvert,y)\in G\mid
\sigma^{\lvert\mu\rvert}(x)=\sigma^{\lvert\nu\rvert}(y),\ 
x\in C_\mu,\ y\in C_\nu\}. 
\]
The subsets $U_{\mu,\nu}$ form a base for the topology of $G$. 

\begin{lemma}\label{SFT>pim}
For any non-empty clopen set $Y\subset X$, 
the \'etale groupoid $G|Y$ is purely infinite and minimal. 
\end{lemma}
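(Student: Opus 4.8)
The plan is to reduce the entire statement to verifying condition (2) of Proposition \ref{pim} for $G|Y$. Since Proposition \ref{pim} asserts the equivalence of ``purely infinite and minimal'' with condition (2), and since $G|Y$ is an essentially principal \'etale groupoid whose unit space $Y$ is a Cantor set, it suffices to show: for any clopen sets $A,B\subset Y$ with $B\neq\emptyset$, there is a compact open $(G|Y)$-set $U$ with $s(U)=A$ and $r(U)\subset B$. The key simplification is that I need not work inside $G|Y$ at all: if I produce a compact open $G$-set $U$ with $s(U)=A$ and $r(U)\subset B$ where $A,B\subset Y$, then automatically $U\subset r^{-1}(Y)\cap s^{-1}(Y)=G|Y$, so $U$ is a compact open $(G|Y)$-set. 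Thus everything comes down to the following purely combinatorial construction in $G$, carried out with cylinder sets.

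First I would decompose $A$ as a finite disjoint union of cylinders $A=C_{\mu_1}\cup C_{\mu_2}\cup\dots\cup C_{\mu_m}$, and choose a cylinder $C_\nu\subset B$ (possible since $B$ is a non-empty clopen set and cylinders form a basis). Writing $\xi_i=t(\mu_i)$, my goal is to find $m$ pairwise disjoint sub-cylinders $C_{\rho_1},\dots,C_{\rho_m}\subset C_\nu\subset B$ with $t(\rho_i)=\xi_i$ for each $i$. Granting this, I set $U=\bigcup_{i=1}^m U_{\rho_i,\mu_i}$, using the standard $G$-sets $U_{\rho_i,\mu_i}$, which satisfy $s(U_{\rho_i,\mu_i})=C_{\mu_i}$ and $r(U_{\rho_i,\mu_i})=C_{\rho_i}$ (this is where the requirement $t(\rho_i)=t(\mu_i)$ is used). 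Since the sources $C_{\mu_i}$ are disjoint and the ranges $C_{\rho_i}$ are disjoint, $U$ is a compact open $G$-set with $s(U)=A$ and $r(U)=\bigcup_i C_{\rho_i}\subset B$, exactly as required.

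The heart of the argument, and the step I expect to be the main obstacle, is producing the disjoint sub-cylinders with prescribed terminal vertices. Here I would exploit both hypotheses on $M$. Because $M$ is irreducible and is not a permutation matrix, some vertex $\zeta$ has at least two distinct outgoing edges; irreducibility lets me route a path from $t(\nu)$ to $\zeta$, and then the two edges at $\zeta$ split the resulting sub-cylinder into two disjoint pieces. By returning to $\zeta$ (again via irreducibility) and branching repeatedly, I can split $C_\nu$ into arbitrarily many, hence at least $m$, pairwise disjoint non-empty sub-cylinders. Finally, for each such sub-cylinder with terminal vertex $\theta_k$, I extend it by an admissible path from $\theta_k$ to the desired vertex $\xi_k$ (using irreducibility once more); the extended cylinder $C_{\rho_k}$ stays disjoint from the others and has $t(\rho_k)=\xi_k$. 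This yields condition (2) of Proposition \ref{pim} for $G$, and by the restriction observation above, also for $G|Y$; hence $G|Y$ is purely infinite and minimal. The only delicate point is the bookkeeping in the branching construction, but the growth of the number of admissible paths forced by the Perron eigenvalue being larger than $1$ makes the existence of enough disjoint sub-cylinders automatic.
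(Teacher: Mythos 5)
Your proof is correct and follows essentially the same route as the paper: both verify condition (2) of Proposition \ref{pim} by decomposing $A$ into disjoint cylinder sets and using irreducibility (together with the non-permutation hypothesis, which guarantees branching) to produce disjoint cylinders inside $B$ with matching terminal vertices, then taking $U$ to be a union of the sets $U_{\rho,\mu}$. The only cosmetic differences are that the paper first reduces to $Y=X$ via the remark following Definition \ref{pi} and spreads the targets over several cylinders of $B$ via an injection, whereas you verify condition (2) directly for $G|Y$ (noting a $G$-set with source and range in $Y$ is automatically a $G|Y$-set) and branch inside a single cylinder $C_\nu\subset B$; both amount to the same construction.
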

\begin{proof}
By the remark following Definition \ref{pi}, we may assume $Y=X$. 
We check condition (2) of Proposition \ref{pim}. 
Let $A$ and $B$ be non-empty clopen subsets of $X$. 
Write $A$ and $B$ as disjoint unions 
$A=\bigcup_{\mu\in I}C_\mu$ and $B=\bigcup_{\nu\in J}C_\nu$ 
of non-empty cylinder sets, respectively. 
By dividing up the cylinder sets $C_\nu$ if necessary, 
we may assume that $\#J$ is not less than $\#I$. 
Let $f:I\to J$ be an injection. 
Since the graph $(\V,\E)$ is irreducible, for each $\mu\in I$, 
we can find an admissible word $\mu'$ such that 
$C_{\mu'}\subset C_{f(\mu)}$ and $t(\mu')=t(\mu)$. 
Set 
\[
U=\bigcup_{\mu\in I}U_{\mu',\mu}. 
\]
Then $U$ is a compact open $G$-set satisfying $s(U)=A$ and $r(U)\subset B$. 
\end{proof}

Let us recall the homology groups $H_n(G)$ from \cite[Section 4]{M12PLMS}. 
The \'etale groupoid $G$ has an open subgroupoid 
\[
K=\{(x,0,y)\in G\}. 
\]
It is well-known that $K$ is an AF (approximately finite) groupoid 
(see \cite[Defintion III.1.1]{Rtext} or \cite[Definition 2.2]{M12PLMS} 
for the definition of AF groupoids) and that 
$H_0(K)$ is isomorphic to the inductive limit 
\[
\lim_{\longrightarrow}(\Z^\V\stackrel{M^t}{\longrightarrow}
\Z^\V\stackrel{M^t}{\longrightarrow}\dots), 
\]
where $M^t$ is the transpose of $M$ and 
is thought of as a homomorphism from $\Z^\V$ to $\Z^\V$. 
This group is called the dimension group of the subshift $(X,\sigma)$. 
Notice that $K$ is not necessary minimal. 
Indeed, $K$ is minimal if and only if 
the subshift $(X,\sigma)$ is topologically mixing. 
For $f\in C(X,\Z)$, 
we denote its equivalence class in $H_0(K)$ by $[f]_K$. 
There exists an automorphism $\delta:H_0(K)\to H_0(K)$ such that 
\[
[1_{r(U)}]_K=\delta^{-n}\left([1_{s(U)}]_K\right)
\]
holds for any $n\in\Z$ and 
any compact open $G$-set $U$ contained in $\{(x,n,y)\in G\}$. 
Then, by virtue of \cite[Theorem 4.14]{M12PLMS}, we have 
\[
H_0(G)\cong\Coker(\id-\delta)\cong\Coker(\id-M^t), 
\]
\[
H_1(G)\cong\Ker(\id-\delta)\cong\Ker(\id-M^t)
\]
and $H_n(G)=0$ for $n\geq2$. 
In particular, $H_0(G)$ is a finitely generated abelian group and 
$H_1(G)$ is the torsion-free part of $H_0(G)$. 
We remark that $\Coker(\id-M^t)$ is called the Bowen-Franks group of $M$ 
(\cite[Definition 7.4.15]{LM}). 
For $f\in C(X,\Z)$, 
we denote its equivalence class in $H_0(G)$ by $[f]_G$. 

K. Matsumoto \cite{Matsu10PJM, MatsuProcAMS} obtained 
the following classification theorem (see also \cite[Theorem 5.1]{M12PLMS}). 
Actually, 
only the case of $Y_i=X_i$ was considered in \cite{Matsu10PJM, MatsuProcAMS}, 
but the proof works in the general case. 

\begin{theorem}
[{\cite[Theorem 1.1]{Matsu10PJM}, \cite[Theorem 1.1]{MatsuProcAMS}}]
\label{classify}
For $i=1,2$, let $(\V_i,\E_i)$ be a finite directed graph and 
suppose that the adjacency matrix $M_i$ is irreducible and 
is not a permutation matrix. 
Let $(X_i,\sigma_i)$ be the one-sided shift associated with $(\V_i,\E_i)$ 
and let $G_i$ be the \'etale groupoid for $(X_i,\sigma_i)$. 
Let $Y_i\subset X_i$ be a non-empty clopen subset. 
If there exists an isomorphism $\phi:H_0(G_1)\to H_0(G_2)$ 
such that $\phi([1_{Y_1}]_{G_1})=[1_{Y_2}]_{G_2}$ and 
$\det(\id-M_1^t)=\det(\id-M_2^t)$, then 
$G_1|Y_1$ is isomorphic to $G_2|Y_2$ as an \'etale groupoid. 
\end{theorem}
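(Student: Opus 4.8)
The plan is to translate the desired groupoid isomorphism into a statement about the reduced groupoid $C^*$-algebras and to invoke Renault's reconstruction theorem recalled in Section~2. By Lemma~\ref{SFT>pim} each reduction $G_i|Y_i$ is purely infinite and minimal, in particular essentially principal, so it is enough to produce a $*$-isomorphism $\Psi\colon C^*_r(G_1|Y_1)\to C^*_r(G_2|Y_2)$ that carries the diagonal $C(Y_1)$ onto $C(Y_2)$; Renault's theorem then upgrades $\Psi$ to an isomorphism of the \'etale groupoids. Since these groupoids are amenable and purely infinite, simple and unital, each $C^*_r(G_i|Y_i)=1_{Y_i}C^*_r(G_i)1_{Y_i}$ (a corner of the Cuntz--Krieger algebra of $M_i$) is a unital Kirchberg algebra satisfying the UCT.

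I would next read off the classifying invariants from the homology computation recalled before the theorem. Using the identification of groupoid homology with $K$-theory available for these groupoids, one gets $K_0(C^*_r(G_i|Y_i))\cong H_0(G_i)\cong\Coker(\id-M_i^t)$ with the class of the unit equal to $[1_{Y_i}]_{G_i}$, and $K_1(C^*_r(G_i|Y_i))\cong H_1(G_i)\cong\Ker(\id-M_i^t)$. The hypothesis supplies an isomorphism $\phi$ of the $K_0$-groups sending $[1_{Y_1}]$ to $[1_{Y_2}]$, and since $H_1$ is the torsion-free part of $H_0$ the $K_1$-groups are automatically isomorphic; the Kirchberg--Phillips classification therefore already yields an \emph{abstract} unital $*$-isomorphism $C^*_r(G_1|Y_1)\cong C^*_r(G_2|Y_2)$.

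The hard part, and the genuine symbolic-dynamics content, is to make this isomorphism respect the Cartan pair, about which Kirchberg--Phillips says nothing; this is exactly where the determinant hypothesis is indispensable. Together with the isomorphism of Bowen--Franks groups $\Coker(\id-M_i^t)$, the condition $\det(\id-M_1^t)=\det(\id-M_2^t)$ is precisely Franks' complete invariant for flow equivalence of the two-sided shifts of finite type defined by $M_1$ and $M_2$. I would realize this flow equivalence as a finite chain of Parry--Sullivan and Franks elementary moves (conjugacy of the two-sided shift together with symbol expansion), verify that each move induces a \emph{diagonal-preserving} equivalence of the associated one-sided SFT groupoids, and so obtain a diagonal-preserving groupoid equivalence $G_1\sim G_2$. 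Finally I would use the unit-class condition $\phi([1_{Y_1}])=[1_{Y_2}]$ together with the comparison property of purely infinite minimal groupoids---two clopen sets with the same class in $H_0$ are the source and range of a single compact open $G$-set, by the pure infiniteness exploited in Lemma~\ref{dsj} and Proposition~\ref{pim}---to cut this equivalence down to an honest isomorphism $G_1|Y_1\cong G_2|Y_2$ of the prescribed reductions, which by Renault's theorem is what we want. The principal obstacle throughout is the passage from an abstract $K$-theoretic isomorphism to one compatible with the diagonal, and it is the determinant condition that removes it.
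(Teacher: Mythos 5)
The first thing to say is that the paper contains no proof of this statement: Theorem \ref{classify} is quoted from Matsumoto \cite{Matsu10PJM,MatsuProcAMS}, with only the remark that Matsumoto's argument for $Y_i=X_i$ extends to general clopen $Y_i$. So there is no in-paper proof to compare against; what can be said is that your outline follows the same route as the cited proof — Franks' flow equivalence theorem \cite{F84ETDS}, diagonal-preserving (stable) isomorphisms induced by conjugacy and Parry--Sullivan moves in the style of Cuntz--Krieger, and a final cut-down using the unit class. Two smaller remarks: the Kirchberg--Phillips paragraph is dispensable (as you yourself note, it says nothing about the Cartan pair, so it does no work), and the comparison statement you want — two clopen sets with equal classes in $H_0$ are the source and range of a single compact open $G$-set — is exactly Theorem \ref{Hopf} of this paper, which is the right internal reference rather than Lemma \ref{dsj} and Proposition \ref{pim}.

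There is, however, a genuine gap in your last step. Franks' theorem is a pure existence statement: from $\Coker(\id-M_1^t)\cong\Coker(\id-M_2^t)$ and equality of determinants it produces \emph{some} flow equivalence, and the diagonal-preserving equivalence of groupoids you extract from it induces \emph{some} isomorphism $\psi:H_0(G_1)\to H_0(G_2)$, over which you have no control; nothing relates $\psi$ to the given $\phi$. Concretely, your equivalence identifies $G_1|Y_1$ with $G_2|W$ for a clopen set $W$ satisfying $[1_W]_{G_2}=\psi([1_{Y_1}]_{G_1})$, and to conclude $G_2|W\cong G_2|Y_2$ via Theorem \ref{Hopf} you need $\psi([1_{Y_1}]_{G_1})=[1_{Y_2}]_{G_2}$, whereas the hypothesis only gives $\phi([1_{Y_1}]_{G_1})=[1_{Y_2}]_{G_2}$. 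Closing this gap requires the functorial refinement of Franks' theorem, due to D. Huang: for irreducible shifts of finite type, every isomorphism of Bowen--Franks groups compatible with the determinant is actually induced by a flow equivalence (equivalently, every determinant-compatible automorphism is realized by a self-flow-equivalence), so that the chain of elementary moves can be chosen to induce $\phi$ itself. This realization theorem is a substantive ingredient of Matsumoto's actual argument, not a formality; as written, your proof only shows that $G_1|Y_1$ is isomorphic to the reduction of $G_2$ to \emph{some} clopen set, not to $G_2|Y_2$.
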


In the theorem above, we do not know whether or not 
the hypothesis $\det(\id-M_1^t)=\det(\id-M_2^t)$ is really necessary. 
If the torsion-free part of $H_0(G_i)$ is non-zero 
(or equivalently $H_1(G_i)\neq0$), then $\det(\id-M_i^t)=0$. 
Thus the hypothesis $\det(\id-M_1^t)=\det(\id-M_2^t)$ automatically holds. 
When $H_0(G_i)$ is a torsion group (or equivalently $H_1(G_i)=0$), 
we always have $\lvert\det(\id-M_1^t)\rvert=\lvert\det(\id-M_2^t)\rvert$, 
and so the only issue is the sign of $\det(\id-M_i^t)$. 

We also remark that 
the pair $\Coker(\id-M^t)$ and $\det(\id-M^t)$ is a complete invariant 
of flow equivalence for two-sided irreducible shifts of finite type 
(\cite{F84ETDS}). 

\begin{remark}\label{HigThomp}
We would like to note a close connection 
between the topological full group $[[G]]$ and Thompson's groups. 
I wish to thank R. Grigorchuk, who drew my attention to this connection. 
For background on Thompson's groups, 
see the survey article \cite{CFP} and the references therein. 
In 1965 Richard Thompson gave the first example of 
a finitely presented infinite simple group. 
G. Higman \cite{Hig} and K. S. Brown \cite{Br87JPAA} later introduced 
infinite families $F_{n,r}\subset T_{n,r}\subset V_{n,r}$ 
of finitely presented infinite simple groups generalizing Thompson's example. 
Notice that $V_{n,r}$ is written $G_{n,r}$ in \cite{Br87JPAA}. 
Here we describe $F_{n,r}$ and $V_{n,r}$ 
as subgroups of PL (piecewise linear) maps on an interval. 

Let $n\in\N\setminus\{1\}$ and $r\in\N$. 
Let $F_{n,r}$ be the set of all PL homeomorphisms $f:[0,r]\to[0,r]$ 
with finitely many singularities 
such that all singularities of $f$ are in $\Z[1/n]$ and 
the derivative of $f$ at any non-singular point is $n^k$ for some $k\in\Z$. 
It is easy to see that 
$F_{n,r}$ is closed under composition and forms a group. 
It is known that $F_{n,r}$ is independent of $r$ up to isomorphism, 
the commutator subgroup $D(F_{n,r})$ is simple, 
the abelianization $F_{n,r}/D(F_{n,r})$ is $\Z^n$, and 
$F_{n,r}$ is finitely presented (\cite{Br87JPAA}). 
It is an outstanding open problem whether the group $F_{n,r}$ is amenable. 
The group $V_{n,r}$ is defined to be the group of 
all right continuous PL bijections $f:[0,r)\to[0,r)$ 
with finitely many singularities 
such that all singularities of $f$ are in $\Z[1/n]$, 
the derivative of $f$ at any non-singular point is $n^k$ for some $k\in\Z$ 
and $f$ maps $\Z[1/n]\cap[0,r)$ to itself. 
We call $V_{n,r}$ the Higman-Thompson groups. 
It is known that $V_{n,r}$ is finitely presented, 
the commutator subgroup $D(V_{n,r})$ is simple, and 
the abelianization $V_{n,r}/D(V_{n,r})$ is trivial when $n$ is even 
and is $\Z_2$ when $n$ is odd (\cite{Br87JPAA}). 
Also, $V_{n,r}$ contains non-abelian free groups. 
For $n_1,n_2,r_1,r_2$, 
the groups $V_{n_1,r_1}$ and $V_{n_2,r_2}$ are isomorphic if and only if 
$n_1=n_2$ and $\gcd(n_1{-}1,r_1)=\gcd(n_2{-}1,r_2)$ (\cite{P11JA}). 
It is also possible to describe $F_{n,r}\subset V_{n,r}$ 
as groups of homeomorphisms of the Cantor set. 

Let $(X,\sigma)$ be the full shift over $n$ symbols and 
let $G$ be the \'etale groupoid associated to $(X,\sigma)$. 
The $C^*$-algebra $C^*_r(G)$ is the Cuntz algebra $\mathcal{O}_n$. 
V. V. Nekrashevych \cite{Ne04JOP} proved that 
the Higman-Thompson group $V_{n,1}$ is identified with a certain subgroup 
of the unitary group $U(\mathcal{O}_n)$ (\cite[Proposition 9.6]{Ne04JOP}). 
This identification yields an isomorphism between 
the Higman-Thompson group $V_{n,1}$ and the topological full group $[[G]]$. 
Therefore, when $(X,\sigma)$ is a shift of finite type 
which is not necessarily a full shift, 
$[[G]]$ can be thought of as a generalization of $V_{n,1}$. 
We will prove that $[[G]]$ is of type F$_\infty$ in later subsections. 
In Section 6.7.1, we revisit the \'etale groupoids of full shifts. 
\end{remark}

\subsection{Equivalence between clopen subsets}

We use the notation of the preceding subsection. 
As mentioned above, $H_0(G)$ is isomorphic to $\Coker(\id-M^t)$. 
This isomorphism is described as follows. 
For $\xi\in\V$, 
we consider the clopen set $D_\xi=\{x\in X\mid i(x_1)=\xi\}$. 
The map sending $\xi\mapsto[1_{D_\xi}]_G$ gives rise to 
a homomorphism from $\Z^\V$ to $H_0(G)$. 
It is well-known that this homomorphism is surjective and 
its kernel is $(\id-M^t)\Z^\V$ 
(see \cite[Proposition 3.1]{C81Invent}, \cite[Theorem 4.14]{M12PLMS}). 
Thus, it induces $H_0(G)\cong\Coker(\id-M^t)$. 

\begin{theorem}\label{Hopf}
For non-empty clopen sets $A,B\subset X$, 
the following conditions are equivalent. 
\begin{enumerate}
\item $[1_A]_G=[1_B]_G$ in $H_0(G)$. 
\item There exists a compact open $G$-set $U$ 
such that $s(U)=A$ and $r(U)=B$. 
\end{enumerate}
\end{theorem}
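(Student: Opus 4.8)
The plan is to prove the two implications separately, with (2)$\Rightarrow$(1) being immediate and (1)$\Rightarrow$(2) carrying the content. For (2)$\Rightarrow$(1), recall that the boundary map $\delta_1\colon C_c(G,\Z)\to C(G^{(0)},\Z)$ sends the characteristic function of a compact open $G$-set to $1_{r(U)}-1_{s(U)}$. Hence if $U$ satisfies $s(U)=A$ and $r(U)=B$, then $1_B-1_A=\delta_1(1_U)\in\Ima\delta_1$, so $[1_A]_G=[1_B]_G$ in $H_0(G)=\Coker\delta_1$. Note also that $G$ is purely infinite and minimal by Lemma \ref{SFT>pim}.

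For (1)$\Rightarrow$(2), the first step is to recast the construction of $U$ as a matching problem between cylinder decompositions. For words $\mu,\nu$ with $t(\mu)=t(\nu)$ the $G$-set $U_{\mu,\nu}$ satisfies $s(U_{\mu,\nu})=C_\nu$ and $r(U_{\mu,\nu})=C_\mu$. Therefore, if one can write $A=\bigsqcup_k C_{\alpha_k}$ and $B=\bigsqcup_k C_{\beta_k}$ as finite disjoint unions of cylinder sets over a \emph{common} index set, with $t(\alpha_k)=t(\beta_k)$ for every $k$, then $U=\bigsqcup_k U_{\beta_k,\alpha_k}$ is a compact open $G$-set with $s(U)=A$ and $r(U)=B$. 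Thus it suffices to produce cylinder decompositions of $A$ and $B$ whose multisets of terminal vertices coincide.

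To track terminal vertices, I would attach to a decomposition $A=\bigsqcup_{\mu}C_\mu$ the vector $\sum_\mu e_{t(\mu)}\in\Z^\V$, whose class under $H_0(G)\cong\Coker(\id-M^t)$ equals $[1_A]_G$. The basic move is refinement: replacing a cylinder $C_\mu$ with $t(\mu)=\xi$ by the disjoint union $\bigsqcup_{e\colon i(e)=\xi}C_{\mu e}$ leaves the set $A$ unchanged but changes its terminal-vertex vector by $(M^t-\id)e_\xi$, since there are exactly $M(\xi,\eta)$ edges from $\xi$ to $\eta$. Now the hypothesis says precisely that the terminal-vertex vectors $a_0$ and $b_0$ of fixed decompositions of $A$ and $B$ satisfy $a_0-b_0=(\id-M^t)c$ for some $c\in\Z^\V$; writing $c=c^+-c^-$ with $c^\pm\in\Z_{\ge0}^\V$ yields
\[
a_0+(M^t-\id)c^+=b_0+(M^t-\id)c^-.
\]
The two sides are the terminal-vertex vectors that suitable refinements of $A$ and of $B$ should reach, and they are equal, whereupon the matching of the second paragraph finishes the proof.

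The main obstacle is feasibility: a refinement at a vertex $\xi$ is available only when the current decomposition actually contains a cylinder ending at $\xi$ (and enough of them to perform all prescribed moves), so the formal identity above must be realized by an honest sequence of refinements. This is where irreducibility and the non-permutation hypothesis enter. I would first uniformly refine both $A$ and $B$ to a common depth, which multiplies each terminal-vertex vector by the same power of $M^t$ and, by irreducibility, produces cylinders ending at every vertex on both sides; I would then perform further refinements \emph{in lockstep} at each vertex on both sides (which changes $a_0$ and $b_0$ by the same vector and so preserves the displayed identity), using that $M$ is not a permutation matrix to grow the number of cylinders at each vertex past $\|c^+\|_1$. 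Once every vertex is sufficiently populated, the remaining adjustments by $(M^t-\id)c^+$ on the $A$-side and $(M^t-\id)c^-$ on the $B$-side are feasible and produce a common terminal-vertex vector. Establishing this lockstep feasibility argument carefully, rather than the routine assembly of $U$, is the technical heart of the proof.
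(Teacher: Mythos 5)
Your overall strategy is the same as the paper's: recast (1)$\Rightarrow$(2) as producing cylinder decompositions of $A$ and $B$ with equal multisets of terminal vertices, observe that a refinement at a vertex $\xi$ changes the terminal-vertex vector by $(M^t-\id)\hat\xi$, split the hypothesis into $a_0+(M^t-\id)c^+=b_0+(M^t-\id)c^-$ with $c^\pm\geq0$, and use ``lockstep'' (identical) refinements on both sides to preserve this identity until the prescribed adjustments become feasible. The paper does exactly this, except that its lockstep move consists of appending a common cylinder partition $L$ of $D_\zeta$ to one cylinder ending at $\zeta$ on each side. The problem is the step you yourself identify as the heart: your feasibility mechanism rests on the claim that uniformly refining both decompositions to a common depth ``produces cylinders ending at every vertex on both sides, by irreducibility.'' That claim is false when $M$ is irreducible but periodic, and the standing hypothesis is only that $M$ is irreducible and not a permutation matrix, which allows periodicity.

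Concretely, take $M=\begin{pmatrix}0&2\\1&0\end{pmatrix}$ (vertices $1,2$; two edges $f_1,f_2$ from $1$ to $2$, one edge $e$ from $2$ to $1$). This $M$ is irreducible, not a permutation matrix, and has period $2$; moreover $\det(\id-M^t)=-1$, so $H_0(G)=0$ and condition (1) holds for every pair of non-empty clopen sets. Let $A=C_e$ and $B=C_{f_1}$. Every word of length $N$ extending $e$ ends at vertex $1$ if $N$ is odd and at vertex $2$ if $N$ is even, while every word of length $N$ extending $f_1$ ends at the opposite vertex. So at any common depth the two decompositions populate \emph{complementary} sets of vertices: neither side ever meets every vertex, no vertex is ever shared, hence no lockstep refinement is ever enabled and no matching at a common depth exists --- yet the theorem is true for this pair (one matches, e.g., $\{C_{ef_1e},C_{ef_2}\}$ with $\{C_{f_1ef_1e},C_{f_1ef_2}\}$). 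The cure is to abandon equal depths: by strong connectivity one can refine one cylinder on each side along paths (of possibly different lengths) so that both decompositions contain a cylinder ending at a common vertex $\zeta$, and then append to both of these the \emph{same} partition $L$ of $D_\zeta$ into cylinders of varying lengths, chosen so that each vertex $\xi$ occurs as terminal vertex of at least $\max\{c^+(\xi),c^-(\xi)\}$ words of $L$; mixed lengths are what defeat the period obstruction. This is precisely the paper's argument, and with it your outline closes up; without it, the proof as written fails on periodic irreducible matrices. (A secondary inaccuracy: refining cylinders of different lengths to a common depth does not multiply the terminal-vertex vector by a single power of $M^t$; that identity only holds if every cylinder is refined the same number of times.)
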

\begin{proof}
The implication from (2) to (1) is immediate from the definition of $H_0(G)$. 
We show that (1) implies (2). 
Write $A$ and $B$ as disjoint unions 
$A=\bigcup_{\mu\in I}C_\mu$ and $B=\bigcup_{\nu\in J}C_\nu$ 
of non-empty cylinder sets $C_\mu,C_\nu$, respectively. 
By dividing up a cylinder set if necessary, 
we may assume that there exist $\mu_0\in I$ and $\nu_0\in J$ 
such that $t(\mu_0)=\zeta=t(\nu_0)$. 
We define $a,b\in\Z^\V$ by 
\[
a(\xi)=\#\{\mu\in I\mid t(\mu)=\xi\},\quad 
b(\xi)=\#\{\nu\in J\mid t(\nu)=\xi\}\quad\forall \xi\in\V. 
\]
Since $[1_A]_G$ equals $[1_B]_G$, $a-b\in\Z^\V$ is in the image of $\id-M^t$. 
Therefore, 
there exist $c,d\in\N^\V$ such that $a+(M^t-\id)c=b+(M^t-\id)d$. 
By dividing the clopen set $D_\zeta$ into cylinder sets, 
we can find a finite set $L$ of admissible words such that 
$D_\zeta$ equals the disjoint union $\bigcup_{\lambda\in L}C_\lambda$ and 
\[
\#\{\lambda\in L\mid t(\lambda)=\xi\}\geq\max\{c(\xi),d(\xi)\}
\quad\forall \xi\in\V. 
\]
Define $I'$ and $J'$ by 
\[
I'=(I\setminus\{\mu_0\})\cup\{\mu_0\lambda\mid\lambda\in L\}
\quad\text{and}\quad 
J'=(J\setminus\{\nu_0\})\cup\{\nu_0\lambda\mid\lambda\in L\}. 
\]
Then we have 
$A=\bigcup_{\mu\in I'}C_\mu$ and $B=\bigcup_{\nu\in J'}C_\nu$. 
Also, 
\[
a'(\xi)=\#\{\mu\in I'\mid t(\mu)=\xi\}\quad\text{and}\quad 
b'(\xi)=\#\{\nu\in J'\mid t(\nu)=\xi\}
\]
still satisfy $a'+(M^t-\id)c=b'+(M^t-\id)d$, because $a'-a=b'-b$. 
For each $\xi\in\V$, 
we can find a finite subset $I'_\xi\subset I'$ such that 
$t(\mu)=\xi$ for all $\mu\in I'_\xi$ and $\#I'_\xi=c(\xi)$. 
Define $I''$ by 
\[
I''=\left(I'\setminus\bigcup_{\xi\in\V}I'_\xi\right)
\cup
\bigcup_{\xi\in\V}\{\mu e\mid\mu\in I'_\xi,\ e\in\E,\ i(e)=\xi\}. 
\]
Then we have $A=\bigcup_{\mu\in I''}C_\mu$ and 
\[
a''(\xi)=\#\{\mu\in I''\mid t(\mu)=\xi\}
=a'(\xi)-c(\xi)+\sum_{\eta}M(\eta,\xi)c(\eta), 
\]
that is, $a''=a'+(M^t-\id)c$. 
Similarly, we can construct a set $J''$ of admissible words such that 
$B=\bigcup_{\nu\in J''}C_\nu$ and 
$b''=b'+(M^t-\id)d$, where 
\[
b''(\xi)=\#\{\nu\in J''\mid t(\nu)=\xi\}. 
\]
It follows from $a'+(M^t-\id)c=b'+(M^t-\id)d$ that 
there exists a bijection $f:I''\to J''$ 
such that $t(\mu)=t(f(\mu))$ for every $\mu\in I''$. 
Then 
\[
U=\bigcup_{\mu\in I''}U_{f(\mu),\mu}
\]
is a compact open $G$-set satisfying $r(U)=B$ and $s(U)=A$. 
\end{proof}

\subsection{The Haagerup property}

As in Section 6.1, 
we let $G$ be the \'etale groupoid 
arising from a one-sided irreducible shift of finite type $(X,\sigma)$. 
We use the notation of Section 6.1. 
We have already seen that $[[G]]$ is not amenable 
(see Proposition \ref{pi>Z2*Z3} and Proposition \ref{SFT>pim}). 
In this subsection, 
we would like to see that $[[G]]$ has the Haagerup property, 
i.e. it is a-T-menable in the sense of M. Gromov. 
Indeed, this is a corollary of B. Hughes's theorem \cite[Theorem 1.1]{H09GGD}, 
which states that any locally finitely determined group of local similarities 
on a compact ultrametric space has the Haagerup property. 
One can regard $X$ as a compact ultrametric space and 
introduce a suitable finite similarity structure for (cylinder sets of) $X$. 
Then \cite[Theorem 1.1]{H09GGD} applies to show that 
$[[G]]$ has the Haagerup property. 
But, instead of proceeding in this way, 
we would like to provide a self-contained succinct proof of it 
for the convenience of the reader. 
I wish to thank R. Grigorchuk, 
who drew my attention to this problem and to the reference \cite{H09GGD}. 

We recall the notion of zipper actions from \cite{H09GGD}. 

\begin{definition}[{\cite[Defintion 5.1]{H09GGD}}]\label{zipper}
An action $\phi:\Gamma\curvearrowright\Omega$ of 
a discrete group $\Gamma$ on a set $\Omega$ is called a zipper action 
if there exists a subset $Z\subset\Omega$ such that the following hold. 
\begin{enumerate}
\item For every $\alpha\in\Gamma$, 
the symmetric difference $\phi_\alpha(Z)\bigtriangleup Z$ is finite. 
\item For every $r>0$, 
$\{\alpha\in\Gamma\mid\#(\phi_\alpha(Z)\bigtriangleup Z)\leq r\}$ 
is finite. 
\end{enumerate}
\end{definition}

\begin{theorem}[{\cite[Theorem 5.3]{H09GGD}}]
If a discrete group $\Gamma$ admits a zipper action, 
then $\Gamma$ has the Haagerup property. 
\end{theorem}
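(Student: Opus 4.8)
The plan is to produce a proper, conditionally negative definite function on $\Gamma$. For a countable discrete group this is one of the standard equivalent formulations of the Haagerup property: $\Gamma$ is a-T-menable if and only if it admits a metrically proper affine isometric action on a real Hilbert space, equivalently a proper conditionally negative definite function $\Gamma\to[0,\infty)$. The two zipper axioms are tailored exactly to supply such a function, with axiom (1) guaranteeing finiteness of the values and axiom (2) guaranteeing properness. (Note that (2) already forces $\Gamma$ to be countable, being a countable increasing union of finite sets.)

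First I would form the permutation representation $\pi$ of $\Gamma$ on the real Hilbert space $\ell^2(\Omega)$, determined by $\pi_\alpha(\delta_\omega)=\delta_{\phi_\alpha(\omega)}$; since each $\phi_\alpha$ is a bijection of $\Omega$, this is an orthogonal representation. The indicator $1_Z$ need not lie in $\ell^2(\Omega)$ when $Z$ is infinite, but axiom (1) ensures that the formal difference
\[
b(\alpha)=\pi_\alpha(1_Z)-1_Z=1_{\phi_\alpha(Z)}-1_Z
\]
is supported on the finite set $\phi_\alpha(Z)\bigtriangleup Z$, hence defines a genuine vector of $\ell^2(\Omega)$. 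Using $\pi_{\alpha\beta}=\pi_\alpha\pi_\beta$ one checks the $1$-cocycle identity $b(\alpha\beta)=\pi_\alpha(b(\beta))+b(\alpha)$ by the telescoping
\[
\pi_{\alpha\beta}(1_Z)-1_Z=\pi_\alpha\bigl(\pi_\beta(1_Z)-1_Z\bigr)+\bigl(\pi_\alpha(1_Z)-1_Z\bigr).
\]

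Setting $\psi(\alpha)=\lVert b(\alpha)\rVert^2=\#(\phi_\alpha(Z)\bigtriangleup Z)$, the general fact that the squared norm of a $1$-cocycle for an orthogonal representation is conditionally negative definite makes $\psi$ conditionally negative definite automatically. Axiom (2) states precisely that every sublevel set $\{\alpha\in\Gamma\mid\psi(\alpha)\le r\}$ is finite, i.e. that $\psi$ is proper, and invoking the characterization above then gives the Haagerup property. The only point that needs genuine care is the bookkeeping around $1_Z\notin\ell^2(\Omega)$: one must verify that only the differences $\pi_\alpha(1_Z)-1_Z$ ever enter, so that the computation takes place entirely inside $\ell^2(\Omega)$ and the cocycle relation is legitimate. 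Everything downstream --- cocycle to conditional negative definiteness, and conditional negative definiteness to a-T-menability --- is the standard Hilbert-space machinery.
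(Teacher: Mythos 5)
Your proof is correct: axiom (1) makes the cocycle $b(\alpha)=1_{\phi_\alpha(Z)}-1_Z$ a genuine element of $\ell^2(\Omega)$, the squared norm $\psi(\alpha)=\lVert b(\alpha)\rVert^2=\#(\phi_\alpha(Z)\bigtriangleup Z)$ is conditionally negative definite by the standard cocycle criterion, and axiom (2) is precisely properness of $\psi$, which for a countable discrete group is equivalent to the Haagerup property. Note that the paper itself gives no proof of this statement --- it is quoted directly from Hughes \cite[Theorem 5.3]{H09GGD} --- and your argument is essentially the one in that source (a proper affine isometric action built from the zipper set), so your reconstruction matches the intended proof.
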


In order to apply the theorem above to $\Gamma=[[G]]$, 
we have to construct a zipper action of $[[G]]$. 
Let $\Omega$ be the set of equivalence classes of 
non-empty compact open $G$-sets $V$ such that $s(V)$ is a cylinder set. 
Two such $V_1$ and $V_2$ are equivalent 
provided $V_2=V_1U_{\mu,\nu}$, where $s(V_1)=C_\mu$ and $s(V_2)=C_\nu$. 
We denote the equivalence class of $V$ in $\Omega$ by $[V]$. 
Define an action $\phi:[[G]]\curvearrowright\Omega$ 
by $\phi_\alpha([V])=[WV]$, 
where $W$ is the compact open $G$-set such that $\alpha=\pi_W$. 

\begin{theorem}\label{Haagerup}
The action $\phi:[[G]]\curvearrowright\Omega$ is a zipper action. 
In particular, $[[G]]$ has the Haagerup property. 
\end{theorem}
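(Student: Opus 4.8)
The plan is to verify the two conditions in Definition \ref{zipper} for the action $\phi:[[G]]\curvearrowright\Omega$, with a judicious choice of the subset $Z\subset\Omega$. The natural candidate is
\[
Z=\{[V]\in\Omega\mid \lvert\mu\rvert\geq\lvert\nu\rvert\text{ for the representative }V\subset\{(x,\lvert\nu\rvert-\lvert\mu\rvert,y)\}\text{ with }s(V)=C_\mu,\ r(V)=C_\nu\},
\]
or more invariantly, $Z$ should consist of those classes that admit a representative $V$ which is a ``contraction,'' i.e. $V\subset\{(x,n,y)\in G\mid n\le 0\}$ (the cocycle is nonpositive, so $r(V)$ sits at a deeper level than $s(V)$). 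The key point making $Z$ well-defined on equivalence classes is that the equivalence $V_2=V_1U_{\mu,\nu}$ changes the length of the source cylinder by $\lvert\mu\rvert-\lvert\nu\rvert$ while adjusting the $\Z$-coordinate consistently, so a length-based comparison must be set up to be invariant under right multiplication by the $U_{\mu,\nu}$.

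First I would make precise how an element $[V]\in\Omega$ is encoded. Each class has a canonical representative in which $s(V)=C_\mu$ is a cylinder of minimal length, and then $V=U_{\nu,\mu}$ for a uniquely determined word $\nu$ with $t(\nu)=t(\mu)$; thus $\Omega$ is identified with the set of pairs of words $(\nu,\mu)$ modulo simultaneous right-extension, and $Z$ is cut out by an inequality like $\lvert\nu\rvert\le\lvert\mu\rvert$ which is stable under that extension. Next I would analyze the action: if $\alpha=\pi_W$ with $W$ a compact open $G$-set, then $\phi_\alpha([V])=[WV]$ only alters the word $\nu$ in a bounded, $W$-dependent way—specifically $W$ is a finite union of basic sets $U_{\rho,\tau}$, and left multiplication by $W$ reshuffles only those $[V]$ whose source level is below the ``depth'' of $W$, leaving all sufficiently deep classes fixed. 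This locality is the mechanism behind condition (1): $\phi_\alpha(Z)\bigtriangleup Z$ is contained in the finite set of classes living above the depth of $W$, hence is finite.

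For condition (2) I would argue contrapositively: if $\#(\phi_\alpha(Z)\bigtriangleup Z)\le r$, then $\alpha$ can only move classes up to a bounded depth determined by $r$, which forces the defining $G$-set $W$ to be supported on cylinders of bounded length; since $G$ is second countable and $X$ is compact, there are only finitely many compact open $G$-sets of bounded depth, hence only finitely many such $\alpha$. The main obstacle will be the bookkeeping in condition (1): one must show that the discrepancy $\phi_\alpha(Z)\bigtriangleup Z$ is genuinely finite rather than merely ``small,'' and this requires checking that left multiplication by $W$ preserves the sign of the length-difference $\lvert\nu\rvert-\lvert\mu\rvert$ outside a finite set—in other words, that the $\Z$-grading (cocycle) on $G$ interacts with $Z$ so that deep enough contractions stay contractions and deep enough expansions stay expansions after applying $\alpha$. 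Once the representative calculus for $\Omega$ and the depth estimate for $W$ are set up cleanly, both zipper conditions follow, and the Haagerup property for $[[G]]$ is then immediate from \cite[Theorem 5.3]{H09GGD}.
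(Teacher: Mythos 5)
Your overall strategy (exhibit a zipper set $Z$ and bound $\#(\phi_\alpha(Z)\bigtriangleup Z)$ from above by ``shallow'' classes and from below by the complexity of $\alpha$) is the same as the paper's, but the set $Z$ you propose does not work, and the description of $\Omega$ it rests on is incorrect. The ``invariant'' version of your $Z$ (classes admitting a representative contained in $\{(x,n,y)\in G\mid n\le 0\}$) is all of $\Omega$: if $V$ is any representative with $s(V)=C_\mu$, then right multiplication by $U_{\mu,\mu\lambda}$ yields an equivalent representative whose cocycle values are those of $V$ shifted down by $\lvert\lambda\rvert$; since $V$ is compact these values are bounded, so taking $\lvert\lambda\rvert$ large makes them all negative. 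With $Z=\Omega$ one has $\phi_\alpha(Z)\bigtriangleup Z=\emptyset$ for every $\alpha$, so condition (2) of Definition \ref{zipper} fails, as $[[G]]$ is infinite. The length-based version fares no better, because it is not invariant: not every class contains a single basic set $U_{\nu,\mu}$ at all (the range $r(V)$ is unchanged by the equivalence and is in general not a cylinder set, e.g. $V=U_{00,01}\cup U_{111,00}$ in the full $2$-shift has range $C_{00}\cup C_{111}$), and among basic sets one has $U_{\nu,\mu}U_{\mu,\mu'}=U_{\nu,\mu'}$ for \emph{every} word $\mu'$ with $t(\mu')=t(\nu)$, so the class of $U_{\nu,\mu}$ is determined by the range word $\nu$ alone. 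In particular $\Omega$ is not ``pairs of words modulo simultaneous right-extension,'' there is no canonical minimal-length source, and any comparison of $\lvert\mu\rvert$ with $\lvert\nu\rvert$ is meaningless on equivalence classes; under the most charitable reading your $Z$ becomes a finite set, which again kills condition (2).

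The missing idea is the correct choice of $Z$: the paper takes $Z=\{[C_\mu]\in\Omega\mid C_\mu\ \text{is a non-empty cylinder set}\}$, i.e.\ exactly the classes of single basic sets, one for each admissible word (its range). This condition refers to the whole class, so it is genuinely invariant, and it is rigid enough for condition (2). Writing $\alpha=\pi_W$ with $W=\bigcup_{i=1}^kU_{\mu_i,\nu_i}$ in reduced form (no sub-union equal to a single basic set) and $m(\alpha)=\max\{\lvert\mu_i\rvert,\lvert\nu_i\rvert\}$, one checks that $\phi_\alpha([C_\lambda])\in Z$ if and only if $C_\lambda\subset C_{\nu_i}$ for some $i$. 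This single criterion yields both bounds: any class in the symmetric difference comes from a word $\lambda$ with $\lvert\lambda\rvert<m(\alpha)$, of which there are finitely many (condition (1)); and the proper prefixes of the longest $\nu_i$ give $\#(\phi_\alpha(Z)\bigtriangleup Z)\geq m(\alpha)-1$, so a bound on the symmetric difference bounds $m(\alpha)$, leaving only finitely many $\alpha$ (condition (2)). Two further inaccuracies in your sketch: left multiplication by $W$ does not ``fix'' deep classes---it maps deep classes of $Z$ into $Z$, which is what is actually needed---and the finiteness of $\{\alpha\mid m(\alpha)<r\}$ comes from there being only finitely many admissible words of length less than $r$ (hence finitely many possible reduced unions $\bigcup_iU_{\mu_i,\nu_i}$), not from second countability.
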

\begin{proof}
Define a subset $Z\subset\Omega$ by 
\[
Z=\{[C_\mu]\in\Omega\mid
\text{$C_\mu$ is a non-empty cylinder set}\}. 
\]
Let $\alpha=\pi_W\in[[G]]$. 
There exist admissible words 
$\mu_1,\mu_2,\dots,\mu_k$, $\nu_1,\nu_2,\dots,\nu_k$ such that 
$t(\mu_i)=t(\nu_i)$ for all $i=1,2,\dots,k$ and 
$W$ equals the disjoint union $\bigcup_{i=1}^kU_{\mu_i,\nu_i}$. 
Suppose that there exists $L\subset\{1,2,\dots,k\}$ such that 
$\bigcup_{i\in L}U_{\mu_i,\nu_i}$ equals $U_{\mu,\nu}$ 
for some admissible words $\mu,\nu$ with $t(\mu)=t(\nu)$. 
Then we may replace $\bigcup_{i\in L}U_{\mu_i,\nu_i}$ with $U_{\mu,\nu}$. 
By repeating this procedure if necessary, 
we may assume that no such $L$ exists. 
Let $m(\alpha)=\max\{\lvert\mu_i\rvert,\lvert\nu_i\rvert\mid i=1,2,\dots,k\}$. 

We would like to estimate $\#(\phi_\alpha(Z)\bigtriangleup Z)$. 
Let $[C_\lambda]\in Z$. 
Then $\phi_\alpha([C_\lambda])=[WC_\lambda]$ is in $Z$ if and only if 
there exists $i$ such that $C_\lambda$ is contained in $C_{\nu_i}$. 
Hence, if $[C_\lambda]$ is in $Z\setminus\phi_\alpha^{-1}(Z)$, 
then $C_\lambda$ is not contained in any $C_{\nu_i}$, and so in particular 
\[
\lvert\lambda\rvert<\max\{\lvert\nu_i\rvert\mid i=1,2,\dots,k\}\leq m(\alpha). 
\] 
Therefore $\#(Z\setminus\phi_\alpha^{-1}(Z))=\#(\phi_\alpha(Z)\setminus Z)$ 
does not exceed the number of admissible words 
of length less than $m(\alpha)$, and hence is finite. 
In the same way, 
we can show $\#(Z\setminus\phi_\alpha(Z))$ is finite, 
which implies condition (1) of Definition \ref{zipper}. 
Next, let us estimate $\#(\phi_\alpha(Z)\bigtriangleup Z)$ from below. 
Without loss of generality, we may assume $m(\alpha)=\lvert\nu_1\rvert$. 
Let $\nu_1=(e_1,e_2,\dots,e_{m(\alpha)})$. 
Put $\lambda_j=(e_1,e_2,\dots,e_j)$ for $j=1,2,\dots,m(\alpha){-}1$. 
Then $C_{\lambda_j}$ is not contained in any $C_{\nu_i}$. 
It follows that $\phi_\alpha([C_{\lambda_j}])$ is not in $Z$, 
and so we get 
\[
\#(\phi_\alpha(Z)\bigtriangleup Z)
\geq\#(\phi_\alpha(Z)\setminus Z)\geq m(\alpha)-1. 
\]

Hence, in order to prove condition (2) of Definition \ref{zipper}, 
for given $r>0$, it suffices to show that 
the set of $\alpha\in[[G]]$ such that $m(\alpha)<r$ is finite. 
Clearly there exist finitely many partitions of $X$ into cylinder sets $C_\mu$ 
satisfying $\lvert\mu\rvert<r$. 
Hence the number of $\alpha=\pi_W\in[[G]]$ 
for which there exist $\mu_1,\mu_2,\dots,\mu_k$ and $\nu_1,\nu_2,\dots,\nu_k$ 
such that $\lvert\mu_i\rvert<r$, $\lvert\nu_i\rvert<r$ and 
$W=\bigcup_{i=1}^kU_{\mu_i,\nu_i}$ is at most finite. 
The proof is complete. 
\end{proof}

For any clopen set $Y\subset X$, $[[G|Y]]$ is a subgroup of $[[G]]$. 
It follows from the theorem above that 
$[[G|Y]]$ also has the Haagerup property.

\subsection{Kernel of the index map}

As in Section 6.1, 
we let $G$ be the \'etale groupoid 
arising from a one-sided irreducible shift of finite type $(X,\sigma)$. 
We use the notation of Section 6.1. 
Fix a non-empty clopen subset $Y\subset X$. 
In this subsection, we would like to show that 
$[[G|Y]]_0$ is finitely generated (Theorem \ref{fg}). 

First we need to write down the index map $I:[[G|Y]]\to H_1(G|Y)$ explicitly. 
Any $\alpha\in[[G|Y]]$ extends to an element of $[[G]]$ 
by letting $\alpha(x)=x$ for $x\in X\setminus Y$, 
and so we may regard $[[G|Y]]$ as a subgroup of $[[G]]$. 
Since $G$ is minimal, 
the inclusion $G|Y\hookrightarrow G$ induces 
isomorphisms between $H_n(G|Y)$ and $H_n(G)$ 
(see Proposition 3.5 and Theorem 3.6 of \cite{M12PLMS}). 
These observations yield the diagram 
\[
\begin{CD}[[G|Y]]@>I>>H_1(G|Y)\\
@VVV@VV\cong V\\
[[G]]@>>I>H_1(G), \end{CD}
\]
which becomes commutative by the definition of the index map. 
From now on, we identify $H_1(G|Y)$ with $H_1(G)$. 
As mentioned in Section 6.1, 
$H_1(G)$ is isomorphic to $\Ker(\id-\delta)$, which is a subgroup of $H_0(K)$. 
For $\alpha\in[[G|Y]]$, 
we would like to describe $I(\alpha)\in H_1(G|Y)\cong H_1(G)$ 
as an element of $\Ker(\id-\delta)$. 
Take a compact open $G|Y$-set $U$ such that $\alpha=\pi_U$. 
For $n\in\Z$, we put 
\[
S(\alpha,n)=\{x\in Y\mid(\alpha(x),n,x)\in U\}. 
\]
Note that $S(\alpha,n)$ is empty except for finitely many $n$. 
We have 
\[
[1_Y]_K=\sum_{n\in\Z}[1_{S(\alpha,n)}]_K
\]
in $H_0(K)$, 
because $\{S(\alpha,n)\mid n\in\Z\}$ is a clopen partition of $Y$. 
Since $\{\alpha(S(\alpha,n))\mid n\in\Z\}$ is also a clopen partition of $Y$, 
one obtains 
\[
[1_Y]_K=\sum_{n\in\Z}[1_{\alpha(S(\alpha,n))}]_K
=\sum_{n\in\Z}\delta^{-n}\left([1_{S(\alpha,n)}]_K\right)
\]
in $H_0(K)$. 
Therefore, we get 
\begin{equation}
\sum_{n\in\Z}(\id-\delta^{-n})\left([1_{S(\alpha,n)}]_K\right)=0. 
\label{mouse}
\end{equation}
For $n\in\Z$, we define homomorphisms $\delta^{(n)}:H_0(K)\to H_0(K)$ by 
\[
\delta^{(n)}=\begin{cases}\id+\delta+\dots+\delta^{n-1}&n>0\\
0&n=0\\
-(\delta^{-1}+\delta^{-2}+\dots+\delta^n)&n<0, \end{cases}
\]
so that $(\id-\delta)\delta^{(n)}=\id-\delta^n$ hold. 
It follows from \eqref{mouse} that 
\[
\sum_{n\in\Z}(\id-\delta)
\left(\delta^{(-n)}\left([1_{S(\alpha,n)}]_K\right)\right)=0, 
\]
which means that $\sum_{n\in\Z}\delta^{(-n)}([1_{S(\alpha,n)}]_K)$ 
is in $\Ker(\id-\delta)$. 
The proof of \cite[Theorem 4.14]{M12PLMS} immediately implies the following. 

\begin{lemma}\label{index}
In the setting above, $I(\alpha)\in H_1(G|Y)\cong H_1(G)$ is identified with 
\[
\sum_{n\in\Z}\delta^{(-n)}\left([1_{S(\alpha,n)}]_K\right)\in\Ker(\id-\delta). 
\]
\end{lemma}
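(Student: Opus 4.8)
The plan is to make the isomorphism $H_1(G|Y)\cong H_1(G)\cong\Ker(\id-\delta)\subset H_0(K)$ explicit on the cycle representing $I(\alpha)$ and to match it with the stated sum. By definition, $I(\alpha)$ is the class $[1_U]\in H_1(G|Y)$, where $U$ is the compact open $G|Y$-set with $\alpha=\pi_U$. The whole task is therefore to trace this class through the two identifications above and check that the outcome is exactly $\sum_{n\in\Z}\delta^{(-n)}([1_{S(\alpha,n)}]_K)$.

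First I would decompose $U$ by degree: writing $U_n=U\cap\{(x,n,y)\in G\}$, one has $U=\bigsqcup_n U_n$ (a finite union), $s(U_n)=S(\alpha,n)$, $r(U_n)=\alpha(S(\alpha,n))$, and hence $1_U=\sum_n 1_{U_n}$ as a $1$-cycle in $C_c(G,\Z)$. Each $U_n$ is a compact open $G$-set contained in $\{(x,n,y)\in G\}$, so the defining relation of $\delta$ gives $[1_{r(U_n)}]_K=\delta^{-n}([1_{S(\alpha,n)}]_K)$.

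Next I would recall the explicit description of the connecting homomorphism $H_1(G)\to H_0(K)$ furnished by the proof of \cite[Theorem 4.14]{M12PLMS} (the Pimsner--Voiculescu type exact sequence associated with the $\Z$-valued cocycle $(x,n,y)\mapsto n$ and the induced automorphism $\delta$ on $H_0(K)$). Under that isomorphism $H_1(G)$ is identified with $\Ker(\id-\delta)$, and the class of a degree-homogeneous $G$-set $1_{U_n}$ is carried to $\delta^{(-n)}([1_{S(\alpha,n)}]_K)$; here the telescoping operators $\delta^{(n)}$, characterized by $(\id-\delta)\delta^{(n)}=\id-\delta^n$, are precisely what appears when one rewrites the degree-$n$ contribution in terms of the degree-$0$ subgroupoid $K$. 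Summing over $n$ and using additivity of the connecting map then yields $I(\alpha)=\sum_n\delta^{(-n)}([1_{S(\alpha,n)}]_K)$, as desired. That this element actually lies in $\Ker(\id-\delta)$ has already been verified in the discussion preceding the statement via \eqref{mouse}, so no further checking is needed there.

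The main obstacle I anticipate is the careful bookkeeping in the connecting map: one must unwind the chain-level maps and the grading used in the proof of Theorem 4.14 to confirm that each degree-$n$ block contributes exactly $\delta^{(-n)}([1_{S(\alpha,n)}]_K)$ --- in particular getting the signs right (so that positive degrees produce $-(\delta^{-1}+\dots+\delta^{-n})$ and negative degrees produce $\id+\delta+\dots+\delta^{-n-1}$) rather than some off-by-one or sign-flipped variant. Once the connecting map is pinned down on homogeneous $G$-sets, the remainder is a direct summation.
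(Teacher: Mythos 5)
Your proposal is correct and takes essentially the same route as the paper: the paper's entire proof is the single sentence that the proof of \cite[Theorem 4.14]{M12PLMS} immediately implies the lemma, and your degree decomposition $U=\bigcup_n U_n$ together with the per-degree identification and summation is exactly the natural unwinding of that citation (with membership in $\Ker(\id-\delta)$ already settled by \eqref{mouse}, as you note). One cosmetic caveat: $1_{U_n}$ is not itself a cycle, so you should phrase the key step as the contribution of the degree-$n$ block under the chain-level recipe for the connecting map, rather than as the image of a class of $1_{U_n}$ in $H_1(G)$.
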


In what follows, we regard $[[K|Y]]$ as a subgroup of $[[G|Y]]$. 
As mentioned in Section 6.1, $K$ is an AF groupoid, and so is $K|Y$. 
Hence, by \cite[Proposition 3.2]{M06IJM}, $[[K|Y]]$ is a locally finite group. 
More precisely, $[[K|Y]]$ is written as 
an increasing union of finite direct sums of symmetric groups. 
In particular, $[[K|Y]]$ is contained in $[[G|Y]]_0=\Ker I$. 
For two clopen sets $A,B\subset X$, it is well-known that 
$[1_A]_K=[1_B]_K$ in $H_0(K)$ if and only if 
there exists a compact open $K$-set $U$ such that $s(U)=A$ and $r(U)=B$. 
For $\alpha\in[[G|Y]]$, it is also easy to see that 
$\alpha$ belongs to $[[K|Y]]$ if and only if $S(\alpha,0)=Y$. 

We introduce the set of transpositions. 
We call $\alpha\in[[G|Y]]$ a transposition 
if there exists a clopen set $A\subset Y$ such that 
$A\cap\alpha(A)=\emptyset$, $\supp(\alpha)=A\cup\alpha(A)$ and $\alpha^2=1$. 
By Theorem \ref{Hopf}, 
if non-empty clopen sets $A,B\subset Y$ satisfy 
$A\cap B=\emptyset$ and $[1_A]_G=[1_B]_G$, then 
there exists a transposition $\alpha\in[[G|Y]]$ such that 
$\alpha(A)=B$ and $\supp(\alpha)=A\cup B$. 
We let $\mathcal{T}$ denote the set of all transpositions. 
By \cite[Lemma 7.3]{M12PLMS}, 
$\mathcal{T}$ is contained in $[[G|Y]]_0=\Ker I$. 
It is also easy to see that 
the group $[[K|Y]]$ is generated by $\mathcal{T}\cap[[K|Y]]$. 
Below we will see that $[[G|Y]]_0$ is generated by $\mathcal{T}$ 
(Lemma \ref{Tgenerate} and Theorem \ref{fg}). 

\begin{lemma}\label{transp}
Let $A,B\subset Y$ be clopen sets and let $n\in\Z$. 
If $\delta^n([1_A]_K)=[1_B]_K$ in $H_0(K)$, then 
there exists a compact open $G$-set $U$ such that 
$r(U)=A$, $s(U)=B$ and $U\subset\{(x,n,y)\in G\}$. 
Furthermore, if $A$ and $B$ are disjoint, then 
there exists $\tau\in\mathcal{T}$ such that 
$S(\tau,-n)=A$, $S(\tau,n)=B$, $\tau(A)=B$ and $\supp(\tau)=A\cup B$. 
\end{lemma}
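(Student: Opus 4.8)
My plan is to prove the first assertion and then read off the transposition, splitting according to the sign of $n$. The key reduction is this: suppose I have produced a compact open $G$-set $W\subset\{(x,n,y)\in G\}$ with $r(W)=A$. By the defining property of $\delta$, $[1_A]_K=[1_{r(W)}]_K=\delta^{-n}([1_{s(W)}]_K)$, hence $[1_{s(W)}]_K=\delta^n([1_A]_K)=[1_B]_K$. Since $[1_{s(W)}]_K=[1_B]_K$ in $H_0(K)$, the characterization of $H_0(K)$ recalled just above yields a compact open $K$-set $V$ with $s(V)=B$ and $r(V)=s(W)$. Then $U=WV$ is a compact open $G$-set with $U\subset\{(x,n,y)\in G\}$, $r(U)=r(W)=A$ and $s(U)=s(V)=B$, as required. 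Thus everything reduces to constructing $W\subset\{(x,n,y)\in G\}$ with prescribed range $A$.

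\textbf{The case $n\le0$.} Write $n=-m$ with $m\ge0$; the case $m=0$ is trivial, so assume $m>0$. First I would write $A$ as a disjoint union $A=\bigcup_{\mu\in I}C_\mu$ of nonempty cylinder sets all of one common length $p$ (possible since irreducibility forces every vertex to have out-degree at least one, so cylinders may be subdivided freely). For each $\mu\in I$ I seek an admissible word $\nu_\mu$ with $\lvert\nu_\mu\rvert=p+m$, $t(\nu_\mu)=t(\mu)$, and with the $C_{\nu_\mu}$ pairwise disjoint; then $W=\bigcup_{\mu\in I}U_{\mu,\nu_\mu}$ is the desired $G$-set, because $r(U_{\mu,\nu_\mu})=C_\mu$ and $\lvert\mu\rvert-\lvert\nu_\mu\rvert=-m=n$. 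Grouping the $\mu$ by terminal vertex $\xi$ and setting $a_\xi=\#\{\mu\in I\mid t(\mu)=\xi\}$, it suffices to find $a_\xi$ distinct admissible words of length $p+m$ ending at $\xi$; distinct words of equal length have disjoint cylinders, which gives the required disjointness both within a fixed $\xi$ and across different $\xi$.

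\textbf{Main obstacle.} The heart of the matter is the combinatorial count in the previous step, and this is where irreducibility enters decisively. The number of admissible words of length $\ell$ ending at $\xi$ is $\sum_{\eta\in\V}M^\ell(\eta,\xi)$, and $a_\xi$ is bounded by this quantity for $\ell=p$. Because $M$ is irreducible, the graph is strongly connected, so every vertex has in-degree at least one and hence admits at least one incoming path of length $m$; that is, $\sum_{\eta}M^m(\eta,\zeta)\ge1$ for every $\zeta\in\V$. Consequently
\[
\sum_{\eta}M^{p+m}(\eta,\xi)
=\sum_{\zeta}\Bigl(\sum_{\eta}M^m(\eta,\zeta)\Bigr)M^p(\zeta,\xi)
\ge\sum_{\zeta}M^p(\zeta,\xi)\ge a_\xi,
\]
so there are enough words of length $p+m$ ending at $\xi$ to choose the $\nu_\mu$ injectively. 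I expect the verification that the chosen $\nu_\mu$ can be made genuinely distinct (rather than the raw inequality) to be the only delicate bookkeeping point.

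\textbf{The case $n>0$ and the transposition.} For $n>0$ the hypothesis $\delta^n([1_A]_K)=[1_B]_K$ is equivalent to $\delta^{-n}([1_B]_K)=[1_A]_K$, so applying the case already settled (with exponent $-n\le0$ and the roles of $A$ and $B$ interchanged) produces a compact open $G$-set $U'\subset\{(x,-n,y)\in G\}$ with $r(U')=B$ and $s(U')=A$; then $U=(U')^{-1}$ satisfies $U\subset\{(x,n,y)\in G\}$, $r(U)=A$ and $s(U)=B$. For the final clause I would assume $A\cap B=\emptyset$, take $U$ as above, and set $W=U\cup U^{-1}\cup(Y\setminus(A\cup B))$; this is a compact open $G|Y$-set with $s(W)=r(W)=Y$, so $\tau=\pi_W\in[[G|Y]]$ is defined. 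By construction $\tau$ interchanges $A$ and $B$ and fixes $Y\setminus(A\cup B)$, whence $\tau^2=1$, $\tau(A)=B$ and $\supp(\tau)=A\cup B$, so $\tau\in\mathcal{T}$. Finally, since the part of $W$ with middle coordinate $n$ is $U$ and the part with middle coordinate $-n$ is $U^{-1}$, the definition of $S(\tau,\cdot)$ gives $S(\tau,n)=s(U)=B$ and $S(\tau,-n)=s(U^{-1})=r(U)=A$.
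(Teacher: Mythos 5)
Your proof is correct, and while it follows the same broad strategy as the paper's --- cylinder-set surgery to produce a $G$-set of pure degree $n$, an appeal to the AF subgroupoid $K$ to match up the remaining side, inversion to switch the sign of $n$, and the identical construction $U\cup U^{-1}\cup(Y\setminus(A\cup B))$ for the transposition --- it is organized differently in a way that changes the AF input required. The paper (taking $n>0$) partitions $B$ into cylinders $C_\mu$ of arbitrary lengths, prepends length-$n$ words to form $U_{\mu'\mu,\mu}$, and then must invoke a disjointification property of AF groupoids: the prepended cylinders $C_{\mu'\mu}$ may overlap, so one needs compact open $K$-sets $V_\mu$ with $s(V_\mu)=C_{\mu'\mu}$ whose ranges disjointly tile $A$. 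Your version instead normalizes the cylinders of $A$ to a common length $p$, arranges the degree-shifted source cylinders to be pairwise disjoint from the outset (distinct words of equal length), and then corrects the source with a single $K$-set via the product $U=WV$; this needs only the equivalence recorded just before the lemma ($[1_E]_K=[1_F]_K$ if and only if some compact open $K$-set has source $E$ and range $F$), so your argument is slightly more self-contained relative to what the paper explicitly states. Two small remarks. First, the counting estimate you flag as the delicate point can be bypassed entirely: as in the paper, prepend rather than count --- for each $\mu$ pick an admissible word $\lambda_\mu$ of length $m$ with $t(\lambda_\mu)=i(\mu)$ (possible since irreducibility gives every vertex an incoming path of every length) and set $\nu_\mu=\lambda_\mu\mu$; these are distinct words of the common length $p+m$, so the cylinders $C_{\nu_\mu}$ are automatically disjoint. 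Second, like the paper's proof, your final clause tacitly assumes $n\neq0$: for $n=0$ and $A,B$ disjoint and non-empty one gets $S(\tau,0)=Y$ rather than $A$, so the "furthermore" statement degenerates; this case is never used, and both proofs share the harmless gap.
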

\begin{proof}
The case $n=0$ is trivial as mentioned above. 
We may assume without loss of generality that $n$ is positive. 
Write $B$ as a disjoint union 
$B=\bigcup_{\mu\in I}C_\mu$ of non-empty cylinder sets. 
For each $\mu\in I$, 
there exists an admissible word $\mu'$ such that 
$t(\mu')=i(\mu)$ and $\lvert\mu'\rvert=n$. 
Then 
\[
\sum_{\mu\in I}[1_{C_{\mu'\mu}}]_K
=\sum_{\mu\in I}\delta^{-n}\left([1_{C_\mu}]_K\right)
=\delta^{-n}([1_B]_K)=[1_A]_K. 
\]
in $H_0(K)$. 
Since $K$ is an AF groupoid, 
we can find compact open $K$-sets $V_\mu$ such that 
$s(V_\mu)=C_{\mu'\mu}$ and 
$A$ equals the disjoint union $\bigcup_\mu r(V_\mu)$. 
Then 
\[
U=\bigcup_{\mu\in I}V_\mu U_{\mu'\mu,\mu}
\]
is a desired compact open $G$-set. 

Assume further that $A$ and $B$ are disjoint. 
Clearly 
\[
V=U\cup U^{-1}\cup(Y\setminus(A\cup B))
\]
is a compact open $G$-set satisfying $r(V)=s(V)=Y$, 
and $\tau=\pi_V$ satisfies the requirement. 
\end{proof}

\begin{lemma}\label{Tgenerate}
Suppose that $(X,\sigma)$ is topologically mixing. 
Then $[[G|Y]]_0$ is generated by the set $\mathcal{T}$ of transpositions. 
\end{lemma}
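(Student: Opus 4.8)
The plan is to push every index-zero element into the AF subgroup $[[K|Y]]$ modulo the subgroup $\langle\mathcal{T}\rangle$ generated by transpositions. Since $[[K|Y]]$ is already generated by $\mathcal{T}\cap[[K|Y]]\subset\mathcal{T}$, it suffices to prove that for each $\alpha\in[[G|Y]]_0$ there is a product $\beta$ of transpositions with $\beta\alpha\in[[K|Y]]$, i.e. with $S(\beta\alpha,0)=Y$. To get a combinatorial handle on $\alpha$, I would first write the defining compact open $G|Y$-set $U$ (with $\alpha=\pi_U$) as a finite disjoint union of basic $G$-sets $U_{\mu_i,\nu_i}$. Then $\{C_{\nu_i}\}$ is a finite clopen partition of $Y$ on which $\alpha$ is degree-homogeneous, with $\alpha(C_{\nu_i})=C_{\mu_i}$ of degree $\lvert\mu_i\rvert-\lvert\nu_i\rvert$, and $\{C_{\mu_i}\}$ is again a partition of $Y$. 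This finite transition table, which may be refined freely by subdividing cylinders, is what I will straighten out.

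I would induct on $m(\alpha)=\max_i\bigl\lvert\,\lvert\mu_i\rvert-\lvert\nu_i\rvert\,\bigr\rvert$, the base case $m(\alpha)=0$ being exactly $\alpha\in[[K|Y]]$. For the inductive step, fix a top-degree block, say $\alpha(C_\nu)=C_\mu$ with $\lvert\mu\rvert-\lvert\nu\rvert=m>0$; writing $A=C_\nu$ and $B=C_\mu$, the computation preceding Lemma \ref{index} gives $\delta^{-m}([1_A]_K)=[1_B]_K$. I would peel off this transition using Lemma \ref{transp}: choosing an auxiliary clopen set $E$ disjoint from $B$ with $[1_E]_K=[1_A]_K$ (equivalently $\delta^m([1_B]_K)=[1_E]_K$), the lemma produces $\tau\in\mathcal{T}$ with $\tau(B)=E$ and $S(\tau,-m)=B$, so that on $A$ the composite $\tau\alpha$ acts in degree $0$. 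Iterating over all top-degree blocks, and symmetrically over the most negative degree, is intended to strictly lower $m(\cdot)$, closing the induction; the degree-zero rearrangements needed to line up the blocks can be absorbed into $[[K|Y]]\subset\langle\mathcal{T}\rangle$.

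The main obstacle is interference: since $\alpha$ permutes all of $Y$, the support of any correcting $\tau$ necessarily meets the range partition $\{C_{\mu_i}\}$, and a careless choice raises the degree of $\tau\alpha$ on the $\alpha$-preimage of $\supp(\tau)$ instead of lowering it. The heart of the argument is thus to choose the auxiliary sets $E$ (and, where needed, to relocate the relevant source and range cylinders beforehand by pre- and post-composing with degree-zero elements of $[[K|Y]]$) so that each correcting transposition strictly decreases a fixed complexity — for instance the lexicographic pair consisting of $m(\alpha)$ and the number of top-degree blocks in a sufficiently refined table — and creates no new transition of equal or higher degree. This is precisely where topological mixing is used: it forces $K$ to be minimal, so $K|Y$ is a minimal AF groupoid and $H_0(K)$ is a simple dimension group, which guarantees that clopen sets $E$ realizing the prescribed $H_0(K)$-classes exist inside whatever fresh region the bookkeeping demands, and that the required degree-zero relocations exist via the $H_0(K)$-criterion recalled above. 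Checking that the complexity genuinely drops at every step, uniformly handling the interference, is the delicate calculation on which the whole induction rests.
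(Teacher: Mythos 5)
Your reductions are the right ones (absorb $[[K|Y]]$ into $\langle\mathcal{T}\rangle$, use Lemma \ref{transp} to correct degrees, note that mixing gives minimality of $K$ and a unique invariant measure), and you have correctly identified interference as the central difficulty. But the proposal defers exactly the step that constitutes the proof, and as stated your induction has a concrete failure mode. Since the range cylinders $\{C_{\mu_i}\}$ partition $Y$, every admissible location for the auxiliary set $E$ lies inside ranges of other blocks, and $\tau$ acts there with degree $+m$; so $\tau\alpha$ has degree $n_i+m$ on $\alpha^{-1}(E\cap C_{\mu_i})$. To avoid \emph{increasing} $m(\cdot)$ you must place $E$ inside ranges of blocks with $n_i\leq 0$, and to make your complexity strictly drop you need $n_i<0$. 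Whether a clopen $E$ with $[1_E]_K=[1_A]_K$ exists inside that region is not merely a question of realizing a class in the simple dimension group $H_0(K)$: realizability of a class inside a prescribed clopen region is governed by the unique $K$-invariant measure $\omega$ (one needs $\hat\omega([1_A]_K)$ smaller than the measure of the region), and nothing in your scheme guarantees the negative-degree part of $\alpha$ has enough mass to receive $E$. So the lexicographic complexity need not decrease, and the induction can get stuck.

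The paper's proof avoids this with two ideas your proposal lacks. First, a rescaling trick: after arranging $\supp(\alpha)=A\neq Y$, it conjugates by a single transposition $\tau_0$ of large degree $m$, setting $\beta=\tau_0\alpha\tau_0$; since $\hat\omega\circ\delta=r\hat\omega$ with $r>1$ the Perron eigenvalue, this multiplies all the masses $\omega(S(\alpha,n))$ by $r^{-m}$, and $m$ is chosen (using the estimate $r^{-m}\hat\omega(z)<\omega(Y)$) so that there is provably enough room in $Y$ for all subsequent auxiliary sets. Second, instead of inducting on the maximal degree, it matches \emph{all} degrees at once: it builds $\gamma=\tau_+\tau_-\in\langle\mathcal{T}\rangle$ with $S(\gamma,n)=S(\beta,n)$ for every $n$, where $\tau_-$ and $\tau_+$ are products of degree-$(\pm1)$ transpositions moving each $S(\beta,n)$ through a chain of disjoint clopen sets $C_{n,i}$ (resp. $D_{n,j}$); the index-zero identity $\sum_n\delta^{(-n)}([1_{S(\beta,n)}]_K)=0$ is what guarantees the chains for the positive degrees can be chosen to exactly fill the ``garbage'' region created by the chains for the negative degrees, so no interference survives. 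Then $S(\beta\gamma^{-1},0)=Y$, i.e.\ $\beta\gamma^{-1}\in[[K|Y]]\subset\langle\mathcal{T}\rangle$. Without some substitute for these two mechanisms — quantitative room-making and the global, index-balanced matching — the ``delicate calculation'' you postpone is not a technicality but the missing proof.
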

\begin{proof}
We remark that $(X,\sigma)$ is topologically mixing if and only if 
the matrix $M=(M(\xi,\eta))_{\xi,\eta\in\V}$ is primitive, 
that is, there exists $n\in\N$ such that 
$M^n(\xi,\eta)>0$ for all $\xi,\eta\in\V$ (\cite[Proposition 4.5.10]{LM}). 
Moreover, in this case the AF groupoid $K$ is minimal and 
has a unique $K$-invariant probability measure on $X$, say $\omega$. 
We can define a homomorphism $\hat\omega:H_0(K)\to\R$ by 
\[
\hat\omega([f])=\int f\,d\omega
\]
for $f\in C(X,\Z)$. 
Let $r>1$ be the Perron eigenvalue of $M$. 
Then it is well-known that $\hat\omega\circ\delta=r\hat\omega$ holds. 

Let $\langle\mathcal{T}\rangle$ be the subgroup generated by $\mathcal{T}$. 
Suppose that $\alpha\in[[G|Y]]_0\setminus\{1\}$ is given. 
There exists a non-empty clopen set $C\subset Y$ 
such that $C\cap\alpha(C)=\emptyset$. 
We can construct $\tau\in\mathcal{T}$ 
such that $\tau(x)=\alpha(x)$ for all $x\in C$. 
Hence, by replacing $\alpha$ with $\tau\alpha$, 
we may assume that $\supp(\alpha)$ is not equal to $Y$. 
Let $A=\supp(\alpha)\varsubsetneq Y$. 
Since $K$ is minimal, one has $\omega(Y)>0$ and $\omega(Y\setminus A)>0$. 

By Lemma \ref{index}, we have 
\[
\sum_{n\in\Z}\delta^{(-n)}\left([1_{S(\alpha,n)}]_K\right)=0. 
\]
Set $P=\{n\in\N\mid S(\alpha,n)\neq\emptyset\}$ and 
$Q=\{n\in\Z\mid n<0,\ S(\alpha,n)\neq\emptyset\}$. 
Then 
\begin{equation}
-\sum_{n\in P}\delta^{(-n)}\left([1_{S(\alpha,n)}]_K\right)=
\sum_{n\in Q}\delta^{(-n)}\left([1_{S(\alpha,n)}]_K\right). 
\label{cow}
\end{equation}
Put 
\[
z=[1_A]_K+\delta\left(
\sum_{n\in Q}\delta^{(-n)}\left([1_{S(\alpha,n)}]_K\right)\right)\in H_0(K). 
\]
We have 
\[
\hat\omega(z)=\omega(A)
+\sum_{n\in Q}(r+r^2+\dots+r^{-n})\omega(S(\alpha,n)). 
\]
Choose $m\in\N$ so that 
\[
r^{-m}\omega(A)
<\omega(Y\setminus A)\quad\text{and}\quad r^{-m}\hat\omega(z)<\omega(Y)
\]
hold. 
Then we can find a clopen set $B\subset Y\setminus A$ 
such that $[1_B]_K=\delta^{-m}([1_A]_K)$. 
It follows from Lemma \ref{transp} that 
there exists $\tau_0\in\mathcal{T}$ such that 
$S(\tau_0,m)=A$, $S(\tau_0,-m)=B$, $\tau_0(A)=B$ and $\supp(\tau_0)=A\cup B$. 
Set $\beta=\tau_0\alpha\tau_0$. 
It suffices to show that $\beta$ belongs to $\langle\mathcal{T}\rangle$. 
Notice that $\supp(\beta)=\tau_0(A)=B$, $S(\beta,n)=\tau_0(S(\alpha,n))$ 
and 
\[
[1_{S(\beta,n)}]_K=\delta^{-m}\left([1_{S(\alpha,n)}]_K\right)
\]
for every $n\in\Z$. 
Hence, by \eqref{cow}, we get 
\begin{equation}
-\sum_{n\in P}\delta^{(-n)}\left([1_{S(\beta,n)}]_K\right)=
\sum_{n\in Q}\delta^{(-n)}\left([1_{S(\beta,n)}]_K\right). 
\label{tiger}
\end{equation}

We would like to construct $\gamma\in\langle\mathcal{T}\rangle$ 
such that $S(\gamma,n)=S(\beta,n)$ for all $n\in\Z$. 
Because 
\begin{align*}
&\omega(B)+\sum_{n\in Q}(r+r^2+\dots+r^{-n})\omega(S(\beta,n))\\
&=r^{-m}\left(\omega(A)
+\sum_{n\in Q}(r+r^2+\dots+r^{-n})\omega(S(\alpha,n))\right)\\
&=r^{-m}\hat\omega(z)<\omega(Y), 
\end{align*}
there exist mutually disjoint clopen sets $C_{n,i}\subset Y\setminus B$ 
for $n\in Q$ and $i=1,2,\dots,-n$ such that 
\[
\delta\left([1_{S(\beta,n)}]_K\right)=[1_{C_{n,1}}]_K
\quad\text{and}\quad 
\delta\left([1_{C_{n,i}}]_K\right)=[1_{C_{n,i+1}}]_K
\]
for each $n\in Q$ and $i=1,2,\dots,-n{-}1$. 
By Lemma \ref{transp}, for $n\in Q$ and $i=1,2,\dots,-n$, 
we can find $\tau_{n,i}\in\mathcal{T}$ such that 
\[
S(\tau_{n,1},-1)=S(\beta,n),\quad S(\tau_{n,1},1)=C_{n,1}, 
\]
\[
\tau_{n,1}(S(\beta,n))=C_{n,1},\quad 
\supp(\tau_{n,1})=S(\beta,n)\cup C_{n,1}
\]
and 
\[
S(\tau_{n,i+1},-1)=C_{n,i},\quad S(\tau_{n,i+1},1)=C_{n,i+1}, 
\]
\[
\tau_{n,i+1}(C_{n,i})=C_{n,i+1},\quad 
\supp(\tau_{n,i+1})=C_{n,i}\cup C_{n,i+1}. 
\]
Define $\tau_-\in\langle\mathcal{T}\rangle$ by 
\[
\tau_-=\prod_{n\in Q}\tau_{n,-n}\dots\tau_{n,2}\tau_{n,1}. 
\]
Then we can verify 
\[
S(\tau_-,n)=S(\beta,n)\quad\forall n\in Q,\quad 
S(\tau_-,1)=\bigcup_{n\in Q}\bigcup_{i=1}^{-n}C_{n,i}
\]
and 
\[
\tau_-(S(\beta,n))=C_{n,-n}\quad\forall n\in Q,\quad 
\supp(\tau_-)
=\bigcup_{n\in Q}\left(S(\beta,n)\cup\bigcup_{i=1}^{-n}C_{n,i}\right). 
\]
Next, we would like to consider $S(\beta,n)$ for $n\in P$. 
By using \eqref{tiger}, we have 
\begin{align*}
&\sum_{n\in P}[1_{S(\beta,n)}]_K
+\sum_{n\in P}(\delta^{-1}+\delta^{-2}+\dots+\delta^{-n})
\left([1_{S(\beta,n)}]_K\right)\\
&=\sum_{n\in P}[1_{S(\beta,n)}]_K
-\sum_{n\in P}\delta^{(-n)}\left([1_{S(\beta,n)}]_K\right)\\
&=\sum_{n\in P}[1_{S(\beta,n)}]_K
+\sum_{n\in Q}\delta^{(-n)}\left([1_{S(\beta,n)}]_K\right)\\
&=\sum_{n\in P}[1_{S(\beta,n)}]_K+\sum_{n\in Q}[1_{S(\beta,n)}]_K
+\sum_{n\in Q}(\delta+\delta^2+\dots+\delta^{-n-1})
\left([1_{S(\beta,n)}]_K\right)\\
&=\sum_{n\in P}[1_{S(\beta,n)}]_K+\sum_{n\in Q}[1_{S(\beta,n)}]_K
+\sum_{n\in Q}\sum_{i=1}^{-n-1}[1_{C_{n,i}}]_K. 
\end{align*}
Hence we can find mutually disjoint clopen sets $D_{n,j}$ 
for $n\in P$ and $j=1,2,\dots,n$ such that 
\[
\delta^{-1}\left([1_{S(\beta,n)}]_K\right)=[1_{D_{n,1}}]_K,\quad 
\delta^{-1}\left([1_{D_{n,j}}]_K\right)=[1_{D_{n,j+1}}]_K
\]
and 
\[
\bigcup_{n\in P}\bigcup_{j=1}^nD_{n,j}
=\bigcup_{n\in Q}\left(S(\beta,n)\cup\bigcup_{i=1}^{-n-1}C_{n,i}\right). 
\]
Therefore, in the same way as above, 
we can construct $\tau_+\in\langle\mathcal{T}\rangle$ such that 
\[
S(\tau_+,n)=S(\beta,n)\quad\forall n\in P,\quad 
S(\tau_+,-1)
=\bigcup_{n\in Q}\left(S(\beta,n)\cup\bigcup_{i=1}^{-n-1}C_{n,i}\right)
\]
and 
\[
\tau_+(S(\beta,n))=D_{n,n}\quad\forall n\in P,\quad 
\supp(\tau_+)
=\bigcup_{n\in P}S(\beta,n)\cup
\bigcup_{n\in Q}\left(S(\beta,n)\cup\bigcup_{i=1}^{-n-1}C_{n,i}\right). 
\]
Let $\gamma=\tau_+\tau_-\in\langle\mathcal{T}\rangle$. 
It is not so hard to see 
\[
S(\gamma,n)=S(\beta,n)\quad\forall n\in P\cup Q,\quad 
\bigcup_{n\in P\cup Q}S(\gamma,n)\cup S(\gamma,0)=Y, 
\]
and so $S(\gamma,n)=S(\beta,n)$ for all $n\in\Z$. 
Then we obtain $S(\beta\gamma^{-1},0)=Y$, 
which implies $\beta\gamma^{-1}\in[[K|Y]]$ 
and hence $\beta\gamma^{-1}\in\langle\mathcal{T}\rangle$. 
It follows that $\beta$ belongs to $\langle\mathcal{T}\rangle$, as desired. 
\end{proof}

We are now ready to prove the main theorem of this subsection. 

\begin{theorem}\label{fg}
The group $[[G|Y]]_0$ is generated by finitely many elements of $\mathcal{T}$. 
\end{theorem}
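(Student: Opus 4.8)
The plan is to reduce to the topologically mixing case and then prove finite generation by an induction on the length of defining words, invoking Lemma \ref{Tgenerate} only after the reduction.

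First I would dispose of the non-mixing case. If $(X,\sigma)$ is irreducible but not mixing, then $M$ has some period $p\ge2$ and $\V$ splits into cyclic classes $\V_0,\dots,\V_{p-1}$ with all edges running from $\V_i$ to $\V_{i+1}$ (indices mod $p$). Setting $X_i=\{x\in X\mid i(x_1)\in\V_i\}$, each $X_i$ is clopen and $\sigma(X_i)=X_{i+1}$, so every element of $G|X_i$ has the form $(x,n,y)$ with $n\in p\Z$; the rescaling $n\mapsto n/p$ then identifies $G|X_i$ with the \'etale groupoid of the topologically mixing shift $(X_i,\sigma^p|_{X_i})$. Given an arbitrary non-empty clopen $Y$, I would apply Lemma \ref{pi>clopen} to the purely infinite groupoid $G|X_0$ (using the isomorphism $H_0(G|X_0)\cong H_0(G)$ coming from minimality) to find a non-empty clopen $Y'\subset X_0$ with $[1_{Y'}]_G=[1_Y]_G$. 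By Theorem \ref{Hopf} there is a compact open $G$-set $W$ with $s(W)=Y$ and $r(W)=Y'$, yielding an isomorphism $G|Y\cong G|Y'$. This isomorphism carries $[[G|Y]]_0$ onto $[[G|Y']]_0$ and transpositions to transpositions, so it suffices to treat $G|Y'=(G|X_0)|Y'$, i.e.\ the mixing case.

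Assume now $(X,\sigma)$ is mixing. By Lemma \ref{Tgenerate} the set $\mathcal{T}$ generates $[[G|Y]]_0$, so it is enough to produce a finite subset of $\mathcal{T}$ whose group contains every transposition. I would first reduce to \emph{cylinder transpositions}: subdividing the support of an arbitrary $\tau\in\mathcal{T}$ into cylinder sets and refining the defining $G$-set, $\tau$ becomes a product of commuting transpositions, each swapping two disjoint cylinders $C_\mu,C_\nu\subset Y$ with $t(\mu)=t(\nu)$ via the canonical map $\pi_{U_{\mu,\nu}}$; call these $\tau_{\mu,\nu}$. Using primitivity of $M$ and a fixed cylinder decomposition of $Y$, I would fix an integer $L$ and take as candidate generating set $\mathcal{T}_0$ the finite collection of all $\tau_{\mu,\nu}$ with $\lvert\mu\rvert,\lvert\nu\rvert\le L$. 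The claim to prove by induction on $\max(\lvert\mu\rvert,\lvert\nu\rvert)$ is that $\tau_{\mu,\nu}\in\langle\mathcal{T}_0\rangle$ for every admissible pair. For the inductive step I would use two relations. Within a fixed terminal vertex $\eta$, choosing an auxiliary cylinder $C_\lambda\subset Y$ with $t(\lambda)=\eta$ disjoint from $C_\mu,C_\nu$, one has the routing identity $\tau_{\mu,\nu}=\tau_{\lambda,\nu}\,\tau_{\mu,\lambda}\,\tau_{\lambda,\nu}$, since all three maps agree with the identity on the common future $D_\eta$. To actually lower the length I would exploit the self-similarity $\sigma\colon C_e\xrightarrow{\ \sim\ }D_{t(e)}$, which carries $C_{e\mu'}$ onto $C_{\mu'}$, aiming to write a transposition supported on length-$N$ cylinders as a conjugate, by a transposition of bounded length, of one supported on length-$(N{-}1)$ cylinders. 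Because $\delta=\id$ in $H_0(G)$, any two cylinders with the same terminal vertex represent the same class, so Theorem \ref{Hopf} supplies all the auxiliary transpositions needed to carry out these moves inside $\mathcal{T}$.

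The main obstacle is precisely this length-reduction: realizing the ``strip a leading edge'' operation --- which is naturally the index-changing shift $\sigma$, hence \emph{not} itself a transposition --- purely through a bounded product of honest transpositions, while simultaneously guaranteeing enough pairwise disjoint cylinders of bounded length at each vertex to run the construction. Controlling these disjointness requirements, via the primitivity exponent of $M$ (which bounds how long a word must be before every vertex becomes reachable), is where the combinatorial work concentrates. Once the single-edge reduction is established, the induction on word length closes and, combined with the reduction to the mixing case and Lemma \ref{Tgenerate}, completes the proof.
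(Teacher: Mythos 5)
Your overall strategy coincides with the paper's: reduce to the topologically mixing case, invoke Lemma \ref{Tgenerate}, reduce arbitrary transpositions to cylinder transpositions $\gamma(\mu,\nu)=\pi_V$ with $V=U_{\mu,\nu}\cup U_{\mu,\nu}^{-1}\cup(Y\setminus(C_\mu\cup C_\nu))$, take as candidate generators those of bounded length, and close an induction on word length using the routing identity $\gamma(\mu,\lambda)\gamma(\nu,\lambda)\gamma(\mu,\lambda)=\gamma(\mu,\nu)$. (Your reduction to the mixing case via the cyclic decomposition and Lemma \ref{pi>clopen} plus Theorem \ref{Hopf} is a valid, mildly roundabout variant of the paper's, which simply uses pure infiniteness and minimality to find a compact open $G$-set $U$ with $s(U)=Y$ and $r(U)\subset Z$.) However, the proposal has a genuine gap, and it is exactly the one you flag yourself: you never establish the length-changing move, i.e.\ how to realize ``strip/prepend a leading edge'' inside the group generated by finitely many transpositions. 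Without that step the induction on $\max(\lvert\mu\rvert,\lvert\nu\rvert)$ does not close, since the routing identity alone only moves transpositions around among words of the \emph{same} lengths; it never produces longer words from shorter ones.

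The paper's resolution of this obstacle is a concrete trick you are missing. First, one repositions $Y$: choosing two distinct edges $p,q\in\E$ with $t(p)=i(q)$, Proposition \ref{pim} (3) gives a compact open $G$-set $U$ with $s(U)=Y$ and $C_{pq}\cup C_q\subset r(U)$, so after replacing $Y$ by the isomorphic reduction to $r(U)$ one may assume $C_{pq}\cup C_q\subset Y$ (a positioning step your proposal also omits). Then the single transposition $\gamma(q,pq)$ --- which swaps $C_{pq}$ with $C_q$ by deleting or prepending the letter $p$ --- is itself an element of the finite set $F$, and conjugation by it satisfies
\[
\gamma(q,pq)\,\gamma(a,b)\,\gamma(q,pq)=\gamma(pa,pb)
\]
whenever the words $a,b$ begin with the letter $q$. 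So the index-changing operation you worried about (``naturally the shift $\sigma$, hence not itself a transposition'') is never needed as a group element: it is implemented as conjugation by one honest transposition of bounded length, restricted to transpositions supported inside $C_q$. The remaining combinatorics is to use primitivity (every entry of $M^m$ at least $3$) to choose auxiliary words $\lambda,a,b,c,d$ with prescribed prefixes, suffixes and terminal vertices so that the relevant cylinders are pairwise disjoint, and to run three successive inductions (equal lengths, lengths differing by one, general). Since your proposal explicitly leaves the decisive mechanism as an unresolved ``main obstacle,'' it does not constitute a proof.
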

\begin{proof}
Let $p\in\N$ be the period of the shift $(X,\sigma)$ 
or, equivalently, of the irreducible matrix $M$ 
(\cite[Definition 4.5.4]{LM}). 
It follows from \cite[Section 4.5]{LM} that 
there exists a non-empty $\sigma^p$-invariant clopen subset $Z\subset X$ 
such that $(Z,\sigma^p|Z)$ is (conjugate to) 
a topologically mixing shift of finite type. 
Clearly the groupoid $G|Z$ is identified 
with the \'etale groupoid arising from the shift $(Z,\sigma^p|Z)$. 
Since $G$ is purely infinite and minimal, 
there exists a compact open $G$-set $U$ 
satisfying $s(U)=Y$ and $r(U)\subset Z$ (see Lemma \ref{SFT>pim}). 
Hence $G|s(U)=G|Y$ is isomorphic to $G|r(U)$ as an \'etale groupoid. 
The groupoid $G|r(U)$ is equal to the reduction of $G|Z$ to $r(U)$. 
Therefore, by replacing $G$ with $G|Z$, we may assume that 
$G$ arises from a topologically mixing shift of finite type. 

Choose two distinct edges $p,q\in\E$ such that $t(p)=i(q)$. 
It is not so hard to see $C_{pq}\cup C_{q}\neq X$. 
By Proposition \ref{pim} (3), there exists a compact open $G$-set $U$ 
such that $Y=s(U)$ and $C_{pq}\cup C_{q}\subset r(U)$. 
It follows that $G|Y=G|s(U)$ is isomorphic to $G|r(U)$. 
By replacing $Y$ with $r(U)$, 
we may assume that $C_{pq}\cup C_{q}$ is contained in $Y$. 
Let $J$ be the set of admissible words $\mu$ such that $C_\mu\subset Y$. 
Then $pq,q\in J$. 

For a word $\mu=e_1e_2\dots e_k$ and $1\leq i\leq j\leq k$, 
we write $\mu_{[i,j]}=e_ie_{i+1}\dots e_j$. 
The clopen set $Y\subset X$ is written 
as a disjoint union of finitely many cylinder sets, 
and so there exists $l\in\N$ such that 
if $\mu\in J$ and $\lvert\mu\rvert\geq l$, then $\mu_{[1,l]}\in J$. 
Because we have assumed that the adjacency matrix $M$ is primitive, 
there exists $m\in\N$ such that 
every entry of the matrix $M^m$ is greater than $2$. 
We assume $m\geq l$. 

Let $\mu,\nu\in J$ be such that $t(\mu)=t(\nu)$. 
In Section 6.1, we introduced the compact open $G$-set $U_{\mu,\nu}$. 
Suppose that $C_\mu$ and $C_\nu$ are disjoint. 
Then 
\[
V=U_{\mu,\nu}\cup U_{\mu,\nu}^{-1}\cup(Y\setminus(C_\mu\cup C_\nu))
\]
is a compact open $G|Y$-set satisfying $r(V)=s(V)=Y$. 
We write $\gamma(\mu,\nu)=\pi_V$, which is an element of $\mathcal{T}$. 
Let $\mathcal{T}_0$ denote the set of all such elements $\gamma(\mu,\nu)$. 
Since the sets $U_{\mu,\nu}$ form a base for the topology of $G|Y$, 
any element of $\mathcal{T}$ is 
a product of finitely many elements of $\mathcal{T}_0$. 
Note also that $\gamma(\mu,\nu)$ equals $\gamma(\nu,\mu)$. 
Define the finite set $F$ by 
\[
F=\{\gamma(\mu,\nu)\in\mathcal{T}_0\mid
\lvert\mu\rvert=\lvert\nu\rvert\leq m+2\}
\cup\{\gamma(\mu,\nu)\in\mathcal{T}_0\mid
\lvert\mu\rvert+1=\lvert\nu\rvert\leq m+2\}. 
\]
Let $\langle F\rangle$ be the subgroup generated by $F$. 
By Lemma \ref{Tgenerate}, $[[G|Y]]_0$ is generated by $\mathcal{T}$. 
Hence, it suffices to prove that $\langle F\rangle$ contains $\mathcal{T}_0$. 

First, we claim that $\langle F\rangle$ contains 
$\{\gamma(\mu,\nu)\in\mathcal{T}_0\mid\lvert\mu\rvert=\lvert\nu\rvert\}$. 
The proof is by induction on the length of the words. 
Assume that the claim is known for length $n$, where $n\geq m+2$. 
Let $\mu,\nu\in J$ be distinct admissible words 
of length $n{+}1$ such that $t(\mu)=t(\nu)$. 
We would like to show $\gamma(\mu,\nu)\in\langle F\rangle$. 
Since each entry of $M^m$ is not less than $3$, 
we can find an admissible word $\lambda$ of length $n{+}1$ such that 
\[
\lambda_{[1,2]}=pq,\quad t(\lambda)=t(\mu),\quad 
\lambda_{[3,m+2]}\neq\mu_{[3,m+2]},\quad \lambda_{[3,m+2]}\neq\nu_{[3,m+2]}. 
\]
We will show that $\gamma(\mu,\lambda)$ is in $\langle F\rangle$. 
By the choice of $m$, 
there exists an admissible word $a$ of length $n$ such that 
\[
a_{[1,1]}=q,\quad a_{[m+2,n]}=\mu_{[m+3,n+1]},\quad 
a_{[2,m+1]}\neq\lambda_{[3,m+2]},\quad a_{[2,m+1]}\neq\mu_{[3,m+2]}. 
\]
Set $b=\lambda_{[2,n+1]}$. 
Then $a,b\in J$, $\lvert a\rvert=\lvert b\rvert=n$ and $a\neq b$. 
By the induction hypothesis, $\gamma(a,b)$ belongs to $\langle F\rangle$. 
It follows that 
\[
\gamma(q,pq)\gamma(a,b)\gamma(q,pq)
=\gamma(pa,pb)=\gamma(pa,\lambda)
\]
also belongs to $\langle F\rangle$. 
Set $c=a_{[1,n-1]}$ and $d=\mu_{[1,n]}$. 
We have $pc,d\in J$, $\lvert pc\rvert=\lvert d\rvert=n$ and $pc\neq d$. 
By the induction hypothesis, $\gamma(pc,d)$ is in $\langle F\rangle$. 
Also $C_\lambda\cap C_{pc}=C_\lambda\cap C_d=\emptyset$ and 
$C_{pa}\subset C_{pc}$. 
Therefore 
\[
\gamma(pc,d)\gamma(pa,\lambda)\gamma(pc,d)=\gamma(\mu,\lambda)
\]
is in $\langle F\rangle$, too. 
In the same way, we can show $\gamma(\nu,\lambda)\in\langle F\rangle$. 
Then 
\[
\gamma(\mu,\lambda)\gamma(\nu,\lambda)\gamma(\mu,\lambda)=\gamma(\mu,\nu)
\in\langle F\rangle, 
\]
which completes the induction, and the proof of the claim. 

Next, we claim that $\langle F\rangle$ contains $\gamma(\mu,\nu)$ 
for $\mu,\nu$ with $\lvert\mu\rvert+1=\lvert\nu\rvert$. 
The proof is by induction on the length $\lvert\nu\rvert$. 
Assume that the claim is known for length $n$, where $n\geq m+2$. 
Let $\mu,\nu\in J$ be such that $t(\mu)=t(\nu)$, 
$\lvert\mu\rvert=n$, $\lvert\nu\rvert=n{+}1$ and $C_\mu\cap C_\nu=\emptyset$. 
Since each entry of the matrix $M^m$ is greater than $2$, 
we can find an admissible word $a$ of length $n$ such that 
\[
a_{[1,1]}=q,\quad a_{[m+2,n]}=\nu_{[m+3,n+1]},\quad 
a_{[2,m+1]}\neq\mu_{[2,m+1]},\quad a_{[2,m+1]}\neq\nu_{[1,m]}. 
\]
Let $b=a_{[1,n-1]}$ and $c=\nu_{[1,n]}$. 
Then $a,b,c\in J$, $\lvert b\rvert=n{-}1$, $\lvert c\rvert=n$ and 
$C_b\cap C_c=\emptyset$. 
By the induction hypothesis, $\gamma(b,c)$ is in $\langle F\rangle$. 
Note also that $C_\mu\cap C_b=C_\mu\cap C_c=\emptyset$ and $C_a\subset C_b$. 
We have already shown that $\gamma(\mu,a)$ is in $\langle F\rangle$. 
It follows that 
\[
\gamma(b,c)\gamma(\mu,a)\gamma(b,c)=\gamma(\mu,\nu)
\]
is in $\langle F\rangle$, 
which completes the induction, and the proof of the claim. 

Finally, we claim that $\langle F\rangle$ contains $\gamma(\mu,\nu)$ 
for $\mu,\nu$ with $\lvert\mu\rvert<\lvert\nu\rvert$. 
Clearly we may assume $\lvert\mu\rvert\geq m+1$. 
The proof is by induction on $n=\lvert\nu\rvert-\lvert\mu\rvert$. 
The case $n=1$ is done. 
Assume that the claim is known for $n{-}1$. 
Let $\mu,\nu\in J$ be such that $t(\mu)=t(\nu)$, $\lvert\mu\rvert\geq m+1$, 
$\lvert\nu\rvert-\lvert\mu\rvert=n\geq2$ and $C_\mu\cap C_\nu=\emptyset$.
In the same way as the argument above, 
we can choose an admissible word $\lambda$ of length $\lvert\mu\rvert{+}1$ 
such that 
\[
\lambda_{[1,1]}=q,\quad t(\lambda)=t(\mu),\quad 
\lambda_{[2,m+1]}\neq\mu_{[2,m+1]},\quad \lambda_{[2,m+1]}\neq\nu_{[2,m+1]}. 
\]
Thus, $\lambda$ is in $J$, and 
$C_\mu$, $C_\lambda$ and $C_\nu$ are mutually disjoint. 
By the induction hypothesis, 
$\gamma(\mu,\lambda)$ and $\gamma(\nu,\lambda)$ are in $\langle F\rangle$. 
Hence we get 
\[
\gamma(\mu,\lambda)\gamma(\nu,\lambda)\gamma(\mu,\lambda)
=\gamma(\mu,\nu)\in\langle F\rangle, 
\]
thereby proving the theorem. 
\end{proof}

\subsection{Finiteness properties}

As in Section 6.1, 
we let $G$ be the \'etale groupoid 
arising from a one-sided irreducible shift of finite type $(X,\sigma)$. 
We use the notation of Section 6.1. 
Fix a non-empty clopen subset $Y\subset X$. 
In this subsection, 
we would like to show that $[[G|Y]]$ is of type F$_\infty$. 
As mentioned in Section 1, 
this is equivalent to saying that 
$[[G|Y]]$ is of type FP$_\infty$ and is finitely presented. 

Such finiteness properties for Thompson's group $F_{2,1}$ were studied by 
K. S. Brown and R. Geoghegan \cite{BrG84Invent}. 
Brown \cite{Br87JPAA,Br89MSRI} later extended this to 
the infinite families $F_{n,r}\subset T_{n,r}\subset V_{n,r}$ 
(see Remark \ref{HigThomp}). 
M. Stein \cite{St92TAMS} generalized these families further. 
Here we follow the approach of \cite{Br87JPAA,Br89MSRI,St92TAMS}. 
The following theorem, due to Brown, is the key tool. 
We refer the reader to \cite[Section I.4]{BrGTM} 
for the definition of a $\Gamma$-complex. 

\begin{theorem}[{\cite[Corollary 3.3]{Br87JPAA}}]\label{F_infty}
Let $\Gamma$ be a group. 
Suppose that $\Gamma$ admits a contractible $\Gamma$-complex $Z$ 
such that the stabilizer of every cell is of type F$_\infty$. 
Let $\{Z_q\}_{q\in\N}$ be a filtration of $Z$ 
such that each $Z_q$ is finite mod $\Gamma$. 
Suppose that the connectivity of the pair $(Z_{q+1},Z_q)$ tends to $\infty$ 
as $q$ tends to $\infty$. 
Then $\Gamma$ is of type F$_\infty$. 
\end{theorem}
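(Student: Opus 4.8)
The plan is to prove that $\Gamma$ is of type F$_n$ for every $n\in\N$, which gives type F$_\infty$. Fix $n$. The conceptual idea is that, since $Z$ is contractible and $\Gamma$ acts on it with stabilizers of type F$_\infty$, the Borel construction $E\Gamma\times_\Gamma Z$ is a model for the classifying space $B\Gamma$; I would extract a finite $n$-skeleton for $B\Gamma$ from a single cocompact piece $Z_q$ of the filtration, using the connectivity hypothesis to guarantee that $Z_q$ already captures the homotopy type of $Z$ through dimension $n$.

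First I would record the connectivity bookkeeping. Because the connectivity of the pairs $(Z_{q+1},Z_q)$ tends to infinity, there is a $q_0$ such that every inclusion $Z_q\hookrightarrow Z_{q+1}$ with $q\geq q_0$ is $n$-connected. Since homotopy commutes with the filtered colimit along these subcomplex inclusions, the inclusion $Z_{q_0}\hookrightarrow Z=\bigcup_q Z_q$ is again $n$-connected; as $Z$ is contractible, this forces $Z_{q_0}$ to be $(n-1)$-connected.

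Next I would pass to the Borel construction. Set $Y_q=E\Gamma\times_\Gamma Z_q$, so that $\bigcup_q Y_q=E\Gamma\times_\Gamma Z\simeq B\Gamma$, the last equivalence coming from contractibility of $Z$. The projection $Y_q\to B\Gamma$ is a fibration with fibre $Z_q$, and I would establish two points. First, because $Z_q$ is finite mod $\Gamma$ it has finitely many $\Gamma$-orbits of cells; the orbit of a $p$-cell with stabilizer $\Gamma_\sigma$ contributes to $Y_q$ a piece of the homotopy type of $B\Gamma_\sigma\times D^p$, and $B\Gamma_\sigma$ admits a model with finitely many cells in each dimension since $\Gamma_\sigma$ is of type F$_\infty$. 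Assembling these finitely many pieces shows $Y_q$ is homotopy equivalent to a CW complex with finitely many cells in every dimension, so in particular its $n$-skeleton is finite. Second, comparing the fibrations $Y_q\to B\Gamma$ and $Y_{q+1}\to B\Gamma$ over the same base, the $n$-connected inclusion of fibres $Z_q\hookrightarrow Z_{q+1}$ transports to an $n$-connected inclusion of total spaces $Y_q\hookrightarrow Y_{q+1}$; passing to the colimit, $Y_{q_0}\hookrightarrow B\Gamma$ is $n$-connected for $q_0$ as above.

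Finally I would conclude: for $q_0$ as above the map $Y_{q_0}\to B\Gamma$ is $n$-connected and $Y_{q_0}$ has a model with finite $n$-skeleton, so $B\Gamma$ can be built from $Y_{q_0}$ by attaching cells of dimension greater than $n$; hence $B\Gamma$ has a model (a $K(\Gamma,1)$) with finite $n$-skeleton, i.e. $\Gamma$ is of type F$_n$. Since $n$ was arbitrary, $\Gamma$ is of type F$_\infty$. The main obstacle is the non-freeness of the action: one must handle the cell stabilizers through the Borel construction, both to see that each $Y_q$ has finite skeleta (this is exactly where the F$_\infty$ hypothesis on stabilizers and the ``finite mod $\Gamma$'' cocompactness are consumed) and to carry the connectivity of the fibre inclusions $Z_q\hookrightarrow Z_{q+1}$ over to the total spaces $Y_q\hookrightarrow Y_{q+1}$; the latter rests on a careful comparison of the two Serre fibrations (equivalently, a relative/equivariant spectral sequence argument) with due attention to the behaviour of $\pi_1$.
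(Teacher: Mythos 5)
Your proof is correct in outline, but note that the paper offers no proof of this statement at all: it is quoted from Brown \cite[Corollary 3.3]{Br87JPAA}, so the relevant comparison is with Brown's original argument, and yours is genuinely different. Brown splits the problem into a homological half and a presentability half: the cellular chain complex of each cocompact piece $Z_q$ consists of $\Z\Gamma$-modules induced from the cell stabilizers, which are of type FP$_\infty$ by hypothesis, and an algebraic criterion for finiteness properties over directed systems (proved by spectral-sequence arguments) yields that $\Gamma$ is of type FP$_\infty$; finite presentability is then obtained separately from the theory of presentations for groups acting on simply-connected complexes (the same \cite[Theorem 1]{Br84JPAA} used in Section 6.6 of this paper); finally one invokes the equivalence of F$_\infty$ with FP$_\infty$ plus finite presentation (see the proof of \cite[Theorem VIII.7.1]{BrGTM}). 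You instead stay entirely on the homotopical side via the Borel construction: cocompactness plus F$_\infty$ stabilizers give that $E\Gamma\times_\Gamma Z_q$ has the homotopy type of a complex with finite skeleta, and the connectivity hypothesis transports along the comparison of fibrations to make $E\Gamma\times_\Gamma Z_{q_0}\to B\Gamma$ an $n$-connected map, whence a $K(\Gamma,1)$ with finite $n$-skeleton. Your route is more self-contained and avoids any separate treatment of finite presentability; Brown's route gives finer information (FP$_n$ and F$_n$ one $n$ at a time, under the weaker hypothesis of essential connectivity, and for directed systems rather than filtrations). Three details you should still nail down: the identification of the piece over an orbit $\Gamma\sigma$ with $B\Gamma_\sigma$ requires stabilizers to fix cells pointwise (arranged by barycentric subdivision, and automatic for the ordered simplicial complexes appearing in this paper); the ``assembling'' step is really an induction over skeleta using homotopy pushouts of complexes with finite skeleta glued along cellular maps, which is where the actual work sits; and your last sentence uses the standard but not-quite-trivial implication that F$_n$ for all $n$ gives F$_\infty$, which is cleanest via FP$_\infty$ together with finite presentation, exactly as recorded in the paper's introduction.
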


First, we need to modify the finite directed graph $(\V,\E)$ 
so that its adjacency matrix $M$ has a certain special property. 
Let $N=(N(i,j))_{1\leq i,j\leq n}$ be a matrix with entries in $\N$. 
We say that $N$ is in canonical form 
if the following conditions are satisfied. 
\begin{itemize}
\item For any $i\neq j$, $N(i,j)=1$. 
\item For any $i$, $N(i,i)\geq2$. 
\item There exists $j$ such that $N(j,j)=2$. 
\end{itemize}
Note that $N$ is primitive and $N=N^t$. 
Take $j$ such that $N(j,j)=2$. 
For $i\neq j$, let $d_i=N(i,i)-2$. 
It is not so hard to see 
\[
\Coker(\id-N^t)\cong\bigoplus_{i\neq j}\Z_{d_i}\quad\text{and}\quad 
\det(\id-N^t)=(-1)^n\prod_{i\neq j}d_i, 
\]
where $\Z_1$ is understood as $0$ and $\Z_0$ is understood as $\Z$. 
Hence, for the given matrix $M$, 
we can construct a matrix $N$ in canonical form satisfying 
\[
\Coker(\id-M^t)\cong\Coker(\id-N^t)\quad\text{and}\quad 
\det(\id-M^t)=\det(\id-N^t). 
\]
Let $(\tilde X,\tilde\sigma)$ be 
the one-sided shift of finite type arising from the matrix $N$ and 
let $\tilde G$ be the \'etale groupoid of $(\tilde X,\tilde\sigma)$. 
Then there exists an isomorphism $\phi:H_0(G)\to H_0(\tilde G)$. 
By Lemma \ref{pi>clopen}, 
there exists a non-empty clopen set $\tilde Y\subset\tilde X$ 
such that $[1_{\tilde Y}]_{\tilde G}=\phi([1_Y]_G)$. 
It follows from Theorem \ref{classify} that 
$G|Y$ is isomorphic to $\tilde G|\tilde Y$. 
Therefore, by replacing $G|Y$ with $\tilde G|\tilde Y$, we may assume that 
the adjacency matrix $M$ of the graph $(\V,\E)$ is in canonical form. 
Set $\V_0=\{\zeta\in\V\mid M^t(\zeta,\zeta)=2\}$. 
Then 
\begin{equation}
a\in\Ker(\id-M^t)\iff 
\sum_{\zeta\in\V_0}a(\zeta)=0\quad\text{and}\quad 
a(\xi)=0\quad\forall \xi\in\V\setminus\V_0. 
\label{rabbit}
\end{equation}
This property will play an important role in the argument below. 

We write $\Z_+=\N\cup\{0\}$. 
For an admissible word $\mu$, we define a compact open $G$-set $U_\mu$ by 
\[
U_\mu=\{(x,\lvert\mu\rvert,y)\in G\mid
\sigma^{\lvert\mu\rvert}(x)=y,\ x\in C_\mu\}. 
\]
In other words, $U_\mu$ equals $U_{\mu,\emptyset}$, 
where $\emptyset$ denotes the empty word of length zero. 
For $\xi\in\V$, we let $i^{-1}(\xi)=\{e\in\E\mid i(e)=\xi\}$. 
Define $\hat\xi\in\Z_+^\V$ by 
\[
\hat\xi(\eta)=\begin{cases}1&\eta=\xi\\0&\eta\neq\xi. \end{cases}
\]
Recall that $D_\xi=\{(x_n)_n\in X\mid i(x_1)=\xi\}$. 

Now we would like to construct a poset (partially ordered set) $\B$. 
Let $\B_0$ denote the set of all compact open $G$-sets $U$ 
such that $r(U)\subset Y$ and $s(U)=D_\xi$ for some $\xi\in\V$. 
For a finite subset $u$ of $\B_0$, 
we define $\rank(u)\in\Z_+^\V$ by 
\[
\rank(u)(\xi)=\#\{U\in u\mid s(U)=D_\xi\}\quad\forall\xi\in\V. 
\]
We let $\B$ denote 
the set of all finite subsets $u$ of $\B$ 
such that $\{r(U)\mid U\in u\}$ is a clopen partition of $Y$. 
We equip $\B$ with a partial order and 
view it as a poset in the following way. 
Given $u\in\B$ and an element $U\in u$ with $s(U)=D_\xi$, 
we get $v\in\B$ 
by replacing $U$ with the sets $UU_e$ 
for $e\in\E$ with $i(e)=\xi$. 
Thus, 
\[
v=(u\setminus\{U\})\cup\{UU_e\in\B_0\mid e\in i^{-1}(\xi)\}. 
\]
One says that $v$ is a simple expansion of $u$. 
Notice that $\rank(v)$ is equal to $\rank(u)+(M^t-\id)\hat\xi$. 
Conversely, if we start with $v\in\B$, $\xi\in\V$ and 
an injection $f:i^{-1}(\xi)\to v$ such that $s(f(e))=D_{t(e)}$, 
then we obtain $u\in\B$ by replacing the elements $f(e)$ with 
\[
\bigcup_{i(e)=\xi}f(e)U_e^{-1}\in\B_0. 
\]
The element $u$ is called a simple contraction of $v$. 
Again we have $\rank(u)=\rank(v)+(\id-M^t)\hat\xi$. 
Given two elements $u,v\in\B$, 
$v$ is a simple expansion of $u$ if and only if 
$u$ is a simple contraction of $v$. 
We now iterate these construction. 
We say that $v\in\B$ is an expansion of $u\in\B$ 
and that $u$ is a contraction of $v$ 
if there exists a sequence $u{=}u_0,u_1,\dots,u_d{=}v$ ($d\geq0$) 
with $u_i$ a simple expansion of $u_{i-1}$ for $i=1,2,\dots,d$. 
Then we equip $\B$ with a partial order 
by saying $u\leq v$ if $v$ is an expansion of $u$. 

\begin{lemma}
The poset $\B$ is directed, 
i.e. for any $u,v\in\B$ there exists $w\in\B$ 
such that $u\leq w$ and $v\leq w$. 
\end{lemma}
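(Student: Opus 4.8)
The plan is to produce, for given $u,v\in\B$, an explicit common expansion $w$ by refining along the overlaps of the two range-partitions, and then to check that $w$ really is an expansion of each of $u$ and $v$. The guiding principle is that once an expansion $w\geq u$ is specified, the $G$-set attached to each piece of $w$ is forced: if $W\in w$ satisfies $r(W)\subset r(U)$ for some $U\in u$ with $s(U)=D_\xi$, then necessarily $\pi_U^{-1}(r(W))$ is a cylinder $C_\beta$ and $W=UU_\beta$. Thus the only real content is to refine the common refinement of $\{r(U)\mid U\in u\}$ and $\{r(V)\mid V\in v\}$ finely enough that the tags demanded by the $u$-side and by the $v$-side coincide.

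First I would fix $U\in u$ and $V\in v$ with $A=r(U)\cap r(V)\neq\emptyset$ and analyse the transition. The product $V^{-1}U$ is a compact open $G$-set with $s(V^{-1}U)=\pi_U^{-1}(A)\subset D_\xi$ and $\pi_{V^{-1}U}=\pi_V^{-1}\circ\pi_U$. Since the sets $U_{\alpha,\beta}$ form a base for the topology of $G$, compactness lets me write $V^{-1}U$ as a finite disjoint union $\bigsqcup_k U_{\alpha_k,\beta_k}$ in which the $C_{\beta_k}$ are cylinders partitioning $\pi_U^{-1}(A)$, with $t(\alpha_k)=t(\beta_k)$ and $\pi_{V^{-1}U}(\beta_k z)=\alpha_k z$. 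Setting $A_k=\pi_U(C_{\beta_k})$ one gets $\pi_U^{-1}(A_k)=C_{\beta_k}$ and $\pi_V^{-1}(A_k)=C_{\alpha_k}$, so both pullbacks are cylinders and the two candidate tags for the piece $A_k$ are $UU_{\beta_k}$ (from the $u$-side) and $VV_{\alpha_k}$ (from the $v$-side).

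The key step is that these two tags are equal. Both are compact open $G$-sets with the same source $D_{t(\beta_k)}=D_{t(\alpha_k)}$ and the same range $A_k$, and they implement the same homeomorphism, because $\pi_{UU_{\beta_k}}(z)=\pi_U(\beta_k z)=\pi_V(\alpha_k z)=\pi_{VV_{\alpha_k}}(z)$. Since $G$ is essentially principal, a compact open $G$-set is determined by the partial homeomorphism it implements (the same reasoning recalled after Definition \ref{defoftfg}: if two such $G$-sets agree as maps, their pointwise product lands in the interior of the isotropy bundle, hence in $G^{(0)}$), whence $UU_{\beta_k}=VV_{\alpha_k}$. I would then let $w$ be the collection of all these $G$-sets, taken over every pair $(U,V)$ with $r(U)\cap r(V)\neq\emptyset$ and every index $k$.

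It remains to verify that $w\in\B$ and that $w\geq u$, $w\geq v$. The sets $A_k$ coming from a fixed pair $(U,V)$ partition $A=r(U)\cap r(V)$, and the sets $r(U)\cap r(V)$ partition $Y$; hence $\{r(W)\mid W\in w\}$ is a clopen partition of $Y$ and $w\in\B$. For $w\geq u$, fix $U\in u$: the pieces of $w$ lying in $r(U)$ pull back under $\pi_U$ to the cylinders $C_{\beta_k}$ arising from all $V$ meeting $r(U)$, and these cylinders partition $D_\xi=s(U)$ (their ranges exhaust $r(U)$). A finite partition of $D_\xi$ into cylinders is a complete prefix code, so it is reached from $\{U\}$ by a sequence of simple expansions, producing exactly the tags $UU_{\beta_k}$; doing this simultaneously for every $U\in u$ exhibits $w$ as an expansion of $u$. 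By symmetry, using $\pi_V^{-1}(A_k)=C_{\alpha_k}$ and $VV_{\alpha_k}=UU_{\beta_k}\in w$, the same argument gives $w\geq v$. The main obstacle is the tag-matching identity $UU_{\beta_k}=VV_{\alpha_k}$; everything else is bookkeeping about cylinder partitions, and essential principality is precisely what forces the two independently-prescribed tags to agree.
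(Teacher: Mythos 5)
Your proof is correct, but it takes a genuinely different route from the paper's. The paper \emph{normalizes} both vertices: it first shows that any $u\in\B$ has an expansion all of whose elements are of the form $U_\mu$ (a simple expansion at $U=\bigcup_i U_{\mu_i,\nu_i}$ replaces it by the sets $UU_e=\bigcup U_{\mu_i,\tilde\nu_i}$, stripping the first letter off each suffix word $\nu_i$, so iterating kills all suffixes), and then expands further until all words have a common length $m$; since there is exactly one element of $\B$ whose members are the sets $U_\mu$ with $\lvert\mu\rvert=m$ and $C_\mu\subset Y$, the normalized expansions of $u$ and of $v$ literally coincide, and that is the common upper bound. You instead construct the common expansion directly as a join: you refine along the intersections $r(U)\cap r(V)$, decompose each transition set $V^{-1}U$ into basic sets $U_{\alpha_k,\beta_k}$ (legitimate, since these form a base and $V^{-1}U$ is compact open), and then invoke essential principality — the same uniqueness principle the paper records after Definition \ref{defoftfg} — to conclude that the tag forced by the $u$-side, $UU_{\beta_k}$, equals the tag forced by the $v$-side, $VU_{\alpha_k}$ (your ``$VV_{\alpha_k}$'' is a harmless notational slip for $V$ times the basic set $U_{\alpha_k}$). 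Your remaining steps (the sources $C_{\beta_k}$ over all $V$ and $k$ form a finite complete prefix code partitioning $D_\xi$, and any such code is reached from the trivial one by leaf expansions, hence $w\geq u$ by performing these simple expansions for every $U\in u$) are standard inductions and are sound. What the paper's argument buys is economy: it uses nothing but cylinder-set combinatorics, never needing essential principality, and exploits the special level structure of the SFT groupoid. What yours buys is a smaller, more structural witness — the common refinement of the two range partitions rather than a full level-$m$ refinement — and a method that would adapt to other groupoids equipped with a distinguished base of bisections. Incidentally, your appeal to essential principality can even be eliminated by pure groupoid algebra: $UU_{\beta_k}=\bigl(r(V)U\bigr)U_{\beta_k}=V\bigl(V^{-1}U\bigr)U_{\beta_k}=V\Bigl(\bigcup_j U_{\alpha_j,\beta_j}\Bigr)U_{\beta_k}=VU_{\alpha_k}$, since $U_{\alpha_j,\beta_j}U_{\beta_k,\emptyset}$ is empty for $j\neq k$ and equals $U_{\alpha_k,\emptyset}$ for $j=k$.
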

\begin{proof}
Let $u\in\B$. 
We first claim that there exists an expansion $w$ of $u$ such that 
any $W\in w$ is of the form $U_\mu$ for some admissible word $\mu$. 
Take $U\in u$ which is not of the form $U_\mu$. 
Let $s(U)=D_\xi$. 
There exist admissible words 
$\mu_1,\mu_2,\dots,\mu_k$, $\nu_1,\nu_2,\dots,\nu_k$ such that 
$U$ equals the disjoint union $\bigcup_{i=1}^kU_{\mu_i,\nu_i}$. 
Consider a simple expansion 
\[
u'=(u\setminus\{U\})\cup\{UU_e\in\B_0\mid e\in i^{-1}(\xi)\}
\]
of $u$. 
Then for any $e\in i^{-1}(\xi)$, 
\[
UU_e=\bigcup U_{\mu_i,\tilde\nu_i}, 
\]
where the union runs over all $i$ such that 
the first letter of $\nu_i$ is $e$, and 
$\tilde\nu_i$ is the (possibly empty) word 
obtained by removing the first letter of $\nu_i$. 
In particular, $\lvert\tilde\nu_i\rvert=\lvert\nu_i\rvert-1$. 
By repeating this argument, 
we finally obtain an expansion $w$ of $u$ 
for which every $\nu_i$ is of length zero. 
The proof of the claim is complete. 

By the claim above, we may assume that there exist admissible words 
$\mu_1,\mu_2,\dots,\mu_k$, $\nu_1,\nu_2,\dots,\nu_l$ such that 
\[
u=\{U_{\mu_i}\mid i=1,2,\dots,k\}\quad\text{and}\quad 
v=\{U_{\nu_j}\mid j=1,2,\dots,l\}. 
\]
Let $m=\max\{\lvert\mu_i\rvert,\lvert\nu_j\rvert
\mid i=1,\dots,k,\ j=1,\dots,l\}$. 
Suppose that $\lvert\mu_i\rvert$ is less than $m$. 
Then 
\[
u'=(u\setminus\{U_{\mu_i}\})
\cup\{U_{\mu_ie}\in\B_0\mid e\in\E,\ t(\mu_i)=i(e)\}
\]
is a simple expansion of $u$ and $\lvert\mu_ie\rvert=\lvert\mu_i\rvert+1$. 
By repeating this argument, we can find an expansion $\tilde u$ of $u$ 
such that any element of $\tilde u$ is 
of the form $U_\mu$ with $\lvert\mu\rvert=m$. 
In the same way, there exists an expansion $\tilde v$ of $v$ 
such that any element of $\tilde v$ is 
of the form $U_\mu$ with $\lvert\mu\rvert=m$. 
This means that $\tilde u$ equals $\tilde v$, 
which completes the proof. 
\end{proof}

We denote by $\lvert\B\rvert$ the simplicial complex 
whose simplices are the non-empty finite linearly ordered subsets of $\B$. 
For a subset $\mathcal{C}\subset\B$, 
$\lvert\mathcal{C}\rvert$ denotes 
the subcomplex of $\lvert\B\rvert$ spanned by $\mathcal{C}$. 
We identify 
the set of vertices of $\lvert\B\rvert$ with $\B$. 

We now define an action of $[[G|Y]]$ on $\lvert\B\rvert$. 
For $\alpha\in[[G|Y]]$, 
take a compact open $G|Y$-set $W$ such that $\alpha=\pi_W$. 
Note that $r(W)=Y$ and $s(W)=Y$. 
For $u\in\B$, we define 
\[
\alpha u=\{WU\in\B_0\mid U\in u\}. 
\]
This gives a well-defined action of $[[G|Y]]$ on $\B$. 
It is easy to see that $u\mapsto\alpha u$ is an order preserving map. 
Hence this action gives rise to 
an action of $[[G|Y]]$ on the simplicial complex $\lvert\B\rvert$. 

\begin{lemma}\label{stabilizer}
\begin{enumerate}
\item For $u,v\in\B$, 
there exists $\alpha\in[[G|Y]]$ such that $\alpha u=v$ 
if and only if $\rank(u)=\rank(v)$. 
\item Every vertex stabilizer is a finite group. 
More precisely, for any $u\in\B$, 
$\{\alpha\in[[G|Y]]\mid\alpha u=u\}$ is isomorphic to 
$\bigoplus_{\xi\in\V}\Sigma_{\rank(u)(\xi)}$, 
where $\Sigma_n$ stands for the symmetric group on $n$ letters. 
\end{enumerate}
\end{lemma}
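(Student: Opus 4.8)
The plan is to handle both parts through one construction: from two elements $u,v\in\B$ together with a source-preserving bijection between their members I build an explicit compact open $G|Y$-set implementing it. The starting observation is that the action of $[[G|Y]]$ preserves $\rank$, because for $\alpha=\pi_W$ and $U\in u$ one has $s(WU)=s(U)$. This at once gives the forward implication of (1) and shows that any $\alpha$ fixing $u$ permutes the members of $u$ within each source class $\{U\in u\mid s(U)=D_\xi\}$.

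For the converse in (1), assume $\rank(u)=\rank(v)$. For each $\xi\in\V$ the members of $u$ and of $v$ with source $D_\xi$ are then equinumerous, so I fix a bijection $f:u\to v$ with $s(f(U))=s(U)$ for all $U\in u$. For each $U$ the set $f(U)U^{-1}$ is, by the composition rules for $G$-sets recalled in Section 2 (the condition $s(f(U))=s(U)$ being exactly what makes the product well behaved), a compact open $G$-set with source $r(U)$ and range $r(f(U))$. Since $\{r(U)\mid U\in u\}$ and $\{r(V)\mid V\in v\}$ are clopen partitions of $Y$, the disjoint union $W=\bigcup_{U\in u}f(U)U^{-1}$ is a compact open $G|Y$-set with $s(W)=r(W)=Y$; thus $\alpha=\pi_W\in[[G|Y]]$. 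A short computation using the disjointness of the sets $r(U)$ and the identity $U^{-1}U=s(U)$ (as a set of units) gives $WU=f(U)$ for every $U$, whence $\alpha u=v$.

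For (2), running the same construction with $v=u$ shows that the natural map from the stabilizer $\{\alpha\mid\alpha u=u\}$ to $\bigoplus_{\xi\in\V}\Sigma_{\rank(u)(\xi)}$ is surjective, since every source-preserving permutation of $u$ arises from a suitable $W$. This map is a homomorphism because $\pi_W\pi_{W'}=\pi_{WW'}$ and $W(W'U)$ realizes the composite permutation. For injectivity I would show that if $WU=U$ for all $U\in u$, then $W$ restricts to the units over each $r(U)$; as these sets cover $Y=s(W)$, this forces $W=G^{(0)}$ and $\alpha=1$. Finiteness is then immediate, each $\rank(u)(\xi)$ being finite.

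I expect the one delicate point to be the identity $WU=f(U)$, which relies on the source-injectivity of $G$-sets together with the disjointness of the ranges $r(U)$; once it is established, the rest follows formally. Verifying that $f(U)U^{-1}$ is compact open, that $W$ is a genuine $G$-set with full source and range, and that the stabilizer map respects composition is routine groupoid bookkeeping.
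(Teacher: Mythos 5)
Your proposal is correct and follows essentially the same route as the paper: the same compact open $G|Y$-set $W=\bigcup_{U\in u}f(U)U^{-1}$ built from a source-preserving bijection, and the same injectivity argument that $WU=U$ for all $U\in u$ forces $W=Y$ (the identity). The details you flag as delicate (e.g.\ $WU=f(U)$, via disjointness of the ranges $r(U)$ and $U^{-1}U=s(U)$) are exactly the routine computations the paper leaves implicit.
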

\begin{proof}
(1)
The `only if' part is obvious. 
To prove the `if ' part, assume $\rank(u)=\rank(v)$. 
There exists a bijection $f:u\to v$ 
such that $s(f(U))=s(U)$ for every $U\in u$. 
Then 
\[
W=\bigcup_{U\in u}f(U)U^{-1}
\]
is a compact open $G|Y$-set satisfying $r(W)=s(W)=Y$ and $\pi_Wu=v$. 

(2)
Assume $\alpha u=u$. 
Take a compact open $G|Y$-set $W$ such that $\alpha=\pi_W$. 
The homeomorphism $\alpha$ induces a permutation of the finite set $u$. 
Suppose that the induced permutation is the identity. 
For every $U\in u$, we have $WU=U$, and so 
\[
W=WY=\bigcup_{U\in u}WUU^{-1}=\bigcup_{U\in u}UU^{-1}=Y. 
\]
Thus $\alpha$ is the identity. 
It is clear that the induced permutations form a group 
isomorphic to $\bigoplus_\xi\Sigma_{\rank(u)(\xi)}$. 
\end{proof}

Let $\omega\in\B$. 
By replacing $\omega$ with its any simple expansion, 
we may assume that $\rank(\omega)(\xi)$ is positive for any $\xi\in\V$, 
because each entry of $M^t-\id$ is positive. 
We fix such $\omega\in\B$ and set 
\[
\B_\omega=\{u\in\B\mid
\alpha\omega\leq u\text{ for some }\alpha\in[[G|Y]]\}. 
\]
Since each entry of $\rank(\omega)$ and $M^t-\id$ is positive, 
from Lemma \ref{stabilizer} (1) one can see that $B_\omega$ is equal to 
\[
\left\{u\in\B\mid
\rank(u)=\rank(\omega)+(M^t-\id)a\text{ for some }a\in\Z_+^\V\right\}. 
\]
As $\B$ is directed, so is $\B_\omega$. 
This readily implies the following 
(see \cite[Proposition 9.3.14]{Geotext} for instance). 

\begin{lemma}\label{Bcontract}
The simplicial complex $\lvert\B_\omega\rvert$ is contractible. 
\end{lemma}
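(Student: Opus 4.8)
The plan is to invoke the general principle that the order complex of a directed poset is contractible; since $\B_\omega$ has already been shown to be directed, this finishes the proof, and I would simply make the standard argument explicit. (This is exactly the content of \cite[Proposition 9.3.14]{Geotext}, which one could also cite directly.)

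First I would record that $\lvert\B_\omega\rvert$ is a nonempty CW complex, since it contains $\omega$, and that it is path-connected: given vertices $u,v\in\B_\omega$, directedness yields $w\in\B_\omega$ with $u\le w$ and $v\le w$, so the edges $[u,w]$ and $[w,v]$ join $u$ to $v$. By Whitehead's theorem it therefore suffices to prove that $\lvert\B_\omega\rvert$ is weakly contractible, that is, $\pi_n(\lvert\B_\omega\rvert)=0$ for every $n\ge1$. To this end I would fix $n\ge1$ and a continuous map $f\colon S^n\to\lvert\B_\omega\rvert$. Its image is compact, hence contained in a finite subcomplex, which involves only finitely many vertices $u_1,u_2,\dots,u_k\in\B_\omega$. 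Applying directedness finitely many times produces $w\in\B_\omega$ with $u_i\le w$ for every $i$.

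Now set $Q=\{u_1,u_2,\dots,u_k,w\}$, regarded as a subposet of $\B_\omega$. Since $w$ is a maximum element of $Q$, every chain in $Q$ can be enlarged by adjoining $w$, so the order complex $\lvert Q\rvert$ is the simplicial cone with apex $w$ over the order complex of $Q\setminus\{w\}$; in particular $\lvert Q\rvert$ is contractible. The finite subcomplex containing the image of $f$ is a subcomplex of $\lvert Q\rvert$ (its simplices are chains among the $u_i$, hence chains in $Q$), so $f$ factors as $S^n\to\lvert Q\rvert\hookrightarrow\lvert\B_\omega\rvert$. Because $\lvert Q\rvert$ is contractible, $f$ is null-homotopic in $\lvert\B_\omega\rvert$, whence $\pi_n(\lvert\B_\omega\rvert)=0$. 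Combining the two steps, $\lvert\B_\omega\rvert$ is a weakly contractible CW complex and Whitehead's theorem gives contractibility. I do not expect any genuine obstacle: the only points needing care are that a finite set of vertices lies below a single $w\in\B_\omega$, which is immediate from directedness, and the elementary fact that the order complex of a finite poset with a top element is a cone.
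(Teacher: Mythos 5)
Your proposal is correct and is essentially the paper's own argument: the paper likewise deduces the lemma from the directedness of $\B_\omega$ (inherited from $\B$) together with the standard fact that the order complex of a directed poset is contractible, citing the same reference \cite[Proposition 9.3.14]{Geotext}. Your explicit verification (compact images lie in finite subcomplexes, a common upper bound $w$ makes the relevant finite order complex a cone, then Whitehead) is just the standard proof of that cited fact written out.
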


For $u\in\B_\omega$, choose $a\in\Z_+^\V$ so that 
\[
\rank(u)=\rank(\omega)+(M^t-\id)a. 
\]
Put 
\[
\height(u)=\sum_{\xi\in\V}a(\xi). 
\]
Thanks to \eqref{rabbit}, 
$\height(u)$ does not depend on the choice of $a$. 
If $v$ is a simple expansion of $u\in\B_\omega$, 
then $\height(v)=\height(u)+1$. 
Clearly $\height(u)$ equals the largest integer $n$ such that 
there exists a linearly ordered $(n{+}1)$-tuple 
$u_0<u_1<\dots<u_n=u$ in $\B_\omega$. 
Because each entry of $M^t-\id$ is positive, 
one has $\rank(u)(\xi)\geq\height(u)$ for every $\xi\in\V$. 
We also note that the $[[G|Y]]$-action preserves $\height$, 
i.e. $\height(\alpha u)=\height(u)$. 

Next, we would like to construct 
a contractible $[[G|Y]]$-invariant subcomplex 
$Z\subset\lvert\B_\omega\rvert$. 
Given an element $u\in\B$, an elementary expansion of $u$ is 
an element $v\in\B$ obtained by choosing a subset $u'\subset u$ 
and taking simple expansions of $u$ at each of $U\in u'$, that is, 
\[
v=(u\setminus u')
\cup\{UU_e\in\B_0
\mid U\in u',\ s(U)=D_\xi,\ e\in i^{-1}(\xi)\}. 
\]
We call a simplex $u_0<u_1<\dots<u_n$ of $\lvert\B_\omega\rvert$ elementary 
if $u_n$ is an elementary expansion of $u_0$. 
This implies that $u_j$ is an elementary expansion of $u_i$ for any $i\leq j$. 
Hence any face of an elementary simplex is elementary. 
Clearly elementary simplices are preserved by the action of $[[G|Y]]$. 
It follows that the union $Z$ of all elementary simplices is 
a $[[G|Y]]$-invariant subcomplex of $\lvert\B_\omega\rvert$. 

In what follows, we will use the standard notation for intervals in a poset. 
For example, the open interval $(u,v)$ is defined 
by $(u,v)=\{w\in\B_\omega\mid u<w<v\}$. 
The closed interval $[u,v]$ and 
the half-open intervals $[u,v)$ and $(u,v]$ are defined similarly. 
The following is exactly the same as the lemma of \cite[Section 4]{Br89MSRI}. 

\begin{lemma}
Let $v\in\B_\omega$ be an expansion of $u\in\B_\omega$. 
If $v$ is not an elementary expansion of $u$, 
then the subcomplex $\lvert(u,v)\rvert$ is contractible. 
\end{lemma}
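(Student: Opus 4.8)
The plan is to prove that $\lvert(u,v)\rvert$ is contractible by exhibiting an explicit conical contraction of the poset $(u,v)$, following Brown's argument. First I would record the combinatorial structure of the closed interval $[u,v]$. Each $U\in u$ is subdivided, on the way to $v$, according to a finite rooted tree $T_U$ whose edges record the simple expansions performed (expanding a piece $\tilde U$ with $s(\tilde U)=D_\xi$ means replacing it by the pieces $\tilde UU_e$, $e\in i^{-1}(\xi)$). A partial expansion of $U$ bounded by $T_U$ amounts to a choice of which carets of $T_U$ are installed, and these form a finite lattice $L_U$ whose least element is the trivial subdivision $\{U\}$ and whose greatest element is $T_U$. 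Crucially, $L_U$ has a \emph{unique} atom, namely the single root caret, because the only simple expansion available at $U$ itself is forced, and no deeper piece can be subdivided before it is created. The assignment $w\mapsto(w_U)_{U\in u}$ then identifies $[u,v]$ with the product lattice $\prod_{U\in u}L_U$, with $u$ as bottom and $v$ as top; in particular meets and joins exist throughout $[u,v]$.

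Next I would single out a cone point. Let $u_1$ be the elementary expansion of $u$ obtained by performing a single simple expansion at exactly those $U\in u$ with $U\notin v$ (the pieces that are genuinely subdivided); in coordinates, $(u_1)_U$ is the atom of $L_U$ whenever $U$ is subdivided and is trivial otherwise. Since $u<v$, at least one piece is subdivided, so $u_1>u$; and $u_1=v$ would force every $T_U$ to have height at most one, i.e. $v$ to be an elementary expansion of $u$, contrary to hypothesis. Hence $u<u_1<v$, so $u_1\in(u,v)$. \emph{This is the only step that uses the non-elementarity of $v$.}

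Finally I would contract $(u,v)$ onto $u_1$ via the order-preserving map $f\colon(u,v)\to(u,v)$, $f(w)=w\wedge u_1$, which satisfies $f(w)\le w$ and $f(w)\le u_1$. The point requiring verification is that $f$ really lands in the \emph{open} interval. On the one hand $f(w)\le u_1<v$ gives $f(w)<v$. On the other hand, $f(w)=u$ would force $w_U\wedge(u_1)_U$ to be trivial for every subdivided $U$; but since $(u_1)_U$ is the \emph{unique} atom of $L_U$, any nontrivial $w_U$ dominates it, so $w_U\wedge(u_1)_U$ is trivial only when $w_U$ itself is trivial, whence $w=u$, which is excluded. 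Thus $u<f(w)<v$ for all $w\in(u,v)$. The inequalities of poset maps $\id\ge f\le\mathrm{const}_{u_1}$ then yield, by the order-homotopy lemma (comparable poset maps induce homotopic maps on order complexes), homotopies $\id\simeq f\simeq\mathrm{const}_{u_1}$ on $\lvert(u,v)\rvert$, proving contractibility. I expect the main obstacle to be the bookkeeping needed to justify the product-lattice description of $[u,v]$ and, inside it, to isolate the unique-atom property of each $L_U$, since this property is precisely what forces the meet $w\wedge u_1$ back down to $u$ only in the excluded case $w=u$.
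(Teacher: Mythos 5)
Your proof is correct and takes essentially the same route as the paper: the paper states this lemma without proof, deferring to the lemma of \cite[Section 4]{Br89MSRI}, and Brown's argument there is precisely your conical contraction — identify $[u,v]$ with a product of lattices of partial subdivisions, let $u_1$ be the largest elementary expansion of $u$ below $v$ (non-elementarity giving $u<u_1<v$), and contract $(u,v)$ via the order-preserving map $w\mapsto w\wedge u_1$ together with the comparability-implies-homotopy lemma for poset maps. Your verification that $w\wedge u_1>u$ via the unique-atom property of each factor lattice is exactly the point Brown's proof turns on, so nothing is missing.
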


The following is exactly the same as \cite[Theorem 1]{Br89MSRI}. 
We include the argument here for completeness. 

\begin{lemma}\label{Zcontract}
The complex $Z$ is contractible. 
\end{lemma}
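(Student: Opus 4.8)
The plan is to deduce the contractibility of $Z$ from that of the ambient complex. Since $\lvert\B_\omega\rvert$ is contractible by Lemma \ref{Bcontract}, it suffices to show that the inclusion $\iota\colon Z\hookrightarrow\lvert\B_\omega\rvert$ is a homotopy equivalence; then $Z$ has the homotopy type of a point. First I would recall the two combinatorial facts that make $Z$ accessible: a simplex $u_0<u_1<\dots<u_k$ lies in $Z$ exactly when $u_k$ is an elementary expansion of $u_0$, and every face of an elementary simplex is again elementary. Hence the simplices of $\lvert\B_\omega\rvert$ missing from $Z$ are precisely the chains whose top $u_k$ is a \emph{non}-elementary expansion of their bottom $u_0$, and the engine for handling them will be the preceding lemma, which asserts that $\lvert(u,v)\rvert$ is contractible whenever $v$ is a non-elementary expansion of $u$.

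To build $\lvert\B_\omega\rvert$ up from $Z$ I would filter by the height of the top vertex. Let $Z_n$ be the subcomplex consisting of all simplices of $Z$ together with all simplices whose top vertex has $\height\le n$. Then $Z_n=Z$ for small $n$ (a chain with top vertex of height $0$ or $1$ is automatically elementary), the union $\bigcup_n Z_n$ is all of $\lvert\B_\omega\rvert$, and passing from $Z_{n-1}$ to $Z_n$ means attaching, for each $v\in\B_\omega$ with $\height(v)=n$, the simplices with top $v$ that are not yet present. Fixing such a $v$, every $u<v$ has $\height(u)\le n-1$, so the realization $\lvert\B_\omega^{<v}\rvert$ of its proper predecessors already lies in $Z_{n-1}$; the chains with top $v$ form the cone $\{v\}\ast\lvert\B_\omega^{<v}\rvert$, and the portion of this cone already contained in $Z_{n-1}$ is exactly the elementary part, governed bottom vertex by bottom vertex by the interval realizations $\lvert(u,v)\rvert$. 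The idea is that each non-elementary batch is adjoined by coning off a copy of a contractible $\lvert(u,v)\rvert$, so that every elementary adjunction $Z_{n-1}\hookrightarrow Z_n$ is gluing a cone along a contractible subcomplex and is therefore a homotopy equivalence.

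Finally I would assemble these stages. Because any sphere maps into a compact, hence finite, subcomplex, it lands in some $Z_n$; combining the homotopy equivalences $Z=Z_{n_0}\hookrightarrow Z_{n_0+1}\hookrightarrow\dots$ with this compactness shows that $\iota$ induces isomorphisms on all homotopy groups, whence $\pi_k(Z)\cong\pi_k(\lvert\B_\omega\rvert)=0$ for all $k$. As $Z$ is a CW complex, Whitehead's theorem then yields that $Z$ is contractible.

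I expect the main obstacle to be the bookkeeping in the attachment step. One must verify that when the non-elementary simplices with a common top $v$ are adjoined, the locus along which they are glued to the already-constructed complex is exactly a cone on the contractible interval $\lvert(u,v)\rvert$, and not some larger, uncontrolled subcomplex; this requires pinning down the order of attachment and controlling the interaction between the different admissible bottoms $u$ for a single $v$, so that the cone-attaching lemma applies cleanly at each stage. This is precisely the point at which the preceding interval lemma is indispensable, and reproducing Brown's argument amounts to making this gluing analysis precise.
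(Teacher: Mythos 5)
Your overall strategy is the right one and is essentially the paper's (and Brown's): deduce contractibility of $Z$ from that of $\lvert\B_\omega\rvert$ (Lemma \ref{Bcontract}) by building $\lvert\B_\omega\rvert$ up from $Z$ in stages, invoking the preceding lemma that $\lvert(u,v)\rvert$ is contractible whenever $v$ is a non-elementary expansion of $u$, and finishing with a compactness/colimit argument. However, the proof as written has a genuine gap exactly at the step you flag, and it is not mere bookkeeping: with your filtration by the height of the \emph{top} vertex, the pieces adjoined at stage $n$ are not attached along subcomplexes of $Z_{n-1}$. For a fixed top $v$ with $\height(v)=n$, the new simplices with bottom $u$ (the chains containing both $u$ and $v$) have faces with bottom $u'\in(u,v)$ and top $v$, and whenever $v$ is a non-elementary expansion of such a $u'$ these faces are themselves new at stage $n$. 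So the batch for $(u,v)$ is glued partly along other stage-$n$ batches, the attaching locus is not contained in $Z_{n-1}$, and the ``cone along a contractible subcomplex'' argument does not apply as stated. To repair this you must impose a secondary ordering within stage $n$, attaching batches in order of increasing $\height(v)-\height(u)$; only then does one check that the closure of each batch is $\lvert[u,v]\rvert$, that its intersection with everything previously attached is exactly $\lvert[u,v)\rvert\cup\lvert(u,v]\rvert$ --- which is the \emph{suspension} of $\lvert(u,v)\rvert$, not a cone on it (a harmless but real imprecision in your write-up; both are contractible here) --- and that distinct batches at the same height difference meet only in earlier material.

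The paper's proof makes precisely this repair structural by filtering by the height difference itself: $W_n$ is $Z$ together with all $\lvert[u,v]\rvert$ for non-elementary expansions with $\height(v)-\height(u)\leq n$. With that filtration the two verifications your version needs become one-line checks: $\lvert[u,v]\rvert\cap W_{n-1}$ equals $\lvert[u,v)\rvert\cup\lvert(u,v]\rvert$, since any chain in $[u,v]$ omitting $u$ or $v$ has endpoints at distance less than $n$ and hence lies in $Z$ or in a smaller interval, while a chain containing both $u$ and $v$ cannot lie in $W_{n-1}$; and two distinct intervals at distance $n$ intersect only in $W_{n-1}$, since a common chain realizing distance $n$ would force the two intervals to share both endpoints. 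So your plan is workable, but completing it requires supplying the ordered-attachment analysis you deferred, and once supplied it amounts to a reorganization of the paper's argument rather than a simplification of it.
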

\begin{proof}
Let $W_n$ be the complex obtained by adjoining to $Z$ the subcomplexes 
of the form $\lvert[u,v]\rvert$ 
with $v$ a non-elementary expansion of $u\in\B_\omega$ and 
$\height(v)-\height(u)\leq n$. 
Since $\lvert\B_\omega\rvert$ is contractible by Lemma \ref{Bcontract} and 
$\lvert\B_\omega\rvert$ equals the union $\bigcup_n W_n$, 
it suffices to show that 
the inclusion $Z\hookrightarrow W_n$ is a homotopy equivalence. 
Suppose by induction that 
$Z\hookrightarrow W_{n-1}$ is a homotopy equivalence. 
Let $v$ be a non-elementary expansion of $u\in\B_\omega$ 
such that $\height(v)-\height(u)=n$. 
The intersection of $\lvert[u,v]\rvert$ with $W_{n-1}$ is 
$\lvert[u,v)\cup(u,v]\rvert$, which is the suspension of $\lvert(u,v)\rvert$ 
and hence is contractible by the lemma above. 
Since $\lvert[u,v]\rvert$ is also contractible for trivial reasons, 
the adjunction of $\lvert[u,v]\rvert$ does not change 
the homotopy type of $W_{n-1}$. 
Two such subcomplexes $\lvert[u,v]\rvert$ intersect only in $W_{n-1}$, 
and so the inclusion $W_{n-1}\hookrightarrow W_n$ is a homotopy equivalence. 
The proof is complete. 
\end{proof}

We remark that the argument above goes through without change 
if we fix $p\in\Z_+$ and replace $\B_\omega$ 
with $\{u\in\B_\omega\mid\height(u)\geq p\}$. 
Therefore the subcomplex of $Z$ 
spanned by the vertices $u$ with $\height(u)\geq p$ is also contractible. 

For every $u\in\B_\omega$, 
we introduce the (finite) simplicial complex $K_u$ as follows. 
Consider a pair $(\xi,f)$ of $\xi\in\V$ and an injection $f:i^{-1}(\xi)\to u$ 
such that $s(f(e))=D_{t(e)}$. 
A simplex of $K_u$ is 
a finite collection $\{(\xi_0,f_0),\dots,(\xi_m,f_m)\}$ of such pairs 
satisfying the following conditions. 
\begin{itemize}
\item The images of the maps $f_j$ are pairwise disjoint. 
\item There exists $a\in\Z_+^\V$ such that 
\[
\rank(u)-(M^t-\id)(\hat\xi_0+\dots+\hat\xi_m)=\rank(\omega)+(M^t-\id)a. 
\]
\end{itemize}
The first condition says that 
by contracting $\Ima f_j$ we can obtain $v$ 
such that $u$ is an elementary expansion of $v$. 
The second condition says that $v$ is still in $\B_\omega$, 
because the left-hand side of the equality is the rank of $v$. 

\begin{lemma}\label{Kuconnected}
For any integer $n\geq0$, there exists $r\in\N$ such that 
the simplicial complex $K_u$ is $n$-connected provided $\height(u)>r$. 
\end{lemma}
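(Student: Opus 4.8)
The plan is to give a combinatorial model of $K_u$ and then bound its connectivity below by a quantity that grows with $\height(u)$, following the method of Brown \cite{Br89MSRI} and Stein \cite{St92TAMS}. View $u$ as a finite set of \emph{slots} graded by source type: for each $\eta\in\V$ there are $N_\eta:=\rank(u)(\eta)$ slots $U\in u$ with $s(U)=D_\eta$. A vertex $(\xi,f)$ of $K_u$ amounts to a type $\xi$ together with, for each $\eta$, an injection of the $M(\xi,\eta)$ edges from $\xi$ to $\eta$ into the $N_\eta$ slots of type $\eta$; thus such a vertex \emph{consumes} $M(\xi,\eta)$ slots of type $\eta$, hence a bounded total number $\#i^{-1}(\xi)$ of slots. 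A family of vertices spans a simplex exactly when their consumptions are pairwise disjoint and the associated contraction still lies in $\B_\omega$.

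The two resources driving connectivity are the slot supply and the budget, and both grow with height. Every $N_\eta=\rank(u)(\eta)\geq\height(u)$, so slots of each type are abundant. For the budget, write $\rank(u)=\rank(\omega)+(M^t-\id)a_0$ with $a_0\in\Z_+^\V$; a family with type-multiplicities $b=\sum_j\hat\xi_j$ yields an element of $\B_\omega$ if and only if, by \eqref{rabbit}, $b(\xi)\leq a_0(\xi)$ for every $\xi\notin\V_0$ and $\sum_{\zeta\in\V_0}b(\zeta)\leq\sum_{\zeta\in\V_0}a_0(\zeta)$. The total capacity $\sum_{\xi\notin\V_0}a_0(\xi)+\sum_{\zeta\in\V_0}a_0(\zeta)$ equals $\height(u)$, so by pigeonhole one of these independent budgets — either a single coordinate $\xi^*\notin\V_0$ or the whole block $\V_0$ — is at least $\height(u)/\#\V$.

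First I would isolate this abundant block: let $L\subset K_u$ be the full subcomplex spanned by the vertices whose type is $\xi^*$ (resp.\ lies in $\V_0$). For these vertices the budget condition involves only the abundant coordinate (resp.\ the pooled $\V_0$-budget), so in all dimensions up to about $\height(u)/\#\V$ the budget is inactive and $L$ reduces to the pure disjointness complex of the corresponding injections with pairwise disjoint consumptions. As each vertex consumes only a bounded number of slots of each type while $N_\eta\geq\height(u)$, this is a generalized matching complex whose connectivity tends to infinity with $\height(u)$; I would prove the needed bound by the standard induction on vertex links (a link being again such a disjointness complex with slot counts decreased by a bounded amount) together with the nerve lemma, exactly as for matching complexes. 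Hence $L$ is $n$-connected once $\height(u)>r$ for a suitable $r=r(n)$.

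The hard part will be passing from the connectivity of $L$ to that of all of $K_u$, since in general a highly connected subcomplex does not force the ambient complex to be highly connected. I would resolve this with the discrete-Morse (``bad-simplex'') argument of \cite[\S4]{Br89MSRI} and \cite{St92TAMS} applied directly to $K_u$: fixing the abundant block, one shows that every map $S^i\to K_u$ with $i\leq n$ is homotopic, inside $K_u$, to one supported on $L$, each elementary homotopy replacing a contraction of a non-abundant type by one of the abundant type and being available precisely because the surplus of slots ($N_\eta\geq\height(u)$) leaves room to reroute the relevant injections disjointly. Carrying out this reduction — the genuine technical core — combines with the connectivity of $L$ to give the $n$-connectivity of $K_u$ for all $u$ with $\height(u)>r$, as required.
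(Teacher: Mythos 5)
Your two driving mechanisms are exactly the ones the paper's proof runs on: a pigeonhole on $\height(u)=\sum_\xi a_0(\xi)$ to locate an abundant budget, the slot surplus $\rank(u)(\eta)\geq\height(u)$ to arrange disjointness, and \eqref{rabbit} for the budget bookkeeping (your characterization of which contractions stay in $\B_\omega$ is correct). But the global architecture differs. The paper proves a single quantitative claim: if $\height(u)>\max\{(m{+}1)\#\V,\ k(m{+}1)l+l\}$, where $l=\max\{M(\xi,\eta)\mid\xi,\eta\in\V\}$, then any $k$ simplices of dimension at most $m$ admit a \emph{common} cone vertex $(\eta,g)$ --- the type $\eta$ found by pigeonhole so that $a_0(\eta)>m{+}1$, the injection $g$ chosen disjoint from the boundedly many slots those simplices consume, the budget checked via \eqref{rabbit} --- and then defers the passage from this claim to $n$-connectivity to the standard argument of \cite[Lemma 4.20]{Br89MSRI}. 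There is no abundant subcomplex $L$, no matching-complex theory, and no pushing argument; your route is closer in spirit to \cite{St92TAMS}, and it is viable in principle, but the one step you must engineer yourself is precisely where your justification breaks down.

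The gap is in the elementary homotopies of your last paragraph. To replace a non-abundant vertex $v=\phi(x)$ in the image of a simplicial map $\phi:S^i\to K_u$ by an abundant-type vertex $w$ and remain simplicial (and contiguous to $\phi$), $w$ must be joinable to $\phi(Q)$ for \emph{every} simplex $Q$ in the closed star of $x$ in the domain triangulation. The slots consumed by that star are bounded only in terms of the size of the triangulation, not in terms of $n$; since simplicial approximation can force arbitrarily large triangulations while $u$ (hence the total slot supply $\height(u)$) stays fixed, ``surplus of slots leaves room to reroute disjointly'' cannot deliver such a $w$. The correct form of the bad-simplex argument does not ask for one disjoint vertex: it modifies $\phi$ over the closed star of a maximal bad simplex $\sigma$ of the domain, and what it needs is that the \emph{good link} of $\phi(\sigma)$ --- the full subcomplex of $L$ on vertices disjoint from the boundedly many slots of $\phi(\sigma)$, with budget depleted by at most $\dim\sigma+1$ --- is $(i-\dim\sigma-1)$-connected. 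That is a statement of the same kind as your connectivity claim for $L$ (an abundant disjointness complex with resources depleted by a bounded amount), so your matching-complex step would cover it; but as written, your proposal justifies the homotopies by disjointness alone, and that argument fails. Note finally that once things are phrased this way, the detour through $L$ is essentially redundant: an abundant-type vertex disjoint from a bounded family of simplices is joinable to all of them simultaneously, which is exactly the paper's claim, and Brown's deduction then finishes the proof without ever singling out $L$.
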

\begin{proof}
Let $l=\max\{M(\xi,\eta)\mid\xi,\eta\in\V\}$. 
We claim that for any $m\in\Z_+$ and $k\in\N$ the following holds: 
if 
\[
\height(u)>\max\{(m{+}1)\#\V,\ k(m{+}1)l+l\}
\]
and 
$P_1,\dots,P_k$ are $m$-simplices of $K_u$, then 
we can find a vertex $(\eta,g)$ of $K_u$ such that 
$P_j\cup\{(\eta,g)\}$ is an $(m{+}1)$-simplex of $K_u$ for any $j$. 
Choose $a\in\Z_+^\V$ so that $\rank(u)=\rank(\omega)+(M^t-\id)a$. 
From 
\[
\sum_{\xi\in\V}a(\xi)=\height(u)>(m{+}1)\#\V, 
\]
we can find $\eta\in\V$ such that $a(\eta)>m{+}1$. 
Let 
\[
u'=\bigcup_{j=1}^k\bigcup_{(\xi,f)\in P_j}\Ima f. 
\]
Then for any $\xi\in\V$ we have 
\[
\#\{U\in u\setminus u'\mid s(U)=D_\xi\}
\geq\rank(u)(\xi)-k(m{+}1)l\geq\height(u)-k(m{+}1)l>l. 
\]
Hence we can find an injection $g:i^{-1}(\eta)\to u$ 
such that $s(g(e))=D_{t(e)}$ for all $e\in i^{-1}(\eta)$ and 
$u'\cap\Ima g=\emptyset$. 
Now for each $j=1,2,\dots,k$, let 
\[
b_j=a-\sum_{(\xi,f)\in P_j}\hat\xi
\]
so that 
\[
\rank(u)-\sum_{(\xi,f)\in P_j}(M^t-\id)\hat\xi
=\rank(\omega)+(M^t-\id)b_j. 
\]
Since $P_j$ is a simplex of $K_u$, by \eqref{rabbit}, one has 
\[
\sum_{\zeta\in\V_0}b_j(\zeta)\geq0\quad\text{and}\quad 
b_j(\xi)\geq0\quad\forall\xi\in\V\setminus\V_0. 
\]
From the choice of $\eta$, we can verify 
\[
\sum_{\zeta\in\V_0}(b_j-\hat\eta)(\zeta)\geq0\quad\text{and}\quad 
(b_j-\hat\eta)(\xi)\geq0\quad\forall\xi\in\V\setminus\V_0. 
\]
From \eqref{rabbit}, we can conclude that 
$P_j\cup\{(\eta,g)\}$ is an $(m{+}1)$-simplex of $K_u$ for any $j$. 

Once this claim is established, the rest of the proof proceeds 
in the same manner as \cite[Lemma 4.20]{Br89MSRI}. 
We omit the details. 
\end{proof}

Notice that the natural number $r$ obtained in the lemma above 
depends only on the adjacency matrix $M$ and 
does not depend on the choice of $\omega\in\B$. 

Given integers $p,q$ with $0\leq p\leq q$, 
we let $Z_{p,q}$ be the subcomplex of $Z$ 
spanned by the vertices $u$ with $p\leq\height(u)\leq q$. 
Note that $Z_{p,q}$ is $[[G|Y]]$-invariant. 
Note also that the dimension of $Z_{p,q}$ is at most $q-p$. 
The following is almost the same as \cite[Theorem 2]{Br89MSRI}. 
We include the proof for completeness. 

\begin{lemma}\label{Zpqconnected}
For any integer $n\geq0$, there exists $r\in\N$ such that 
the inclusion $Z_{p,q}\hookrightarrow Z_{p,q+1}$ induces 
isomorphisms $\pi_i(Z_{p,q})\to\pi_i(Z_{p,q+1})$ for all $i\leq n$, 
provided $q\geq r$ and $q\geq p+n+1$. 
In particular, for such $p,q$. $Z_{p,q}$ is $n$-connected. 
\end{lemma}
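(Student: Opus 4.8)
The plan is to run Brown's Morse-theoretic argument with the height function, following \cite[Theorem 2]{Br89MSRI}. First I would observe that $Z_{p,q+1}$ is obtained from $Z_{p,q}$ by adjoining the vertices $v$ with $\height(v)=q+1$ together with all elementary simplices containing them. Since heights strictly increase along any chain in $\B_\omega$, two vertices of height $q+1$ never lie in a common simplex, and every simplex of $Z_{p,q+1}$ containing such a $v$ has all its other vertices of height $\le q$. Hence the closed star of $v$ is the cone $v*L_v$, where $L_v$ is the descending link of $v$, namely the subcomplex spanned by the elementary contractions $u<v$ that still lie in $Z_{p,q}$, and $L_v\subset Z_{p,q}$. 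Thus $Z_{p,q+1}$ is gotten from $Z_{p,q}$ by coning off the subcomplexes $L_v$, which meet one another only inside $Z_{p,q}$.

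Next I would identify $L_v$ with a skeleton of $K_v$. The elementary contractions of $v$ are in order-reversing bijection with the simplices of $K_v$: a $d$-simplex $\{(\xi_0,f_0),\dots,(\xi_d,f_d)\}$ of $K_v$ corresponds to the contraction obtained by simultaneously contracting along all the $f_j$, and inclusion of simplices corresponds to the reverse order in $\B_\omega$. Passing to order complexes, this realizes the full descending link of $v$ in $Z$ as the barycentric subdivision of $K_v$. The contraction attached to a $d$-simplex of $K_v$ has height $q+1-(d+1)=q-d$, so it survives in $Z_{p,q}$ precisely when $q-d\ge p$, i.e.\ $d\le q-p$. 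Therefore $L_v$ is (a subdivision of) the $(q-p)$-skeleton of $K_v$, and the lower bound $p$ is exactly what truncates away the high-dimensional simplices of $K_v$.

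Now I would do the connectivity bookkeeping. Choose $r=r(n)$ as in Lemma \ref{Kuconnected}, so that $K_u$ is $n$-connected whenever $\height(u)>r$. For $q\ge r$ we have $\height(v)=q+1>r$, hence $K_v$ is $n$-connected; its $(q-p)$-skeleton is then $\min(n,q-p-1)$-connected, and the hypothesis $q\ge p+n+1$ gives $q-p-1\ge n$, so $L_v$ is $n$-connected. Coning off an $n$-connected subcomplex does not affect $\pi_i$ for $i\le n$: the pair $(v*L_v,L_v)$ is $(n+1)$-connected, whence $(Z_{p,q+1},Z_{p,q})$ is $(n+1)$-connected and the inclusion $Z_{p,q}\hookrightarrow Z_{p,q+1}$ induces isomorphisms on $\pi_i$ for $i\le n$. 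For the final assertion I would pass to the limit: the subcomplex $\bigcup_{q'\ge p}Z_{p,q'}$ spanned by the vertices of height $\ge p$ is contractible by the remark following Lemma \ref{Zcontract}. Since for all $q'\ge q\ge\max\{r,p+n+1\}$ every step is a $\pi_i$-isomorphism for $i\le n$, and homotopy groups commute with this increasing union, we get $\pi_i(Z_{p,q})\cong\pi_i\bigl(\bigcup_{q'}Z_{p,q'}\bigr)=0$ for $i\le n$, i.e.\ $Z_{p,q}$ is $n$-connected.

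The main obstacle is the precise identification of $L_v$ with the truncated complex $K_v^{(q-p)}$: getting the skeletal dimension $q-p$ right from the height accounting, and checking that restriction to $Z_{p,q}$ discards exactly the simplices of $K_v$ of dimension $>q-p$. This, combined with the standard but slightly delicate fact that the $m$-skeleton of an $n$-connected complex is only $\min(n,m-1)$-connected, is what pins down the constant $p+n+1$ in the hypothesis; the remaining homotopy-pushout and colimit steps are routine.
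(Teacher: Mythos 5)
Your proposal is correct and takes essentially the same route as the paper's proof: decompose the inclusion $Z_{p,q}\hookrightarrow Z_{p,q+1}$ as coning off the descending links of the height-$(q{+}1)$ vertices, identify each link with the barycentric subdivision of the $(q{-}p)$-skeleton of $K_u$ via the same height accounting, invoke Lemma \ref{Kuconnected}, and obtain $n$-connectedness in the limit from the contractibility of the subcomplex spanned by vertices of height $\geq p$. If anything, you are more explicit than the paper about the skeleton-connectivity bound $\min(n,q{-}p{-}1)$ and about exactly where the hypothesis $q\geq p+n+1$ is used, which the paper's phrase ``Hence $L$ is $n$-connected, too'' leaves implicit.
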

\begin{proof}
For given $n\geq0$, let $r\in\N$ be as in the lemma above. 
The inclusion $Z_{p,q}\hookrightarrow Z_{p,q+1}$ is obtained 
by adjoining, for each $u\in\B_\omega$ with $\height(u)=q+1$, 
a cone over the link $L$ of $u$ in $Z_{p,q}$. 
The vertices of this link $L$ are $v\in\B_\omega$ 
such that $u$ is an elementary expansion of $v$ 
and $p\leq\height(v)\leq q$, 
and the simplices of $L$ are linearly ordered tuples $v_0<v_1<\dots<v_m$. 

We can describe a vertex $v$ of $L$ 
by specifying which elements of $u$ are contracted to get $v$. 
More precisely, each vertex $v$ of $L$ corresponds to 
a set $P=\{(\xi_0,f_0),\dots,(\xi_m,f_m)\}$ consisting of 
pairs of $\xi_j\in\V$ and injections $f_j:i^{-1}(\xi_j)\to u$ 
such that the following conditions are satisfied. 
\begin{itemize}
\item $s(f_j(e))=D_{t(e)}$ for every $e\in i^{-1}(\xi_j)$. 
\item $\Ima f_j$ are pairwise disjoint. 
\item There exists $a\in\Z_+^\V$ such that 
\[
\rank(u)-(M^t-\id)(\hat\xi_0+\dots+\hat\xi_m)=\rank(\omega)+(M^t-\id)a. 
\]
\item $q-m\geq p$. 
\end{itemize}
The simplices of $L$ correspond to 
chains $P_0\subset\dots\subset P_l$ of such sets $P$. 
Now such $P$ are the same as simplices of the complex $K_u$ defined above 
that have dimension at most $q-p$. 
It follows that the complex $L$ is 
the barycentric subdivision of the $(q{-}p)$-skeleton of $K_u$. 
The lemma above implies that $K_u$ is $n$-connected, 
because $\height(u)=q+1>r$. 
Hence $L$ is $n$-connected, too. 
Attaching a cone over such a complex $L\subset Z_{p,q}$ does not affect 
$\pi_i$ for $i\leq n$. 
Consequently, 
$\pi_i(Z_{p,q})\to\pi_i(Z_{p,q+1})$ is an isomorphism as required. 

Consider the sequence of inclusions 
$Z_{p,q}\subset Z_{p,q+1}\subset Z_{p,q+2}\subset\dots$. 
The union is the subcomplex of $Z$ 
spanned by the vertices $u$ with $\height(u)\geq p$. 
This union is contractible 
by Lemma \ref{Zcontract} and the remark following it. 
Therefore $Z_{p,q}$ is $n$-connected. 
\end{proof}

\begin{lemma}\label{finitemodG}
For any integers $p,q$ with $0\leq p\leq q$, 
the simplicial complex $Z_{p,q}$ is finite mod $[[G|Y]]$. 
\end{lemma}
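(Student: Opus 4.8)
The plan is to prove directly that $Z_{p,q}$ has only finitely many $[[G|Y]]$-orbits of simplices, first bounding the orbits of vertices and then the purely combinatorial structure of the simplices sitting above a fixed vertex. The two ingredients I would lean on are Lemma \ref{stabilizer} (1), which tells us that two elements of $\B$ lie in the same $[[G|Y]]$-orbit exactly when they have equal rank, together with the facts already recorded above that the $[[G|Y]]$-action preserves $\height$ and carries elementary simplices to elementary simplices.

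First I would count orbits of vertices. Every vertex $u$ of $Z_{p,q}$ satisfies $p\leq\height(u)\leq q$, so we may write $\rank(u)=\rank(\omega)+(M^t-\id)a$ with $a\in\Z_+^\V$ and $\sum_{\xi\in\V}a(\xi)=\height(u)\leq q$. Since there are only finitely many $a\in\Z_+^\V$ with $\sum_{\xi}a(\xi)\leq q$, the function $\rank$ takes only finitely many values on the vertices of $Z_{p,q}$; by Lemma \ref{stabilizer} (1) this means $Z_{p,q}$ has only finitely many orbits of vertices. In particular I may fix a finite set $R$ of vertices such that every vertex of $Z_{p,q}$ is carried into $R$ by some element of $[[G|Y]]$.

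Next I would put an arbitrary simplex into a normal form. Given a simplex $\sigma=(u_0<u_1<\dots<u_m)$ of $Z_{p,q}$, its bottom vertex $u_0$ can be moved to a fixed representative $\tilde u\in R$ by some $\alpha\in[[G|Y]]$. Because $\alpha$ is order preserving, preserves $\height$, and preserves elementary simplices, the translate $\alpha\sigma=(\tilde u<\alpha u_1<\dots<\alpha u_m)$ is again a simplex of $Z_{p,q}$, now with bottom vertex $\tilde u$. Hence it suffices to bound the number of simplices of $Z_{p,q}$ whose bottom vertex is one of the finitely many $\tilde u\in R$.

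The heart of the argument is then to check that, for fixed $\tilde u$, there are only finitely many such simplices. Since $\sigma$ is elementary, $u_m$ is an elementary expansion of $\tilde u$, and an elementary expansion is obtained by expanding some subset $u'$ of the \emph{finite} set $\tilde u$ exactly once; thus there are at most $2^{\#\tilde u}$ possibilities for $u_m$. For each such $u_m$ the closed interval $[\tilde u,u_m]$ in $\B_\omega$ consists precisely of the elements obtained by expanding subsets of $u'$, so it is isomorphic to the Boolean lattice on $u'$ and in particular is finite; consequently only finitely many chains $\tilde u=u_0<u_1<\dots<u_m$ arise. Summing over the finitely many $\tilde u\in R$ and their finitely many elementary expansions yields a finite set of orbit representatives for the simplices of $Z_{p,q}$, which is what we want. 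The one point I expect to require care is the verification that $[\tilde u,u_m]$ is finite; I would obtain this from the description of elementary expansions as a single layer of simple expansions performed on the finite set $\tilde u$, so that every intermediate vertex corresponds to a subset of the fixed finite collection $u'$.
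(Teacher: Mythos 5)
Your proof is correct, but it follows a genuinely different route from the paper's. The paper classifies orbits of simplices directly: to an elementary simplex $u_0<u_1<\dots<u_m$ it attaches the descending chain $u_0\varsupsetneq\tilde u_1\varsupsetneq\dots\varsupsetneq\tilde u_m$ of subsets of $u_0$ consisting of the elements \emph{not} expanded, proves by an explicit bijection construction (a refinement of the proof of Lemma \ref{stabilizer} (1)) that the tuple $(\rank(u_0),\rank(\tilde u_1),\dots,\rank(\tilde u_m))$ is a complete invariant for the $[[G|Y]]$-orbit of the simplex, and then notes that only finitely many such tuples occur since $\height(u_0)\leq q-m$. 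You instead invoke Lemma \ref{stabilizer} (1) only at the level of vertices (finitely many vertex orbits, since $\height\leq q$ bounds the possible ranks), translate an arbitrary simplex so that its bottom vertex lies in a fixed finite set of representatives, and then show that a fixed bottom vertex $\tilde u$ supports only finitely many simplices of $Z$, via the Boolean-lattice description of the interval $[\tilde u,u_m]$. That last step, which you rightly flag as the delicate point, does hold: in the canonical-form setting of this section every vertex of the graph has at least two outgoing edges, which is exactly what guarantees that distinct subsets of $\tilde u$ yield distinct elementary expansions and that every element of $[\tilde u,u_m]$ is the elementary expansion at a subset of $u'$. (If you want to bypass that verification entirely, observe instead that every element of $[\tilde u,u_m]$ is reached from $\tilde u$ by at most $\#u'$ simple expansions and each simple expansion offers only finitely many choices, so the interval is finite without analyzing its poset structure.) The trade-off between the two arguments: yours is more elementary, avoiding the construction of a bijection $f:u_0\to v_0$ compatible with the filtration and with sources; the paper's yields strictly more information, namely an explicit complete invariant and hence explicit orbit representatives for simplices, which is what is exploited in Section 6.6 when the representative sets $C_0$, $C_1\cup C_1'$, $C_2\cup C_2'$ are written down.
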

\begin{proof}
Let $u_0<u_1<\dots<u_m$ be an elementary simplex. 
For every $j=1,2,\dots,m$, 
let $\tilde u_j\subset u_0$ be the subset such that 
$u_j$ is obtained by taking simple expansions of $u_0$ 
at each of $U\in u_0\setminus\tilde u_j$. 
Then we have 
\[
u_0\varsupsetneq\tilde u_1\varsupsetneq\tilde u_2\varsupsetneq
\dots\varsupsetneq\tilde u_m. 
\]
Let $v_0<v_1<\dots<v_m$ be another elementary simplex. 
Define $\tilde v_1,\tilde v_2,\dots,\tilde v_m$ in the same way. 
We claim that if $\rank(u_0)=\rank(v_0)$ and 
$\rank(\tilde u_j)=\rank(\tilde v_j)$ for every $j$, 
then there exists $\alpha\in[[G|Y]]$ such that 
$\alpha u_j=v_j$ for all $j=0,1,\dots,m$. 
From $\rank(\tilde u_m)=\rank(\tilde v_m)$, 
we can find a bijection $f:\tilde u_m\to\tilde v_m$ 
satisfying $s(U)=s(f(U))$ for all $U\in\tilde u_m$. 
Using $\rank(\tilde u_m\setminus\tilde u_{m-1})
=\rank(\tilde v_m\setminus\tilde v_{m-1})$, 
we can extend $f$ to a bijection from $\tilde u_{m-1}$ to $\tilde v_{m-1}$ 
satisfying $s(U)=s(f(U))$ for all $U\in\tilde u_{m-1}$. 
Repeating this argument, 
we finally obtain a bijection $f:u_0\to v_0$ such that 
$f(\tilde u_j)=\tilde v_j$ for any $j$ and $s(U)=s(f(U))$ for any $U\in u_0$. 
Define a compact open $G|Y$-set $W$ by 
\[
W=\bigcup_{U\in u_0}f(U)U^{-1}. 
\]
Then $\alpha=\pi_W\in[[G|Y]]$ satisfies 
$\alpha u_j=v_j$ for all $j=0,1,\dots,m$. 

If $u_0<u_1<\dots<u_m$ is a simplex of $Z_{p,q}$, 
then $\height(u_0)$ is at most $q-m$. 
Evidently $\{\rank(u)\in\Z^\V\mid\height(u)\leq q{-}m\}$ is finite. 
Hence the number of $(m{+}1)$-tuples 
$(\rank(u_0),\rank(\tilde u_1),\dots,\rank(\tilde u_m))$ 
for $m$-simplices of $Z_{p,q}$ is finite. 
It follows from the claim above that 
$m$-simplices of $Z_{p,q}$ is finite mod $[[G|Y]]$. 
\end{proof}

We are now in a position to apply Brown's theorem. 

\begin{theorem}\label{finite}
The group $[[G|Y]]$ is of type F$_\infty$. 
In other words, 
the group $[[G|Y]]$ is finitely presented and is of type FP$_\infty$. 
\end{theorem}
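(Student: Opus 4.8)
**

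The plan is to apply Brown's criterion (Theorem~\ref{F_infty}) to the group $\Gamma=[[G|Y]]$ acting on the contractible complex $Z$ constructed above. All the necessary ingredients have in fact already been assembled in the preceding lemmas, so the proof is essentially a matter of verifying that the hypotheses of Theorem~\ref{F_infty} hold and then invoking it. First I would recall that $Z$ is contractible by Lemma~\ref{Zcontract}, and that $\Gamma$ acts simplicially on $Z$ by the action constructed before Lemma~\ref{stabilizer}. This gives the required contractible $\Gamma$-complex.

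Next I would verify the cell-stabilizer condition. By Lemma~\ref{stabilizer}~(2), the stabilizer of any vertex $u$ is isomorphic to a finite direct sum $\bigoplus_{\xi\in\V}\Sigma_{\rank(u)(\xi)}$ of symmetric groups, hence is finite. The stabilizer of a higher-dimensional simplex $u_0<u_1<\dots<u_m$ is contained in the stabilizer of its lowest vertex $u_0$, so it too is finite. Since every finite group is of type F$_\infty$, the stabilizer of every cell of $Z$ is of type F$_\infty$, as required by the hypothesis of Theorem~\ref{F_infty}.

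For the filtration, I would take $Z_q=Z_{0,q}$, the subcomplex spanned by vertices $u$ with $\height(u)\leq q$ (using $p=0$ in the notation of Lemma~\ref{Zpqconnected}). These form an increasing filtration of $Z$ with $Z=\bigcup_q Z_q$, and each $Z_q$ is finite mod $\Gamma$ by Lemma~\ref{finitemodG}. It remains to check that the connectivity of the pairs $(Z_{q+1},Z_q)$ tends to infinity. This is exactly what Lemma~\ref{Zpqconnected} delivers: for any $n$ there is an $r\in\N$ so that, once $q$ is large enough, the inclusion $Z_{0,q}\hookrightarrow Z_{0,q+1}$ is an isomorphism on $\pi_i$ for all $i\leq n$, which forces the relative connectivity of $(Z_{q+1},Z_q)$ to exceed $n$ for all large $q$. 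Hence the connectivity of $(Z_{q+1},Z_q)$ grows without bound.

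Having verified all three hypotheses—contractibility with finite-type cell stabilizers, a filtration by subcomplexes finite mod $\Gamma$, and unboundedly increasing connectivity of successive pairs—Theorem~\ref{F_infty} applies and yields that $\Gamma=[[G|Y]]$ is of type F$_\infty$. The final clause, that $[[G|Y]]$ is then finitely presented and of type FP$_\infty$, follows immediately from the general facts about finiteness properties recalled in Section~1. I do not anticipate any genuine obstacle in this final assembly, since the heavy lifting was done in Lemmas~\ref{Bcontract} through~\ref{finitemodG}; the only point demanding a little care is matching the connectivity statement of Lemma~\ref{Zpqconnected} (phrased in terms of $\pi_i$-isomorphisms under the constraint $q\geq p+n+1$) to the relative-connectivity formulation required by Brown's theorem, but this is a routine translation using the long exact sequence of the pair together with contractibility of $Z$.
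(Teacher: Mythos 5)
Your proof is correct and is essentially identical to the paper's: the paper deduces Theorem~\ref{finite} directly from Brown's criterion (Theorem~\ref{F_infty}) together with Lemma~\ref{stabilizer}~(2), Lemma~\ref{Zpqconnected} and Lemma~\ref{finitemodG}, using exactly the filtration by the subcomplexes $Z_{0,q}$. Your added remarks---that simplex stabilizers sit inside vertex stabilizers and that the $\pi_i$-isomorphism statement translates into relative connectivity of the pairs $(Z_{q+1},Z_q)$---are just the routine details the paper leaves implicit.
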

\begin{proof}
This immediately follows from 
Lemma \ref{stabilizer} (2), Lemma \ref{Zpqconnected}, Lemma \ref{finitemodG} 
and Theorem \ref{F_infty}. 
\end{proof}

\subsection{A presentation for $[[G|Y]]$}

As in Section 6.1, 
we let $G$ be the \'etale groupoid 
arising from a one-sided irreducible shift of finite type $(X,\sigma)$. 
We use the notation of Section 6.1. 
Fix a non-empty clopen subset $Y\subset X$. 
In this subsection, 
we would like to describe generators and relations for $[[G|Y]]$. 
This enables us to calculate the abelianization of $[[G|Y]]$ 
(Corollary \ref{abel}). 
Consequently, 
the commutator subgroup $D([[G|Y]])$ is shown to be type F$_\infty$ 
when $H_0(G)$ is a finite abelian group (Corollary \ref{Dfinite}). 

As in the last subsection, we assume that 
the adjacency matrix $M$ is in canonical form, 
and put $\V_0=\{\eta\in\V\mid M^t(\eta,\eta)=2\}$. 
Fix an element $\zeta\in\V_0$. 
Set $\V_1=(\V\setminus\V_0)\cup\{\zeta\}$. 
Notice that the rank of the free abelian group $H_1(G)$ is equal to 
$\#(\V\setminus\V_1)=\#(\V_0\setminus\{\zeta\})$. 
We equip $\V\setminus\V_1$ with a linear order. 
We regard $\Z^{\V_1}$ as a subgroup of $\Z^\V$. 
By \eqref{rabbit}, $\id-M^t$ is injective on $\Z^{\V_1}$, 
i.e. for $a\in\Z^{\V_1}$, $(\id-M^t)a=0$ implies $a=0$. 
For $a\in\Z_+^\V$, we define $\lVert a\rVert=\sum_{\xi\in\V}a(\xi)$. 

In the previous subsection, 
we constructed contractible $[[G|Y]]$-invariant complexes 
$Z\subset\lvert\B_\omega\rvert$. 
For $p<q$, $Z_{p,q}$ denotes the subcomplex of $Z$ 
spanned by vertices $u$ with $p\leq\height(u)\leq q$. 
In Lemma \ref{Zpqconnected}, we proved that $Z_{p,q}$ is $n$-connected 
if $p$ is large enough and $q-p>n$. 
In particular, there exists $p\in\N$ such that 
the $2$-dimensional simplicial complex $Z_{p,p+2}$ is $1$-connected. 
We would like to examine the $[[G|Y]]$-action on $Z_{p,p+2}$ and 
write down generators and relations of $[[G|Y]]$, 
by using \cite[Theorem 1]{Br84JPAA}. 

As pointed out in the remark following Lemma \ref{Kuconnected}, 
the natural number $p$ depends only on the matrix $M$ and 
does not depend on the choice of $\omega\in\Z_+^\V$. 
Hence, by replacing $\omega$ with its certain expansion if necessary, 
we may assume $\rank(\omega)(\xi)\geq p+2$ for any $\xi\in\V$. 
Let 
\[
f:\V\times\{1,2,\dots,p{+}2\}\to\omega
\]
be an injection satisfying $s(f(\xi,j))=D_\xi$ for every $(\xi,j)$. 
For each $a\in\Z_+^\V$ with $\lVert a\rVert\leq p+2$, we let 
\[
u=\{f(\xi,j)\in\omega\mid\xi\in\V,\ 1\leq j\leq a(\xi)\}
\]
and define a vertex $\omega(a)$ of $Z$ by 
\[
\omega(a)=(\omega\setminus u)
\cup\{UU_e\mid U\in u,\ s(U)=D_\xi,\ e\in i^{-1}(\xi)\}. 
\]
Thus, $\omega(a)$ is the elementary expansion of $\omega$ 
with respect to the subset $u\subset\omega$. 
Clearly $\rank(\omega(a))=\rank(\omega)+(M^t-\id)a$, 
and $\height(\omega(a))=\lVert a\rVert$. 
Note also that $\omega(a')$ is an elementary expansion of $\omega(a)$ 
if $a(\xi)\leq a'(\xi)$ for every $\xi\in\V$. 

For every $\eta\in\V\setminus\V_1$, 
we associate an element $\theta_\eta\in[[G|Y]]$ as follows. 
First, for each $\eta\in\V_0$, 
we choose an edge $e_\eta\in\E$ such that $i(e_\eta)=t(e_\eta)=\eta$. 
Let $\eta\in\V\setminus\V_1$. 
Since the matrix $M$ is in canonical form, 
there exists a unique bijection 
\[
h:i^{-1}(\zeta)\setminus\{e_\zeta\}\to i^{-1}(\eta)\setminus\{e_\eta\}
\]
satisfying $t(h(e))=t(e)$. 
Let $U=f(\zeta,1)$ and $V=f(\eta,1)$. 
Define a compact open $G|Y$-set $W$ by 
\[
W=U(UU_{e_\zeta})^{-1}\cup(VU_{e_\eta})V^{-1}\cup\bigcup_eVU_{h(e),e}U^{-1}. 
\]
One has $r(W)=s(W)=r(U)\cup r(V)$. 
Let $\tilde W=W\cup(Y\setminus r(W))$ and 
define $\theta_\eta\in[[G|Y]]$ by $\theta_\eta=\pi_{\tilde W}$. 
Then we have $\theta_\eta\omega(\hat\zeta)=\omega(\hat\eta)$. 
For every $\eta\in\V_0$ and $1\leq k\leq p{+}2$, 
we define a compact open $G|Y$-set $U$ by 
\[
U=f(\eta,1)f(\eta,k)^{-1}\cup f(\eta,k)f(\eta,1)^{-1}
\]
and put $\beta_{\eta,k}=\pi_{\tilde U}\in[[G|Y]]$, 
where $\tilde U=U\cup(Y\setminus r(U))$. 
It is easy to see that $\beta_{\eta,k}\omega=\omega$ and $\beta_{\eta,k}^2=1$. 
Moreover, for $\eta\in\V\setminus\V_1$ we have 
\[
(\beta_{\zeta,k}\beta_{\eta,l}\theta_\eta\beta_{\eta,l}\beta_{\zeta,k})
\omega(a+\hat\zeta)=\omega(a+\hat\eta)
\]
if $a(\zeta)=k-1$ and $a(\eta)=l-1$. 

For each $i=0,1,2$, we would like to find 
a set of representatives for the $i$-simplices of $Z_{p,p+2}$ mod $[[G|Y]]$. 
For $p\leq k\leq p{+}2$, set 
\[
A_k=\left\{a\in\Z_+^{\V_1}\mid\lVert a\rVert=k\right\}. 
\]
Then, by Lemma \ref{stabilizer} (1), 
\[
C_0=\{\omega(a)\in Z_{p,p+2}\mid a\in A_p\cup A_{p+1}\cup A_{p+2}\}
\]
is a set of representatives for the vertices of $Z_{p,p+2}$ mod $[[G|Y]]$, 
that is, for any vertex $u$ of $Z_{p,p+2}$ 
there exists a unique vertex $v\in C_0$ 
such that $\alpha u=v$ for some $\alpha\in[[G|Y]]$. 

We let $C_1$ be the set of all $1$-simplices of 
the simplicial complex $\lvert C_0\rvert$. 
In other words, 
\begin{align*}
C_1=&\{\omega(a)<\omega(a+\hat\xi)\mid a\in A_p\cup A_{p+1},\ \xi\in\V_1\}\\
&\cup\{\omega(a)<\omega(a+\hat\xi_1+\hat\xi_2)
\mid a\in A_p,\ \xi_1,\xi_2\in\V_1\}. 
\end{align*}
In order to obtain a set of representatives 
for the $1$-simplices mod $[[G|Y]]$, we consider the following $1$-simplices. 
\begin{enumerate}
\item[(i)] $\omega(a)<\omega(a+\hat\eta)$ 
for $a\in A_p$ and $\eta\in\V\setminus\V_1$. 
\item[(ii)] $\omega(a)<\omega(a+\hat\eta)$ 
for $a\in A_{p+1}$ and $\eta\in\V\setminus\V_1$. 
\item[(iii)] $\omega(a)<\omega(a+\hat\xi+\hat\eta)$ 
for $a\in A_p$, $\xi\in\V_1$ and $\eta\in\V\setminus\V_1$. 
\item[(iv)] $\omega(a)<\omega(a+\hat\eta_1+\hat\eta_2)$ 
for $a\in A_p$ and $\eta_1,\eta_2\in\V\setminus\V_1$. 
\end{enumerate}
Let $C_1'$ be the set of these $1$-simplices. 
Then $C_1\cup C_1'$ is a set of representatives 
for the $1$-simplices of $Z_{p,p+2}$ mod $[[G|Y]]$, 

Likewise, we let $C_2$ be the set of all $2$-simplices of $\lvert C_0\rvert$, 
i.e. 
\[
C_2=\{\omega(a)<\omega(a+\hat\xi_1)<\omega(a+\hat\xi_1+\hat\xi_2)
\mid a\in A_p,\ \xi_1,\xi_2\in\V_1\}. 
\]
In order to obtain a set of representatives 
for the $2$-simplices mod $[[G|Y]]$, we consider the following $2$-simplices. 
\begin{enumerate}
\item[(v)] $\omega(a)<\omega(a+\hat\xi)<\omega(a+\hat\xi+\hat\eta)$ 
for $a\in A_p$, $\xi\in\V_1$ and $\eta\in\V\setminus\V_1$. 
\item[(vi)] $\omega(a)<\omega(a+\hat\eta)<\omega(a+\hat\xi+\hat\eta)$ 
for $a\in A_p$, $\xi\in\V_1$ and $\eta\in\V\setminus\V_1$. 
\item[(vii)] $\omega(a)<\omega(a+\hat\eta_1)
<\omega(a+\hat\eta_1+\hat\eta_2)$ 
for $a\in A_p$ and $\eta_1,\eta_2\in\V\setminus\V_1$. 
\end{enumerate}
Let $C_2'$ be the set of these $2$-simplices. 
Then $C_2\cup C_2'$ is a set of representatives 
for the $2$-simplices of $Z_{p,p+2}$ mod $[[G|Y]]$, 

For each vertex $u$ of $Z_{p,p+2}$, 
let $\Sigma(u)$ denote the stabilizer of $u$. 
By Lemma \ref{stabilizer} (2), 
$\Sigma(u)$ is naturally identified with 
a subgroup of the symmetric group on $u$ and 
is isomorphic to $\bigoplus_{\xi\in\V}\Sigma_{\rank(u)(\xi)}$. 
Similarly, for each $1$-simplex $e$ of $Z_{p,p+2}$, 
we denote the stabilizer of $e$ by $\Sigma(e)$. 
We define an abstract group $\Gamma$ to be the free product of 
the groups $\Sigma(u)$ for $u\in C_0$ and 
a free group generated by elements $g_e$ for $e\in C_1\cup C_1'$. 
It is clear that the group $\Gamma$ is finitely presented. 

Next, 
we would like to introduce a surjective homomorphism $\pi:\Gamma\to[[G|Y]]$. 
To this end, for each $e=\{u{<}v\}\in C_1\cup C_1'$, 
we choose $\gamma_e\in[[G|Y]]$ such that $\gamma_e^{-1}(v)\in C_0$ as follows. 
First, for $e\in C_1$, we put $\gamma_e=1$. 
Let us consider $\gamma_e$ for $e\in C_1'$. 
For $e=\{\omega(a){<}\omega(a{+}\hat\eta)\}$ as in (i) and (ii), 
we put 
\[
\gamma_e=\beta_{\zeta,k}\theta_\eta\beta_{\zeta,k}, 
\]
where $k=a(\zeta)+1$. 
For $e=\{\omega(a){<}\omega(a{+}\hat\xi{+}\hat\eta)\}$ as in (iii), 
we put 
\[
\gamma_e=\begin{cases}\beta_{\zeta,k}\theta_\eta\beta_{\zeta,k}&\xi\neq\zeta\\
\beta_{\zeta,k+1}\theta_\eta\beta_{\zeta,k+1}&\xi=\zeta, \end{cases}
\]
where $k=a(\zeta)+1$. 
For $e=\{\omega(a){<}\omega(a{+}\hat\eta_1{+}\hat\eta_2)\}$ as in (iv), 
we put 
\[
\gamma_e=\begin{cases}
(\beta_{\zeta,k}\theta_{\eta_1}\beta_{\zeta,k})
(\beta_{\zeta,k+1}\theta_{\eta_2}\beta_{\zeta,k+1})&\eta_1<\eta_2\\
(\beta_{\zeta,k}\theta_{\eta_1}\beta_{\zeta,k})
(\beta_{\zeta,k+1}\beta_{\eta_2,2}\theta_{\eta_2}
\beta_{\eta_2.2}\beta_{\zeta,k+1})&\eta_1=\eta_2, \end{cases}
\]
where $k=a(\zeta)+1$. 
Now we set $\pi(g_e)=\gamma_e$ for $e\in C_1\cup C_1'$. 
This, together with the canonical inclusions $\Sigma(u)\to[[G|Y]]$, 
gives rise to a homomorphism $\pi:\Gamma\to[[G|Y]]$. 

The following theorem says that 
the kernel of $\pi:\Gamma\to[[G|Y]]$ is finitely generated 
as a normal subgroup of $\Gamma$. 
In order to describe generators of the kernel, 
we need to introduce a little more notations. 
The $1$-skeleton of $\lvert C_0\rvert$ is $C_0\cup C_1$, 
which can be regarded as a graph. 
Consider a maximal tree of this graph. 
Every vertex of $C_0$ is contained in the tree. 
Let us identify this tree with the corresponding subset $T\subset C_1$. 
For $e=\{u{<}v\}\in C_1\cup C_1'$, 
we let $i_e:\Sigma(e)\to\Sigma(u)$ be the canonical inclusion and 
let $c_e:\Sigma(e)\to\Sigma(\tilde v)$ be the homomorphism 
given by $\alpha\mapsto\gamma_e^{-1}\alpha\gamma_e$, 
where $\tilde v=\gamma_e^{-1}v\in C_0$. 
For each $2$-simplex $\tau\in C_2\cup C_2'$, 
as discussed in \cite[Section 1]{Br84JPAA}, 
we can associate a `relator' $r_\tau\in\Gamma$ 
by thinking of the boundary of $\tau$ as a closed path. 

\begin{theorem}[{\cite[Theorem 1]{Br84JPAA}}]
The homomorphism $\pi:\Gamma\to[[G|Y]]$ is surjective and 
its kernel is generated by the following elements as a normal subgroup. 
\begin{enumerate}
\item $g_e$ for $e\in T$. 
\item $g_e^{-1}i_e(\alpha)g_ec_e(\alpha^{-1})$ 
for $e\in C_1\cup C_1'$ and $\alpha\in\Sigma(e)$. 
\item $r_\tau$ for $\tau\in C_2\cup C_2'$. 
\end{enumerate}
\end{theorem}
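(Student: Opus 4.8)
The plan is to obtain this presentation as a direct application of Brown's general presentation theorem for a group acting on a simply connected complex, namely \cite[Theorem 1]{Br84JPAA} itself; the work lies entirely in checking that the data assembled above satisfy the hypotheses of that theorem and in matching its output to the three listed families of relators. Recall that Brown's theorem takes as input a simply connected $\Gamma$-complex, a set of orbit representatives for the cells of dimension at most two, the stabilizers of those cells, and a choice of element carrying the terminal vertex of each representative edge back to a representative vertex; it then produces a presentation whose generators are the vertex stabilizers together with one symbol $g_e$ per representative edge, and whose relators are exactly of the three types (tree edges, edge--stabilizer compatibility, $2$-cell boundaries) appearing in the statement.

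First I would fix $p$ large enough that $Z_{p,p+2}$ is $1$-connected; this is guaranteed by Lemma \ref{Zpqconnected} applied with $n=1$ and $q=p+2$, and as noted there $p$ depends only on $M$, so we are free to enlarge $\omega$ afterwards so that $\rank(\omega)(\xi)\geq p+2$ for all $\xi\in\V$. Next I would record that $[[G|Y]]$ acts on $Z_{p,p+2}$ by simplicial automorphisms that preserve the partial order on $\B$, hence preserve $\height$; in particular no group element can reverse a simplex $u_0<u_1<\dots<u_m$, so the action is without inversions and Brown's theorem applies directly to $Z_{p,p+2}$ without passing to a subdivision. The relevant stabilizers are the finite groups $\Sigma(u)\cong\bigoplus_{\xi\in\V}\Sigma_{\rank(u)(\xi)}$ of Lemma \ref{stabilizer} (2), so in particular $\Gamma$ is finitely presented.

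The combinatorial core is the identification of $C_0$, $C_1\cup C_1'$ and $C_2\cup C_2'$ as complete and irredundant sets of orbit representatives for the vertices, edges and $2$-simplices. Here I would invoke Lemma \ref{stabilizer} (1): since every simplex of $Z$ is elementary, two simplices lie in the same $[[G|Y]]$-orbit precisely when their vertices have equal $\rank$, by the rank-matching argument already used in the proof of Lemma \ref{finitemodG}. Every vertex of $Z_{p,p+2}$ is therefore equivalent to a unique $\omega(a)$ with $a$ supported on $\V_1$ and $p\leq\lVert a\rVert\leq p+2$, giving $C_0$; the simplices in $\lvert C_0\rvert$ account for those orbits whose expansions occur only in directions $\xi\in\V_1$, while the auxiliary families $C_1'$ and $C_2'$ supply representatives for the remaining orbits, in which at least one expansion direction $\eta\in\V\setminus\V_1$ occurs. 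To feed these into Brown's theorem I would then check that each chosen section $\gamma_e$ really carries the top vertex $v$ of $e=\{u<v\}$ into $C_0$; this reduces to the identity $\theta_\eta\,\omega(\hat\zeta)=\omega(\hat\eta)$ and its $\beta$-conjugated variants recorded above, which realize an expansion in a $\V\setminus\V_1$ direction as a translate of a standard expansion. With $\pi$ thus well defined, surjectivity and the description of $\ker\pi$ are exactly the conclusion of \cite[Theorem 1]{Br84JPAA}, the relators of type (1)--(3) being its tree, edge-compatibility and $2$-cell relators respectively.

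The main obstacle I anticipate is precisely this bookkeeping of orbit representatives and sections: verifying that $C_1'$ and $C_2'$, together with the $\lvert C_0\rvert$-simplices, meet every edge- and $2$-simplex-orbit exactly once, and that each $\gamma_e$ lands in $C_0$, requires careful use of \eqref{rabbit} to control which $a\in\Z_+^\V$ have the same $\height$ and $\rank$. None of this is conceptually difficult, but it is the step where an error would break the matching with Brown's relators, so it is where I would concentrate the detailed calculation.
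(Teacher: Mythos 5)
Your overall route is the same as the paper's: the statement is exactly \cite[Theorem 1]{Br84JPAA}, and proving it amounts to checking that the action of $[[G|Y]]$ on the $1$-connected complex $Z_{p,p+2}$, together with the representatives $C_0$, $C_1\cup C_1'$, $C_2\cup C_2'$, the stabilizers, and the elements $\gamma_e$, satisfies the hypotheses of Brown's theorem. Your choice of $p$ via Lemma \ref{Zpqconnected}, the remark that the action preserves the order on $\B$ (hence acts without inversions), and the identification of stabilizers via Lemma \ref{stabilizer} (2) are all correct.

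The genuine gap is in the orbit bookkeeping, precisely the step you yourself flag as critical. You assert that two simplices of $Z$ lie in the same $[[G|Y]]$-orbit exactly when their vertices have equal $\rank$, citing Lemma \ref{stabilizer} (1) and the proof of Lemma \ref{finitemodG}. This is false, and it is not what that proof uses: there one needs $\rank(u_0)=\rank(v_0)$ \emph{and} $\rank(\tilde u_j)=\rank(\tilde v_j)$ for all $j$, where $\tilde u_j=u_0\cap u_j$ is the set of elements of $u_0$ left unexpanded in $u_j$; that is, the orbit invariant of a simplex is the rank of its bottom vertex together with the expansion directions, not the ranks of its vertices. The two invariants differ precisely when $H_1(G)\neq 0$: by \eqref{rabbit}, for distinct $\zeta,\eta\in\V_0$ one has $\hat\zeta-\hat\eta\in\Ker(\id-M^t)$, so the edges $\{\omega(a)<\omega(a+\hat\zeta)\}$ and $\{\omega(a)<\omega(a+\hat\eta)\}$ have pairwise equal vertex ranks; yet they lie in different orbits, because every $\alpha=\pi_W\in[[G|Y]]$ acts by $u\mapsto\{WU\mid U\in u\}$ with $s(WU)=s(U)$, hence carries a simple expansion at an element with source $D_\zeta$ to a simple expansion at an element with source $D_\zeta$, never $D_\eta$. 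Under your criterion the edges of types (i) and (ii) in $C_1'$ (and the corresponding parts of $C_2'$) would be merged into orbits already represented inside $\lvert C_0\rvert$, so Brown's theorem would be applied with too few edge representatives: the generators $g_\eta$, $\eta\in\V\setminus\V_1$, would never be introduced, and the resulting presentation would define a group whose abelianization has no free summand, contradicting Theorem \ref{surjective} and Corollary \ref{abel} (1). Note also that your criterion is inconsistent with your own (correct) later sentence that $C_1'$ and $C_2'$ represent the orbits in which some expansion direction lies in $\V\setminus\V_1$; it is that description, made precise via the invariant $(\rank(u_0),\rank(\tilde u_1),\dots,\rank(\tilde u_m))$, which must be used to match the orbit representatives, the sections $\gamma_e$, and hence Brown's relators with the three families in the statement.
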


The presentation given in the theorem above contains much redundancy. 
We would like to obtain a `smaller' presentation of $[[G|Y]]$. 
First, let us look at $e\in C_1\setminus T$. 
Since $T$ is a maximal tree of the $1$-skeleton of $\lvert C_0\rvert$, 
there exists a loop in $T\cup\{e\}$ containing $e$. 
Then, from $\pi(r_\tau)=1$ for all $\tau\in C_2$, we obtain 
\begin{enumerate}
\item[(4)] $\pi(g_e)=1$ for all $e\in C_1\setminus T$. 
\end{enumerate}
Let us now look at the elements $r_\tau\in\Gamma$ 
associated with $\tau\in C_2'$. 
From the $2$-simplices 
$\tau=\{\omega(a){<}\omega(a{+}\hat\xi){<}\omega(a{+}\hat\xi{+}\hat\eta)\}$ 
as in (v), we get 
\begin{enumerate}
\item[(5)] $\pi(g_e)=\pi(g_{e'})$ 
for all $e=\{\omega(a{+}\hat\xi){<}\omega(a{+}\hat\xi{+}\hat\eta)\}$ and 
$e'=\{\omega(a){<}\omega(a{+}\hat\xi{+}\hat\eta)\}$ 
with $a\in A_p$, $\xi\in\V_1$ and $\eta\in\V\setminus\V_1$. 
\end{enumerate}
Next, let $\tau
=\{\omega(a){<}\omega(a{+}\hat\eta){<}\omega(a{+}\hat\xi{+}\hat\eta)\}$ 
be a $2$-simplex as in (vi). 
In case $\xi\neq\zeta$, we obtain 
\begin{enumerate}
\item[(6)] $\pi(g_e)=\pi(g_{e'})$ 
for all $e=\{\omega(a){<}\omega(a{+}\hat\eta)\}$ and 
$e'=\{\omega(a){<}\omega(a{+}\hat\xi{+}\hat\eta)\}$ 
with $a\in A_p$, $\xi\in\V_1\setminus\{\zeta\}$ and $\eta\in\V\setminus\V_1$. 
\end{enumerate}
In case $\xi=\zeta$, we have 
\begin{enumerate}
\item[(7)] $\pi(\beta g_e\beta)=\pi(g_{e'})$ 
for all $e=\{\omega(a){<}\omega(a{+}\hat\eta)\}$ and 
$e'=\{\omega(a){<}\omega(a{+}\hat\zeta{+}\hat\eta)\}$ 
with $a\in A_p$ and $\eta\in\V\setminus\V_1$, 
where $\beta\in\Sigma(\omega(a))$ denotes the simple transposition 
on $\{f(\zeta,a(\zeta){+}1),\ f(\zeta,a(\zeta){+}2)\}$. 
\end{enumerate}
Finally, let us look at $2$-simplices 
$\tau
=\{\omega(a){<}\omega(a{+}\hat\eta_1){<}\omega(a{+}\hat\eta_1{+}\hat\eta_2)\}$ 
as in (vii). 
Recall that we have fixed a linear order on $\V\setminus\V_1$. 
In case $\eta_1<\eta_2$, one obtains 
\begin{enumerate}
\item[(8)] $\pi(g_eg_{e'})=\pi(g_{e''})$ 
for all $e=\{\omega(a){<}\omega(a{+}\hat\eta_1)\}$, 
$e'=\{\omega(a{+}\hat\zeta){<}\omega(a{+}\hat\zeta{+}\hat\eta_2)\}$ and 
$e''=\{\omega(a){<}\omega(a{+}\hat\eta_1{+}\hat\eta_2)\}$ 
with $a\in A_p$, $\eta_1,\eta_2\in\V\setminus\V_1$ and $\eta_1<\eta_2$. 
\end{enumerate}
In case $\eta_1>\eta_2$, one obtains 
\begin{enumerate}
\item[(9)] $\pi(\beta g_eg_{e'}\beta)=\pi(g_{e''})$ 
for all $e=\{\omega(a){<}\omega(a{+}\hat\eta_1)\}$, 
$e'=\{\omega(a{+}\hat\zeta){<}\omega(a{+}\hat\zeta{+}\hat\eta_2)\}$ and 
$e''=\{\omega(a){<}\omega(a{+}\hat\eta_1{+}\hat\eta_2)\}$ 
with $a\in A_p$, $\eta_1,\eta_2\in\V\setminus\V_1$ and $\eta_1>\eta_2$, 
where $\beta\in\Sigma(\omega(a))$ is the simple transposition 
on $\{f(\zeta,a(\zeta){+}1),\ f(\zeta,a(\zeta){+}2)\}$. 
\end{enumerate}
In case $\eta_1=\eta_2=\eta$, one has 
\begin{enumerate}
\item[(10)] $\pi(g_e\sigma g_{e'}\sigma)=\pi(g_{e''})$ 
for all $e=\{\omega(a){<}\omega(a{+}\hat\eta)\}$, 
$e'=\{\omega(a{+}\hat\zeta){<}\omega(a{+}\hat\zeta{+}\hat\eta)\}$ and 
$e''=\{\omega(a){<}\omega(a{+}2\hat\eta)\}$ 
with $a\in A_p$, $\eta\in\V\setminus\V_1$, 
where $\sigma\in\Sigma(\omega(a))$ is the simple transposition 
on $\{f(\eta,1),\ f(\eta,2)\}$. 
\end{enumerate}

For each $\eta\in\V\setminus\V_1$, consider the $1$-simplex 
\[
e=\{\omega((p{+}1)\hat\zeta){<}\omega((p{+}1)\hat\zeta+\hat\eta)\}\in C_1'
\]
and set $g_\eta=g_e$. 
Let $\Gamma_0$ be the free product of 
the groups $\Sigma(u)$ for $u\in C_0$ and 
the free group generated by $g_\eta$ for $\eta\in\V\setminus\V_1$. 
We regard $\Gamma_0$ as a subgroup of $\Gamma$. 
As a consequence of the discussion above, the following theorem is obtained. 

\begin{theorem}
The homomorphism $\pi:\Gamma_0\to[[G|Y]]$ is surjective and 
its kernel is generated by the following elements as a normal subgroup. 
\begin{itemize}
\item $i_e(\alpha)c_e(\alpha^{-1})$ 
for any $e\in C_1$ and $\alpha\in\Sigma(e)$. 
\item $(\beta g_\eta\beta)^{-1}i_e(\alpha)(\beta g_\eta\beta)c_e(\alpha^{-1})$ 
for any $e=\{\omega(a){<}\omega(a+\hat\eta)\}\in C_1'$ and 
$\alpha\in\Sigma(e)$, 
where $a$ is in $A_p\cup A_{p+1}$, $\eta$ is in $\V\setminus\V_1$ and 
$\beta\in\Sigma(\omega(a))$ is the simple transposition 
on $\{f(\zeta,a(\zeta){+}1),\ f(\zeta,p{+}2)\}$. 
\item $[g_{\eta_1},\beta g_{\eta_2}\beta]$ 
for any distinct $\eta_1,\eta_2\in\V\setminus\V_1$, 
where $\beta\in\Sigma(\omega(p\hat\zeta))$ is the simple transposition 
on $\{f(\zeta,p{+}1),\ f(\zeta,p{+}2)\}$. 
\item $(\beta g_\eta\beta\sigma g_\eta\sigma)^{-1}i_e(\alpha)
(\beta g_\eta\beta\sigma g_\eta\sigma)c_e(\alpha^{-1})$ 
for any $e=\{\omega(p\hat\zeta){<}\omega(p\hat\zeta+2\hat\eta)\}\in C_1'$ 
and $\alpha\in\Sigma(e)$, 
where $\eta$ is in $\V\setminus\V_1$, 
$\beta\in\Sigma(\omega(p\hat\zeta))$ denotes the simple transposition 
on $\{f(\zeta,p{+}1),\ f(\zeta,p{+}2)\}$ and 
$\sigma\in\Sigma(\omega(p\hat\zeta))$ denotes the simple transposition 
on $\{f(\eta,1),\ f(\eta,2)\}$. 
\end{itemize}
\end{theorem}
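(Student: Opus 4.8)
The plan is to begin from the presentation supplied by the preceding theorem (Brown's \cite[Theorem 1]{Br84JPAA}) and to descend to the smaller presentation over $\Gamma_0$ through a sequence of Tietze transformations, using the relations (4)--(10) already extracted from the relators $r_\tau$ to eliminate every generator $g_e$ that is not kept in $\Gamma_0$. Since $\Gamma_0$ retains all the stabilizers $\Sigma(u)$ for $u\in C_0$ together with the generators $g_\eta$ for $\eta\in\V\setminus\V_1$, the surjectivity of $\pi\colon\Gamma_0\to[[G|Y]]$ will emerge as a byproduct: once each $g_e$ is rewritten as a word in generators of $\Gamma_0$, the image $\pi(\Gamma)=[[G|Y]]$ will be seen to coincide with $\pi(\Gamma_0)$.

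First I would remove the generators $g_e$ for $e\in C_1$. Relation (1) gives $g_e=1$ for $e\in T$, and relation (4) gives $g_e=1$ for $e\in C_1\setminus T$, so all of these may be discarded by a Tietze transformation; substituting $g_e=1$ into the relator (2) attached to $e\in C_1$ produces $i_e(\alpha)c_e(\alpha^{-1})$, the first listed family. Next I would treat the edges $e\in C_1'$ of types (i), (ii), (iii). Relations (5) and (6) show that altering a coordinate of $a$ in $\V_1\setminus\{\zeta\}$ does not change $\pi(g_e)$, while (5) together with (7) shows that incrementing the $\zeta$-coordinate conjugates $\pi(g_e)$ by the adjacent transposition in $\Sigma(\omega(a))$. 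Iterating these moves identifies each such $g_e$ with $\beta g_\eta\beta$, where $\beta\in\Sigma(\omega(a))$ interchanges $f(\zeta,a(\zeta)+1)$ and $f(\zeta,p+2)$; this is consistent with the explicit form $\gamma_e=\beta_{\zeta,k}\theta_\eta\beta_{\zeta,k}$, as a short computation with these transpositions (using that $(k,p+2)$ commutes with $\theta_\eta$) confirms $\gamma_e=\beta\pi(g_\eta)\beta$. Eliminating these $g_e$ and feeding the substitution into the relator (2) yields the second family; the type (iii) symbols are cleared along the way by (5)--(7).

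Finally I would treat the type (iv) edges $\{\omega(a)<\omega(a+\hat\eta_1+\hat\eta_2)\}$ with $a\in A_p$. For $\eta_1\neq\eta_2$ each such edge is the top edge of two $2$-simplices of shape (vii), one through each intermediate vertex, and relations (8) and (9) express $\pi(g_{e''})$ according to the two orderings; eliminating $g_{e''}$ by one of these expressions and equating it to the other forces the commutator $[g_{\eta_1},\beta g_{\eta_2}\beta]$, the third family. For $\eta_1=\eta_2=\eta$, relation (10) expresses $\pi(g_{e''})$ as $\beta g_\eta\beta\,\sigma g_\eta\sigma$, and substitution into the relator (2) for this edge gives the fourth family. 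After all of these eliminations the surviving generators are precisely those of $\Gamma_0$ and the surviving normal relators are exactly the four displayed families.

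I expect the main obstacle to lie in the bookkeeping of the last two steps: verifying that the iterated conjugations of the second step genuinely collapse to the single transposition recorded in the statement rather than to a longer permutation (the point being that the spurious factors land in the pointwise stabilizer of $\pi(g_\eta)$ and drop out), and confirming that equating the two orderings of a type (iv) $2$-simplex yields exactly the commutator relation with the prescribed $\beta$, so that no independent relations survive the eliminations beyond the four families.
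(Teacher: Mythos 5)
Your proposal is correct and takes essentially the same route as the paper: the paper's own proof consists precisely of deriving the consequences (4)--(10) of the relators $r_\tau$ and then performing the (implicit) Tietze eliminations of the redundant generators $g_e$, concluding with the single sentence that the theorem follows from that discussion. Your explicit justification of the conjugator collapse --- that the discrepancy between the product of adjacent transpositions and the single transposition on $\{f(\zeta,a(\zeta){+}1),f(\zeta,p{+}2)\}$ lies in the pointwise stabilizer of the support of $\pi(g_\eta)$ and therefore drops out --- is exactly the bookkeeping the paper leaves to the reader.
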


As an application of the theorem above, we obtain the following. 

\begin{corollary}\label{abel}
\begin{enumerate}
\item The abelianization $[[G|Y]]/D([[G|Y]])$ of 
the topological full group $[[G|Y]]$ is isomorphic to 
$(H_0(G)\otimes\Z_2)\oplus H_1(G)$. 
\item The group $[[G|Y]]_0/D([[G|Y]])$ is isomorphic to 
$H_0(G)\otimes\Z_2$. 
\item The group $[[G|Y]]$ is simple if and only if $H_0(G)$ is $2$-divisible. 
\end{enumerate}
\end{corollary}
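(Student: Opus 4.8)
The plan is to abelianize the presentation of $[[G|Y]]$ furnished by the theorem above. Writing $\Gamma_0^{\mathrm{ab}}$ for the abelianization of the free product $\Gamma_0$, I would first observe that $[[G|Y]]/D([[G|Y]])$ is the quotient of $\Gamma_0^{\mathrm{ab}}$ by the images of the listed normal generators of $\Ker\pi$. Since $\rank(\omega)(\xi)\geq p+2\geq2$ for every $\xi$, each vertex stabilizer $\Sigma(u)\cong\bigoplus_{\xi\in\V}\Sigma_{\rank(u)(\xi)}$ has abelianization $(\Z_2)^{\V}$, with the class $t^u_\xi$ of a transposition of two $\xi$-boxes as the $\xi$-th generator; together with the infinite cyclic groups $\langle g_\eta\rangle$ for $\eta\in\V\setminus\V_1$, these classes generate $\Gamma_0^{\mathrm{ab}}$. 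Thus $[[G|Y]]/D([[G|Y]])$ is generated by the $t^u_\xi$ and the $g_\eta$, and the whole computation reduces to reading off what the relators impose on these classes.

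Next I would analyse the relators. Conjugation is invisible in an abelian quotient, so each commutator relator, in particular $[g_{\eta_1},\beta g_{\eta_2}\beta]$, becomes trivial, and each relator of the form $w^{-1}i_e(\alpha)w\,c_e(\alpha^{-1})$ collapses to the identity $i_e(\alpha)=c_e(\alpha)$ between transposition classes. Two effects have to be separated. Because an element of $[[G|Y]]$ preserves the source of each box, the relations coming from transpositions of boxes untouched by the expansion merely identify $t^u_\xi$ with $t^{u'}_\xi$; as $C_0$ is connected this collapses the $t^u_\xi$ to a single generator $t_\xi$ for each $\xi\in\V$. The remaining relations, in which the $\theta_\eta$-conjugations inside $\gamma_e$ redistribute the children of an expanded box according to $M$ (this is where the canonical form of $M$, with $M(\eta,\eta)=2$ and $M(\eta,\chi)=1$, is used), turn a $\xi$-transposition into a product of $M(\xi,\eta)$ transpositions of type $\eta$. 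Read in $(\Z_2)^{\V}$ this kills each $(\id-M^t)\hat\xi$, so the transposition classes generate $(\Z_2)^{\V}/(\id-M^t)(\Z_2)^{\V}\cong H_0(G)\otimes\Z_2$. I expect the \emph{main obstacle} to lie exactly in this bookkeeping: checking that the conjugated relators produce precisely the columns of $\id-M^t$ modulo $2$ and nothing more, and that none of them constrains the $g_\eta$.

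Granting this, the $g_\eta$ survive as free commuting generators of $\Z^{\#(\V\setminus\V_1)}$, which has the rank of the free abelian group $H_1(G)$, and the two families do not interact; so $[[G|Y]]/D([[G|Y]])\cong(H_0(G)\otimes\Z_2)\oplus H_1(G)$, proving (1). For (2), the index map $I\colon[[G|Y]]\to H_1(G)$ is a homomorphism into an abelian group, hence factors through the abelianization, and under the identification of (1) it is the projection onto the $H_1(G)$ summand: the $g_\eta$ map onto a basis of $H_1(G)$ by surjectivity of $I$, while every transposition lies in $[[G|Y]]_0=\Ker I$. Since $D([[G|Y]])\subset[[G|Y]]_0$, the image of $[[G|Y]]_0$ in the abelianization is the kernel of this projection, namely $H_0(G)\otimes\Z_2$, and the kernel of $[[G|Y]]_0\to[[G|Y]]/D([[G|Y]])$ is $[[G|Y]]_0\cap D([[G|Y]])=D([[G|Y]])$; hence $[[G|Y]]_0/D([[G|Y]])\cong H_0(G)\otimes\Z_2$.

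Finally, for (3): by Lemma \ref{SFT>pim} the groupoid $G|Y$ is purely infinite and minimal, so $D([[G|Y]])$ is simple by Theorem \ref{simple2}, and it is nontrivial because $[[G|Y]]$ contains $\Z_2*\Z_3$ by Proposition \ref{pi>Z2*Z3} and is therefore non-abelian. Consequently $[[G|Y]]$ is simple if and only if $[[G|Y]]=D([[G|Y]])$, i.e. if and only if its abelianization $(H_0(G)\otimes\Z_2)\oplus H_1(G)$ vanishes. Since $H_1(G)$ is the torsion-free part of the finitely generated group $H_0(G)$, the vanishing of $H_0(G)\otimes\Z_2$ already forces $H_0(G)$ to be finite of odd order, whence $H_1(G)=0$ automatically; thus the abelianization vanishes exactly when $H_0(G)\otimes\Z_2=0$, that is, when $H_0(G)$ is $2$-divisible. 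This establishes (3).
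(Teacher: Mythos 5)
Your proposal is correct and takes essentially the same route as the paper: abelianize the presentation $\pi:\Gamma_0\to[[G|Y]]$ from the preceding theorem, observe that the relators first identify the transposition classes across the vertices of $C_0$ and then impose $(\id-M^t)\hat\xi\equiv 0$ modulo $2$ while leaving the $g_\eta$ unconstrained (giving $(H_0(G)\otimes\Z_2)\oplus H_1(G)$), and deduce (2) from Theorem \ref{surjective} and (3) from Lemma \ref{SFT>pim} and Theorem \ref{simple2}. The only (harmless) imprecision is attributing the mod-$2$ relations to the $\theta_\eta$-conjugations inside $\gamma_e$: for $\xi\in\V_1$ they actually come from the $C_1$-edges $\omega(a)<\omega(a+2\hat\xi)$, where $\gamma_e=1$, via the edge-stabilizer homomorphism $c_e$ (a swap of the two expanded parent boxes maps to the product of the $M(\xi,\eta)$ swaps of child boxes), the $\theta_\eta$'s entering only for the $C_1'$-edges with $\eta\in\V\setminus\V_1$ --- a distinction that is invisible in the abelian quotient in any case.
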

\begin{proof}
(1)
Since $\Gamma_0$ is the free product of 
the groups $\Sigma(u)$ for $u\in C_0$ and 
the free group generated by $g_\eta$ for $\eta\in\V\setminus\V_1$, 
its abelianization $\Gamma_0/D(\Gamma_0)$ is isomorphic to 
\[
\bigoplus_{u\in C_0}(\Z_2)^\V\oplus\Z^{\V\setminus\V_1}. 
\]
By the theorem above, we have $\pi(i_e(\alpha))=\pi(c_e(\alpha))$ 
for any $e=\{\omega(a){<}\omega(a+\hat\xi)\}$ and $\alpha\in\Sigma(e)$, 
where $a$ is in $A_p\cup A_{p+1}$ and $\xi$ is in $\V_1$. 
These equations collapse the group $\Gamma_0/D(\Gamma_0)$ to 
\[
(\Z_2)^\V\oplus\Z^{\V\setminus\V_1}. 
\]
By the theorem above, we also have $\pi(i_e(\alpha))=\pi(c_e(\alpha))$ 
for any $e=\{\omega(a){<}\omega(a+2\hat\xi)\}$ and $\alpha\in\Sigma(e)$, 
where $a$ is in $A_p$ and $\xi$ is in $\V_1$. 
These equations collapse the group above to 
\[
((\Z_2)^\V/\Ima(\id-M^t))\oplus\Z^{\V\setminus\V_1}, 
\]
which is isomorphic to $(H_0(G)\otimes\Z_2)\oplus H_1(G)$. 
The theorem above tells us that no further collapsing occurs. 

(2) 
This follows from (1) and Theorem \ref{surjective}. 

(3) 
As mentioned in Section 6.1, 
$H_0(G)$ is a finitely generated abelian group and 
$H_1(G)$ is isomorphic to the torsion-free part of $H_0(G)$. 
Therefore if $H_0(G)$ is $2$-divisible, 
then $(H_0(G)\otimes\Z_2)\oplus H_1(G)=0$, and so $[[G|Y]]=D([[G|Y]])$. 
By Lemma \ref{SFT>pim} and Theorem \ref{simple2}, $D([[G|Y]])$ is simple. 
Hence $[[G|Y]]$ is simple. 
The other implication is shown in a similar way. 
\end{proof}

The corollary above, together with Theorem \ref{fg} and Theorem \ref{finite}, 
implies the following. 

\begin{corollary}\label{Dfinite}
\begin{enumerate}
\item The group $D([[G|Y]])$ is finitely generated. 
\item If $H_0(G)$ is a finite group, 
then the groups $[[G|Y]]_0$ and $D([[G|Y]])$ are of type F$_\infty$. 
In particular, they are finitely presented and are of type FP$_\infty$. 
\end{enumerate}
\end{corollary}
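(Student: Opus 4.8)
The plan is to deduce both statements from the finiteness results already established, together with the abelianization computations in Corollary \ref{abel}, by exhibiting $D([[G|Y]])$ as a finite-index subgroup in a group whose finiteness properties are known.

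For part (1), I would argue as follows. By Theorem \ref{fg}, the group $[[G|Y]]_0$ is finitely generated. By Corollary \ref{abel} (2), the quotient $[[G|Y]]_0/D([[G|Y]])$ is isomorphic to $H_0(G)\otimes\Z_2$. Since $H_0(G)$ is a finitely generated abelian group (as recorded in Section 6.1), the group $H_0(G)\otimes\Z_2$ is a finite $2$-group. Hence $D([[G|Y]])$ is a subgroup of finite index in $[[G|Y]]_0$, and a finite-index subgroup of a finitely generated group is itself finitely generated. This proves (1).

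For part (2), suppose $H_0(G)$ is finite. Since $H_1(G)$ is isomorphic to the torsion-free part of $H_0(G)$, finiteness of $H_0(G)$ forces $H_1(G)=0$. The index map $I\colon[[G|Y]]\to H_1(G)$ is then trivial, so $[[G|Y]]_0=\Ker I=[[G|Y]]$, and by Theorem \ref{finite} this group is of type F$_\infty$. To handle $D([[G|Y]])$, note that by Corollary \ref{abel} (1) the abelianization $[[G|Y]]/D([[G|Y]])$ is isomorphic to $(H_0(G)\otimes\Z_2)\oplus H_1(G)=H_0(G)\otimes\Z_2$, which is finite; thus $D([[G|Y]])$ is a finite-index subgroup of $[[G|Y]]$. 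I would then invoke the standard fact that the properties F$_n$ and F$_\infty$ pass to finite-index subgroups (see \cite[Section 7.2]{Geotext}), applied to $[[G|Y]]$, to conclude that $D([[G|Y]])$ is of type F$_\infty$ as well.

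The argument is essentially a bookkeeping exercise built on the cited theorems, so there is no deep obstacle; the conceptual crux is the pair of finiteness observations on the abelian quotients. Specifically, the step I expect to carry the weight is recognizing that $H_0(G)\otimes\Z_2$ is finite (which depends on $H_0(G)$ being finitely generated) and, in part (2), that finiteness of $H_0(G)$ collapses $H_1(G)$ and thereby identifies $[[G|Y]]_0$ with $[[G|Y]]$. Once these reductions place $D([[G|Y]])$ as a finite-index subgroup, the transfer of finite generation and of type F$_\infty$ follows immediately from standard results.
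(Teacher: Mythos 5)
Your proof is correct and follows exactly the route the paper intends: the paper derives this corollary in one line from Corollary \ref{abel}, Theorem \ref{fg} and Theorem \ref{finite}, and your argument supplies precisely the omitted details (finiteness of $H_0(G)\otimes\Z_2$, the collapse $H_1(G)=0$ hence $[[G|Y]]_0=[[G|Y]]$ when $H_0(G)$ is finite, and the passage of finite generation and type F$_\infty$ to finite-index subgroups).
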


In general, we do not know 
if $[[G|Y]]_0$ and $D([[G|Y]])$ are finitely presented or not.

\subsection{Examples}

\subsubsection{Full shifts}

We consider \'etale groupoids arising from full shifts. 
Let $n\in\N\setminus\{1\}$ and let $r\in\N$. 
Let $(\V,\E)$ be a finite directed graph such that 
$\#\V=r$ and its adjacency matrix is 
\[
M=\begin{bmatrix}0&0&\ldots&0&n\\1&0&\ldots&0&0\\0&1&\ldots&0&0\\
\vdots&\vdots&\ddots&\vdots&\vdots\\0&0&\ldots&1&0\end{bmatrix}. 
\]
Note that $\det(\id-M^t)=1{-}n$. 
Let $(X_{n,r},\sigma_{n,r})$ be the shift of finite type 
associated with $(\V,\E)$. 
When $r=1$, the shift $(X_{n,1},\sigma_{n,1})$ is 
the full shift over $n$ symbols. 
Let $G_{n,r}$ be the \'etale groupoid of the shift $(X_{n,r},\sigma_{n,r})$ 
(see Section 6.1). 
One has 
\[
H_0(G_{n,r})\cong\Coker(\id-M^t)\cong\Z_{n-1}\quad\text{and}\quad 
H_1(G_{n,r})\cong\Ker(\id-M^t)=0,
\]
where $\id-M^t$ is thought of as a homomorphism from $\Z^r$ to $\Z^r$. 
The equivalence class of $1_{X_{n,r}}$ in $H_0(G_{n,r})$ 
corresponds to $r\in\Z_{n-1}$. 
As mentioned in Remark \ref{HigThomp}, 
the topological full group $[[G_{n,r}]]$ is 
naturally isomorphic to the Higman-Thompson group $V_{n,r}$. 
From $H_1(G_{n,r})=0$, we obtain $[[G_{n,r}]]=[[G_{n,r}]]_0$. 
By Lemma \ref{SFT>pim} and Theorem \ref{simple2}, $D([[G_{n,r}]])$ is simple. 
Moreover, Corollary \ref{abel} says that 
\[
[[G_{n,r}]]/D([[G_{n,r}]])\cong\Z_{n-1}\otimes\Z_2
=\begin{cases}0&\text{$n$ is even}\\\Z_2&\text{$n$ is odd. }\end{cases}
\]
By Theorem \ref{iso} and Theorem \ref{classify}, 
$[[G_{n,r}]]$ (or $D([[G_{n,r}]])$) is isomorphic to 
$[[G_{m,s}]]$ (or $D([[G_{m,s}]])$) if and only if 
there exists an isomorphism $\phi:\Z_{n-1}\to\Z_{m-1}$ 
such that $\phi(r)=s$. 
This reproves the main result of \cite{P11JA}. 
By Theorem \ref{finite} and Corollary \ref{Dfinite}, 
$[[G_{n,r}]]$ and $D([[G_{n,r}]])$ are of type F$_\infty$.

\subsubsection{The golden mean shift}

Let $X$ be the set of sequences of $\{0,1\}$ 
in which `11' does not appear, that is, 
\[
X=\{(x_n)_n\in\{0,1\}^\N\mid\text{ if $x_n=1$, then $x_{n+1}=0$}\}. 
\]
Let $\sigma:X\to X$ be the one-sided shift. 
The dynamical system $(X,\sigma)$ is called the golden mean shift, 
and the corresponding matrix is 
\[
M=\begin{bmatrix}1&1\\1&0\end{bmatrix}. 
\]
The Perron eigenvalue of $M$ is the golden number $(1+\sqrt{5})/2$, 
and $\det(\id-M^t)=-1$. 
Consider the \'etale groupoid $G$ of $(X,\sigma)$. 
One has 
\[
H_0(G)\cong\Coker(\id-M^t)=0\quad\text{and}\quad 
H_1(G)\cong\Ker(\id-M^t)=0. 
\]
It follows from Theorem \ref{classify} that 
$G$ is isomorphic to $G_{2,1}$ discussed above. 
Hence $[[G]]$ is isomorphic to the Higman-Thompson group $V_{2,1}$. 

In \cite{KMW98ETDS}, for an arbitrary real number $\beta>1$, 
the $\beta$-shifts and 
the associated $C^*$-algebras $\mathcal{O}_\beta$ were studied. 
It is known that the $\beta$-shift is a shift of finite type 
if and only if the $\beta$-expansion of $1$ is finite 
(see \cite[Proposition 3.8]{KMW98ETDS} for instance). 
The golden mean shift $(X,\sigma)$ is equal to 
the $\beta$-shift with $\beta=(1+\sqrt{5})/2$. 
Note that the ring $\Z[\beta,\beta^{-1}]$ equals $\Z+\beta\Z$, 
because $\beta^2-\beta-1=0$. 
We would like to observe that 
$[[G]]$ is identified with a group of PL bijections on the unit interval. 
As in Remark \ref{HigThomp}, 
let $F_{\beta,1}$ be the set of all PL homeomorphisms $f:[0,1]\to[0,1]$ 
with finitely many singularities 
such that all singularities of $f$ are in $\Z[\beta,\beta^{-1}]$ and 
the derivative of $f$ at any non-singular point 
is $\beta^k$ for some $k\in\Z$. 
The group $F_{\beta,1}$ was shown to be finitely presented and 
is of type FP$_\infty$ in \cite{C00Illi}. 
In the same fashion as Remark \ref{HigThomp}, 
one can define the group $V_{\beta,1}$ containing $F_{\beta,1}$. 
Define a continuous map $\rho:X\to[0,1]$ by 
\[
\rho((x_n)_n)=\sum_{n=1}^\infty\frac{x_n}{\beta^n}
\quad\forall (x_n)_n\in X. 
\]
Then for every $t\in[0,1]$, 
\[
\#\rho^{-1}(t)=\begin{cases}2&t\in\Z[\beta,\beta^{-1}]\cap[0,1)\\
1&\text{otherwise. }\end{cases}
\]
Any cylinder set of $X$ is mapped to 
an interval $[a,b]$ with $a,b\in\Z[\beta,\beta^{-1}]$. 
It is not so hard to see that 
$[[G]]$ is naturally isomorphic to a subgroup of $V_{\beta,1}$ 
via the map $\rho$. 
Thus $V_{2,1}$ is embeddable into $V_{\beta,1}$.

\subsubsection{A subshift with non-trivial $H_1$}

Let $(X,\sigma)$ be the irreducible shift of finite type 
arising from the matrix 
\[
M=\begin{bmatrix}2&1\\1&2\end{bmatrix}, 
\]
and let $G$ be the \'etale groupoid of $(X,\sigma)$. 
One has 
\[
H_0(G)\cong\Coker(\id-M^t)\cong\Z\quad\text{and}\quad 
H_1(G)\cong\Ker(\id-M^t)=\Z. 
\]
The equivalence class of $1_X$ in $H_0(G)$ corresponds to $0\in\Z$. 
By Lemma \ref{SFT>pim} and Theorem \ref{simple2}, $D([[G]])$ is simple. 
Theorem \ref{surjective} says that $[[G]]/[[G]]_0\cong\Z$. 
By Corollary \ref{abel}, $[[G]]_0/D([[G]])\cong\Z_2$. 
By Theorem \ref{finite}, $[[G]]$ is of type F$_\infty$. 
By Theorem \ref{fg} and Corollary \ref{Dfinite}, 
$[[G]]_0$ and $D([[G]])$ are finitely generated. 

We would like to give a finite presentation for $[[G]]$. 
The notation is borrowed from Section 6.5 and Section 6.6. 
Let $(\V,\E)$ be a finite directed graph whose adjacency matrix is $M$. 
Let $\V=\{\zeta,\eta\}$. 
Then $\omega=\{D_\zeta,D_\eta\}$ is in $\B$ and $\rank(\omega)=(1,1)$. 
It is easy to see 
$\rank(u)=(n,n)$ for $u\in\B_\omega$ with $\height(u)=n{-}1$. 
By Lemma \ref{Kuconnected}, 
there exists $n$ such that $K_u$ is $1$-connected 
for any $u\in\B_\omega$ satisfying $\rank(u)=(n,n)$. 
In view of the remark given in \cite[Section 5]{Br89MSRI} 
(this argument is due to K. Vogtmann), one can take $n=9$ in this case. 
Thus, 
the simplicial complex $Z_{5,7}$ is $1$-connected. 
Let $u_6\in\B_\omega$ be such that $\rank(u_6)=(6,6)$. 
Choose two elements $V_1,V_2\in u_6$ such that $s(V_1)=s(V_2)=D_\zeta$. 
Let $u_7$ be the elementary expansion of $u_6$ with respect to $V_1$ and 
let $u_8$ be the elementary expansion of $u_7$ with respect to $V_2$. 
Then $\rank(u_k)=(k,k)$, $\{u_6{<}u_7{<}u_8\}$ is a $2$-simplex of $Z_{5,7}$ 
and $\{u_6,u_7,u_8\}$ is a set of representatives 
for the vertices of $Z_{5,7}$ mod $[[G]]$. 
The stabilizer of $u_k$ is isomorphic to $\Sigma_k\times\Sigma_k$ 
for each $k=6,7,8$. 
To simplify notation, we denote products of symmetric groups 
by $\Sigma_{k,l}=\Sigma_k\times\Sigma_l$, etc. 
Let $\Gamma_0$ be the free product of 
$\Sigma_{6,6}$, $\Sigma_{7,7}$, $\Sigma_{8,8}$ 
and $\Z$ generated by $g=g_\eta$. 
As discussed in Section 6.6, 
there exists a natural surjection $\pi:\Gamma_0\to[[G]]$. 

Consider 
the direct limit (in the sense of J-P. Serre) of the following diagram. 
\[
\xymatrix{&&\Sigma_{8,8}&&\\
&\Sigma_{4,2,6}\ar[ur]\ar[dl]&&
\Sigma_{6,7}\ar[ul]\ar[dr]&\\
\Sigma_{6,6}&&\Sigma_{5,6}\ar[ll]\ar[rr]&&
\Sigma_{7,7}}
\]
Regard $\Sigma_{k,k}$ as the group of permutations of 
letters $\{1_k,2_k,\dots,k_k\}\cup\{1'_k,2'_k,\dots,k'_k\}$ for $k=6,7,8$. 
The maps $\Sigma_{5,6}\hookrightarrow\Sigma_{k,k}$ for $k=6,7$ are 
obtained by letting $\Sigma_5$ permute $\{1_k,2_k,3_k,4_k,k_k\}$ 
and letting $\Sigma_6$ permute $\{1'_k,2'_k,\dots,6'_k\}$. 
Similarly, the maps $\Sigma_{6,7}\hookrightarrow\Sigma_{k,k}$ for $k=7,8$ 
are obtained by letting $\Sigma_6$ permute $\{1_k,\dots,6_k\}$ 
and letting $\Sigma_7$ permute $\{1'_k,\dots,7'_k\}$. 
The map $\Sigma_{4,2,6}\hookrightarrow\Sigma_{6,6}$ is obtained 
by letting $\Sigma_{4,2,6}$ 
permute $\{1_6,\dots,4_6\}$, $\{5_6,6_6\}$ and $\{1'_6,\dots,6'_6\}$. 
The embedding $\Sigma_{4,2,6}\hookrightarrow\Sigma_{8,8}$ is not 
of the standard type. 
The subgroup $\Sigma_4\times\Sigma_6$ permutes 
$\{1_8,\dots,4_8\}$ and $\{1'_8,\dots,6'_8\}$, 
but the non-trivial element of $\Sigma_2$ maps to 
the product $(5_8 \ 7_8)(6_8 \ 8_8)(7'_8 \ 8'_8)$ of three transpositions. 
Let $\Gamma_1$ be the direct limit of this triangle of groups. 
Then the restriction of the homomorphism $\pi:\Gamma_0\to[[G]]$ 
to the subgroup $\Sigma_{6,6}*\Sigma_{7,7}*\Sigma_{8,8}$ 
factors through $\Gamma_1$. 

Let us now turn to discussion on the relations involving $g=g_\eta$. 
We consider the following diagram. 
\[
\xymatrix{&&\Sigma_{8,8}&&\\
&\Sigma_{6,2,4}\ar[ur]^{c_2}\ar[dl]_{i_2}&&
\Sigma_{7,6}\ar[ul]_{c_1}\ar[dr]^{i_1}&\\
\Sigma_{6,6}&&\Sigma_{6,5}\ar[ll]_{i_0}\ar[rr]^{c_0}&&
\Sigma_{7,7}}
\]
The maps $i_0$ and $c_0$ are obtained by 
letting $\Sigma_6$ permute $\{1_k,\dots,5_k,k_k\}$ and 
letting $\Sigma_5$ permute $\{2'_k,\dots,6'_k\}$ for $k=6,7$. 
The maps $i_1$ and $c_1$ are obtained by 
letting $\Sigma_7$ permute $\{1_k,\dots,7_k\}$ and 
letting $\Sigma_6$ permute $\{2'_k,\dots,7'_k\}$ for $k=7,8$. 
The map $i_2$ is obtained by letting $\Sigma_{6,2,4}$ permute 
$\{1_6,\dots,6_6\}$, $\{1'_6,2'_6\}$ and $\{3'_6,\dots,6'_6\}$. 
The map $c_2$ is not of the standard type. 
The subgroup $\Sigma_6\times\Sigma_4$ permutes 
$\{1_8,\dots,5_8,7_8\}$ and $\{3'_8,\dots,6'_8\}$, 
but the non-trivial element of $\Sigma_2$ maps to 
the product $(1'_8 \ 2'_8)(7'_8 \ 8'_8)(6_8 \ 8_8)$ of three transpositions. 
Now the relations involving $g$ are given by 
\[
(\beta_0g\beta_0)^{-1}i_0(\sigma)
(\beta_0g\beta_0)c_0(\sigma)\quad \forall\sigma\in\Sigma_{6,5},\quad 
g^{-1}i_1(\sigma)
gc_1(\sigma)\quad \forall\sigma\in\Sigma_{7,6}, 
\]
and 
\[
(\beta_0g\beta_0\beta_1g\beta_1)^{-1}i_2(\sigma)
(\beta_0g\beta_0\beta_1g\beta_1)c_2(\sigma)\quad 
\forall\sigma\in\Sigma_{6,2,4}, 
\]
where $\beta_0=(5_6 \ 6_6)$ and $\beta_1=(1'_6 \ 2'_6)$. 
Hence $[[G]]$ is isomorphic to the quotient of 
the free product of $\Gamma_1$ and $\Z=\langle g\rangle$ 
by the normal subgroup generated by these relations.

\subsubsection{Boundary actions I}

We consider the boundary actions of free products of finite groups. 
These actions and related $C^*$-algebras were studied 
in \cite{S91IJM,O02RIMS}. 
Let $P$ and $Q$ be non-trivial finite groups 
(we exclude the case $P=Q=\Z_2$). 
Put $p=\#P$ and $q=\#Q$. 
Let $\E$ be the disjoint union of $P\setminus\{1\}$ and $Q\setminus\{1\}$. 
We define 
\[
X=\{(x_n)_n\in\E^\N\mid\text{if $x_n\in P$ (resp. $x_n\in Q$), 
then $x_{n+1}\in Q$ (resp. $x_{n+1}\in P$)}\}. 
\]
The topological space $X$ is naturally identified with 
the hyperbolic boundary of the free product $P*Q$, 
and admits a natural action $\phi:P*Q\curvearrowright X$ by homeomorphisms, 
which is called the boundary action 
(see \cite[Proposition 5.5]{O02RIMS}, for example). 
Let $\sigma:X\to X$ be the one-sided shift. 
Then $(X,\sigma)$ is the shift of finite type associated with the matrix 
\[
M=\begin{bmatrix}0&p{-}1\\ q{-}1&0\end{bmatrix}. 
\]
We have $\det(\id-M^t)=1-(p-1)(q-1)$. 
It is not so hard to see that 
the transformation groupoid $G_\phi$ is canonically identified with 
the \'etale groupoid arising from $(X,\sigma)$ 
(see \cite[Definition 2.1]{M12PLMS} for the definition of $G_\phi$). 
By Lemma \ref{SFT>pim}, 
the groupoid $G_\phi$ is purely infinite and minimal. 
One has 
\[
H_0(G_\phi)\cong\Coker(\id-M^t)\cong\Z_{n-1}\quad\text{and}\quad 
H_1(G_\phi)\cong\Ker(\id-M^t)=0,
\]
where $n=(p-1)(q-1)$. 
The equivalence class of $1_X$ in $H_0(G_\phi)$ corresponds to 
$p\in\Z_{n-1}$ (or $q\in\Z_{n-1}$). 
Hence Theorem \ref{classify} tells us that 
$G_\phi$ is isomorphic to $G_{n,p}$ (or $G_{n,q}$) discussed in Section 6.7.1.

\subsubsection{Boundary actions II}

Let $k\in\N\setminus\{1\}$ and 
let $\F_k$ denote the free group on $k$ generators $s_1,s_2,\dots,s_k$. 
Let $\E=\{s_1,s_1^{-1},s_2,s_2^{-1},\dots,s_k,s_k^{-1}\}$. 
Set 
\[
X_k=\{(x_n)_n\in\E^\N\mid
\text{if $x_n=s_i$ (resp. $x_n=s_i^{-1}$), 
then $x_{n+1}\neq s_i^{-1}$ (resp. $x_{n+1}\neq s_i$)}\}. 
\]
In the same way as the preceding example, 
the space $X_k$ is naturally identified with 
the hyperbolic boundary of the free group $\F_k$. 
Let $\phi_k:\F_k\curvearrowright X_k$ be the boundary action. 
The one-sided shift $\sigma_k:X_k\to X_k$ is a shift of finite type 
associated with the $2k\times2k$ matrix 
\[
M=\begin{bmatrix}1&0&1&1&\ldots&\ldots&1&1\\0&1&1&1&\ldots&\ldots&1&1\\
1&1&1&0&\ldots&\ldots&1&1\\1&1&0&1&\ldots&\ldots&1&1\\
\vdots&\vdots&\vdots&\vdots&\ddots&&\vdots&\vdots\\
\vdots&\vdots&\vdots&\vdots&&\ddots&\vdots&\vdots\\
1&1&1&1&\ldots&\ldots&1&0\\1&1&1&1&\ldots&\ldots&0&1\end{bmatrix}. 
\]
In the same way as the preceding example, 
the transformation groupoid $G_{\phi_k}$ is canonically identified with 
the \'etale groupoid arising from $(X_k,\sigma_k)$. 
By Lemma \ref{SFT>pim}, 
the groupoid $G_{\phi_k}$ is purely infinite and minimal. 
We have 
\[
H_0(G_{\phi_k})\cong\Coker(\id-M^t)\cong\Z^k\oplus\Z_{k-1}
\]
and 
\[
H_1(G_{\phi_k})\cong\Ker(\id-M^t)\cong\Z^k. 
\]
The equivalence class of $1_X$ in $H_0(G_{\phi_k})$ 
corresponds to $(0,1)\in\Z^k\oplus\Z_{k-1}$. 
By Theorem \ref{iso}, if $k\neq l$, then 
$[[G_{\phi_k}]]$ (or $[[G_{\phi_k}]]_0$, $D([[G_{\phi_k}]])$) 
is not isomorphic to 
$[[G_{\phi_l}]]$ (or $[[G_{\phi_l}]]_0$, $D([[G_{\phi_l}]])$). 
It follows from Theorem \ref{surjective} that 
$[[G_{\phi_k}]]/[[G_{\phi_k}]]_0$ is isomorphic to $\Z^k$. 
Theorem \ref{simple2} tells us that $D([[G_{\phi_k}]])$ is simple. 
By Corollary \ref{abel}, 
\[
[[G_{\phi_k}]]_0/D([[G_{\phi_k}]])\cong(\Z^k\oplus\Z_{k-1})\otimes\Z_2
=\begin{cases}(\Z_2)^k&\text{$k$ is even}\\
(\Z_2)^{k+1}&\text{$k$ is odd. }\end{cases}
\]
By Theorem \ref{finite}, 
we know that $[[G_{\phi_k}]]$ is of type F$_\infty$. 
By Theorem \ref{fg} and Corollary \ref{Dfinite}, 
$[[G_{\phi_k}]]_0$ and $D([[G_{\phi_k}]])$ are finitely generated. 

\bigskip
\bigskip

\noindent
\textbf{Acknowledgments}

I am very much indebted to D. Tamaki 
for helpful discussions on homotopy theory and algebraic topology. 
I also thank R. Grigorchuk and K. Matsumoto 
for many valuable suggestions and comments.

\end{document}